\documentclass[reqno,twoside,11pt,english]{amsart}
\usepackage{amsmath,amsfonts,amssymb,amsthm,epsfig}
\usepackage{graphicx}
\usepackage{esint}

\usepackage[colorlinks,citecolor=blue,linkcolor=blue]{hyperref}
\usepackage{color}
\usepackage[colorlinks,citecolor=blue,linkcolor=blue]{hyperref}

\usepackage{graphicx}
\usepackage{titletoc,epsf}
\usepackage{amsmath,amsfonts,latexsym,amsthm,amsxtra,amssymb,bbm}
\allowdisplaybreaks
\usepackage{graphicx,float}

\allowdisplaybreaks

\let\pa=\partial

\def\ov{\overline}
\def\Re{\mathrm{Re} }
\def\Im{\mathrm{Im}}
\newcommand{\dd}{\mathrm{d}}

\def\Xint#1{\mathchoice
	{\XXint\displaystyle\textstyle{#1}}	{\XXint\textstyle\scriptstyle{#1}}	{\XXint\scriptstyle\scriptscriptstyle{#1}}	{\XXint\scriptscriptstyle\scriptscriptstyle{#1}}	\!\int}
\def\XXint#1#2#3{{\setbox0=\hbox{$#1{#2#3}{\int}$ }
		\vcenter{\hbox{$#2#3$ }}\kern-.6\wd0}}
\def\dashint{\Xint-}

\newtheorem{theorem}{Theorem}[section]
\newtheorem{lemma}[theorem]{Lemma}
\newtheorem{proposition}[theorem]{Proposition}
\newtheorem{corollary}[theorem]{Corollary}
\newtheorem{definition}[theorem]{Definition}

\newtheorem{thmA}{Theorem}[section]

\newtheorem{assumption}[thmA]{Assumption}

\newtheorem{remark}{Remark}

\numberwithin{equation}{section}

\numberwithin{equation}{section}

\topmargin - .23 in

\begin{document}

\title[Long time dynamics for CGL]{Long time dynamics for complex Ginzburg-Landau equations on three-dimensional Riemannian manifolds}

\author{Xing Cheng,~~~~~~Zeyu Lyu,~~~~~~Chenmin Sun,~~~~~~And~~~~~~Yunrui Zheng}

\address[X. Cheng]{ School of mathematics, Hohai University, Nanjing 210098, Jiangsu,  China and Laboratory of Mathematical Modeling and Intelligent Computing for Water Systems, Hohai University, Nanjing, 211100, China. }
\email{{\tt chengx@hhu.edu.cn}}

\address[Z. Lyu]{ School of Mathematics and Key Laboratory of Mathematical MIIT, Nanjing University of Aeronautics and Astronautics, 210016, Nanjing, China.}
\email{{\tt zeyulyu@nuaa.edu.cn}}

\address[C. Sun]{  CNRS, Universit\'e Paris-Est Cr\'eteil, Laboratoire d'Analyse et de Math\'ematiques Appliqu\'ees, UMR CNRS 8050, Cr\'eteil, France}
\email{{\tt chenmin.sun@cnrs.fr}}

\address[Y. Zheng]{ School of mathematics, Shandong University, Shandong 250100, Jinan,   China.}
\email{{\tt yunrui\_zheng@sdu.edu.cn}}

\thanks{X. Cheng has been supported by the NSF of Jiangsu Province (No.~BK20230036), the NSF of China (Grant U2340221) and National Key R\&D Program of China (2022YFC3202602).
Z. Lyu has been supported by the NSFC (No. 125001296), the NSF of Jiangsu Province (No. BK20251386) and the Postdoctoral Fellowship Program of CPSF (No. GZB20250975).
C. Sun is partially supported by the ANR project Smooth ANR-22-CE40-0017.
  Y. Zheng is partially supported by the NSF of Shandong Province (ZR2026MS0070) and the NSFC (No. 11901350 and No. 12371172).  The authors acknowledge support of the Institut Henri Poincar\'e (UAR 839 CNRS-Sorbonne Universit\'e) and LabEx CARMIN (ANR-10-LABX-59-01).}

\begin{abstract}

We study the energy-subcritical and energy-critical complex Ginzburg-Landau equations on three-dimensional Riemannian manifolds.
We establish the asymptotic behavior of solutions via a purely energy-based approach and subsequently prove a fast/slow dichotomy of the solutions. Moreover, for the slow solution, we show that the solution converges to an explicit asymptotic profile.
In particular, we obtain a precise description of both the amplitude and the phase of the leading-order dynamics.
As a by-product, the complex Ginzburg-Landau equation approximation yields global weak solutions to the energy-subcritical and energy-critical defocusing nonlinear Schr\"odinger equations on compact three-dimensional Riemannian manifolds.
We also investigate the long time behavior of the solutions to the energy-subcritical and energy-critical complex Ginzburg-Landau equations on a class of complete noncompact three-dimensional Riemannian manifolds and prove decay of global solutions under suitable geometric assumptions.

\bigskip

\noindent \textbf{Keywords}:
Complex Ginzburg-Landau equation, nonlinear Schr\"odinger equation, asymptotic behavior, asymptotic profile, energy method

\bigskip

\noindent \textbf{Mathematics Subject Classification (2020)} Primary: 35L05; Secondary: 35L71, 35Q40, 35P25, 35B40
\end{abstract}

\maketitle

\section{Introduction}

The complex Ginzburg-Landau equation (CGL) plays a fundamental role in the mathematical description of dissipative nonlinear phenomena and arises in a broad range of physical contexts, including superconductivity, fluid dynamics, nonlinear optics, and pattern formation; see, for instance, \cite{GL,MYLYYZ}.
Owing to its mixed dispersive-dissipative nature, the CGL equation serves as a bridge between nonlinear Schr\"odinger equations and semilinear heat equations and has therefore attracted considerable attention in both mathematics and physics.

Throughout this paper, a Riemannian manifold is always understood to be smooth, connected, and complete, with completeness taken with respect to the Riemannian distance. Unless explicitly stated otherwise, manifolds are assumed to have no boundary. We write $\left(\mathcal{M}^3,g\right)$ for a three-dimensional Riemannian manifold and $\left(\mathcal{M}^d,g\right)$ for a $d$-dimensional Riemannian manifold. Compactness or noncompactness, as well as the presence of boundary, will be specified when relevant. These standing assumptions will not be repeated in subsequent statements. In the three-dimensional setting, we consider the complex Ginzburg-Landau equation on $\left(\mathcal{M}^3,g\right)$, namely,
\begin{align}\label{eq1.CGL}
\left\{
\begin{aligned}
 &\partial_t u - z \Delta_g u = - z  f(u ),  \\
 	& u(0, x) =  u_0 (x),
\end{aligned}
\right.
\end{align}
where $z\in \mathbb{C}$ satisfies $\Re{z}>0$,
$u: \mathbb{R}_+ \times \mathcal{M}^3 \to \mathbb{C}$ is the unknown function, and $\Delta_g$ denotes the Laplace-Beltrami operator on $\mathcal{M}^3$.
The power-type nonlinearity is given by
$f(u) = |u|^{p-1} u$, with $1 < p \le 5$.
The associated energy functional is
\begin{align}\label{u_energy_func}
	E(u(t)) = \frac12 \int_{\mathcal{M}^3} |\nabla_g u(t)|^2  \,\mathrm{d}\mu + \frac{1}{p+1} \int_{\mathcal{M}^3} |u(t)|^{p+1} \, \mathrm{d}\mu,
\end{align}
where $\mathrm{d}\mu$ stands for the Riemannian volume measure. The functional $E(u(t))$ constitutes a Lyapunov functional associated with equation \eqref{eq1.CGL}.

The global well-posedness and blowup phenomena for the CGL on Euclidean spaces have been extensively investigated over the past two decades. In \cite{HW}, Huang-Wang established the global well-posedness of the energy-critical CGL equations via the induction-on-energy method and further explored the inviscid limit behavior of this model. We also refer to \cite{WGZ} for  global well-posedness of both energy-subcritical and energy-critical derivative-type CGL equations. Notably, the CGL equations studied in \cite{HW,WGZ} are essentially of the defocusing type. As for the focusing counterpart, Cheng-Guo-Zheng \cite{CGZ} proved the global well-posedness of the energy-critical CGL equations by employing the concentration-compactness/rigidity theorem method originally developed in \cite{KK11,KM}, which resolves an open problem proposed by Cazenave \cite{Ca12}.
Furthermore, via an inviscid limit analysis, the authors in \cite{CGZ} also verified the existence of global weak solutions to the energy-critical focusing nonlinear Schr\"odinger equation, under the condition that both the total energy and kinetic energy of the initial data are strictly smaller than those of the corresponding ground state. This achievement settles an open problem proposed in \cite{Ca03}. Subsequently, Cheng-Guo-Zheng-Zheng \cite{CGZZ} extended the global well-posedness theory of energy-critical CGL equations to exterior domains. In \cite{CDW},  Cazenave-Dickstein-Weissler established the finite-time blowup of negative-energy CGL solutions based on a virial-type argument. Meanwhile, the rigorous construction of finite-time blowup solutions for the CGL was systematically investigated by Duong-Nouaili-Zaag in \cite{DNZ1,DNZ0,DNZ2}; see also \cite{MZ0,NZ}.

In contrast to the substantial literature on well-posedness and blowup,
much less is known about the long time dynamics of CGL equations on curved geometries.  Although several recent works have addressed related problems on Riemannian manifolds \cite{CMZZ,DMNZ,MNZ*}, a quantitative description of the asymptotic behavior of global solutions on compact three-dimensional Riemannian manifolds is still unavailable in the existing literature.
The primary objective of the present paper is to fill this gap. We establish precise asymptotic profiles for global solutions on compact three-dimensional Riemannian manifolds and identify the leading-order dynamics governing both the amplitude and the phase.

Our first result provides a dichotomy of the asymptotic dynamics of global solutions on compact three-dimensional Riemannian manifolds.
The main mechanism underlying Theorem \ref{th2v79} is the separation between the averaged mode and the fluctuation component.
The dissipative structure of the equation suppresses spatial oscillations, while the averaged mode survives on much longer time scales.
Therefore, the infinite-dimensional dynamics generated by the complex Ginzburg-Landau equation reduce asymptotically to an effective one-dimensional evolution equation governing the spatial average.

\begin{theorem}[Long time behavior and asymptotics of the CGL equation on $\mathcal{M}^3$ without boundary]\label{th2v79}

Let $ \left(\mathcal{M}^3, g \right)$ be compact,
and let $u_0 \in H_{x}^1 \left(\mathcal{M}^3 \right)$. For every $1 < p \le 5$, equation \eqref{eq1.CGL} admits a unique global solution
\[
u \in C_t^0 H_x^1 \left(\mathbb{R}_+ \times \mathcal{M}^3 \right),
\]
which satisfies the following decay estimates: for all sufficiently large $t$,
\begin{align*}
&  \left\|\nabla_{g}^{k} u(t) \right\|_{L_x^2 \left(\mathcal{M}^3 \right)}
\lesssim_{\|u_0\|_{H^k}} t^{- \frac{k}2-\frac{1}{p-1}}, \qquad k \in \{0,1\},
\intertext{ and }
& \left\| u(t) \right\|_{L_x^{\infty} \left(\mathcal{M}^3 \right)} \lesssim_{\|u_0\|_{H^1} } t^{-\frac{1}{p-1}}.
\end{align*}
Furthermore, decomposing $u$ into its spatial mean and fluctuation as
\begin{align}\label{eq4.1v127}
u(t,x) = m(t) + v(t,x), \qquad
m(t) := \dashint_{\mathcal{M}^3} u(t,x) \, \mathrm{d}\mu
= \frac{1}{\mathrm{Vol}_g \left(\mathcal{M}^3 \right)} \int_{\mathcal{M}^3} u(t,x) \, \mathrm{d}\mu,
\end{align}
we have the following asymptotic characterization.
\begin{enumerate}
  \item[(i)] \textbf{Mean component.}
  The spatial average $m(t)$ satisfies
  \[
  |m(t)| \le C t^{-\frac{1}{p-1}},
  \]
  where $C > 0$ is a constant depending only on $p$, $ \Re z$, and the geometry of $\mathcal{M}^3$.

  \item[(ii)] \textbf{Fluctuation component.}
  The remainder $v(t)$ decays exponentially in $H_x^1$,   namely,
  \begin{align}\label{eq1.2v6}
  \|v(t)\|_{H_x^1} \le C e^{-c t},
  \end{align}
  where $C$ and $ c $ are positive constants which depend only on $p$, $z$, $\|u_0 \|_{H_{x}^1}$, and the geometry of $\mathcal{M}^3$.

  \item[(iii)] \textbf{Dichotomy between fast and slow solutions.}
  Exactly one of the following alternatives holds for the solution $u$:

  $\mathrm{(a)}$ \textbf{Fast solution.}
  There exist constants $C,c>0$ such that
  \[
  \|u(t)\|_{H_x^1 \left(\mathcal M^3 \right)}\le Ce^{-ct},  \ \forall\,  t\ge0.
  \]
  $\mathrm{(b)}$ \textbf{Slow solution.}
  There exist a constant $\theta_\infty = \theta_\infty(u_0) \in \mathbb{R}$
  and real-valued functions $\rho(t)>0$ and $\theta(t)$,
  defined for all sufficiently large $t$, such that
  \[
  u(t,x) = \rho(t) e^{i\theta(t)} + v(t,x),
  \]
  where $v(t)$ satisfies the exponential decay estimate \eqref{eq1.2v6}, and
  \[
  \rho(t) = \rho_c(t) \left( 1 + O_{u_0} \left(t^{-1} \right) \right),  \quad
  \theta(t) = \theta_c(t) + \theta_\infty + o_{u_0 } (1),  \text{ as }  t \to +\infty,
  \]
  with
  \[
  \rho_c(t) = \left( \frac{1}{ \Re z (p-1)t} \right)^{\frac{1}{p-1}}, \quad
  \theta_c(t) = -\frac{\Im z}{(p-1)\Re z} \ln t.
  \]
  Here $\rho(t)$ and $\theta(t)$ are the actual time-dependent amplitude and phase of the dominant background component, characterizing the oscillatory factor $e^{i\theta(t)}$ in the decomposition of $u(t,x)$, while their leading-order counterparts $\rho_c(t)$ and $\theta_c(t)$ capture the dominant decay rate and phase modulation of the solution at infinity.
  In the case $\Im z = 0$, the leading logarithmic phase is absent, and the phase converges to a constant.

\end{enumerate}

\end{theorem}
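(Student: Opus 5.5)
The proof is energy-based and proceeds in four stages: global existence, algebraic decay via an ODE inequality for the $L^2$ mass, exponential decay of the fluctuation by a spectral-gap argument, and an ODE analysis of the mean that yields the dichotomy.

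\textbf{Global existence and energy identities.} Local well-posedness in $H^1_x(\mathcal{M}^3)$ follows from the smoothing of the semigroup $e^{tz\Delta_g}$ with $\Re z>0$. This is a parabolic-type semigroup, and $p\le 5$ is exactly the $H^1$-admissible range in dimension three. Multiplying by $\ov{u}$, taking real parts, and using $\Re z>0$ gives
\[
\frac{\dd}{\dd t}\|u\|_{L^2}^2=-2\Re z\big(\|\nabla_g u\|_{L^2}^2+\|u\|_{L^{p+1}}^{p+1}\big)\le0 .
\]
Multiplying by $\ov{\partial_t u}/z$ (equivalently, pairing with $-\Delta_g u+f(u)$) gives
\[
\frac{\dd}{\dd t}E(u)=-\Re z\,\|\Delta_g u-f(u)\|_{L^2}^2\le 0 .
\]
These a priori bounds globalize the solution in the subcritical case $p<5$. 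For $p=5$ they do not suffice on their own. There I would argue that the $H^1$ norm cannot concentrate, using the dissipation $\int_0^\infty\|\Delta_g u-f(u)\|^2\,\dd t<\infty$ together with local smoothing. Alternatively, I would invoke the Huang--Wang-type induction-on-energy argument cited in the introduction, localized to charts.

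\textbf{Algebraic decay.} By Hölder on a compact manifold, $\|u\|_{L^2}^{p+1}\lesssim\|u\|_{L^{p+1}}^{p+1}$. The mass identity then gives $y'\le -c\,y^{(p+1)/2}$ for $y=\|u\|_{L^2}^2$, hence $y\lesssim t^{-2/(p-1)}$; this is the $k=0$ bound. For $k=1$, I would use that $E$ is nonincreasing and integrate the mass identity over $[t/2,t]$. This yields
\[
\tfrac t2\,\big(\|\nabla u(t)\|^2+\|u(t)\|_{L^{p+1}}^{p+1}\big)\lesssim\|u(t/2)\|_{L^2}^2,
\]
which gives the stated gradient rate. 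For the $L^\infty$ bound, I would run parabolic smoothing on unit time intervals (Moser iteration, or $H^2$ bounds from the energy dissipation and elliptic regularity) to pass from $L^2$ decay to $L^\infty$ decay at the same rate $t^{-1/(p-1)}$.

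\textbf{Mean and fluctuation.} Averaging the equation gives
\[
m'=-z\,\dashint f(u),\qquad \partial_t v-z\Delta_g v=-z\big(f(u)-\dashint f(u)\big).
\]
Pairing the $v$-equation with $-\Delta_g v$ and using the spectral gap $\lambda_1>0$ of $\Delta_g$ on mean-zero functions, the nonlinear term is bounded by $\|u\|_{L^\infty}^{p-1}\|\nabla v\|^2\lesssim t^{-1}\|\nabla v\|^2$. This follows from $|\nabla f(u)|\lesssim|u|^{p-1}|\nabla u|$ and $\nabla u=\nabla v$. The resulting inequality is
\[
\frac{\dd}{\dd t}\|\nabla v\|^2\le -\big(2\Re z\,\lambda_1-Ct^{-1}\big)\|\nabla v\|^2 ,
\]
so $\|v\|_{H^1}\lesssim e^{-ct}$ for $t$ large, which is (ii). The bound $|m|\le\|u\|_{L^\infty}$ gives (i). To get the constant in (i) uniform in $u_0$, I would use instead the ODE comparison for $|m|$ derived in the next step.

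\textbf{Dichotomy and profile (main obstacle).} Write $\dashint f(u)=f(m)+O(\|u\|_\infty^{p-1}\|v\|)=f(m)+O(e^{-ct})$. In polar form $m=\rho e^{i\theta}$ the mean equation becomes
\[
\rho'=-\Re z\,\rho^p+O(e^{-ct}),\qquad \theta'=-\Im z\,\rho^{p-1}+O(e^{-ct}/\rho).
\]
\emph{Fast case.} If $\rho$ ever falls below an exponentially small threshold, the error term dominates and $\rho$ decays exponentially. Combined with (ii), this gives alternative (a).

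\emph{Slow case.} Otherwise, the main difficulty is to show that the $O(e^{-ct})$ error is relatively small compared to $\rho^p\sim t^{-p/(p-1)}$. In this regime I would set $w=\rho^{1-p}$, which satisfies
\[
w'=(p-1)\Re z+O(e^{-ct}w^{p/(p-1)}).
\]
A bootstrap then gives $w=(p-1)\Re z\,t+O(1)$, hence $\rho=\rho_c(1+O(t^{-1}))$. Integrating the phase equation, $\theta'=-\frac{\Im z}{(p-1)\Re z\,t}+O(t^{-2})$, so $\theta-\theta_c$ converges to a limit $\theta_\infty$. Finally, the two alternatives are mutually exclusive because their $H^1$ decay rates differ.
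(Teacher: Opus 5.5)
Most of your outline is sound and close to the paper: the mass ODE, the spectral gap for $v$, the polar ODE for $m$, and a bootstrap on $w=\rho^{1-p}$. The genuine gap is the fast branch of the dichotomy, which is wrong as stated.

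An upper bound $O(e^{-ct})$ on the forcing cannot by itself produce exponential decay of $\rho$. Zero forcing satisfies the bound, and then $\rho'=-\Re z\,\rho^p$ decays only algebraically from any positive value. Concretely, take $u_0\equiv\epsilon>0$. Then $v\equiv0$, $r\equiv0$ and $\rho(t)=\bigl(\epsilon^{1-p}+\Re z\,(p-1)t\bigr)^{-1/(p-1)}$. So $\rho(T_0)\le e^{-\gamma T_0}$ at $T_0=\gamma^{-1}\ln(1/\epsilon)$, which can be made as large as you like, yet this solution is slow.

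A single-time criterion only works in the opposite direction. Suppose $\rho(T)\ge e^{-\gamma T}$ at one sufficiently large $T$, i.e.\ $w(T)\le e^{\gamma(p-1)T}$. Then your bootstrap from $T$ closes, because the accumulated error is $\int_T^\infty e^{-cs}\bigl(w(T)+s-T\bigr)^{p/(p-1)}\,\mathrm{d}s\lesssim e^{(\gamma p-c)T}$. This is small precisely when $\gamma p<c$. The bootstrap also keeps $\rho>0$ on $[T,\infty[$, which the polar form requires.

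So you should split on whether $\limsup_{t\to\infty}e^{\gamma t}|m(t)|$ is finite, with $\gamma p<c$. If it is finite, (a) follows at once from $|m(t)|\lesssim e^{-\gamma t}$ and (ii), with no ODE argument. If it is infinite, pick such a $T$ and run the bootstrap. This is the logic of Proposition~\ref{prop:dichotomy}. There the paper uses a contraction for the modulation parameters $(\nu,\alpha)$; since $m$ is already given, your bootstrap is an adequate substitute.

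There are three smaller points.

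\emph{The case $p=5$.} ``No concentration'' is not yet an argument. The missing ingredient is the coercive bound $\|\Delta_g u\|_{L^2}^2+\|u\|_{L^{2p}}^{2p}\le\|\Delta_g u-f(u)\|_{L^2}^2$. It holds because $\Re\int\langle\nabla_g f(u),\nabla_g\bar u\rangle_g\,\mathrm{d}\mu\ge0$ in the defocusing case. It converts your dissipation integral into the $L^2_tH^2_x$ bound that is the blowup criterion, so no induction on energy is needed. Without this sign, $\Delta_g u$ and $f(u)$ could nearly cancel.

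\emph{The constant in (i).} The $m$-ODE carries data-dependent errors, so it will not give a constant independent of $u_0$. Instead, note that $y'\le-c\,y^{(p+1)/2}$ already gives $y(t)\le(c't)^{-2/(p-1)}$ regardless of $y(0)$. Combined with $|m|\le\mathrm{Vol}_g(\mathcal M^3)^{-1/2}\|u\|_{L^2}$, this is the uniform bound.

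\emph{The $L^\infty$ bound.} Drop Moser iteration: for $\Im z\ne0$ the $L^q$ energy estimates for the linear part lose their sign once $q$ is large. Your $H^2$ alternative does work. Take good times from $\int_t^\infty\|\partial_t u\|_{L^2}^2\lesssim t^{-1-2/(p-1)}$, apply the elliptic bound there, and propagate in $H^2$ over short times. This gives $\|u(t)\|_{H^2}\lesssim t^{-1/(p-1)}$, which is all the $L^\infty$ rate needs. It is lighter than the paper's modified energy $\mathcal E_2$, which targets the sharper $\|\Delta_g u\|_{L^2}\lesssim t^{-1-1/(p-1)}$; the paper itself uses your shortcut for $4<p\le5$. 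Likewise, using $\|u\|_{L^\infty}^{p-1}\lesssim t^{-1}$ in a single $\dot H^1$ estimate for $v$ is a valid alternative to the paper's two-step argument.
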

In the following, we will discuss some further extensions.
We will give four remarks concerning extensions of the above result to the CGL on compact Riemannian manifolds with boundary, the CGL on some more general three-dimensional Riemannian manifolds, the CGL on higher-dimensional Riemannian manifolds,
and the semilinear heat equation (NLH) on the corresponding Riemannian manifolds.

\begin{remark}[Asymptotic behavior of the CGL equation on $\mathcal{M}^3 $ with boundary $\partial \mathcal{M}^3 $]\label{re2v118}

Let $\left(\mathcal M^3,g\right)$ be compact with smooth nonempty boundary. For the CGL equation on $\mathcal M^3$,
two important types of boundary conditions are considered: Dirichlet and Neumann conditions. As a result, we have
 the Dirichlet initial-boundary value problem
\begin{align*}
\left\{
\begin{aligned}
 &\partial_t u - z \Delta_g u = - z  f(u) \ \text{in}\ \mathcal{M}^3,\\
 	& u|_{t=0} = u_0, \\
 &  u = 0 \ \text{on}\ \partial \mathcal{M}^3,
\end{aligned}
\right.
\end{align*}
and the Neumann initial-boundary problem
\begin{align*}
\left\{
\begin{aligned}
 &\partial_t u - z \Delta_g u = - z  f(u)\ \text{in}\ \mathcal{M}^3,\\
 	& u|_{t=0} = u_0, \\
 & \frac{\partial u}{\partial \nu } =0\ \text{on}\ \partial \mathcal{M}^3,
\end{aligned}
\right.
\end{align*}
where $\nu$ is the outer normal direction along $\partial \mathcal{M}^3 $.
The initial space is $H_0^1 \left(\mathcal M^3 \right)$ for Dirichlet conditions and
$H^1 \left(\mathcal M^3 \right)$ for Neumann conditions.

Global well-posedness for the CGL equations on compact three-dimensional Riemannian manifolds with boundary under both Dirichlet and Neumann boundary conditions can be established.
Concerning the Neumann case, we can derive asymptotic behavior for solutions consistent with the conclusions of Theorem \ref{th2v79}.
For the Dirichlet case,
the mass dissipation identity \eqref{eq_ener} and Proposition~\ref{pr2.3v122} give $ \| u(t) \|_{L^2} \le e^{- \Re z \lambda_1t} \| u_0 \|_{L^2}$,
 where $\lambda_1>0$ is the first Dirichlet eigenvalue of $-\Delta_g$.
Since $E$ is nonincreasing by the energy dissipation identity \eqref{eq_mass} and $\int_{t-1}^t E(u(s))\,\mathrm ds\le C  \| u(t-1) \|_{L^2}^2 $ for $t\ge1$,
the gradient has the same exponential bound after adjusting the constants.
Together with the bound on $[0,1]$, this implies
\begin{align*}
\|u (t) \|_{H_{x}^1}\le  C e^{- c t}, \ \forall\, t > 0 .
\end{align*}
This means that the solution to CGL on compact three-dimensional Riemannian manifolds with Dirichlet boundary condition is a fast solution.

\end{remark}

\begin{remark}\label{re4v177}

The global well-posedness of the CGL on compact three-dimensional Riemannian manifolds can be extended to any three-dimensional Riemannian manifold that satisfies the following assertion:
\begin{align}\label{eq1.5v177}
 \text{ the Ricci curvature of $ \left( \mathcal{M}^3,g \right)$ is bounded from below and
$\inf\limits_{p \in \mathcal{M}^3} \mathrm{Vol}_g B(p, 1) > 0$. }
\end{align}

\end{remark}

\begin{remark}[CGL equation on $d$-dimensional Riemannian manifolds]\label{re3v121}

Compared with the three-dimensional case, for any dimension $d \ge 1$,
the natural energy-subcritical and energy-critical exponent ranges are
\[
1<p\le1+\frac4{d-2}\quad(3\le d < \infty ),\qquad  1<p<\infty\quad(d=1,2).
\]
Extending the global well-posedness and asymptotic conclusions in Theorem \ref{th2v79} to other dimensions,
including initial-boundary problems, requires the relevant local well-posedness theory and higher-order energy estimates.
The proofs in this paper establish the stated conclusions in dimension three;
the other dimensions are left as possible extensions.
\end{remark}

\begin{remark}[Long time behavior of the solution to the NLH]

The conclusions of Theorem \ref{th2v79} and Remark \ref{re2v118}
extend directly to the NLH posed on the corresponding Riemannian manifolds
\begin{align*}
\left\{
\begin{aligned}
 &\partial_t u -  \Delta_g u = -  f(u ),  \\
 	& u(0, x) =  u_0 (x),
\end{aligned}
\right.
\end{align*}
 since all of our arguments for \eqref{eq1.CGL} remain valid
in the special case $\Im z=0$.
We point out that all related classical results given in \cite{GGH,Ha01,Ha06,HJ,HJK} were established for bounded domains with Dirichlet or Neumann boundary conditions, and the extension to compact Riemannian manifolds currently does not exist in the literature.
This generalization to the geometric setting therefore also constitutes one of the novel contributions of our work.

\end{remark}

Relying on the global well-posedness established in Theorem \ref{th2v79} and Remark \ref{re4v177} for the CGL equations,
by resorting to an inviscid limit argument,
we deduce the existence of global weak solutions and a restricted weak-strong uniqueness result for the energy-subcritical and energy-critical defocusing nonlinear Schr\"odinger equations on three-dimensional Riemannian manifolds satisfying \eqref{eq1.5v177}. The corresponding equation takes the form
	\begin{align}\label{eq1.1v18}
	\begin{cases}
		i \partial_t v + \Delta_g v -  f(v) = 0 , \\
		v(0) = v_0 \in H^1
,
	\end{cases}
\end{align}
where $v: \mathbb{R}_t \times \mathcal{M}^3_x \to \mathbb{C}$. The mass and energy functionals, conserved for sufficiently regular solutions, are
\begin{align*}
	 & M(v(t))=\int_{\mathcal{M}^{3}}|v(t,x)|^2 \,\mathrm{d}\mu,
	\intertext{ and }
& 	E(v(t))=\int_{\mathcal{M}^{3}}\left(\frac{1}{2}|\nabla_g v|^2+\frac{1}{p+1}|v|^{p+1}\,\right) \, \mathrm{d}\mu.
\end{align*}
Our result can be stated as follows.
	\begin{theorem}[Global weak solutions to the three-dimensional energy-subcritical and energy-critical defocusing nonlinear Schr\"odinger  equations]\label{th1.2v21}

	Let $1<p\le5$. Suppose the initial datum $v_0$ of \eqref{eq1.1v18} belongs to $H_{x}^{1} \left(\mathcal{M}^{3} \right)$.
Then equation \eqref{eq1.1v18} admits a global weak solution $v$. A weak-strong uniqueness statement holds under the additional assumptions in 2):

		\begin{itemize}
			\item[1)]
Global existence: There exists a global weak solution $v\in L_t^\infty H_x^1   \bigcap C_t^0 L^2_x \left(\mathbb{R} \times \mathcal{M}^3  \right)$ of \eqref{eq1.1v18} satisfying the energy inequality
			\begin{align}\label{eq1.8v256}
				E(v(t)) \le E(v_0)
			\end{align}
			and mass conservation
			\begin{align}\label{eq1.9v256}
				M(v(t)) = M(v_0).
			\end{align}

    \item[2)]
     Weak-strong uniqueness:
        Let $\tilde v$ be a global strong solution of
\eqref{eq1.1v18}, with $\tilde v\in C_t^0H_x^2$ when $1<p\le2$
and $\tilde v\in C_t^0H_x^3$ when $2<p\le5$.
Let $v$ be a global weak solution from the same datum satisfying \eqref{eq1.8v256} and \eqref{eq1.9v256}
 in 1). Then $v\equiv\tilde v$.
		\end{itemize}

	\end{theorem}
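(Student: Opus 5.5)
The plan is a vanishing-viscosity (inviscid) limit. For $\epsilon\in(0,1]$ take $z_\epsilon=\epsilon+i$ and let $u_\epsilon$ be the global solution of \eqref{eq1.CGL} with $z=z_\epsilon$ and datum $v_0$, given by Theorem~\ref{th2v79} in the compact case and by Remark~\ref{re4v177} under \eqref{eq1.5v177}. Since $-z_\epsilon\Delta_g u+z_\epsilon f(u)=-i(\Delta_g u-f(u))-\epsilon(\Delta_g u-f(u))$, this is $i\partial_t u_\epsilon+\Delta_g u_\epsilon-f(u_\epsilon)=-i\epsilon(\Delta_g u_\epsilon-f(u_\epsilon))$, a damped perturbation of \eqref{eq1.1v18}. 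I work on $t\ge0$ and recover negative times by the symmetry $v(t)\mapsto\overline{v(-t)}$ applied to $\overline{v_0}$.

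\textbf{Step 1: uniform bounds.} The dissipation identities used in Remark~\ref{re2v118} give
\begin{align*}
\tfrac{d}{dt}M(u_\epsilon)=-2\epsilon\bigl(\|\nabla_g u_\epsilon\|_{L^2}^2+\|u_\epsilon\|_{L^{p+1}}^{p+1}\bigr),\qquad \tfrac{d}{dt}E(u_\epsilon)=-\epsilon\|\Delta_g u_\epsilon-f(u_\epsilon)\|_{L^2}^2 .
\end{align*}
Hence $\sup_t\|u_\epsilon(t)\|_{H^1}\lesssim \|v_0\|_{H^1}$ uniformly in $\epsilon$, with $E(u_\epsilon(t))\le E(v_0)$. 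Integrating the mass identity gives
\begin{align*}
M(u_\epsilon(t))\ge M(v_0)-2\epsilon\int_0^t\bigl(\|\nabla_g u_\epsilon\|_{L^2}^2+\|u_\epsilon\|_{L^{p+1}}^{p+1}\bigr)\,ds\ge M(v_0)-C\epsilon t .
\end{align*}

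\textbf{Step 2: compactness and passage to the limit.} From the equation, $\partial_t u_\epsilon$ is bounded in $L^\infty_t H^{-1}$, using $f(u_\epsilon)\in L^{(p+1)/p}\subset H^{-1}$ for $p\le5$. Aubin--Lions with local compactness of $H^1\hookrightarrow L^2$, on compact subsets of $\mathcal M^3$ or on all of $\mathcal M^3$ in the compact case, then gives a subsequence with $u_\epsilon\to v$ strongly in $C([0,T];L^2_{loc})$ and weak-$*$ in $L^\infty H^1$. Strong local $L^2$ convergence with a uniform $L^6$ bound yields $f(u_\epsilon)\to f(v)$ in $L^1_{loc}$ for $p<5$. \textbf{This is the main obstacle:} at $p=5$ the family $|u_\epsilon|^4u_\epsilon$ is only bounded in $L^{6/5}$, so I use interpolation to get a.e. convergence plus uniform integrability of $|u_\epsilon|^5$ on compact sets; this holds because $|u|^5\le|u|^{6}$ only when large, so I instead bound $|u_\epsilon|^5$ in $L^{6/5}_{loc}$ and use Vitali. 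The term $\epsilon(\Delta_g u_\epsilon-f(u_\epsilon))$ tends to $0$ in distributions. So $v$ is a weak solution, and $E(v(t))\le\liminf E(u_\epsilon(t))\le E(v_0)$ by weak lower semicontinuity.

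\textbf{Step 3: mass conservation and continuity.} Weak lower semicontinuity and Step 1 give $M(v(t))\le M(v_0)$. For the reverse inequality in the noncompact case I need strong $L^2$ convergence globally, so I add a tightness estimate: multiply the equation by $\phi_R|u_\epsilon|^2$-type weights with $\phi_R$ a cutoff built from the distance function. Under \eqref{eq1.5v177} there are cutoffs with bounded gradient, so the mass outside $B_R$ changes at rate $O(1/R)$ uniformly in $\epsilon$. Hence $u_\epsilon(t)\to v(t)$ in $L^2$ and $M(v(t))=\lim M(u_\epsilon(t))\ge M(v_0)$. With weak continuity in $H^1$, conservation of $\|v(t)\|_{L^2}$ upgrades to $v\in C_tL^2$.

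\textbf{Step 4: weak--strong uniqueness.} Set $w=v-\tilde v$. Using $\tilde v$ (regular enough that $\Delta_g\tilde v,\,f(\tilde v)\in L^\infty_tL^2$ and $\tilde v\in L^\infty$ via $H^2\subset L^\infty$) as a test function, I write the relative energy
\begin{align*}
\mathcal E(t)=E(v)-E(\tilde v)-\Re\int\bigl(\nabla_g\tilde v\cdot\overline{\nabla_g w}+f(\tilde v)\overline w\bigr)\,d\mu .
\end{align*}
The energy inequality for $v$ and conservation for $\tilde v$ give $\mathcal E(t)\le\int_0^t R(s)\,ds$. The error $R$ is controlled by $\|\nabla\tilde v\|_{L^\infty}$ and $\|\tilde v\|_{L^\infty}$ times $\|w\|_{H^1}^2$ plus the nonlinear remainder $|w|^{p+1}$. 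For $p>2$ the $H^3$ assumption is used to bound $\nabla f(\tilde v)$ and $\nabla\tilde v$ in $L^\infty$. Convexity of $|u|^{p+1}$ gives $\mathcal E\gtrsim\|\nabla w\|_{L^2}^2$ modulo $\|w\|_{L^2}^2$, and the $L^2$ part is handled by mass conservation of both solutions through the mixed term $\Re\int v\bar{\tilde v}$. Gronwall then gives $w\equiv0$.
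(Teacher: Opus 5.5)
Your existence argument (Steps 1--3) is essentially the paper's inviscid-limit proof: uniform $L^\infty_tH^1_x$ and $L^\infty_tH^{-1}_x$ bounds, local compactness with a Lipschitz-cutoff tail estimate, Vitali for $f(u_\epsilon)$, and lower semicontinuity for the energy. You obtain mass conservation from strong global $L^2$ convergence and $M(u_\epsilon(t))\ge M(v_0)-C\epsilon t$, whereas the paper tests the limit equation with $i\bar v$; both work. The case $p=5$ is not actually an obstacle, since $|u_\epsilon|^5$ bounded in $L^{6/5}_{\mathrm{loc}}$ with $6/5>1$ already gives uniform integrability.

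\textbf{The gap is Step 4 for $1<p<2$}, where only $\tilde v\in C^0_tH^2_x$ is assumed. Carry out the cancellation of the terms linear in $w$, which you assert but do not perform, using both equations. The energy inequality then gives
\[
\mathcal{E}(t)\le\int_0^t\Re\int_{\mathcal M^3}\partial_s\tilde v\,\overline{\bigl(f(\tilde v+w)-f(\tilde v)-f'(\tilde v)[w]\bigr)}\,\mathrm{d}\mu\,\mathrm{d}s,\qquad f'(\tilde v)[w]=|\tilde v|^{p-1}w+(p-1)|\tilde v|^{p-3}\tilde v\,\Re\bigl(\overline{\tilde v}w\bigr).
\]
For $p<2$ the nonlinearity is only $C^{1,p-1}$. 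Its second derivative has size $|\tilde v|^{p-2}$, which is unbounded near zeros of $\tilde v$, so the Taylor remainder is only $O(|w|^p)$ with bounded coefficients, not $O(|w|^2)$. You are left with $y(t)\lesssim\int_0^t y(s)^{p/2}\,\mathrm{d}s$ with $p/2<1$. This inequality has nonzero solutions vanishing at $t=0$, so Gronwall gives nothing.

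Moreover, $H^2$ gives neither $\nabla\tilde v\in L^\infty$ (only $W^{1,6}$) nor $\partial_t\tilde v\in H^1$. The latter is what you need to differentiate $\Re\int(-\Delta_g\tilde v+f(\tilde v))\bar w$ when $\partial_t w$ lies only in $H^{-1}$.

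The paper avoids the energy entirely for $1<p\le2$. You already derive, from mass conservation of both solutions,
\[
\frac{\mathrm{d}}{\mathrm{d}t}\|w(t)\|_{L^2}^2=2\Im\int_{\mathcal M^3}\bigl(f(\tilde v+w)-f(\tilde v)\bigr)\bar w\,\mathrm{d}\mu .
\]
Combine this with the pointwise bound $|\Im((f(\eta+w)-f(\eta))\bar w)|\le C_p|\eta|^{p-1}|w|^2$, which holds exactly for $p\le2$ (when $|w|>|\eta|/2$, subtract the real quantity $f(w)\bar w$). Gronwall with $\|\tilde v\|_{L^\infty}^{p-1}$ then closes.

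For $2\le p\le5$, your $\mathcal{E}$ is exactly the paper's $II(t)$ and the structure matches, but the error term is misidentified. It is $\partial_t\tilde v$ paired with a remainder bounded by $|\tilde v|^{p-2}|w|^2+|w|^p$. It is controlled by $\|\partial_t\tilde v\|_{L^6}$ times $\|w\|_{L^{12/5}}^2+\|w\|_{L^{6p/5}}^p\lesssim\|w\|_{H^1}^2$, using $p\le5$ and the a priori $H^1$ bound on $w$. Having $\partial_t\tilde v\in C^0_tH^1_x$ is what the $H^3$ hypothesis is for. The error is not controlled by $\|\nabla\tilde v\|_{L^\infty}$ and $\int|w|^{p+1}$, because $\int|\partial_t\tilde v||w|^p$ is not dominated by $\int|w|^{p+1}$ unless $\partial_t\tilde v\in L^\infty$.
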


\begin{remark}

Related global strong solution results for the energy-critical defocusing nonlinear Schr\"odinger equation have been established on hyperbolic spaces \cite{IPS}, asymptotically flat spaces \cite{Jao},
cylinders \cite{CGZ18,HP14,IPS1,Zhao}, tori \cite{IP2,Kwak,Yue}, and compact Riemannian manifolds and bounded domains \cite{PTW}.
For the energy-subcritical defocusing nonlinear Schr\"odinger equation on compact Riemannian manifolds, see \cite{BGT,BGT16}.

\end{remark}

As a complementary result, we will  discuss the noncompact setting.
Under the geometric and analytic assumptions stated in
Assumption \ref{assump:A} (see Section \ref{se5}), we have

\begin{theorem}[Decay of the solutions to the CGL equation on noncompact three-dimensional Riemannian manifolds]\label{th3.1v3}
Let $ \left(\mathcal M^3,g \right)$ be noncompact and obey Assumption~\ref{assump:A}, and let $u_0 \in H^1 \left(\mathcal{M}^3 \right) $. For $\frac73\le p\le5$,
equation \eqref{eq1.CGL} with initial datum
$u_0\in H_x^1 \left(\mathcal M^3 \right)$ admits a global solution $u \in C_t^0 H_x^1 \left( \mathbb{R}_+ \times \mathcal{M}^3 \right)$.
Furthermore,
\begin{align}\label{eq6.2v51}
\|u \|_{L_{t,x}^{\frac{5(p-1)}{2}} \left(\mathbb{R}_+ \times \mathcal{M}^3 \right)} < \infty,
\intertext{and}
\lim\limits_{t \to \infty } \|u(t) \|_{H_x^1 \left(\mathcal{M}^3 \right)} = 0.
\label{eq6.3v51}
\end{align}
\end{theorem}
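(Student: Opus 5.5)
The approach is to combine a priori bounds from the dissipation identities with a Strichartz/smoothing-type local theory. The positive-definite dissipation then gives spacetime integrability, and integrability together with energy monotonicity gives decay.

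\textbf{Global existence.} We use the mass and energy dissipation identities \eqref{eq_mass}--\eqref{eq_ener}. Mass is nonincreasing,
\[
\frac{\mathrm d}{\mathrm dt}\|u\|_{L^2}^2=-2\Re z\left(\|\nabla_g u\|_{L^2}^2+\|u\|_{L^{p+1}}^{p+1}\right),
\]
and the energy satisfies $\frac{\mathrm d}{\mathrm dt}E(u)=-\Re z\,\|\Delta_g u-f(u)\|_{L^2}^2$. Hence $\|u(t)\|_{H^1}$ is uniformly bounded. For $p<5$ this bound and the local theory (under Assumption~\ref{assump:A}, which I expect supplies the needed heat-type smoothing estimates for $e^{tz\Delta_g}$ and Sobolev embedding) give global existence by iteration. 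For $p=5$ I would argue as in the compact case: the parabolic smoothing together with the integrated dissipation $\int_0^\infty\|\Delta_g u-f(u)\|_{L^2}^2\,\mathrm dt<\infty$ rules out concentration of the critical norm, so the solution can be continued with uniform-in-time local existence intervals.

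\textbf{Spacetime bound \eqref{eq6.2v51}.} The mass identity gives
\[
\int_0^\infty\left(\|\nabla_g u\|_{L^2}^2+\|u\|_{L^{p+1}}^{p+1}\right)\mathrm dt\le \frac{\|u_0\|_{L^2}^2}{2\Re z}.
\]
Set $q=\frac{5(p-1)}2$. The range $\frac73\le p\le5$ is exactly the range where $2\le q\le 10$ and $q$ lies between $p+1$ and $10$ (for $p\ge\frac73$ we have $q\ge p+1$). I would interpolate $L^q_x$ between $L^{p+1}_x$ and $L^6_x$, and control $\|u\|_{L^6}\lesssim\|u\|_{H^1}$ by the Sobolev inequality, which should be part of Assumption~\ref{assump:A}. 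If the interpolation exponents do not match directly, I would instead use $L^\infty_tH^1\cap L^2_t\dot H^1$ together with $L^{p+1}_{t,x}$, plus the $L^2_tL^6_x$ bound from Sobolev, and Hölder in time. The endpoint $p=5$, $q=10$, is not directly reachable this way. There I expect to need the parabolic smoothing, namely $L^2_t\dot H^2$ bounds via the energy dissipation controlling $\Delta_g u$, to get $L^2_tL^\infty_x$ and then interpolate. \emph{This endpoint bookkeeping, and deciding exactly which estimates Assumption~\ref{assump:A} provides, is the main obstacle.}

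\textbf{Decay \eqref{eq6.3v51}.} Since $\int_0^\infty\left(\|\nabla_g u\|_2^2+\|u\|_{p+1}^{p+1}\right)\mathrm dt<\infty$, we have $\int_0^\infty E(u(t))\,\mathrm dt<\infty$. Because $E$ is nonincreasing, $tE(u(t))\le\int_0^tE\to$ bounded, so $E(u(t))\to0$, giving $\|\nabla_g u(t)\|_{L^2}\to0$. For the $L^2$ part there is no spectral gap on a noncompact manifold. I would use the finiteness of the $L^q_{t,x}$ norm to show that for large $T$ the nonlinear flow is a small perturbation of the linear flow $e^{(t-T)z\Delta_g}u(T)$. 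The $L^2$ norm of the linear flow decays to $0$ by assumption (no $L^2$ eigenvalue at $0$, i.e. the $L^2$ kernel of $\Delta_g$ is trivial on noncompact $\mathcal M^3$ under the volume assumptions). Combining the linear decay with the Duhamel term, which is small in $L^\infty_tL^2_x$ on $[T,\infty)$ by the finite spacetime bound, yields $\|u(t)\|_{L^2}\to0$. Together with the gradient decay this gives \eqref{eq6.3v51}.
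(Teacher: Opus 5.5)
Your global-existence step and your proof of \eqref{eq6.3v51} follow the paper. $E(u(t))$ is nonincreasing and integrable, so $\|\nabla_g u(t)\|_{L^2}\to0$. The limit $\|u(t)\|_{L^2}\to0$ then comes from Duhamel on $[T,\infty[$: the tail is bounded by $\|f(u)\|_{L^{10/7}_{t,x}([T,\infty[\times\mathcal M^3)}$ via Strichartz, and the linear part is killed by the spectral theorem and $\ker(-\Delta_g)\cap L^2=\{0\}$.

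You differ on \eqref{eq6.2v51}. The paper uses Proposition~\ref{limit_u_1} and Gagliardo--Nirenberg to make $\|u(t_0)\|_{\dot H^{s_c}}$ small at a late time, then invokes the small-data part of Theorem~\ref{thm:wellposedness}; the case $p=\frac73$ is immediate from \eqref{eq_ener}. You want a purely a priori bound instead. That is viable, but your bookkeeping is wrong in a way that matters:
\begin{itemize}
\item The $H^1$-level information ($L^\infty_tH^1_x$, $L^2_tL^6_x$ from Sobolev, $L^{p+1}_{t,x}$ with $p+1\le 6$) never gives spatial integrability beyond $L^6$. It therefore yields $L^q_{t,x}$ only for $q\le 6$, i.e.\ $p\le\frac{17}{5}$, and only if the $L^6$ factor is placed in $L^2_t$ rather than $L^\infty_t$. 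Second-order input is needed for every $\frac{17}{5}<p\le5$, not just at $p=5$.
\item \eqref{eq_mass} controls only $\Delta_g u-f(u)$ in $L^2_{t,x}$. Separating $\Delta_g u$ requires the defocusing sign of the cross term, i.e.\ Lemma~\ref{lem:monotonicity}.
\item $L^2_tL^\infty_x$ interpolated with $L^\infty_tL^6_x$ reaches at most $L^8_{t,x}$. At $p=5$ you would need $L^4_tL^\infty_x$, which follows from $\|u\|_{L^\infty}^2\lesssim\|\nabla_g u\|_{L^2}\|\Delta_g u\|_{L^2}$ and $\sup_t\|\nabla_g u\|_{L^2}<\infty$.
\end{itemize}

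The clean way to run your idea is as follows. Apply Lemma~\ref{lem:monotonicity} with $h=-z^{-1}\partial_t u$ and integrate in time against \eqref{eq_mass}; this gives
\[
\int_0^\infty\Bigl(\|\Delta_g u(t)\|_{L^2}^2+\|u(t)\|_{L^{2p}}^{2p}\Bigr)\,\mathrm dt\le\frac{E(u_0)}{\Re z}.
\]
Meanwhile \eqref{eq_ener} gives $\|u\|_{L^{p+1}_{t,x}}^{p+1}\le\frac{M(u_0)}{2\Re z}$. Since $p+1\le\frac{5(p-1)}2\le 2p$ holds exactly when $\frac73\le p\le5$, H\"older's inequality on $\mathbb R_+\times\mathcal M^3$ gives \eqref{eq6.2v51} immediately, with a bound depending only on $M(u_0)$ and $E(u_0)$.

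Compared with the paper, this route needs no small-data theory for this step, is quantitative, and explains the admissible range of $p$. Similarly, $u\in L^{10/3}_{t,x}$ follows at once from $L^\infty_tL^2_x\cap L^2_tL^6_x$, so the absorption argument in the proof of Proposition~\ref{limit_u_2} becomes unnecessary. The paper's route, for its part, uses only the decay of $\|\nabla_g u(t)\|_{L^2}$ plus the critical small-data theory, whose Strichartz estimates are needed anyway for the $L^2$ decay.

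Finally, at $p=5$ global existence is driven by this same elliptic estimate, through the $L^2_tH^2_x$ blow-up criterion of Lemma~\ref{lem:Blowupcriterion}. It does not come from uniform local existence times, which the critical theory does not provide in terms of the $H^1$ norm alone.
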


\begin{remark}

If $\mathcal{M}^3$ is $\mathbb{R}^3$, we could obtain \eqref{eq6.3v51} for the full range $1<p\leq 5$ by using a truncated virial type argument.
We believe that \eqref{eq6.3v51} holds for any solution to the equation \eqref{eq1.CGL} on a noncompact Riemannian manifold $ \mathcal M^3$ that obeys Assumption~\ref{assump:A}. As this objective is far from our main theme, we do not present the full proof in the current manuscript.
\end{remark}
Note that on noncompact three-dimensional Riemannian manifolds, \eqref{eq6.3v51} cannot be upgraded with a uniform decay rate in general. The essential reason is that the spectrum of the Laplace-Beltrami operator
$\sigma(- \Delta_g)$\footnote{Here $\sigma(- \Delta_g )$ denotes the spectrum of $- \Delta_g$ on $L^2 \left(\mathcal{M}^3 \right)$.} might contain $0$, which prevents the fast decay of the propagator of CGL.
Denote by $\lambda_{\min}  $ the bottom of $\sigma( - \Delta_g)$, that is,
\begin{align*}
\lambda_{\min}   := \inf \sigma( - \Delta_g).
\end{align*}
We illustrate this point by the following proposition:

\begin{proposition}\label{pr1.4v152}
Let $ \left(\mathcal M^3,g \right)$ be a noncompact Riemannian manifold satisfying Assumption~\ref{assump:A} with $\lambda_{\min}   = 0$.
Then for $1< p \le 5$, there does not exist a function $\Psi: [0,+\infty[ \rightarrow \mathbb{R}_+$ with $\lim\limits_{t\to +\infty}\Psi(t)=0$
such that for any solution $u(t) \in C_t^0 H_x^1 \left(\mathbb{R}_+ \times \mathcal{M}^3 \right)$ of equation \eqref{eq1.CGL}
with initial data $u_0 \in H_x^1 \left(\mathcal{M}^3 \right)$, the following estimate holds:
\[
\|u(t) \|_{H_x^1 \left(\mathcal{M}^3 \right)} \le C\left( \|u_0 \|_{H_x^1 \left(\mathcal{M}^3 \right)} \right) \Psi(t),
\quad \forall \, t \ge 0.
\]
\end{proposition}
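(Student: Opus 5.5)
We argue by contradiction. Suppose such a $\Psi$ exists, with $\Psi(t)\to0$, and fix a bound $R$ for the initial data, say $\|u_0\|_{H^1}\le 1$. Then for all solutions with data in the unit ball of $H^1$ we would have $\|u(t)\|_{H^1}\le C(1)\Psi(t)$, uniformly in the data. The plan is to construct, using $\lambda_{\min}=0$, a family of small data $u_0^{(n)}$ with $\|u_0^{(n)}\|_{H^1}\le 1$ whose solutions stay of size $\gtrsim \delta$ in $L^2$ (hence in $H^1$) up to times $T_n\to\infty$. Since $\Psi(T_n)\to0$, this is a contradiction.

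\textbf{Step 1: Weyl sequence.} Since $0=\inf\sigma(-\Delta_g)$, there exist $\varphi_n\in H^1$ with $\|\varphi_n\|_{L^2}=1$ and $\|\nabla_g\varphi_n\|_{L^2}\to0$; by density we may take $\varphi_n\in C_c^\infty$. We choose data $u_0^{(n)}=\varepsilon\varphi_n$ with $\varepsilon>0$ small and fixed, so that $\|u_0^{(n)}\|_{H^1}\le 2\varepsilon\le1$.

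\textbf{Step 2: $L^2$ lower bound.} The mass dissipation identity (the one used in Remark~\ref{re2v118}) reads
\[
\frac{\mathrm d}{\mathrm dt}\|u\|_{L^2}^2=-2\Re z\left(\|\nabla_g u\|_{L^2}^2+\|u\|_{L^{p+1}}^{p+1}\right).
\]
Energy is nonincreasing, so $\|\nabla_g u(t)\|_{L^2}^2\le 2E(u_0^{(n)})$ and
\[
\|u(t)\|_{L^{p+1}}^{p+1}\le (p+1)E(u_0^{(n)}),\qquad
E(u_0^{(n)})=\tfrac{\varepsilon^2}{2}\|\nabla\varphi_n\|_{L^2}^2+\tfrac{\varepsilon^{p+1}}{p+1}\|\varphi_n\|_{L^{p+1}}^{p+1}.
\]
Integrating in time gives $\|u(t)\|_{L^2}^2\ge \varepsilon^2-Ct\,E(u_0^{(n)})$.

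\textbf{Step 3: the main obstacle.} We must make $E(u_0^{(n)})\to0$, which requires controlling the $L^{p+1}$ term as well. The gradient part is handled by Step 1. For the nonlinear part we use that $\varphi_n$ can be chosen to spread out: on a manifold with $\lambda_{\min}=0$ satisfying Assumption~\ref{assump:A}, the Sobolev/Gagliardo–Nirenberg inequality (bounded geometry) gives
\[
\|\varphi\|_{L^{p+1}}^{p+1}\lesssim \|\varphi\|_{L^2}^{p+1-\frac{3(p-1)}{2}}\|\varphi\|_{H^1}^{\frac{3(p-1)}{2}}.
\]
This alone is not small, so we need a better construction. Replace $\varphi_n$ by $P_{\le\eta}\varphi_n$, or alternatively use $e^{s\Delta_g}\varphi_n$. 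Heat flow with $\lambda_{\min}=0$ preserves $L^2$ mass asymptotically near the bottom of the spectrum while gaining $L^\infty$ smallness, via $\|e^{s\Delta}\|_{L^2\to L^\infty}\lesssim s^{-3/4}$ for $s\le1$; this is insufficient for large $s$.

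Simplest fix: use scaling in amplitude only. Take $u_0=\varepsilon\varphi_n$ with $\varepsilon$ fixed; the nonlinear energy is then $\lesssim\varepsilon^{p+1}\|\varphi_n\|_{L^{p+1}}^{p+1}$. Since $\|\varphi_n\|_{H^1}\le 2$, this is bounded by $C\varepsilon^{p+1}$, which is not small relative to $\varepsilon^2$ over long times. Hence we genuinely need $\|\varphi_n\|_{L^{p+1}}\to0$. We obtain this by using the spectral projection $\mathbf 1_{[0,\eta_n]}(-\Delta_g)$ together with Bernstein-type bounds $\|\mathbf 1_{[0,\eta]}f\|_{L^\infty}\lesssim\|f\|_{L^2}$, interpolated to get decay through spreading. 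Concretely, we would prove that for every $\eta>0$ there is $\varphi$ with unit $L^2$ norm, spectrally supported in $[0,\eta]$, and $\|\varphi\|_{L^{p+1}}\le\eta$. The argument goes by contradiction with a concentration/compactness argument based on local compactness: if every such $\varphi$ had large $L^{p+1}$ norm, it would concentrate mass in unit balls, and $\mathbf 1_{[0,\eta]}$ functions with mass in a ball would contradict $\lambda_{\min}=0$ fails to be attained. This is the delicate point, and we expect the argument to use Assumption~\ref{assump:A}.

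With this construction, $E(u_0^{(n)})\le \varepsilon^2\eta_n$, so $\|u(t)\|_{L^2}^2\ge\varepsilon^2/2$ for all $t\le T_n\sim\eta_n^{-1}\to\infty$. This contradicts $\|u(T_n)\|_{H^1}\le C\Psi(T_n)\to0$.
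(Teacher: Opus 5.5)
Your Step 2 lower bound $\|u(t)\|_{L^2}^2\ge\|u_0\|_{L^2}^2-C\,t\,E(u_0)$, obtained from \eqref{eq_ener} and the monotonicity of $E$ from \eqref{eq_mass}, is correct and simpler than the paper's route. The paper bounds $\|u_n(t_n)\|_{L^2}$ from below via Duhamel's formula. That forces it to estimate $\int_0^{t_n}\|f(u_n)\|_{L^2}\,\mathrm d\tau$ separately for $1<p\le\frac32$, $\frac32<p<3$ and $3\le p\le5$; the last case uses $H^2$ smoothing and Lemma~\ref{lem:monotonicity}. It also needs a spectral bound on $e^{t_nz\Delta_g}u_n(0)-u_n(0)$. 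Your inequality handles every $1<p\le5$ at once.

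However, the proof has a gap exactly where you flag it in Step 3: you never show that $E(u_0^{(n)})\to0$. The spectral-projection/concentration-compactness sketch is not a proof. The Bernstein bound $\|\mathbf 1_{[0,\eta]}(-\Delta_g)f\|_{L^\infty}\lesssim\|f\|_{L^2}$ is uniform in $\eta$, so interpolating it with the $L^2$ norm gives a bounded $L^{p+1}$ norm, not a small one. The compactness contradiction you describe is never actually carried out. As written, your conclusion rests on an unproved claim.

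The missing idea is that no new construction is needed. Assumption~\ref{assump:A}(4) is the homogeneous Sobolev inequality $\|\phi\|_{L^6}\le C\|\nabla_g\phi\|_{L^2}$ on $C_c^\infty$. Your Weyl functions are in $C_c^\infty$, so they already satisfy $\|\varphi_n\|_{L^6}\lesssim h_n:=\|\nabla_g\varphi_n\|_{L^2}\to0$. Now apply the interpolation inequality you wrote, but with $\|\nabla_g\varphi\|_{L^2}$ in place of the inhomogeneous $\|\varphi\|_{H^1}$; it interpolates between $L^2$ and $L^6$, which is legitimate since $2<p+1\le6$. This gives $\|\varphi_n\|_{L^{p+1}}^{p+1}\le\|\varphi_n\|_{L^2}^{\frac{5-p}{2}}\|\varphi_n\|_{L^6}^{\frac{3(p-1)}{2}}\lesssim h_n^{\frac{3(p-1)}{2}}$. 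Hence $E(u_0^{(n)})\lesssim h_n^2+h_n^{\frac{3(p-1)}{2}}\to0$, which is exactly \eqref{eq5.17v275} in the paper. Choosing, say, $T_n=E(u_0^{(n)})^{-1/2}\to\infty$ then closes your Step 2 argument.

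One smaller point: the constant in the statement is only some function $C(\|u_0\|_{H^1})$, not assumed monotone. So you should normalize the data to have exactly the same $H^1$ norm, for instance $u_0^{(n)}=\varepsilon\varphi_n/\|\varphi_n\|_{H^1}$. The paper does this with $r\phi_n/\sqrt{1+h_n^2}$. Merely requiring $\|u_0^{(n)}\|_{H^1}\le2\varepsilon$ is not enough.
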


In contrast to the case $\lambda_{\min} =0$, for noncompact Riemannian manifolds with $\lambda_{\min } > 0$,
we expect an exponential decay. In fact, we have

\begin{proposition}\label{pr1.5v275}
Let $ \left(\mathcal M^3,g \right)$ be a noncompact Riemannian manifold satisfying Assumption~\ref{assump:A} with $\lambda_{\min} > 0$.
Then for $1< p \le 5$, the solution $u(t) \in C_t^0 H_x^1 \left(\mathbb{R}_+ \times \mathcal{M}^3 \right)$ of equation \eqref{eq1.CGL}
with initial data $u_0 \in H_x^1 \left(\mathcal{M}^3 \right)$ satisfies
\[
\|u(t)\|_{H^1 \left(\mathcal M^3 \right)} \le \left(2 \left(1+\lambda_{\min}^{-1} \right)E(u_0)\right)^{ \frac12}
 e^{- \Re z \lambda_{\min} t}, \ \forall\, t \ge 0.
\]
\end{proposition}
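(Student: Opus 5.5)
The plan is to run a purely energy-based argument on the two dissipation identities. The first is mass dissipation,
\[
\frac{\dd}{\dd t}\|u(t)\|_{L^2}^2=-2\Re z\Big(\|\nabla_g u\|_{L^2}^2+\|u\|_{L^{p+1}}^{p+1}\Big),
\]
obtained by pairing the equation with $\bar u$, integrating by parts and taking the real part; here $\Re z>0$ enters. The second is energy dissipation, $\frac{\dd}{\dd t}E(u(t))=-\Re z\,\|\Delta_g u-f(u)\|_{L^2}^2\le 0$, obtained by pairing with $\overline{\partial_t u}$ after rewriting the equation as $\partial_t u=z(\Delta_g u-f(u))$. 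Both hold for the global $H^1$ solution supplied by the well-posedness theory (Theorem~\ref{th1.2v21}'s CGL input and Remark~\ref{re4v177}), first for smooth data and then by density. Since the bound is stated with constant $1$ in front of $E(u_0)$, I will aim for the sharp differential inequality rather than an averaging argument.

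\textbf{Step 1: spectral lower bound.} By the definition of $\lambda_{\min}=\inf\sigma(-\Delta_g)$ and the spectral theorem (equivalently, the Rayleigh quotient characterization on $H^1$, which is available because $-\Delta_g$ is essentially self-adjoint on a complete manifold), we have $\|\nabla_g w\|_{L^2}^2\ge\lambda_{\min}\|w\|_{L^2}^2$ for every $w\in H^1$. This is the only point where the hypothesis $\lambda_{\min}>0$ is used.

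\textbf{Step 2: exponential decay of the energy.} Combining Step 1 with the energy dissipation identity should give a Gronwall inequality for $E$ with rate $2\Re z\lambda_{\min}$. For this I would use the energy functional and write $\Delta_g u-f(u)=-E'(u)$. The aim is then the bound $\|E'(u)\|_{L^2}^2\ge 2\lambda_{\min}E(u)$. Expanding $\langle E'(u),u\rangle=\|\nabla_g u\|^2+\|u\|_{p+1}^{p+1}$ and applying Cauchy–Schwarz gives
\[
\|E'(u)\|\,\|u\|\ge \|\nabla_g u\|^2+\|u\|^{p+1}_{p+1}\ge 2E(u).
\]
The catch is that this produces a quotient $E/\|u\|^2$, not a clean rate. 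I therefore expect the sharp form to need a more careful route. The fallback is to handle mass and energy separately.

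By mass dissipation and Step 1, $\frac{\dd}{\dd t}\|u\|^2\le-2\Re z\lambda_{\min}\|u\|^2$, so $\|u(t)\|_{L^2}^2\le e^{-2\Re z\lambda_{\min}t}\|u_0\|^2$. Step 1 also gives $\|u_0\|^2\le\lambda_{\min}^{-1}\|\nabla_g u_0\|^2\le 2\lambda_{\min}^{-1}E(u_0)$. For the gradient, I would consider the combination $F=E+\kappa\|u\|^2$, or else use the identity
\[
\frac{\dd}{\dd t}\|\nabla_g u\|^2=-2\Re z\|\Delta_g u\|^2-2\Re\big(z\langle \nabla f(u),\nabla u\rangle\big).
\]
The main obstacle is the nonlinear term when $\Im z\neq 0$, since its sign is not controlled. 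Following the sketch in Remark~\ref{re2v118}, where $E$ is nonincreasing and $\int_{t-1}^t E\le C\|u(t-1)\|^2$, one obtains $E(u(t))\lesssim e^{-2\Re z\lambda_{\min}t}E(u_0)$, up to constants.

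\textbf{Step 3: conclusion.} Adding the two bounds gives
\[
\|u\|_{H^1}^2\le\|u\|_{L^2}^2+2E(u)\le 2\big(1+\lambda_{\min}^{-1}\big)E(u_0)\,e^{-2\Re z\lambda_{\min}t},
\]
and taking square roots yields the claim. The hardest point is Step 2, namely getting the energy to decay at exactly the rate $2\Re z\lambda_{\min}$ with constant one. What I would try first is to show that the ratio $E(u)/\|u\|^2$ is nonincreasing along the flow, in the spirit of a Dirichlet-quotient monotonicity argument.
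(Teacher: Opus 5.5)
There is a genuine gap in Step 2, and it is the step that carries the statement. Your Step 3 uses $E(u(t))\le E(u_0)\,e^{-2\Re z\,\lambda_{\min}t}$ with constant exactly one, but the proposal never proves it. The route you actually carry out is the averaging argument of Remark~\ref{re2v118}. For $t\ge h$ it gives
\[
E(u(t))\le \frac1h\int_{t-h}^tE(u(s))\,\dd s\le \frac{\|u(t-h)\|_{L^2}^2}{4\Re z\,h}\le \frac{e^{y}}{y}\,E(u_0)\,e^{-2\Re z\,\lambda_{\min}t},\qquad y=2\Re z\,\lambda_{\min}h .
\]
Since $\inf_{y>0}e^{y}/y=e$, this chain always loses a factor of at least $e$, and it says nothing for $t<h$. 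So what you have proves decay at the correct rate, but not the stated inequality. The monotonicity of $E/\|u\|_{L^2}^2$ is only announced, and the gradient-identity route runs into the unsigned $\Im z$ term, as you note.

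The missing idea is one line, and you already had the ingredients. Write $G:=-\Delta_g u+f(u)$ (your $E'(u)$) and $A:=\|\nabla_g u\|_{L^2}^2+\|u\|_{L^{p+1}}^{p+1}$. Then $\frac{\dd}{\dd t}E=-\Re z\,\|G\|_{L^2}^2$ and $\|G\|_{L^2}\|u\|_{L^2}\ge A\ge 2E(u)$. Instead of stopping at the quotient, apply Step 1 to the factor $\|u\|_{L^2}$:
\[
\|u\|_{L^2}\le\lambda_{\min}^{-1/2}\|\nabla_g u\|_{L^2}\le\lambda_{\min}^{-1/2}A^{1/2}.
\]
Hence $\|G\|_{L^2}^2\ge\lambda_{\min}A\ge 2\lambda_{\min}E(u)$, and so $\frac{\dd}{\dd t}E\le-2\Re z\,\lambda_{\min}E$. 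This is exactly the paper's proof, written there as $\|\partial_tu\|_{L^2}^2\ge|z|^2\lambda_{\min}A$ via $\partial_t u=-zG$. The paper then concludes with $\|u\|_{H^1}^2\le(1+\lambda_{\min}^{-1})\|\nabla_gu\|_{L^2}^2\le2(1+\lambda_{\min}^{-1})E(u(t))$. Your Step 3, which adds the separate mass decay, works equally well once the sharp energy decay is in hand.

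Your ratio idea would also close the gap. With $M=\|u\|_{L^2}^2$ you have $\frac{\dd}{\dd t}M=-2\Re z\,A$, so
\[
\frac{\dd}{\dd t}\frac{E}{M}=-\Re z\,\frac{\|G\|_{L^2}^2M-2EA}{M^2}\le0,
\]
because $\|G\|_{L^2}^2M\ge A^2\ge2EA$. It follows that $E(u(t))\le \frac{E(u_0)}{M(u_0)}M(t)\le E(u_0)e^{-2\Re z\,\lambda_{\min}t}$. If $M$ vanishes at some time, then $u\equiv0$ afterwards and the bound is trivial.

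A minor point: global well-posedness of the CGL flow comes from Proposition~\ref{prop:global} together with Remark~\ref{re4v177}. It does not come from Theorem~\ref{th1.2v21}, which concerns NLS.
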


\subsection{Methodology}

Our analysis relies on a rigorous energy method,
which differs from the approaches adopted in existing works on the semilinear heat equation (see \cite{Ben,GGH,Ha01,Ha06,HJ,HJK,QS,RZ}).
Thanks to the maximum principle, the $L_x^\infty$-norm of solutions to the NLH equation can be controlled directly.
This principle effectively endows the NLH equation with a subcritical structure for arbitrary power-type nonlinearities and may itself be interpreted in terms of an associated Lyapunov functional.
By contrast, no counterpart of the maximum principle is available for the CGL equation.
A similar situation arises in several geometric flows, such as the harmonic map heat flow and the Yang--Mills heat flow, where the analysis relies crucially on energy dissipation rather than on a scalar maximum principle; see, for instance, \cite{Rade,Str}.
Accordingly, to establish global well-posedness together with long time asymptotic behavior,
the analysis must be carried out by the energy method within $L_x^2$-based Sobolev spaces. Since the associated energy bounds the $H_x^1$
norm of solutions, we exploit the defocusing character and inherent energy dissipation of the equation to prove global well-posedness across the energy-subcritical and critical regimes $1<p\leq 5$.
Our global well-posedness result for the CGL equation applies to compact three-dimensional Riemannian manifolds.
An inviscid limit argument also yields global weak solutions of the defocusing NLS,
with the restricted weak-strong uniqueness statement of Theorem~\ref{th1.2v21}.

Concerning the $L_x^\infty$ decay of solutions to CGL, we also use the energy method. For the CGL equation, the basic Lyapunov information used here comes from the mass and energy dissipation identities \eqref{eq_ener} and \eqref{eq_mass} in Proposition~\ref{pr2.12v70}, which provide decay estimates in the $L_x^2$ and $H_x^1$ norms. In the three-dimensional case, these estimates are insufficient to control the $L_x^\infty$ norm of solutions. To overcome this difficulty,
we establish $L_x^\infty$ decay estimates by deriving decay bounds for the $H_x^2$ norm. The main obstacle lies in the absence of any $H_x^2$-level Lyapunov functional for the CGL, in sharp contrast to the semilinear heat equation, for which $\|\partial_t u\|_{L_x^2}^2$ serves as a valid $H_x^2$-type Lyapunov functional. Drawing inspiration from the modified energy methods proposed in \cite{BPTV,PTV,PTV3,PTV2,PTV1} for controlling Sobolev norm growth in nonlinear dispersive equations,
we construct an $H_x^2$-level modified energy that satisfies an almost monotonicity property.
Combined with the smoothing effect inherent to the CGL equation (see Theorem \ref{th2.6v107}), which ensures that solutions emanating from $H_x^1$ initial data belong to $H_x^2$ for any positive time, this structure allows us to simultaneously control the $H_x^2$ and $L_x^\infty$ norms via a bootstrap argument. Notably, the proposed modified energy method fully exploits the energy-subcritical and energy-critical nature of the equation, i.e., $1 < p \le 5$ in three dimensions.

To establish the $H_x^2$ decay, we employ the modified energy method (see Subsection~\ref{H2decay}, where the definition of the modified energy functional and the complete derivation of the $H_x^2$ decay estimates are given). The main novelty of this approach lies in the construction of a modified energy functional, which is tailored to cancel the most dangerous nonlinear terms and thereby close the higher-order estimates without loss of derivatives. The argument below yields decay estimates for second-order spatial derivatives by combining the corrected modified energy identity with a quantitative time-derivative estimate. Its extension to further derivatives would also require sufficient differentiability of $f$, together with the corresponding Sobolev and Gagliardo-Nirenberg inequalities.
\vspace{0.3cm}

We further characterize the asymptotic behavior of solutions by a fast/slow dichotomy. Such dynamical behavior is similar to that of the semilinear heat equations \cite{Ben,GGH,Ha01,Ha06,HJ,HJK,QS}. For fast solutions, the dynamics are dominated by the zero-mean component, which decays exponentially fast, whereas the decay rate for the slow solutions is governed by the nonlinear dynamics of zero modes
\[
\frac{\mathrm{d}m(t)}{\mathrm{d}t} = -z f(m).
\]
The crucial feature is that all nonzero modes exhibit exponential decay.
To substantiate this claim, we begin with the energy identity:
\begin{align}\label{eq1.7v132}
\begin{aligned}
& \frac{1}{2}\frac{\mathrm{d}}{\mathrm{d}t} \left\|u(t)-\dashint_{\mathcal{M}^3} u(t)\,\mathrm{d}\mu \right\|_{L_x^2}^2
+ \Re z \|\nabla_g u\|_{L_x^2}^2 \\
= & -\Re z \cdot \Re \int_{\mathcal{M}^3} \left( f(u) - f(m) \right) \overline{(u-m)} \,\mathrm{d}\mu
+ \Im z \cdot \Im \int_{\mathcal{M}^3} \left( f(u) - f(m) \right) \overline{(u-m)} \,\mathrm{d}\mu.
\end{aligned}
\end{align}
For the energy-subcritical and energy-critical semilinear heat equation,
 the second term on the right hand side of \eqref{eq1.7v132} vanishes; the algebraic property
\[
\Re \left[ \left( f(\omega_1) - f(\omega_2) \right) \overline{(\omega_1 - \omega_2)} \right] \geq 0,
\]
and Poincar\'e's inequality yield
\[
\left\| u(t) - \dashint_{\mathcal{M}^3} u(t) \,\mathrm{d}\mu \right\|_{L_x^2(\mathcal{M}^3)}
\leq C e^{-\lambda t}.
\]
For the CGL equation, however, such monotonicity fails, forcing us to treat the nonlinear term perturbatively.
The decay properties of solutions constitute the key ingredient in closing the Gr\"onwall-type inequality and lie at the core of our analysis.

A few remarks on the dimensional scope are in order. In one and two dimensions, the Sobolev inequalities are more favorable,
whereas in dimensions $d\ge4$ both the critical exponent and the interpolation estimates change.
The higher-order estimates are implemented here on a compact three-dimensional Riemannian manifold.
 Extension to other dimensions is straightforward by verifying the associated local well-posedness theory
 and nonlinear estimates developed for the three-dimensional case.

Due to different mechanisms for the long time dynamics on noncompact Riemannian manifolds,
the proof of Theorem \ref{th3.1v3} relies on {different} techniques on noncompact Riemannian manifolds.
Our analysis combines the heat kernel estimate on noncompact Riemannian manifolds and nonlinear energy methods to investigate the long time behavior of the complex Ginzburg-Landau equation on noncompact three-dimensional Riemannian manifolds.
Under suitable geometric assumptions,
we first derive the dispersive estimate for the linear Ginzburg-Landau propagator $e^{t z\Delta_g}$
 by the heat kernel estimates on noncompact Riemannian manifolds.
The complex heat-kernel estimate \eqref{eq1.4v3} is then combined with the abstract Keel-Tao framework to obtain global Strichartz estimates for the CGL on noncompact Riemannian manifolds, which form the basis for the well-posedness theory and the subsequent spacetime estimate.
To study the asymptotic behavior, we exploit the mass and energy dissipation identities \eqref{eq_ener} and \eqref{eq_mass} from Proposition~\ref{pr2.12v70} to prove that the energy is integrable in time and that
the gradient tends to zero. For $p> \frac73$, interpolation then allows the
solution to enter the critical small-data regime at a sufficiently large
time; the endpoint $p= \frac73$ is handled directly by the mass dissipation identity \eqref{eq_ener}.
Together with the small-data spacetime estimate and local regularity, this yields the global spacetime bound.
The convergence of the $L^2$ norm is established through the control of the nonlinear tail by the Strichartz estimate and the exclusion of nontrivial $L^2$ zero modes.
Finally,
we demonstrate the absence of a uniform decay rate for $1 < p \le 5$ when $\lambda_{\min } = 0$ and exponential decay of the $H^1$-norm of the solution to CGL when $\lambda_{\min} > 0$, revealing the essential influence of the noncompact geometry and the bottom of the spectrum of $- \Delta_g$ on the long time dynamics of the solution to the equation.

\textbf{Layout.}
The proof of Theorem \ref{th2v79} comprises three distinct components, corresponding to the three assertions in the theorem.

Section \ref{se2} establishes global well-posedness of the CGL equation \eqref{eq1.CGL} on the compact three-dimensional Riemannian manifold $\mathcal{M}^3$ without boundary for $1<p\leq 5$, which provides the necessary background for the subsequent asymptotic analysis.

Section \ref{se3} is devoted to the derivation of decay estimates for the mass and energy functionals (Subsection \ref{decay_ME}) as well as for the $H_x^2$-norm (Subsection \ref{H2decay}). These bounds serve as the key ingredients for the long time analysis carried out in Section \ref{se4}.

Section \ref{se4} analyzes the long time asymptotic behavior of the solution, establishing:
\begin{itemize}
  \item the polynomial decay of the spatial mean $m(t)$ (part (i));
  \item the exponential decay of the fluctuation $v(t)$ in $H_x^1$ (part (ii));
  \item the dichotomy between fast exponential decay and slow algebraic decay for the mean component, with the refined asymptotic expansion in the slow case (part (iii)).
\end{itemize}

Furthermore, building upon the global well-posedness of the CGL equation and the inviscid limit method,
we derive global weak solutions of the defocusing NLS and establish weak-strong uniqueness in the range $1<p\le5$ in Theorem~\ref{th1.2v21}.
This constitutes the main assertion of Theorem \ref{th1.2v21}.

Finally, Section \ref{se5} is dedicated to decay estimates for the CGL equation on noncompact three-dimensional Riemannian manifolds,
corresponding to the result stated in Theorem \ref{th3.1v3}.
 We also demonstrate the absence of a uniform decay rate for $1 < p \le 5$ when $\lambda_{\min } = 0$ and exponential decay of the $H^1$-norm of the solution to CGL when $\lambda_{\min} > 0$.

\textbf{Notations.}
Let $\mathbb{N}$ denote the set of natural numbers.
We use $X \lesssim Y$ when $X \le CY$ for some constant $C> 0$ and $X \sim Y$ when $X \lesssim Y \lesssim X$.
We use $X \lesssim_\alpha Y$ to denote $X\le C(\alpha) Y$, where $C(\alpha)$ is a positive constant depending on $\alpha$.
We write $C=C( \alpha, \beta,\cdots)$ to mean that $C$ is a constant depending on $\alpha, \beta,\cdots$ and $C$ is allowed to change from line to line.

We define the space $L^2 \left(\mathcal{M}^3  \right)$ of complex-valued functions on $\mathcal{M}^3 $ with the following norm and inner product:
\begin{align*}
L^2 \left( \mathcal{M}^3  \right) & = \left\{ f : \mathcal{M}^3  \to \mathbb{C} \,\big|\,
\int_{\mathcal{M}^3 } |f(x)|^2 \,\mathrm{d}\mu < \infty  \right\},
\intertext{ and }
(f, g)_{L^2}   & = \int_{\mathcal{M}^3 } f(x) \overline{g(x)} \,\mathrm{d}\mu.
\end{align*}
For an integer $k$ and a smooth function $u: \mathcal{M}^3   \to \mathbb{C}$, we denote by $\nabla^k_g u $ the $k$-th covariant derivative of $u$, and $\left|\nabla^k_g u  \right|$ the norm of $\nabla_{g}^k u$ defined in a local chart by
\begin{align*}
\left|\nabla^k_g u \right|^2 = g^{i_1j_1} \cdots g^{i_k j_k}  \left( \nabla^k u \right)_{i_1 \cdots i_k }  \left( \nabla^k \bar{u}  \right)_{j_1 \cdots j_k}.
\end{align*}
For two vector fields $X, Y$, we have the inner product:
\begin{align*}
    \langle X, Y  \rangle_g : = g_{ij}(x) X^i Y^j.
\end{align*}
For any space-time slab $I \times \mathcal{M}^3 $, we use $L_t^q L_x^r \left(I \times \mathcal{M}^3  \right)$ to denote the space of functions $u: I \times \mathcal{M}^3  \to \mathbb{C}$,
whose norm satisfies
	$\|u\|_{L_t^q L_x^r  \left(I \times \mathcal{M}^3   \right)} : = \left( \int_I \|u(t) \|_{L_x^r \left(\mathcal{M}^3  \right) }^q \,\mathrm{d}t \right)^\frac1q < \infty$,
	with the usual modification when $q$ or $r$ is equal to infinity. When $q = r$, we abbreviate $L_t^q L_x^q$ as $L_{t,x}^q$. Similarly, we can define $L_t^q W_x^{s, r} \left( I \times \mathcal{M}^3  \right)$.

\section{Well-posedness theory}\label{se2}

In this section, we develop the well-posedness theory for the complex Ginzburg-Landau equation on three-dimensional Riemannian manifolds.
We first establish local and global well-posedness in the energy space of CGL on three-dimensional Riemannian manifolds. Then, we construct global weak solutions to the corresponding nonlinear defocusing Schr\"odinger equation (NLS) through an inviscid limit approximation.
We also establish weak-strong uniqueness for NLS solutions.

As in \cite{Tre}, we now define the inhomogeneous Sobolev spaces on $\mathcal{M}^3 $.
\begin{definition}[Sobolev space]

 \begin{itemize}
   \item
   For a positive integer $k$ and $p \in ]1, \infty [$, the Sobolev space $W^{k, p} \left(\mathcal{M}^3  \right)$ is defined to be the completion of $\left\{ \phi  \in C^\infty \left(\mathcal{M}^3  \right) \,|\, \| \phi \|_{W^{k, p}} < \infty \right\}$ under the norm
       \[
       \| \phi \|_{W^{k, p}} := \|  \phi \|_{L^p}+ \sum_{j=1}^k  \left\|\nabla^j  \phi  \right\|_{L^p}.
       \]
   \item
For a fractional $s >0$ and $p \in ]1, \infty[$,
we define the Sobolev space $W^{s, p} \left( \mathcal{M}^3 \right)$ to be the collection of functions $ \phi  \in L^p \left(\mathcal{M}^3 \right)$ such that $( 1- \Delta_{g} )^\frac{s}2  \phi  \in L^p \left(\mathcal{M}^3 \right)$ under the norm
\begin{align*}
\| \phi  \|_{W^{s, p}  \left( \mathcal{M}^3 \right)} =  \left\| ( 1- \Delta_{g} )^\frac{s}2  \phi   \right\|_{L^p \left( \mathcal{M}^3 \right)}  .
\end{align*}
For $s >0$, let $H^s = W^{s, 2}$.
 \end{itemize}

\end{definition}

We also recall the Sobolev embedding theorem (see, for instance, \cite{Hebey0,Hebey,Tre}):

\begin{theorem}[Sobolev embedding theorem] \label{th2.6v116}

Let $ \left( \mathcal{M}^3 ,g \right)$ satisfy the assertion \eqref{eq1.5v177}.
 Then for any $1  < p \le q < \infty$ and $s \ge 0$, we have
\begin{align*}
W^{s,p} \left( \mathcal{M}^3 \right) \subseteq L^q \left( \mathcal{M}^3 \right),
 \end{align*}
where $\frac1p  =  \frac1q + \frac{s}3$.

\end{theorem}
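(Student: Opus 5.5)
The plan is to reduce the embedding to $L^p\to L^q$ bounds for the Bessel potential $(1-\Delta_g)^{-s/2}$, and to get those from heat kernel estimates. Writing $\phi=(1-\Delta_g)^{-s/2}h$ with $h\in L^p$ and $\|h\|_{L^p}=\|\phi\|_{W^{s,p}}$, it suffices to prove
\begin{align*}
\left\|(1-\Delta_g)^{-s/2}h\right\|_{L^q}\lesssim \|h\|_{L^p},\qquad \frac1p=\frac1q+\frac s3 .
\end{align*}
Case $s=0$ is trivial, since then $q=p$. For integer $s=k$ with the covariant-derivative norm, I would either invoke the equivalence of the two Sobolev norms under \eqref{eq1.5v177}, or argue directly for $k=1$ in harmonic coordinates of uniform size and then iterate. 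I therefore concentrate on the potential formulation. I would use the subordination formula
\begin{align*}
(1-\Delta_g)^{-s/2}h=\frac{1}{\Gamma(s/2)}\int_0^\infty t^{\frac s2-1}e^{-t}\,e^{t\Delta_g}h\,\mathrm{d}t ,
\end{align*}
and split the time integral into $t\in(0,1]$ and $t\ge1$.

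The geometric input is the heat kernel bound. The first step is a uniform volume estimate: from the Ricci lower bound, Bishop--Gromov comparison and $\inf_p\mathrm{Vol}_g B(p,1)>0$, I expect $\mathrm{Vol}_g B(x,r)\gtrsim r^3$ for $0<r\le 1$, uniformly in $x$. Local volume doubling at scales $\le 1$ also holds. Next I would apply the Li--Yau Gaussian upper bound to get
\begin{align*}
p_t(x,y)\lesssim \mathrm{Vol}_g B(x,\sqrt t)^{-1/2}\,\mathrm{Vol}_g B(y,\sqrt t)^{-1/2}\,e^{-d(x,y)^2/(Ct)}\lesssim t^{-3/2}e^{-d(x,y)^2/(Ct)},\qquad 0<t\le 1 .
\end{align*}
By the semigroup property and stochastic completeness (a consequence of the Ricci lower bound), this also gives $\sup_{x,y}p_t(x,y)\lesssim 1$ for $t\ge 1$. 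I expect this step to be the main obstacle, because all the uniformity in $x$ has to come out of the two hypotheses in \eqref{eq1.5v177}. In particular, I need Li--Yau's constants to depend only on the Ricci lower bound. The noncollapsing condition must also be transported from radius $1$ down to all small radii via relative volume comparison.

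For the large-time piece, the bound $\|e^{t\Delta_g}\|_{L^1\to L^\infty}\lesssim 1$ for $t\ge 1$ combined with $L^p$-contractivity gives $\|e^{t\Delta_g}\|_{L^p\to L^q}\lesssim 1$ for $t\ge 1$, by interpolation. Then $\int_1^\infty t^{s/2-1}e^{-t}\,\mathrm{d}t<\infty$ bounds this piece. For the short-time piece I would run Hedberg's argument. The Gaussian bound together with local doubling gives the pointwise estimate $|e^{t\Delta_g}h|\lesssim M_{\mathrm{loc}}h$ for $t\le1$, where $M_{\mathrm{loc}}$ is the maximal function over balls of radius $\le 1$. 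This maximal function is bounded on $L^p$ for $p>1$ by the Vitali covering argument at small scales. Hölder and the volume bound give $|e^{t\Delta_g}h|\lesssim t^{-3/(2p)}\|h\|_{L^p}$. Splitting the integral at $\delta\in(0,1]$ yields
\begin{align*}
\left|\int_0^1 t^{\frac s2-1}e^{t\Delta_g}h\,\mathrm{d}t\right|\lesssim \delta^{s/2}M_{\mathrm{loc}}h+\delta^{\frac s2-\frac3{2p}}\|h\|_{L^p},
\end{align*}
which requires $s<3/p$; this holds because $q<\infty$. Choosing $\delta=\min\{1,(\|h\|_{L^p}/M_{\mathrm{loc}}h)^{2p/3}\}$ gives the pointwise bound $\lesssim (M_{\mathrm{loc}}h)^{p/q}\|h\|_{L^p}^{1-p/q}+\|h\|_{L^p}^{1-p/q}(M_{\mathrm{loc}}h)^{p/q}$. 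The second term arises when $\delta=1$ and is handled identically, using $M_{\mathrm{loc}}h\le\|h\|_{L^p}$ in that regime. Taking $L^q$ norms and using the $L^p$-boundedness of $M_{\mathrm{loc}}$ closes the estimate.
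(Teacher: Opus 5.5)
Your architecture (subordination, Li--Yau plus Bishop--Gromov giving $p_t(x,y)\lesssim t^{-3/2}e^{-d_g(x,y)^2/(Ct)}$ for $0<t\le1$, interpolation for $t\ge1$, Hedberg for $t\le1$) is sound. It is a genuinely different route from the paper, which gives no proof and only cites Hebey and Triebel.

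One step, however, fails as written: the pointwise bound $|e^{t\Delta_g}h|\lesssim M_{\mathrm{loc}}h$ for $t\le1$. Summing over dyadic annuli only dominates the part of $\int p_t(x,y)|h(y)|\,\mathrm{d}\mu(y)$ coming from $d_g(x,y)<1$. The region $d_g(x,y)\ge1$ is not controlled by balls of radius $\le1$ around $x$. For instance, if $h\ge0$ is supported in a unit ball at distance $10$ from $x$, then $M_{\mathrm{loc}}h(x)=0$ while $e^{t\Delta_g}h(x)>0$.

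The fix is short: split the kernel at $d_g(x,y)=1$.
\begin{itemize}
\item The far operator $T_t^{\mathrm{far}}$ has a positive kernel dominated by $p_t$. Hence $|T_t^{\mathrm{far}}h|\le e^{t\Delta_g}|h|$, and $T_t^{\mathrm{far}}$ contracts every $L^r$.
\item The bound $\sup_{0<t\le1}\sup_{d_g(x,y)\ge1}p_t(x,y)\lesssim\sup_{0<t\le1}t^{-3/2}e^{-1/(Ct)}<\infty$ gives a uniform $L^1\to L^\infty$ bound.
\item Interpolating exactly as in your large-time piece gives $\|T_t^{\mathrm{far}}\|_{L^p\to L^q}\lesssim1$ uniformly in $t\in(0,1]$. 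Since $s>0$, $\int_0^1t^{s/2-1}\,\mathrm{d}t<\infty$, so this piece is bounded.
\item Hedberg's optimisation is then applied to the near part only. There both of your bounds, $\lesssim M_{\mathrm{loc}}h$ and $\lesssim t^{-3/(2p)}\|h\|_{L^p}$, are valid.
\end{itemize}

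The integer-order reduction also needs tightening. Uniform-size harmonic coordinates are not available under \eqref{eq1.5v177} alone: a Ricci lower bound with volume noncollapsing does not bound the harmonic radius from below. Also, full equivalence of the covariant and Bessel norms at order two is an $L^p$ Calder\'on--Zygmund inequality, which you do not need. Only $\|(1-\Delta_g)^{k/2}\phi\|_{L^p}\lesssim\|\phi\|_{W^{k,p}}$ is required.
\begin{itemize}
\item For $k=2$ it is immediate from $|\Delta_g\phi|\le\sqrt3\,|\nabla_g^2\phi|$.
\item For $k=1$ it is the reverse Riesz inequality. This follows by duality from Bakry's $L^{p'}$ bound for local Riesz transforms on manifolds with Ricci curvature bounded below.
\end{itemize}
The integer-order route in Hebey's book instead proves the first-order embedding directly under \eqref{eq1.5v177} and iterates using Kato's inequality.

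The advantage of your heat-kernel argument is that it handles the Bessel-potential spaces of every order $s$ directly under \eqref{eq1.5v177}. It uses only the short-time Gaussian bound and the sub-Markov property. By contrast, Triebel's fractional theory is set up under bounded geometry, which is stronger than \eqref{eq1.5v177}.
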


In the following, we recall Poincar\'e's inequality in the compact setting (see, for instance, \cite{Hebey,Tre}):
\begin{proposition}[Poincar\'e's  inequality]\label{pr2.3v122}
Let $\left(\mathcal{M}^3,g\right)$ be compact. Define $H$ to be:
\begin{itemize}
\item if $\partial \mathcal{M}^3= \emptyset$ or $\partial \mathcal{M}^3 \ne \emptyset$ with the Neumann boundary condition,
$H =  \left\{ \phi \in H^1 \left( \mathcal{M}^3 \right) \,|\, \int_{\mathcal{M}^3 } \phi (x) \,\mathrm{d} \mu = 0  \right\}$;

\item if $\partial \mathcal{M}^3 \ne \emptyset $ with the Dirichlet boundary condition,
$H= H^1_0 \left( \mathcal{M}^3  \right) $;

\end{itemize}
Then
\begin{align*}
\| \phi  \|_{L^2 \left( \mathcal{M}^3 \right)}^2 \le \lambda_1^{-1} \|\nabla_g  \phi  \|_{L^2 \left(\mathcal{M}^3 \right)}^2,
\ \forall\, \phi \in H,
\end{align*}
where $\lambda_1$ is the first {non-zero} eigenvalue of $- \Delta_g$.
\end{proposition}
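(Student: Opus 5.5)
The statement immediately above is Poincar\'e's inequality, which the paper recalls rather than proves. I would prove it by the spectral theorem for $-\Delta_g$, realized with the relevant boundary condition. The operator is taken as the Friedrichs extension of the quadratic form $\phi\mapsto\|\nabla_g\phi\|_{L^2}^2$. Its form domain is $H^1(\mathcal M^3)$ in the boundaryless and Neumann cases, and $H^1_0(\mathcal M^3)$ in the Dirichlet case.

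The first step is to show that this operator has compact resolvent. Since $\mathcal M^3$ is compact, Rellich's theorem gives that the inclusions $H^1(\mathcal M^3)\hookrightarrow L^2(\mathcal M^3)$ and $H^1_0(\mathcal M^3)\hookrightarrow L^2(\mathcal M^3)$ are compact. In the boundary case this uses smoothness of $\partial\mathcal M^3$ to get an extension operator, or covering by charts. Consequently there is an $L^2$-orthonormal basis of eigenfunctions $\{e_j\}$ with eigenvalues $0\le\mu_0\le\mu_1\le\cdots\to\infty$.

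The second step identifies the kernel of $-\Delta_g$.
\begin{itemize}
\item In the boundaryless and Neumann cases, $\|\nabla_g\phi\|_{L^2}=0$ forces $\phi$ to be constant, since $\mathcal M^3$ is connected. So the kernel is spanned by the constants, and the condition $\int_{\mathcal M^3}\phi\,\mathrm d\mu=0$ says exactly that $\phi$ is orthogonal to the kernel.
\item In the Dirichlet case, a constant function lying in $H^1_0$ must vanish, so the kernel is trivial and $\lambda_1$ is the first eigenvalue, which is positive.
\end{itemize}

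The third step expands a function $\phi\in H$ as $\phi=\sum_j a_je_j$. All nonzero coefficients then occur only at indices $j$ with $\mu_j\ge\lambda_1$. Hence
\[
\|\nabla_g\phi\|_{L^2}^2=\sum_j\mu_j|a_j|^2\ge\lambda_1\sum_j|a_j|^2=\lambda_1\|\phi\|_{L^2}^2 .
\]
This is the claimed inequality, and equality is attained by first eigenfunctions, so the constant is sharp. Justifying the form identity requires only that $\phi$ lies in the form domain, which holds by the definition of $H$.

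The main obstacle is the compactness and boundary regularity needed for discreteness of the spectrum. On a closed manifold this is routine via a partition of unity and the Euclidean Rellich theorem. An alternative proof avoiding spectral theory argues by contradiction: take $\|\phi_n\|_{L^2}=1$ with $\|\nabla_g\phi_n\|_{L^2}\to0$, extract a strongly convergent subsequence in $L^2$ by Rellich, and conclude that the limit is a constant with zero mean (or lying in $H^1_0$). Such a limit is zero, contradicting $\|\phi_n\|_{L^2}=1$. This argument gives existence of some constant but not its identification with $\lambda_1^{-1}$; identifying the sharp constant still needs the variational characterization $\lambda_1=\inf_{\phi\in H\setminus\{0\}}\|\nabla_g\phi\|_{L^2}^2/\|\phi\|_{L^2}^2$, whose infimum is attained by the same compactness.
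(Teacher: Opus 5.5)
Your argument is correct. The paper gives no proof of this proposition; it only recalls it from the cited references (Hebey, Tr\`eves), so there is no competing argument to compare against. What you wrote is the standard spectral proof:
\begin{enumerate}
\item realize $-\Delta_g$ as the self-adjoint operator of the closed form $\|\nabla_g\phi\|_{L^2}^2$ on $H^1$ (closed or Neumann case) or $H^1_0$ (Dirichlet case);
\item get a discrete spectrum from Rellich's compactness theorem;
\item identify the kernel using connectedness;
\item expand $\phi$ in the eigenbasis.
\end{enumerate}

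One wording point, which affects nothing because you state the form domain explicitly. In the Neumann case, the operator is the one associated with the closed form on all of $H^1$. It is not the Friedrichs extension of $-\Delta_g$ on $C_c^\infty$ of the interior, since that extension is the Dirichlet realization.

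Your final remark is worth keeping in view for how the paper uses the result. The compactness-by-contradiction argument yields only some constant. The paper, however, needs the sharp value $\lambda_1$, understood as the first nonzero eigenvalue of the realization matching the boundary condition. It uses this value in the rate $e^{-\Re z\,\lambda_1 t}$ in Remark~\ref{re2v118}, in the rate $\tfrac14\Re z\,\lambda_1$ in the proof of Proposition~\ref{non_integral}, and in the threshold $\lambda<\Re z\,\lambda_1$ in Proposition~\ref{pr4.4v210}. Your spectral route, or equivalently the attained variational infimum, is what actually delivers that constant.
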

There is an alternative formulation of Poincar\'e's inequality in the compact setting, presented in the following proposition.
\begin{proposition}[Poincar\'e's  inequality]\label{th2.10v116}
{ {Assume that $\left(\mathcal{M}^3,g\right)$ is compact}}. For any $ \phi  \in  W^{1, q}  \left( \mathcal{M}^3  \right)$, where $q \in [1, 3[$, there exists a positive constant $C$ such that \begin{align*}
\left\| \phi -
 \dashint_{\mathcal{M}^3} \phi(x)  \,\mathrm{d}\mu
 \right\|_{L^q \left( \mathcal{M}^3  \right)}
\le
C  \|\nabla_g \phi \|_{L^q \left( \mathcal{M}^3  \right) } .
\end{align*}

\end{proposition}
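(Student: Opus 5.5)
The plan is to prove Proposition~\ref{th2.10v116} by contradiction, using Rellich--Kondrachov compactness on the compact manifold. This is the standard route, and it does not need the spectral description behind Proposition~\ref{pr2.3v122}, which only covers $q=2$.

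First I reduce to $\phi$ with zero mean. For $\phi\in W^{1,q}$, write $\bar\phi := \phi - \dashint_{\mathcal{M}^3}\phi\,\mathrm{d}\mu$. Then $\nabla_g\bar\phi=\nabla_g\phi$, and $\bar\phi$ lies in the closed subspace
\[
Z:=\left\{\psi\in W^{1,q} : \int_{\mathcal{M}^3}\psi\,\mathrm{d}\mu=0\right\}.
\]
The subspace $Z$ is closed because $\mathrm{Vol}_g(\mathcal{M}^3)<\infty$, so the mean is a continuous functional on $L^q$. It therefore suffices to show $\|\psi\|_{L^q}\le C\|\nabla_g\psi\|_{L^q}$ for all $\psi\in Z$.

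Suppose this fails. Then there are $\psi_n\in Z$ with $\|\psi_n\|_{L^q}=1$ and $\|\nabla_g\psi_n\|_{L^q}\to 0$. The sequence is bounded in $W^{1,q}(\mathcal{M}^3)$. By Rellich--Kondrachov on a compact manifold, the embedding $W^{1,q}\hookrightarrow L^q$ is compact for every $q\in[1,3[$. I would prove this by covering $\mathcal{M}^3$ with finitely many charts, taking a subordinate partition of unity, and applying the Euclidean compact embedding on each bounded chart domain. Passing to a subsequence, $\psi_n\to\psi$ in $L^q$. Since $\nabla_g\psi_n\to0$ in $L^q$, the sequence $\psi_n$ is Cauchy in $W^{1,q}$, so $\psi\in W^{1,q}$ with $\nabla_g\psi=0$. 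The limit also satisfies $\|\psi\|_{L^q}=1$ and $\int\psi\,\mathrm{d}\mu=0$, because both quantities pass to the limit under $L^q$ convergence.

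Next I show $\psi$ is constant. In each chart $\psi$ is a $W^{1,q}_{\mathrm{loc}}$ function with vanishing weak gradient, so it equals a constant almost everywhere there; mollification in the chart gives this. Overlapping charts then force the constants to agree along any chain of charts. Connectedness of $\mathcal{M}^3$, one of the paper's standing assumptions, makes $\psi$ a single constant $c$. Zero mean gives $c=0$, which contradicts $\|\psi\|_{L^q}=1$.

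The only nontrivial input is the compactness of the embedding. The rest is soft. The restriction $q<3$ is harmless here: for $q\ge3$ compactness into $L^q$ still holds, so the statement is simply stated in the range the paper needs. The constant $C$ depends on $q$ and $(\mathcal{M}^3,g)$ and is not explicit, which is consistent with the statement.
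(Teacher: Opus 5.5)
Your proof is correct. The paper states this proposition without proof, as a standard fact alongside the references given for Proposition~\ref{pr2.3v122}, and your Rellich--Kondrachov compactness-and-contradiction argument is the textbook proof; just choose the chart domains connected (e.g.\ coordinate balls), so that a vanishing weak gradient gives a single constant on each chart before you propagate it along chains of overlapping charts using the connectedness of $\mathcal{M}^3$.
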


We now derive two fundamental integral identities satisfied by all finite-energy solutions to the CGL equation on $\mathcal{M}^3$:
a mass dissipation identity and an energy dissipation identity.
These relations serve as the core machinery for both the global well-posedness theory established below and the long time decay analysis on compact three-dimensional Riemannian manifolds developed in the subsequent sections.
\begin{proposition}\label{pr2.12v70}
Any solution $u$ to \eqref{eq1.CGL} satisfies the mass dissipation identity
\begin{equation}\label{eq_ener}
	M(u(t))
+2 \Re{z} \int_0^t\|\nabla_g u(\tau) \|_{L_{x}^2 \left(\mathcal{M}^{3 } \right) }^2 \, \dd\tau
+ 2\Re{z}\int_0^t \|u(\tau) \|_{L_{x}^{p+1} \left( \mathcal{M}^3  \right) }^{p+1} \, \dd\tau
=  M(u_0) ,
\end{equation}
where the mass functional is defined as
$ M(u(t)):= \|u(t)\|_{L_x^{2} \left( \mathcal{M}^{3 } \right)}^{2}$. It also satisfies the energy dissipation identity
\begin{equation}\label{eq_mass}
E(u(t)) + \frac{\Re z}{|z|^2} \int_0^t\|\partial_t u(\tau) \|_{L_{x}^2 \left(  \mathcal{M}^{3 } \right)}^2 \, \mathrm{d}\tau = E(u_0),
\end{equation}
with $E(u (t) )$ being given by \eqref{u_energy_func}.
\end{proposition}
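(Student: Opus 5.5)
The plan is to derive both identities first for smooth solutions by pairing the equation with a suitable test function and integrating by parts on $\mathcal{M}^3$. We then pass to general finite-energy solutions by approximation. Since $\mathcal{M}^3$ is compact without boundary (or satisfies \eqref{eq1.5v177} in the noncompact case), integrations by parts produce no boundary terms.

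\textbf{Mass identity \eqref{eq_ener}.} Pair the equation with $\bar u$, integrate over $\mathcal{M}^3$ and take real parts:
\[
\frac12\frac{\mathrm d}{\mathrm dt}\|u\|_{L^2}^2=\Re\Big( z\int \Delta_g u\,\bar u\,\mathrm d\mu - z\int |u|^{p+1}\,\mathrm d\mu\Big).
\]
Integrating by parts gives $\int \Delta_g u\,\bar u = -\|\nabla_g u\|_{L^2}^2$, which is real, as is $\int|u|^{p+1}$. Hence the right-hand side equals $-\Re z\,(\|\nabla_g u\|_{L^2}^2+\|u\|_{L^{p+1}}^{p+1})$. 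Multiplying by $2$ and integrating on $[0,t]$ yields \eqref{eq_ener}.

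\textbf{Energy identity \eqref{eq_mass}.} Rewrite the equation as $z^{-1}\partial_t u = \Delta_g u - f(u)$. The functional derivative of $E$ is $-\Delta_g u+f(u)$, so
\[
\frac{\mathrm d}{\mathrm dt}E(u)=\Re\int(-\Delta_g u+f(u))\,\overline{\partial_t u}\,\mathrm d\mu=-\Re\int z^{-1}|\partial_t u|^2\,\mathrm d\mu .
\]
Since $\Re(z^{-1})=\Re z/|z|^2$, this equals $-\frac{\Re z}{|z|^2}\|\partial_t u\|_{L^2}^2$, and integrating in time gives \eqref{eq_mass}.

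\textbf{Justification for finite-energy solutions.} This is the main obstacle. For $u_0\in H^1$ and $p$ up to $5$, the pairings $\langle\Delta_g u,\partial_t u\rangle$ and $\langle f(u),\partial_t u\rangle$ are not a priori defined for positive times. I would regularize: take smooth data $u_0^n\to u_0$ in $H^1$. By parabolic smoothing (the dissipative term $\Re z>0$) and local well-posedness, the corresponding solutions are smooth for $t>0$, so the computations above hold exactly for them. Continuous dependence in $C_tH^1_x$ on compact time intervals then passes the terms $M$, $E$, $\int\|\nabla u\|^2$ and $\int\|u\|_{L^{p+1}}^{p+1}$ to the limit, using Sobolev embedding $H^1\subset L^6$ from Theorem \ref{th2.6v116} together with $p+1\le 6$. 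The dissipation term $\int\|\partial_t u^n\|_{L^2}^2$ is uniformly bounded by the identity itself, so $\partial_t u^n\rightharpoonup\partial_t u$ weakly in $L^2_{t,x}$. Weak lower semicontinuity gives "$\le$". To upgrade to equality, I would use that $\partial_t u^n=z(\Delta_g u^n-f(u^n))$ converges strongly in $L^2_{t,x}$ on $[\delta,t]$ by the smoothing estimates, and then let $\delta\to0$ by monotone convergence. Alternatively, one can apply a Lions–Magenes type lemma, since $u\in L^2_tH^2_x$ with $\partial_t u\in L^2_tL^2_x$ follows from the maximal regularity of $z\Delta_g$.
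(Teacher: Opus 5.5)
Your argument is correct. It uses the same direct differentiation of $M(u(t))$ and $E(u(t))$ along the flow that the paper uses. Pairing $z^{-1}\partial_t u=\Delta_g u-f(u)$ with $\overline{\partial_t u}$ and using $\Re(z^{-1})=\Re z/|z|^2$ is tidier than the paper's route through $\Re(\cdot,i\Delta_g u)_{L^2}$, which puts $\Im z$ in the denominators. It also covers $\Im z=0$ without the paper's separate appeal to the heat-equation literature.

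The paper omits the justification step entirely. Your Lions--Magenes alternative is the economical way to supply it. The local solution class $C_t^0H_x^1\cap L_t^2H_x^2$ of Lemma~\ref{lem:Blowupcriterion} already gives $\partial_t u=z(\Delta_g u-f(u))\in L^2_{t,x}$ on every $[0,T]$ with $T<T_{\max}$. So you need neither the smooth approximating solutions nor the $\delta\to0$ limit. The former would not even be $C^\infty$ in general when $p$ is not an odd integer, since $f$ is then not smooth at $u=0$.
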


\begin{proof}
The result for $\Im{z} =0$ has already been established for the semilinear heat equation, see \cite{QS}.
Therefore, we focus on the case $\Im{z} \neq 0$. We first prove \eqref{eq_ener}.
Differentiating directly, we find
\begin{equation*}
	\begin{split}
\left(1+\frac{(\Re{z})^{2}}{(\Im{z})^{2}}\right)
\frac{ \mathrm{d} }{ \mathrm{d} t} \left( \|u (t) \|_{L_x^{2} \left( \mathcal{M}^{3 } \right)}^{2} \right)
+\frac{2\Re{z}|z|^2}{(\Im{z})^{2}}
\|\nabla_{g} u (t) \|_{L_x^{2} \left( \mathcal{M}^{3 } \right)}^{2}
+\frac{2\Re{z}|z|^2}{(\Im{z})^{2}}\|u (t) \|_{L_x^{p+1} \left( \mathcal{M}^{3 } \right)}^{p+1}=0.
	\end{split}
\end{equation*}
This implies
\begin{align}\label{u_L2}
\frac{ \mathrm{d} }{ \mathrm{d} t} \left( \|u (t) \|_{L_x^{2} \left( \mathcal{M}^{3 } \right)}^{2} \right)
+ 2\Re{z}  \|\nabla_{g} u (t) \|_{L_x^{2} \left( \mathcal{M}^{3 } \right)}^{2}
+ 2\Re{z} \|u (t) \|_{L_x^{p+1} \left( \mathcal{M}^{3 } \right)}^{p+1}=0.
\end{align}
Integrating \eqref{u_L2} in time yields \eqref{eq_ener}.

To prove \eqref{eq_mass}, we use \eqref{eq1.CGL},
 and begin with the following computations:
	\begin{equation}\label{id3.1_1}
	\begin{split}
\frac{ \mathrm{d} }{ \mathrm{d} t} \left( \|\nabla_{g} u (t) \|_{L_x^{2} \left( \mathcal{M}^{3 } \right)}^{2} \right)
&=-\frac{2\Re{z}}{\Im{z}}\Re \left(\partial_{t}u (t) ,i\Delta_{g} u (t)  \right)_{L^2}
-\frac{2|z|^2}{\Im{z}}\Re \left(  f(u (t) ) ,i\Delta_{g} u (t)  \right)_{L^2} .
\end{split}
	\end{equation}
For the first term of the RHS of \eqref{id3.1_1}, there holds
\begin{equation}\label{id3.1_2}
	\begin{split}
-\frac{2\Re{z}}{\Im{z}}\Re \left(\partial_{t}u (t) ,i\Delta_{g} u  (t) \right)_{L^2}
= & -\frac{2\Re z}{|z|^2}\|\partial_{t}u (t) \|_{L_x^{2} \left( \mathcal{M}^{3 } \right)}^{2}-\frac{2\Re{z}}{\Im{z}}\Re \left(\partial_{t}u (t) ,i
f(u (t) )  \right)_{L^2} .
	\end{split}
\end{equation}
For the second term of the RHS of \eqref{id3.1_1}, a direct calculation shows
\begin{equation}\label{id3.1_3}
	\begin{split}
-\frac{2|z|^2}{\Im{z}}\Re \left( f(u (t) ) ,i\Delta_{g} u  (t) \right)_{L^2}
& =\frac{2\Re{z}}{\Im{z}}\Re \left(\partial_{t}u (t) ,i f(u (t) ) \right)_{L^2} -2\Re \left( f(u (t) ) ,\partial_{t}u (t)  \right)_{L^2} .
\end{split}
	\end{equation}
Putting \eqref{id3.1_1}, \eqref{id3.1_2}, and \eqref{id3.1_3} together, we obtain
\begin{equation*}
	\begin{split}
\frac{ \mathrm{d} }{ \mathrm{d} t} \left( \|\nabla_{g} u (t) \|_{L_x^{2} \left( \mathcal{M}^{3 } \right)}^{2} \right)
+{ \frac{2}{p+1} }\frac{ \mathrm{d}} { \mathrm{d} t} \left( \|u (t) \|_{L_x^{p+1} \left(  \mathcal{M}^{3 } \right)}^{p+1} \right)
+\frac{2\Re z}{|z|^2}\|\partial_{t}u (t) \|_{L_x^{2} \left( \mathcal{M}^{3 } \right)}^{2}=0.
	\end{split}
\end{equation*}
Integrating the above identity with respect to time yields \eqref{eq_mass}.

\end{proof}

\subsection{Global well-posedness of the CGL equation on general three-dimensional Riemannian manifolds}

In this subsection, we show the global well-posedness of the equation \eqref{eq1.CGL} on Riemannian manifolds that satisfy \eqref{eq1.5v177}.

\subsubsection{Blowup criterion}

{Let us start with a basic smoothing effect for the linear CGL flow:}
\begin{lemma}\label{le1.1v156}
For any $t > 0$, we have
\begin{align}
 & \bigl\|e^{tz \Delta_g } f  \bigr\|_{H_x^1 \left( \mathcal{M}^{3 }  \right)}
    \le \|f \|_{H_x^1 \left( \mathcal{M}^{3 }  \right)}, \label{eq2.7v239}
\intertext{and}
&  \bigl\|\nabla_g e^{tz \Delta_g } f  \bigr\|_{L_x^2 \left( \mathcal{M}^{3 }  \right)}
    \lesssim t^{-\frac12} \|f \|_{L_x^2 \left( \mathcal{M}^{3 }  \right)}. \label{eq2.8v239}
\end{align}
\end{lemma}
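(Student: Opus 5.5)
We argue by the spectral theorem for the self-adjoint, nonnegative operator $-\Delta_g$ on $L^2(\mathcal{M}^3)$. On a complete Riemannian manifold, $-\Delta_g$ with domain $H^2$ is essentially self-adjoint on $C_c^\infty$, so it has a spectral resolution $-\Delta_g=\int_{[0,\infty)}\lambda\,\mathrm{d}E_\lambda$. For $\Re z>0$ the operator $e^{tz\Delta_g}$ is then defined by functional calculus as multiplication by $e^{-tz\lambda}$. Its modulus is $|e^{-tz\lambda}|=e^{-t\Re z\,\lambda}\le 1$ for $\lambda\ge 0$ and $t\ge0$. The whole proof consists of bounding scalar multipliers on $[0,\infty)$.

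For \eqref{eq2.7v239}, we use the equivalent norm $\|f\|_{H^1}^2=\|f\|_{L^2}^2+\|\nabla_g f\|_{L^2}^2=\|(1-\Delta_g)^{1/2}f\|_{L^2}^2$. This identity comes from integration by parts, $\|\nabla_g f\|_{L^2}^2=(-\Delta_g f,f)_{L^2}$, which is valid on complete manifolds by density of $C_c^\infty$. The paper defines $H^1$ via $(1-\Delta_g)^{1/2}$, and the two definitions agree exactly at $s=1$, $p=2$. Since $e^{tz\Delta_g}$ commutes with $(1-\Delta_g)^{1/2}$, we get
\[
\|e^{tz\Delta_g}f\|_{H^1}^2=\int_{[0,\infty)}(1+\lambda)\,e^{-2t\Re z\,\lambda}\,\mathrm{d}(E_\lambda f,f)\le \int_{[0,\infty)}(1+\lambda)\,\mathrm{d}(E_\lambda f,f)=\|f\|_{H^1}^2 .
\]
This gives the contraction with constant exactly $1$.

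For \eqref{eq2.8v239}, we similarly write
\[
\|\nabla_g e^{tz\Delta_g}f\|_{L^2}^2=\int_{[0,\infty)}\lambda\, e^{-2t\Re z\,\lambda}\,\mathrm{d}(E_\lambda f,f)\le \sup_{\lambda\ge0}\lambda e^{-2t\Re z\,\lambda}\,\|f\|_{L^2}^2 .
\]
An elementary maximization gives $\sup_{\lambda\ge 0}\lambda e^{-2t\Re z\,\lambda}=(2e\,t\Re z)^{-1}$. Hence
\[
\|\nabla_g e^{tz\Delta_g}f\|_{L^2}\le (2e\Re z)^{-1/2}\,t^{-1/2}\|f\|_{L^2},
\]
with an implicit constant depending only on $\Re z$.

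No step is a genuine obstacle. The only point needing care is justifying the functional-calculus framework: self-adjointness of $\Delta_g$ on a complete manifold (Chernoff/Strichartz), and the identity $(-\Delta_g f,f)=\|\nabla_g f\|^2$ on the form domain $H^1$. With these in place, the two estimates above are immediate.
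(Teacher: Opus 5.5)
Your proof is correct and follows essentially the same route as the paper: spectral calculus for $-\Delta_g$, the bound $|e^{-tz\lambda}|=e^{-t\Re z\,\lambda}\le 1$ for the $H^1$ contraction, and the maximization $\sup_{\lambda\ge 0}\lambda e^{-2t\Re z\,\lambda}=(2e\,t\Re z)^{-1}$ for the smoothing estimate. The only difference is that you bundle the $L^2$ and gradient parts into the single weight $1+\lambda$, while the paper bounds them separately. That is harmless, but note that the paper's integer-order definition of $H^1$ is the sum norm $\|f\|_{L^2}+\|\nabla_g f\|_{L^2}$, not $\|(1-\Delta_g)^{1/2}f\|_{L^2}$. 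So your claim that the two definitions ``agree exactly'' is inaccurate, though the contraction with constant $1$ holds for either norm since each component is separately non-increasing.
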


\begin{proof}

Since $\Delta_g$ is self-adjoint and non-positive, the spectral theorem \cite{Lewin} yields a projection-valued measure $\mathrm{d}E_\lambda$ such that
\begin{equation*}
    -\Delta_g = \int_0^\infty \lambda \,\mathrm{d}E_\lambda,
\end{equation*}
and for any bounded Borel function $\varphi$,
\[
    \varphi(-\Delta_g) = \int_0^\infty \varphi(\lambda) \,\mathrm{d}E_\lambda.
\]
In particular,
\begin{equation*}
    e^{tz \Delta_g } = \int_0^\infty e^{-tz \lambda} \,\mathrm{d}E_\lambda.
\end{equation*}
By direct calculation, we have
\begin{equation*}
    \left\|e^{tz \Delta_g} f \right\|_{L_x^2 \left( \mathcal{M}^{3 }  \right)}
    \le \|f\|_{L_x^2 \left( \mathcal{M}^{3 }  \right)} .
\end{equation*}
Similarly, using the spectral representation of the gradient, we see
\begin{align*}
    \left\|\nabla_g e^{tz \Delta_g } f \right\|_{L_x^2 \left( \mathcal{M}^{3 }  \right)}^2
    &= \int_0^\infty \lambda \, e^{-2t (\Re z) \lambda} \,\mathrm{d}\mu_f(\lambda) \le \int_0^\infty \lambda \,\mathrm{d}\mu_f(\lambda)
     = \|\nabla_g f \|_{L_x^2 \left( \mathcal{M}^{3 }  \right)}^2,
\end{align*}
where $\mathrm{d}\mu_f(\lambda) := \mathrm{d}(E_\lambda f,f)_{L^2}$.
Together with the $L^2$ bound, we obtain \eqref{eq2.7v239}.

For \eqref{eq2.8v239}, a simple calculation gives
\begin{align*}
    \left\|\nabla_g e^{tz \Delta_g } f \right\|_{L_x^2 \left( \mathcal{M}^{3 }  \right)}
    &= \left(  \int_0^\infty \lambda \, e^{-2t (\Re z) \lambda} \,\mathrm{d}\mu_f(\lambda) \right)^\frac12
     \le \left( \frac{1}{2e\, t \Re z} \int_0^\infty \mathrm{d}\mu_f(\lambda) \right)^\frac12
    \lesssim (t \Re z)^{-\frac12} \|f\|_{L_x^2 \left( \mathcal{M}^{3 }  \right)}.
\end{align*}
This completes the proof.
\end{proof}
Consequently, we have:
\begin{corollary}\label{lem:smoothing}
For any $T>0$ and $f\in H_x^1 \left(\mathcal{M}^3 \right)$, we have
\begin{align}\label{eq:smoothing}
    \bigl\|e^{tz\Delta_g} f \bigr\|_{L_t^2 H_x^2 \left([0,T] \times \mathcal{M}^3 \right) }
    \le C_{T} \|f\|_{H_x^1 \left(\mathcal{M}^3 \right)},
\end{align}
where $C_{T} > 0$ depends on $T$, $\Re z$, and the geometry of $\mathcal{M}^3$. \end{corollary}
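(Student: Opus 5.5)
The plan is to prove \eqref{eq:smoothing} with the spectral calculus already used in Lemma~\ref{le1.1v156}, reducing the statement to an explicit integral in the spectral variable. Write $\mathrm{d}\mu_f(\lambda)=\mathrm{d}(E_\lambda f,f)_{L^2}$. On a manifold satisfying \eqref{eq1.5v177}, in particular on a compact one, elliptic regularity gives the norm equivalence
\[
\|\phi\|_{H_x^2} \sim \bigl\|(1-\Delta_g)\phi\bigr\|_{L_x^2},
\]
which is also the definition of $H^2=W^{2,2}$ in the fractional sense above. So it suffices to bound
\[
\int_0^T \bigl\|(1-\Delta_g)e^{tz\Delta_g}f\bigr\|_{L^2}^2\,\mathrm{d}t
=\int_0^T\!\!\int_0^\infty (1+\lambda)^2 e^{-2t\Re z\,\lambda}\,\mathrm{d}\mu_f(\lambda)\,\mathrm{d}t ,
\]
using $|e^{-tz\lambda}|=e^{-t\Re z\,\lambda}$.

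By Tonelli, I exchange the order of integration and compute the time integral for each fixed $\lambda$:
\[
\int_0^T (1+\lambda)^2 e^{-2t\Re z\,\lambda}\,\mathrm{d}t \le (1+\lambda)^2\min\Bigl(T,\frac{1}{2\Re z\,\lambda}\Bigr).
\]
I then split the spectral range into two pieces.
\begin{itemize}
\item For $\lambda\le 1$, the bound is at most $4T$, which is at most $4T(1+\lambda)$.
\item For $\lambda\ge1$, the bound is at most $\frac{(1+\lambda)^2}{2\Re z\,\lambda}\le \frac{2(1+\lambda)}{\Re z}$, since $\frac{1+\lambda}{\lambda}\le 2$.
\end{itemize}
In both cases the time integral is at most $C(T,\Re z)(1+\lambda)$. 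Integrating against $\mathrm{d}\mu_f$ gives
\[
C(T,\Re z)\int_0^\infty (1+\lambda)\,\mathrm{d}\mu_f(\lambda) = C\bigl(\|f\|_{L^2}^2+\|\nabla_g f\|_{L^2}^2\bigr)\sim \|f\|_{H^1}^2.
\]
Taking square roots yields \eqref{eq:smoothing}, with $C_T$ depending on $T$, on $\Re z$, and, through the norm equivalence, on the geometry.

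The only step that needs care is identifying $\|\cdot\|_{H^2}$, defined through covariant derivatives $\nabla_g^2$, with $\|(1-\Delta_g)\cdot\|_{L^2}$. On a compact manifold this follows from standard elliptic regularity. More concretely, the Bochner identity
\[
\|\nabla_g^2\phi\|_{L^2}^2=\|\Delta_g\phi\|_{L^2}^2-\int_{\mathcal M^3}\mathrm{Ric}(\nabla_g\phi,\nabla_g\phi)\,\mathrm{d}\mu
\]
together with a lower Ricci bound gives $\|\nabla_g^2\phi\|_{L^2}^2\lesssim \|\Delta_g\phi\|_{L^2}^2+\|\nabla_g\phi\|_{L^2}^2$. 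The last term is controlled by $\|(1-\Delta_g)\phi\|_{L^2}^2$ by the spectral theorem. I will cite this fact rather than reprove it; everything else in the argument is explicit calculus in $\lambda$.
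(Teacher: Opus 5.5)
Your proof is correct and follows essentially the paper's route: spectral calculus for $-\Delta_g$, Tonelli to integrate in $t$ first, and the identification $\|\cdot\|_{H^2}\sim\|(1-\Delta_g)\cdot\|_{L^2}$, which the paper simply asserts and you justify via elliptic regularity and the Bochner identity. The only difference is how the low frequencies are handled. The paper writes $e^{tz\Delta_g}=e^{tz}e^{-tz(1-\Delta_g)}$, uses the uniform bound $\int_0^T(1+\lambda)^2e^{-2t\Re z(1+\lambda)}\,\mathrm{d}t\le\frac{1+\lambda}{2\Re z}$, and then pays a factor $e^{T\Re z}$. Your split at $\lambda=1$ avoids that factor and gives the sharper constant $C_T^2\lesssim T+(\Re z)^{-1}$.
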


\begin{proof}
Using the spectral resolution of $-\Delta_g$ and the associated measure
$\mathrm{d}\mu_f$ as in the proof of Lemma~\ref{le1.1v156}, we obtain
\begin{align}\label{eq:aux}
    & \ \left\|e^{-tz(-\Delta_g + 1)} f \right\|_{L_t^2 H_x^2 \left([0,T] \times \mathcal{M}^3   \right)}  \notag \\
    &\sim \left\|(-\Delta_g + 1) e^{-tz(-\Delta_g + 1)} f \right\|_{L_{t,x}^2 \left([0,T] \times \mathcal{M}^3   \right)}
    \notag \\
    &= \left( \int_0^T \int_0^\infty (\lambda+1)^2 e^{-2t \Re z (\lambda+1)}
    \,\mathrm{d}\mu_f(\lambda) \,\mathrm{d}t \right)^\frac12 \notag \\
    &= \left( \int_0^\infty (\lambda+1)^2
    \left( \int_0^T e^{-2t \Re z (\lambda+1)} \,\mathrm{d}t \right)
    \mathrm{d}\mu_f(\lambda) \right)^\frac12
    \lesssim (\Re z)^{- \frac12} \|f\|_{H_x^1 \left(\mathcal{M}^3 \right)}.
\end{align}
For any $t \in [0,T]$,  we have
\begin{align}\label{eq2.12v256}
\left\|e^{tz\Delta_g} f \right\|_{H_x^2 \left(\mathcal{M}^3 \right)}
    = e^{t \Re z} \left\|e^{-tz(-\Delta_g + 1)} f \right\|_{H_x^2 \left(\mathcal{M}^3 \right)}.
\end{align}
By \eqref{eq:aux} and \eqref{eq2.12v256}, we get
\begin{align*}
    \left\|e^{tz\Delta_g} f \right\|_{L_t^2H_x^2  \left([0,T] \times \mathcal{M}^3 \right)   }
    &= \left( \int_0^T e^{2t \Re z} \left\|e^{-tz(-\Delta_g + 1)} f \right\|_{H_x^2 \left(\mathcal{M}^3 \right)}^2 \,\mathrm{d}t
    \right)^{ \frac12} \\
    &\le e^{T \Re z} \left\|e^{-tz(-\Delta_g + 1)} f \right\|_{L_t^2 H_x^2 \left([0,T] \times \mathcal{M}^3  \right)}
    \lesssim e^{T \Re z} (\Re z)^{- \frac12} \|f\|_{H_x^1 \left(\mathcal{M}^3 \right)}.
\end{align*}
This completes the proof.
\end{proof}

\begin{lemma}[Blowup criterion]\label{lem:Blowupcriterion}
Assume that $u_0\in H_x^1 \left(\mathcal{M}^3 \right)$.
The equation \eqref{eq1.CGL} is locally well-posed in $C_t^0 H_x^1
\cap L_t^2 H_x^2 \left([0,T_{\max}[ \times \mathcal{M}^3 \right)$ with maximal lifespan $T_{\max}>0$.
Furthermore, the following blowup criterion holds:
\[
T_{\max}<\infty \implies
\begin{cases}
\|u\|_{L_t^2 H_x^{2} \left([0,T_{\max}[ \times \mathcal{M}^3 \right)} = \infty, & p=5,\\
\|u\|_{L_t^\infty H_x^1 \left([0, T_{\max}[ \times \mathcal{M}^3 \right)} = \infty, & 1< p< 5.
\end{cases}
\]
\end{lemma}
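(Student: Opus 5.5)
I will prove local well-posedness by a contraction argument for the Duhamel formulation
\[
\Phi(u)(t) = e^{tz\Delta_g}u_0 - z\int_0^t e^{(t-s)z\Delta_g} f(u(s))\,\mathrm{d}s
\]
in the space $X_T := C_t^0H_x^1\cap L_t^2H_x^2([0,T]\times\mathcal M^3)$. The linear estimates come from Lemma~\ref{le1.1v156} and Corollary~\ref{lem:smoothing}. For the inhomogeneous term I will use the standard parabolic energy estimate: if $w_t - z\Delta_g w = F$ with $w(0)=0$, then
\[
\|w\|_{L_t^\infty H_x^1}+\|w\|_{L_t^2H_x^2}\lesssim_T \|F\|_{L_t^2L_x^2}.
\]
To prove it, pair with $(1-\Delta_g)w$, take real parts, use $\Re z>0$, and use elliptic regularity $\|w\|_{H^2}\sim\|(1-\Delta_g)w\|_{L^2}$ under \eqref{eq1.5v177}. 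Equivalently, it follows from the spectral calculus exactly as in Corollary~\ref{lem:smoothing}. So everything reduces to bounding $\|f(u)-f(w)\|_{L_t^2L_x^2([0,T])}$.

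Pointwise, $|f(u)-f(w)|\lesssim(|u|^{p-1}+|w|^{p-1})|u-w|$. By Hölder and Theorem~\ref{th2.6v116} ($H^1\subset L^6$), together with the Gagliardo--Nirenberg/Sobolev bound $\|u\|_{L^\infty}\lesssim\|u\|_{H^1}^{1/2}\|u\|_{H^2}^{1/2}$ on such manifolds, I estimate
\[
\|(|u|^{p-1}+|w|^{p-1})|u-w|\|_{L_x^2}\lesssim \bigl(\|u\|_{L^{\infty}}^{p-1}+\|w\|_{L^\infty}^{p-1}\bigr)^{\theta}(\cdots)\|u-w\|_{L^6}.
\]
Concretely, I place $|u|^{p-1}$ in $L^3$ and interpolate between $L^6$ and $L^\infty$. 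When $p=5$ the worst term is $\|u\|_{L^\infty}^2\|u\|_{L^6}^2\|u-w\|_{L^6}$, and $\int_0^T\|u\|_{L^\infty}^4\,\mathrm{d}t\lesssim \|u\|_{L_t^\infty H^1}^2\|u\|_{L_t^2H^2}^2$. Hence
\[
\|f(u)-f(w)\|_{L_t^2L_x^2}\lesssim \bigl(\|u\|_{L^\infty_tH^1}^{4}+\|w\|_{L^\infty_tH^1}^{4}\bigr)^{1/2}\bigl(\|u\|_{L_t^2H^2}+\|w\|_{L_t^2H^2}\bigr)\cdots\|u-w\|_{X_T}.
\]
The point is that this is controlled by a power of the \emph{small} quantity $\|\cdot\|_{L_t^2H_x^2([0,T])}$. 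For $p<5$ the same Hölder scheme gives an extra factor $T^{\delta}$ with $\delta=\delta(p)>0$, since $\|u\|_{L^\infty}^{p-1}$ needs only $L_t^{4/(p-1)\cdot\ldots}$ integrability, strictly below the critical one.

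The contraction is then set up in two regimes. In the subcritical case the existence time depends only on $\|u_0\|_{H^1}$, via the $T^\delta$ factor. In the critical case $p=5$, I work in the ball $\{\|u\|_{L_t^\infty H^1}\le 2C\|u_0\|_{H^1},\ \|u\|_{L_t^2H^2([0,T])}\le 2\eta\}$. I choose $T$ so small that $\|e^{tz\Delta_g}u_0\|_{L_t^2H^2([0,T])}\le\eta$, which is possible by dominated convergence in the spectral representation of Corollary~\ref{lem:smoothing}. This is the main obstacle: in the critical case the time depends on the profile of $u_0$ rather than its norm, and this is exactly the reason for the different criterion. Uniqueness and continuous dependence follow from the same difference estimate, and the maximal lifespan $T_{\max}$ is obtained by gluing local solutions.

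For the blowup criterion, suppose $T_{\max}<\infty$. For $1<p<5$, if $\|u\|_{L_t^\infty H^1}\le A$ on $[0,T_{\max})$, then restarting at $t_0=T_{\max}-\tau(A)/2$ with the uniform local time $\tau(A)$ extends the solution past $T_{\max}$, a contradiction. For $p=5$, suppose $\|u\|_{L_t^2H^2([0,T_{\max}))}<\infty$. Then $f(u)\in L_t^2L_x^2([0,T_{\max}))$ by the estimate above, and the $L^\infty_tH^1$ norm stays bounded on this interval. By Duhamel, $u(t)$ is Cauchy in $H^1$ as $t\to T_{\max}$, so it has a limit $u(T_{\max})\in H^1$. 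Running the local theory from this datum, with $T$ chosen from the profile of $u(T_{\max})$ and continuity of the linear flow in $L^2_tH^2$, then extends the solution beyond $T_{\max}$, a contradiction.
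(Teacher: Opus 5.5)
Your subcritical case and both restart arguments are essentially the paper's. The critical local existence step, however, does not close as written. Because you measure the forcing only in $L^2_tL^2_x$, the best bound available is
\[
\|f(u)\|_{L^2_{t,x}}=\|u\|_{L^{10}_{t,x}}^{5}\lesssim\|u\|_{L^\infty_tH^1_x}^{4}\,\|u\|_{L^2_tH^2_x}.
\]
This is only \emph{linear} in the small quantity, with a coefficient of size $\|u_0\|_{H^1}^4$. On your ball it yields $\|\Phi(u)\|_{L^2_tH^2_x}\le\eta+C\|u_0\|_{H^1}^4\eta$, which is not $\le 2\eta$ unless the data are small.

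Your difference estimate has coefficient $\sim\|u_0\|_{H^1}^3\eta$, so the contraction part is fine; what is not proved is that $\Phi$ maps the ball into itself. A sharper H\"older splitting cannot rescue this. Test on $u(t)=\chi(t/s)\,a\psi$ with fixed $\chi,\psi$: then $\|f(u)\|_{L^2_{t,x}}\sim a^4\|u\|_{L^2_tH^2_x}$ independently of $s$. So on your ball the forcing really can be of size $\|u_0\|_{H^1}^4\eta$, and letting $s\to0$ with $a$ fixed rules out any bound superlinear in $\eta$. Unlike the subcritical case, at $p=5$ there is no factor $T^\delta$ to absorb the large coefficient.

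The paper avoids this by putting the nonlinearity in $L^1_tH^1_x$. Minkowski's inequality and the smoothing estimate of Corollary~\ref{lem:smoothing} handle the $L^2_tH^2_x$ component, and Lemma~\ref{le1.1v156} handles the $L^\infty_tH^1_x$ component:
\[
\Bigl\|\int_0^t e^{(t-\tau)z\Delta_g}f(u(\tau))\,\mathrm d\tau\Bigr\|_{L^2_tH^2_x}\le C_T\|f(u)\|_{L^1_tH^1_x}\lesssim C_T\|u\|_{L^\infty_tH^1_x}\|u\|_{L^4_tL^\infty_x}^4\lesssim C_T\|u\|_{L^\infty_tH^1_x}^3\|u\|_{L^2_tH^2_x}^2 .
\]
This is quadratic in $\eta$, so with $A=\|u_0\|_{H^1}$ one gets $\eta+CA^3\eta^2\le2\eta$ once $\eta\lesssim A^{-3}$. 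Alternatively, you could stay in your $L^2_{t,x}$ framework but add smallness of $\|u\|_{L^{10}_{t,x}([0,T])}$ to the ball, choosing $T$ so that the free flow is small in that norm.

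One minor point: in the $p=5$ criterion you assert without justification that $\|u\|_{L^\infty_tH^1_x}$ stays bounded. This should come from the dissipation identities of Proposition~\ref{pr2.12v70}, as in the paper; it does not follow from the Duhamel estimate alone.
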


\begin{proof}
(1) We first consider the energy-critical case $p=5$.
Let
\[
A:=\|u_0\|_{H_x^1 \left(\mathcal M^3 \right)},
\]
and define
\[
B_R=\left\{ u\in C_t^0 H_x^1 \cap L^2_t  H_x^2\left( [0,T] \times   \mathcal M^3   \right)\,\big|\,
\|u\|_{L_t^\infty H_x^1 \left([0,T]\times \mathcal{M}^3 \right)}\le 2A,
\|u\|_{L_t^2 H_x^2 \left([0,T]\times \mathcal{M}^3 \right)}\le R \right\},
\]
equipped with the metric
\[
d(u,v) := \|u-v\|_{L_t^\infty H_x^1 \left([0,T]\times \mathcal{M}^3 \right)} + \|u-v\|_{L_t^2 H_x^2 \left([0,T]\times \mathcal{M}^3 \right)}.
\]
Define the mapping
\begin{align}\label{eq2.13v245}
\mathcal T(u)(t) = e^{tz\Delta_g}u_0 - z\int_0^t e^{(t- \tau )z\Delta_g} f(u( \tau )) \,\mathrm{d} \tau .
\end{align}
By Minkowski's inequality, \eqref{eq:smoothing}, and the Gagliardo-Nirenberg inequality, we have
\begin{align*}
& \left\| \int_0^t  e^{(t-\tau)z\Delta_g}f(u(\tau))\,\mathrm{d}\tau \right\|_{L_t^2 H_x^2 \left([0,T]\times \mathcal{M}^3 \right)}
\\
 & \leq  \int_0^T \left\| e^{(t-\tau)z\Delta_g}f(u(\tau)) \right\|_{L_t^2 H_x^2 \left([\tau,T]\times \mathcal{M}^3 \right)}
\,\mathrm{d}\tau \notag \\
& \leq C_T \bigl\| \|u\|_{H_x^1 \left(\mathcal{M}^3 \right)}\|u\|_{L_x^{\infty} \left(\mathcal{M}^3 \right)}^{4} \bigr\|_{L_\tau^1([0,T])}  \leq C_T \|u\|_{L_t^{\infty} H_x^1 \left([0,T]\times \mathcal{M}^3 \right)}^3 \|u\|_{L_t^2 H_x^2 \left([0,T]\times \mathcal{M}^3 \right)}^2 .
\end{align*}
Hence, for $u\in B_R$, we have
\[
\left\|\int_0^t e^{(t- \tau )z\Delta_g} f(u( \tau )) \,\mathrm{d} \tau  \right\|_{L_t^2 H_x^2 \left([0,T]\times \mathcal{M}^3 \right)}
\le C_T (2A)^3R^2.
\]
Consequently,
\[
\|\mathcal T(u)\|_{L_t^2 H_x^2 \left([0,T]\times \mathcal{M}^3 \right)} \le
 \left\|e^{tz\Delta_g}u_0 \right\|_{L_t^2 H_x^2 \left([0,T]\times \mathcal{M}^3 \right)}
+ C_T (2A)^3R^2.
\]
On the other hand, the trivial energy estimate yields
\begin{align*}
\|\mathcal T(u)\|_{L_t^{\infty}H_x^1 \left([0,T]\times \mathcal{M}^3 \right)}
 & \leq \|u_0\|_{H_x^1 \left(\mathcal{M}^3 \right)}
+C\|u\|_{L_t^{\infty} H_x^1 \left([0,T]\times \mathcal{M}^3 \right)}^3 \|u\|_{L_t^2 H_x^2 \left([0,T]\times \mathcal{M}^3 \right)}^2 .
\end{align*}
Hence, for $u\in B_R$,
\[
\|\mathcal T(u)\|_{L_t^\infty H_x^1 \left([0,T]\times \mathcal{M}^3 \right)} \le A + C(2A)^3R^2.
\]
Since
$\lim\limits_{T\to0} \left\|e^{tz\Delta_g}u_0 \right\|_{L_t^2 H_x^2 \left([0,T]\times \mathcal{M}^3 \right)} = 0$,
we choose $R = 2  \left\|e^{tz\Delta_g}u_0 \right\|_{L_t^2 H_x^2 \left([0,T]\times \mathcal{M}^3 \right)}$,
and then take $T>0$ sufficiently small so that $R\le A$ and $C_T (2A)^3R \le \frac12$. It follows that
\[
\|\mathcal T(u)\|_{L_t^\infty H_x^1 \left([0,T]\times \mathcal{M}^3 \right)} \le 2A
\quad\text{and}\quad \|\mathcal T(u)\|_{L_t^2 H_x^2 \left([0,T]\times \mathcal{M}^3 \right)} \le R.
\]
Hence $\mathcal{T}: B_R \to B_R$.

Next, we show that $\mathcal T$ is a contraction. For any $u,v\in B_R$,
using the chain rule,
we get
\[
|f(u)-f(v)|\le C \left(|u|^4+|v|^4 \right)| u-v|,
\]
\[
|\nabla_g(f(u)-f(v))| \le C \left( |u|^4+|v|^4 \right)|\nabla_g ( u-v )| +C \left(|u|^3+|v|^3 \right)(|\nabla_g u|+|\nabla_g v|)| u-v |.
\]
Using the Gagliardo-Nirenberg and H\"older inequalities, we obtain
\[
\|f(u)-f(v)\|_{L_t^1H_x^1} \le C\left(  (2A)^2R^2+(2A)^{ \frac52}R^{ \frac32} \right)d(u,v).
\]
Indeed, the first gradient term is bounded by
$C(2A)^2R^2\|w\|_{L_t^\infty H_x^1}$; in the second term one uses the interpolation inequality.
The linear estimate \eqref{eq:smoothing} and Lemma \ref{le1.1v156} therefore imply
\[
d(\mathcal T(u),\mathcal T(v)) \le C\left((2A)^2R^2+(2A)^{ \frac52}R^{ \frac32}\right)d(u,v).
\]
Taking $T$ smaller makes the coefficient less than $ \frac12$.
Thus $\mathcal T$ is a contraction on $B_R$.
The Banach fixed-point theorem therefore yields a unique local solution
\[
u \in C_t^0 H_x^1  \left([0,T] \times \mathcal M^3   \right) \cap L^2_t H_x^2\left([ 0,T] \times   \mathcal M^3  \right).
\]
Applying the same difference estimate on short subintervals, using the
absolute continuity of the $L_t^2H_x^2$ norm, gives uniqueness in the
whole stated solution class. Including the linear initial-data difference
also gives continuous dependence in this class.
We now prove the blowup criterion.

Suppose that $T_{\max}<\infty$ but
$\|u\|_{L_t^2H_x^2 \left([0,T_{\max} [ \times\mathcal M^3 \right)}<\infty$.
The mass and energy dissipation identities \eqref{eq_ener} and \eqref{eq_mass} yield a uniform $H_x^1$ bound.
By the Sobolev and interpolation inequalities, we have
\[
\|u\|_{L_{t,x}^{10} \left([0,T_{\max} [ \times\mathcal M^3 \right)}
\lesssim
\|u\|_{L_t^\infty H_x^1}^{ \frac45}   \|u\|_{L_t^2H_x^2}^{ \frac15}<\infty.
\]
Thus $f(u),u_t=z(\Delta_gu-f(u))\in L_{t,x} ^2 $.
The embedding
\[
W^{1,2}_t L_x^2  \left( [ 0,T_{\max} [  \times \mathcal{M}^3  \right)\cap L^2_t H_x^2 \left( [ 0,T_{\max} [  \times \mathcal{M}^3  \right)
\hookrightarrow C_t^0 H_x^1  \left([0,T_{\max} [    \times \mathcal{M}^3  \right)
\]
supplies an $H_x^1$ limit at $T_{\max}$. Local existence at that limit extends $u$ beyond $T_{\max}$, a contradiction.
Therefore
\[
T_{\max}<\infty\Longrightarrow
\|u\|_{L_t^2H_x^2 \left([0,T_{\max} [ \times\mathcal M^3 \right)}=\infty.
\]
(2) We now consider the energy-subcritical case $1<p<5$.
The argument is analogous to part (1), so we only record the estimate which provides the required smallness in time.
For $0<T\le1$, set
\[
Y_T=C_t^0H_x^1 \left([0,T]\times\mathcal M^3 \right)\cap L_t^2H_x^2 \left([0,T]\times\mathcal M^3 \right),
\qquad
\|u\|_{Y_T}=\|u\|_{L_t^\infty H_x^1}+\|u\|_{L_t^2H_x^2}.
\]
Let
\[
\alpha=\max\left\{0,\frac{p-3}{2p}\right\},
\qquad
\beta=\frac{1-p\alpha}{2}>0.
\]
By the Sobolev, interpolation and H\"older inequalities, we get
\[
\|v\|_{L_x^{2p}}\lesssim \|v\|_{H_x^1}^{1-\alpha}\|v\|_{H_x^2}^{\alpha},
\qquad
\|v\|_{L_t^{2p}L_x^{2p}}\lesssim T^{\frac\beta p}\|v\|_{Y_T}.
\]
Hence,
\[
\|f(u)\|_{L_{t,x}^2}\lesssim T^\beta\|u\|_{Y_T}^p,
\]
and
\[
\|f(u)-f(v)\|_{L_{t,x}^2}
\lesssim T^\beta\left(\|u\|_{Y_T}^{p-1}+\|v\|_{Y_T}^{p-1}\right)\|u-v\|_{Y_T}.
\]
Therefore, applying the linear estimate established in part (1) to the same map $\mathcal{T}$ defined in \eqref{eq2.13v245}, and setting $A=\max\{1,\|u_0\|_{H_x^1}\}$, we obtain a contraction on
\[
X_A=\{u\in Y_T:\ \|u\|_{Y_T}\le2C_0A\}
\]
provided $T=T(A)>0$ is chosen sufficiently small so that
\[
CT^\beta(2C_0A)^{p-1}\le \frac12.
\]
This yields local well-posedness in $Y_T$, with an existence time depending only on the $H_x^1$ norm of the initial data; uniqueness and continuous dependence follow from the same difference estimate as in part (1).

It remains to prove the blowup criterion. If $T_{\max}<\infty$ and
\[
\sup_{0<t<T_{\max}}\|u(t)\|_{H_x^1}\le A<\infty,
\]
then the local existence time furnished above is bounded from below by some $\delta=\delta(A)>0$, independently of the initial time. Choosing $T\in]T_{\max}-\delta,T_{\max}[$ and restarting the equation at time $T$ extends the solution to $[T,T+\delta]$, hence beyond $T_{\max}$, a contradiction. Therefore,
\[
T_{\max}<\infty\Longrightarrow
\sup_{0<t<T_{\max}}\|u(t)\|_{H_x^1 \left(\mathcal M^3 \right)}=\infty.
\]
This completes the proof.
\end{proof}

\subsubsection{Global well-posedness}

For the energy-critical case, by Lemma \ref{lem:Blowupcriterion},
it suffices to rule out concentration of the $L_t^2 H_x^2$-norm of the solution $u$. The core ingredient is the following elliptic estimate:
\begin{lemma}[Elliptic estimate]\label{lem:monotonicity}
Let $v\in H_x^2 \left(\mathcal{M}^3 \right)$ satisfy the elliptic equation
\begin{align}\label{eq2.9v127}
 -\Delta_g v + f(v) = h,
\end{align}
where $h\in L_x^2 \left(\mathcal{M}^3 \right)$.
Then we have the following bound:
\begin{align}
\|\Delta_g v\|_{L_x^2 \left(\mathcal{M}^3 \right)}^2+ \left\|  v \right\|_{L^{2p}_x \left(\mathcal{M}^3 \right)}^{2p}
\le \|h\|_{L_x^2 \left(\mathcal{M}^3 \right)}^2.
\end{align}
\end{lemma}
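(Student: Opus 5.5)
The plan is to square the equation in $L^2$ and show that the cross term is nonnegative. From \eqref{eq2.9v127} we have $h = -\Delta_g v + f(v)$, so
\begin{align*}
\|h\|_{L_x^2}^2 = \|\Delta_g v\|_{L_x^2}^2 + \|f(v)\|_{L_x^2}^2 - 2\Re \int_{\mathcal{M}^3} \Delta_g v\, \overline{f(v)} \,\mathrm{d}\mu .
\end{align*}
Since $\|f(v)\|_{L_x^2}^2 = \int |v|^{2p}\,\mathrm{d}\mu = \|v\|_{L_x^{2p}}^{2p}$, the claimed bound is equivalent to
\begin{align*}
-\Re \int_{\mathcal{M}^3} \Delta_g v\,\overline{|v|^{p-1}v}\,\mathrm{d}\mu \ge 0 .
\end{align*}
All terms are finite: $v\in H^2_x$ gives $\Delta_g v\in L^2$, and $f(v)\in L^2$ because $f(v)=h+\Delta_g v$.

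The key step is an integration by parts followed by a pointwise computation. For smooth $v$ (on a compact manifold, or with decay in the noncompact setting of \eqref{eq1.5v177}), Green's formula gives
\begin{align*}
-\Re\int \Delta_g v\, \overline{|v|^{p-1}v} = \Re \int \langle \nabla_g v, \nabla_g(|v|^{p-1}v)\rangle_g .
\end{align*}
Expanding $\nabla_g(|v|^{p-1}v) = |v|^{p-1}\nabla_g v + (p-1)|v|^{p-3}\Re(\bar v\nabla_g v)\, v$, the integrand becomes
\begin{align*}
|v|^{p-1}|\nabla_g v|^2 + (p-1)|v|^{p-3}\bigl|\Re(\bar v \nabla_g v)\bigr|^2 \ge 0 ,
\end{align*}
because $\Re\bigl(\overline{\nabla v}\, v\bigr)=\Re(\bar v\nabla v)$. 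This is the classical accretivity of $-\Delta$ together with the monotone nonlinearity, and it yields the inequality.

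The main obstacle is justifying these manipulations for $v$ that is merely in $H^2$. The issues are the singularity of $|v|^{p-3}$ at $v=0$ when $p<3$, and the integrability of $\nabla_g(|v|^{p-1}v)$ when only $f(v)\in L^2$ is known and $p$ is close to $5$.

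I would first try a regularized nonlinearity $f_\varepsilon(v) = (|v|^2+\varepsilon)^{\frac{p-1}{2}}v$, truncated at large $|v|$ as $\min\{(|v|^2+\varepsilon)^{\frac{p-1}{2}},N\}\,v$, or equivalently cut off in modulus. Each such map is Lipschitz, so $f_{\varepsilon,N}(v)\in H^1$ and the integration by parts holds; the same pointwise computation shows the pairing is nonnegative, since the gradient of the truncated function is still monotone on a Lipschitz truncation. Then I would let $N\to\infty$ and $\varepsilon\to0$ using dominated or monotone convergence, with $\Delta_g v\in L^2$ and $|v|^{p}\in L^2$ as the dominating functions. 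Alternatively, I would approximate $v$ by smooth $v_n\to v$ in $H^2$; by Sobolev embedding ($H^2\subset L^\infty$ in 3D) this gives $f(v_n)\to f(v)$ in $L^2$, which passes the inequality to the limit.
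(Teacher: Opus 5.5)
Your proposal is correct and is essentially the paper's argument: the paper also proves $\Re\int_{\mathcal M^3}\langle\nabla_g f(v),\nabla_g\bar v\rangle_g\,\mathrm d\mu\ge0$ through the same pointwise identity, and then expands $\|h\|_{L^2}^2=\|{-\Delta_g v}+f(v)\|_{L^2}^2$ and drops the nonnegative cross term exactly as you do. The only difference is that the paper first tests the equation against $-\Delta_g\bar v$ to get $\|\Delta_g v\|_{L^2}\le\|h\|_{L^2}$ before the expansion, and it omits the justification for $v\in H^2$ that you supply; your regularization is more than is needed, since $H^2_x\subset L^\infty_x$ in three dimensions together with $f\in C^1$ already gives $f(v)\in H^1$ by the chain rule, and the apparent singularity at $v=0$ is harmless because $|v|^{p-3}|\Re(\bar v\nabla_g v)|^2\le|v|^{p-1}|\nabla_g v|^2$.
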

\begin{proof}
Multiplying \eqref{eq2.9v127} by $-\Delta_g \overline{v}$, integrating over $\mathcal{M}^3$, and taking real parts yields
\begin{align*}
\|\Delta_g v\|_{L_x^2 \left(\mathcal{M}^3 \right)}^2
+ \Re\int_{\mathcal{M}^3} \left\langle \nabla_g f(v), \nabla_g \overline{v} \right\rangle_g \,\mathrm{d}\mu
= -\Re\int_{\mathcal{M}^3} h \Delta_g \overline{v} \,\mathrm{d}\mu.
\end{align*}
A direct computation gives
\begin{align*}
\Re\left\langle \nabla_g f(v), \nabla_g \overline{v} \right\rangle_g
= |v|^{p-1} |\nabla_g v|^2
+ (p-1) |v|^{p-3} \, \Re \left\langle v \nabla_g \overline{v}, \Re \left(v \nabla_g \overline{v} \right) \right\rangle_g .
\end{align*}
Since
\[
\Re \left\langle v \nabla_g \overline{v}, \Re \left(v \nabla_g \overline{v} \right) \right\rangle_g
= \left| \Re \left(v \nabla_g \overline{v} \right) \right|_g^2 \ge 0,
\]
it follows that
\[
\Re\int_{\mathcal{M}^3} \left\langle \nabla_g f(v), \nabla_g \overline{v} \right\rangle_g \,\mathrm{d}\mu \ge 0.
\]
Hence,
\begin{align*}
\|\Delta_g v\|_{L_x^2 \left(\mathcal{M}^3 \right)}^2
\le -\Re\int_{\mathcal{M}^3} h  \Delta_g \overline{v} \,\mathrm{d}\mu
\le \frac{1}{2} \|\Delta_g v\|_{L_x^2 \left(\mathcal{M}^3 \right)}^2 + \frac{1}{2} \|h\|_{L_x^2 \left(\mathcal{M}^3 \right)}^2.
\end{align*}
Absorbing the first term on the right hand side gives the bound for $\Delta_gv$.
The same nonnegative cross term also gives the stronger bound
stated above:
\[
\begin{aligned}
\|h\|_{L^2_x }^2 =\|\Delta_gv\|_{L^2_x }^2+  \left\| f(v) \right\|_{L_x^2}^2
 +2\Re\int_{\mathcal M^3}(-\Delta_g\bar v) f(v)\,\mathrm d \mu \ge\|\Delta_gv\|_{L_x^2}^2+ \left\|   v \right\|_{L_x^{2p}}^{2p} .
\end{aligned}
\]
\end{proof}

We are now in a position to prove
\begin{proposition}[Global well-posedness in $H_{x}^1$]\label{prop:global}
Let $$u\in C_t^0H_x^1\left( \left[0,T_{\mathrm{max}} \right[ \times\mathcal{M}^3 \right)\cap L_t^2H_x^2\left( \left[0,T_{\mathrm{max}} \right[ \times\mathcal{M}^3 \right)$$
be the maximal-lifespan solution of equation \eqref{eq1.CGL}. Then
the maximal lifespan satisfies  $T_{\mathrm{max}}=+\infty$.
\end{proposition}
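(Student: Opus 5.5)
The plan is to combine the blowup criterion of Lemma~\ref{lem:Blowupcriterion} with the dissipation identities of Proposition~\ref{pr2.12v70} and the elliptic estimate of Lemma~\ref{lem:monotonicity}. First, \eqref{eq_ener} and \eqref{eq_mass} give, for every $t<T_{\max}$,
\[
\|u(t)\|_{L_x^2}^2\le M(u_0),\qquad \tfrac12\|\nabla_g u(t)\|_{L_x^2}^2\le E(u_0),
\]
since both functionals are nonincreasing and $E$ is coercive in the defocusing case. Hence $\sup_{t<T_{\max}}\|u(t)\|_{H_x^1}\le C(\|u_0\|_{H^1})$. For $1<p<5$, the blowup criterion in Lemma~\ref{lem:Blowupcriterion} then forces $T_{\max}=+\infty$ immediately. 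Here $E(u_0)<\infty$ because $H^1\subset L^{p+1}$ by Theorem~\ref{th2.6v116}.

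For $p=5$, I argue by contradiction: assume $T_{\max}<\infty$, and show $\|u\|_{L_t^2H_x^2([0,T_{\max}[\times\mathcal M^3)}<\infty$. At each time, rewrite \eqref{eq1.CGL} as the elliptic equation
\[
-\Delta_g u+f(u)=h,\qquad h:=-z^{-1}\partial_t u .
\]
This is legitimate for a.e.\ $t$, since $u(t)\in H_x^2$ for a.e.\ $t$. Lemma~\ref{lem:monotonicity} then gives
\[
\|\Delta_g u(t)\|_{L_x^2}^2+\|u(t)\|_{L_x^{10}}^{10}\le |z|^{-2}\|\partial_t u(t)\|_{L_x^2}^2 .
\]
Integrating in time and using \eqref{eq_mass}, I get
\[
\int_0^{T_{\max}}\|\Delta_g u\|_{L_x^2}^2\,\mathrm dt\le |z|^{-2}\cdot\frac{|z|^2}{\Re z}E(u_0)=\frac{E(u_0)}{\Re z}.
\]
Combined with the uniform $L_x^2$ bound, this gives $\|u\|_{L_t^2H_x^2}^2\lesssim \int(\|\Delta_g u\|_{L^2}^2+\|u\|_{L^2}^2)\,\mathrm dt\le C(E(u_0)/\Re z+T_{\max}M(u_0))<\infty$. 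This contradicts Lemma~\ref{lem:Blowupcriterion}, so $T_{\max}=\infty$. The equivalence $\|u\|_{H^2}\sim\|\Delta_g u\|_{L^2}+\|u\|_{L^2}$ is standard elliptic regularity; on compact manifolds, or under \eqref{eq1.5v177}, it follows from Bochner's formula with Ricci bounded below.

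The main obstacle is justifying the dissipation identity \eqref{eq_mass} rigorously in the class $C_t^0H_x^1\cap L_t^2H_x^2$, i.e.\ knowing that $\partial_t u\in L_{t,x}^2$ on $[0,T]$ for every $T<T_{\max}$ so that the time-derivative computations make sense. I would handle this in two steps. On each compact subinterval, $u\in L_t^2H_x^2$, and $u\in L_t^\infty H^1\cap L_t^2H^2\subset L^{10}_{t,x}$ by the interpolation used in Lemma~\ref{lem:Blowupcriterion}; hence $f(u)\in L^2_{t,x}$ and $\partial_t u=z(\Delta_g u-f(u))\in L^2_{t,x}$. The energy identity then follows by approximating $u_0$ with smooth data, using the continuous dependence from Lemma~\ref{lem:Blowupcriterion}, or alternatively by a Lions--Magenes type chain rule in $W^{1,2}_tL^2_x\cap L^2_tH^2_x$. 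With that in place, the a priori bound above is uniform up to $T_{\max}$, which closes the argument.
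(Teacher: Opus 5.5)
Your proposal is correct and takes essentially the same route as the paper. For $1<p<5$, both use the uniform $H_x^1$ bound from the mass and energy dissipation identities together with the blowup criterion. For $p=5$, both apply Lemma~\ref{lem:monotonicity} with $h=-z^{-1}\partial_t u$, which gives $\|\Delta_g u\|_{L^2_{t,x}}^2\le E(u_0)/\Re z$; combined with $\|u\|_{L^2_{t,x}([0,T])}\le T^{1/2}\|u_0\|_{L^2}$, this bounds $\|u\|_{L_t^2H_x^2}$ up to $T_{\max}$.

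Your extra remarks address points the paper leaves implicit. These are the justification of the energy identity via $\partial_t u\in L^2_{t,x}$ on compact subintervals, and the norm equivalence $\|u\|_{H^2}\sim\|\Delta_g u\|_{L^2}+\|u\|_{L^2}$ via Bochner's formula under a Ricci lower bound.
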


\begin{proof}

(1) $p = 5$.
In view of Lemma \ref{lem:Blowupcriterion},
it suffices to verify that $\|u\|_{L_t^2H_x^2 \left([0,T [ \times  \mathcal{M}^3  \right)}$ remains bounded (with a bound depending on $T$) for every $T<T_{\max}$. From \eqref{eq_ener}, we obtain
\begin{align}\label{eq2.15v246}
\|u\|_{L_{t,x}^2  \left([0,T]\times  \mathcal{M}^3  \right)}\leq \|u_0\|_{L_x^2}T^{ \frac12}.
\end{align}
Meanwhile, \eqref{eq_mass} yields
\begin{align}\label{eq2.16v246}
\frac{\Re z}{|z|^2}\left\|\partial_tu\right\|_{L_{t, x}^2 \left([0,T]\times  \mathcal{M}^3  \right)}^2\leq E\left(u(0)\right).
\end{align}
By Lemma \ref{lem:monotonicity}, we have
\begin{align}\label{eq2.17v246}
 \|\Delta_gu\|_{L_{t, x }^2  \left([0,T]\times  \mathcal{M}^3  \right)}
\lesssim
\|\partial_tu\|_{L_{t,x}^2 \left([0,T]\times  \mathcal{M}^3  \right)}.
\end{align}
By \eqref{eq2.15v246}, \eqref{eq2.16v246}, and \eqref{eq2.17v246}, we deduce
\begin{align*}
\|u\|_{L_t^2H_x^2
\left([0,T]\times  \mathcal{M}^3  \right)}\leq C \left(\|u_0\|_{H_x^1} \right)\left(1+T^{ \frac12} \right)<\infty.
\end{align*}
As a consequence, we conclude $T_{\mathrm{max}}=+\infty$.

(2) $1 < p < 5$.
In this case, the uniform $H_x^1$ bound following from \eqref{eq_ener}--\eqref{eq_mass}, together with Lemma~\ref{lem:Blowupcriterion}, gives $T_{\mathrm{max}}=+\infty$.
\end{proof}

\begin{theorem}[Smoothing effect]\label{th2.6v107}
For any initial data $u_0 \in H_x^1 \left(\mathcal{M}^3 \right)$ and $t_0 > 0$,
the solution $u$ to the equation \eqref{eq1.CGL} with $p\in ]1,5]$ satisfies
\[
u \in C_t^0 H_x^2 \left([t_0, \infty[ \times \mathcal{M}^3 \right).
\]
Moreover, for any $T>t_0$,
$$ \sup_{t_0\leq t\leq T}\|u(t)\|_{H_x^2}\leq C \left(T, t_0, \|u_0\|_{H_x^1} \right).
$$
\end{theorem}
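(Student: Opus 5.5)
The plan is to use the equation itself to turn $H^2$ regularity into $L^2$ control of $\partial_t u$, and to get $\partial_t u\in L^\infty_tL^2_x$ away from $t=0$ by differentiating the equation in time and doing a weighted energy estimate. Proposition~\ref{prop:global} and the dissipation identities give
\[
\sup_{t\ge0}\|u(t)\|_{H^1}\le C(\|u_0\|_{H^1}),\qquad \|\partial_t u\|_{L^2_{t,x}([0,\infty[\times\mathcal M^3)}^2\le \tfrac{|z|^2}{\Re z}E(u_0),
\]
and, through Lemma~\ref{lem:monotonicity} applied at a.e.\ time with $h=-z^{-1}\partial_t u$, also $\|\Delta_g u\|_{L^2_{t,x}([0,T])}\lesssim \|\partial_t u\|_{L^2_{t,x}}$. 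The same lemma at a fixed time gives
\[
\|\Delta_g u(t)\|_{L^2}\le |z|^{-1}\|\partial_t u(t)\|_{L^2}.
\]
Together with the $L^2$ bound, this controls $\|u(t)\|_{H^2}$, since on a compact manifold $\|u\|_{H^2}\sim\|u\|_{L^2}+\|\Delta_g u\|_{L^2}$. So it suffices to show $\sup_{t_0\le t\le T}\|\partial_t u(t)\|_{L^2}<\infty$.

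Set $w=\partial_t u$, which solves the linearized equation
\[
\partial_t w - z\Delta_g w = -z\,\big(\partial_u f\, w + \partial_{\bar u} f\,\bar w\big),\qquad |\partial_u f|+|\partial_{\bar u}f|\lesssim |u|^{p-1}.
\]
Pairing with $\bar w$, taking real parts and multiplying by the weight $t$ gives
\[
\frac{d}{dt}\Big(\tfrac t2\|w\|_{L^2}^2\Big)+\Re z\, t\|\nabla_g w\|_{L^2}^2\le \tfrac12\|w\|_{L^2}^2 + C|z|\, t\int_{\mathcal M^3}|u|^{p-1}|w|^2\,\mathrm d\mu .
\]
The first term on the right is integrable in time by \eqref{eq_mass}. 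For the nonlinear term, use H\"older with $|u|^{p-1}\in L^{3/2}$ or better and $w\in L^6$ in part, to obtain
\[
\int |u|^{p-1}|w|^2 \le \|u\|_{L^{3(p-1)/2}}^{p-1}\|w\|_{L^2}\|w\|_{L^6}\le \varepsilon\|\nabla_g w\|^2_{L^2}+C_\varepsilon\big(1+\|u\|_{L^{3(p-1)/2}}^{2(p-1)}\big)\|w\|_{L^2}^2 .
\]
Choosing $\varepsilon$ small absorbs the gradient term, and Gr\"onwall then closes provided $\|u\|_{L^{3(p-1)/2}}^{2(p-1)}\in L^1_t([0,T])$. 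For $p<5$ this follows from interpolating $L^\infty_tH^1\cap L^2_tH^2$: the required $H^s$ regularity is $s=\tfrac{3}{2}-\tfrac{2}{p-1}$, and $(p-1)(2s-2)\le 2$ is exactly $p\le 5$ after checking exponents. Gr\"onwall yields $t\|w(t)\|_{L^2}^2\le C(T,\|u_0\|_{H^1})$, hence the bound on $[t_0,T]$ with constant blowing up like $t_0^{-1/2}$.

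The main obstacle is the endpoint $p=5$. There $\|u\|_{L^6}^{8}$-type quantities are not controlled by $L^\infty_tH^1$ alone, and the needed quantity, essentially $\|u\|_{L^\infty}^4$, is only known to be in $L^1_t$ by Gagliardo--Nirenberg $\|u\|_{L^\infty}^2\lesssim\|u\|_{H^1}\|u\|_{H^2}$. I would instead bound $\int|u|^4|w|^2\le \|u\|_{L^\infty}^4\|w\|_{L^2}^2$ and use $\|u\|_{L^\infty}^4\lesssim \|u\|_{H^1}^2\|u\|_{H^2}^2\in L^1_t([0,T])$, which follows from the $L^2_tH^2$ bound in Proposition~\ref{prop:global}. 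This keeps Gr\"onwall linear in $\|w\|^2$ with an $L^1_t$ coefficient, and the same choice works uniformly for all $1<p\le5$.

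The computation should be justified on smooth approximations: regularize $u_0$, use persistence of regularity for smooth data, and pass to the limit via continuous dependence in $C_tH^1\cap L^2_tH^2$. Continuity in time with values in $H^2$ on $[t_0,\infty[$ then follows from $w\in C_tL^2$ on $[t_0,T]$, which comes from the energy estimate applied to differences, together with the elliptic identity above. Lemma~\ref{lem:monotonicity}, applied to differences of the equation $-\Delta_g u+f(u)=-z^{-1}\partial_t u$, gives continuity of $\Delta_g u$ in $L^2$, with the difference of nonlinearities handled by the established $L^\infty_x$ bounds.
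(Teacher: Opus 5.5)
Your proof is correct, but it takes a different route from the paper's.

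\textbf{The paper's route.} It stays with the mild formulation. It restarts Duhamel's formula at some $t_1\in\,]0,t_0[$ with $u(t_1)\in H^2_x$ and splits into two cases.
\begin{itemize}
\item For $1<p<3$ it uses $\|e^{tz\Delta_g}F\|_{H^2}\lesssim(1+t^{-1/2})\|F\|_{H^1}$, $\|f(u)\|_{H^1}\lesssim\|u\|_{L^\infty}^{p-1}\|u\|_{H^1}$ and $u\in L^4_tL^\infty_x$. Pairing $|t-\tau|^{-1/2}$ with $\|u\|_{L^\infty}^{p-1}\in L^{4/(p-1)}_t$ by H\"older is what forces $p<3$.
\item For $3\le p\le5$ it runs Gr\"onwall directly on $\|u(t)\|_{H^2}$ via $\|f(u)\|_{H^2}\lesssim\|u\|_{L^\infty}^{p-1}\|u\|_{H^2}$.
\end{itemize}

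\textbf{Your route.} You estimate $t\|\partial_tu(t)\|_{L^2}^2$ through the linearized equation, then return to $\Delta_gu$ with Lemma~\ref{lem:monotonicity}. The key input shared with the paper is $\|u\|_{L^\infty}^{p-1}\in L^1_t([0,T])$. This follows from $L^\infty_tH^1_x\cap L^2_tH^2_x$ and the Gagliardo--Nirenberg (Agmon) inequality $\|u\|_{L^\infty}^2\lesssim\|u\|_{H^1}\|u\|_{H^2}$ precisely when $p\le5$.

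\textbf{What your route buys.}
\begin{itemize}
\item It needs only $|\partial_uf|+|\partial_{\bar u}f|\lesssim|u|^{p-1}$, so there is no case distinction.
\item It gives the explicit rate $\|\Delta_gu(t)\|_{L^2}\lesssim t^{-1/2}$, with a constant depending only on $T$ and $\|u_0\|_{H^1}$. The paper's bound passes through $\|u(t_1)\|_{H^2}$ at an unspecified $t_1$. To make that quantitative one should pick $t_1\in[t_0/2,t_0]$ by Chebyshev from the $L^2_tH^2_x$ bound of Proposition~\ref{prop:global}.
\item Compactness is not actually used, since $\|u\|_{H^2}\sim\|u\|_{L^2}+\|\Delta_gu\|_{L^2}$ for the spectrally defined $H^2$ on any manifold. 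This matters because the theorem is invoked on noncompact manifolds in Section~\ref{se5}.
\end{itemize}

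\textbf{What it costs.} Differentiating in time, and the vanishing of $t\|\partial_tu(t)\|_{L^2}^2$ as $t\to0$, require regularized data. You then need persistence of $H^2$ regularity, uniformity of the $L^2_tH^2_x$ bound in $\|u_0\|_{H^1}$, and a limiting argument. Continuity in $H^2_x$ also needs an extra step. For example, show $\partial_tu\in L^2_tH^1_x$ and $\partial_t^2u\in L^2_tH^{-1}_x$ on $[t_0,T]$, then apply Lions--Magenes. The paper gets all of this directly from Duhamel's formula in the local well-posedness class.

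\textbf{A correction in the branch you abandoned.} With $\partial_tu\in L^2\cap L^6$, H\"older forces $|u|^{p-1}\in L^3$, i.e.\ you need $\|u\|_{L^{3(p-1)}}$. This corresponds to $s=\frac32-\frac1{p-1}$, not $s_c$. With this exponent, $(p-1)(2s-2)=p-3\le 2$ is exactly $p\le5$. So the endpoint $p=5$ is not an obstacle for that route either. Your final $L^\infty$ argument is unaffected.
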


\begin{proof}
Fix \(t_0>0\) and \(T>t_0\).
 By Proposition~\ref{prop:global}, Sobolev and
interpolation, we deduce that
\[
  \partial_tu=z\bigl(\Delta_gu-f(u)\bigr) \in L^2_{t,x}\bigl([0,T+1]\times\mathcal M^3\bigr).
\]
Choose  \(t_1\in  ] 0,t_0 [ \) such that \(t_1\) is a Lebesgue point of
\(\partial_tu\) in \(L^2\) and \(u(t_1)\in H^2\).

By Duhamel's formula, we have
\begin{align*}
u(t)=e^{(t-t_1)z\Delta_g}u(t_1)-z\int_{t_1}^t e^{(t-\tau)z\Delta_g}f(u(\tau))\, \dd\tau.
\end{align*}
Since $u(t_1)\in H_x^2$, it suffices to show that $\int_{t_1}^t e^{(t-\tau)z\Delta_g}f(u(\tau)) \, \dd\tau  $ is bounded in $H_x^2$.
We distinguish two cases:

\noindent
$\bullet${\bf Case 1: $1<p<3$}

In this case, we apply
 Lemma \ref{le1.1v156} to get
\begin{align*}
\Big\|\int_{t_1}^t e^{(t-\tau)z\Delta_g}f(u(\tau)) \, \dd\tau \Big\|_{H_x^2}
\lesssim &  \int_{t_1}^t  \left(1+(t-\tau)^{-\frac{1}{2}} \right)\|f(u(\tau))\|_{H_x^1} \, \dd\tau.
\end{align*}
Since
$$ \|f(u)\|_{H_x^1}\lesssim \|u\|_{L_x^{\infty}}^{p-1}\|u\|_{H_x^1},
$$
by the H\"older inequality, we obtain that
\begin{align*}
\left\|\int_{t_1}^t e^{(t-\tau)z\Delta_g}f(u(\tau)) \, \dd\tau \right\|_{H_x^2}
\lesssim &  \left\|1+|t-\tau|^{-\frac{1}{2}} \right\|_{L_{\tau}^{\frac{4}{5-p}} \left([0,t] \times \mathcal{M}^3 \right) }
 \|u \|_{L_{\tau}^4L_x^{\infty} \left([0,t] \times \mathcal{M}^3 \right)}^{p-1} \|u \|_{L_{\tau}^{\infty}H_x^1 \left([0,t] \times \mathcal{M}^3 \right) }.
\end{align*}
Thanks to the fact that $p<3$, we have  $|t-\tau|^{- \frac12}\in L^{\frac{4}{5-p}}([0,t])$.

Next, from the interpolation inequality,
we have
\begin{align*}
\|u(\tau)\|_{L_{\tau}^4L_x^{\infty} \left([0,t] \times \mathcal{M}^3 \right)}^4\lesssim \|u \|_{L_{\tau}^{\infty}H_x^1 \left([0,t] \times \mathcal{M}^3 \right)}^2 \|u \|_{L_{\tau}^2H_x^2 \left([0,t] \times \mathcal{M}^3 \right) }^2.
\end{align*}
This implies that, for any $t>t_1$,
\begin{align*}
\Big\|\int_{t_1}^t e^{(t-\tau)z\Delta_g}f(u(\tau)) \, \dd\tau \Big\|_{H_x^2}
\leq C(p,t)\|u \|_{L_{\tau}^{\infty}H_x^1 \left([0,t] \times \mathcal{M}^3 \right) }^{\frac{p+1}{2}}\|u \|_{L_{\tau}^2H_x^2 \left([0,t] \times \mathcal{M}^3 \right)}^{\frac{p-1}{2}}.
\end{align*}

\noindent
$\bullet${\bf Case 2: $3\leq p\leq 5$}

In this case, we estimate straightforwardly
\begin{align*}
\Big\|\int_{t_1}^t e^{(t-\tau)z\Delta_g}f(u(\tau)) \, \dd\tau \Big\|_{H_x^2}\leq \int_{t_1}^t\|f(u(\tau)) \|_{H_x^2} \, \dd\tau.
\end{align*}
By the Gagliardo-Nirenberg inequality, we have
$$ \|f(u)\|_{H_x^2}\lesssim \|u\|_{L_x^{\infty}}^{p-1}\|u\|_{H_x^2}.
$$
Thus, we obtain the inequality
\begin{align*}
\|u(t)\|_{H_x^2}\leq \|u(t_1)\|_{H_x^2}+C\int_{t_1}^t \|u(\tau)\|_{L_x^{\infty}}^{p-1}\|u(\tau)\|_{H_x^2} \, \dd \tau.
\end{align*}
By the integral form of the Gr\"onwall inequality, we deduce that
\begin{align*}
\|u(t)\|_{H_x^2}\leq \|u(t_1)\|_{H_x^2}+C\|u(t_1)\|_{H_x^2}\int_{t_1}^t \|u(\tau)\|_{L_{x}^{\infty}}^{p-1}\exp\left(\int_{t_1}^t \|u(t')\|_{L_x^{\infty}}^{p-1} \, \dd t' \right)\dd \tau.
\end{align*}
Using the fact that
\begin{align*}
 \int_{t_1}^t\|u(\tau)\|_{L_x^{\infty}}^{p-1} \, \dd\tau
 \leq &C(p,t) \|u \|_{L_{\tau}^4L_{x}^{\infty}\left([0,t] \times \mathcal{M}^3 \right)}^{p-1}
 \leq  C(p,t)\|u \|_{L_{\tau}^{\infty}H_x^1  \left([0,t] \times \mathcal{M}^3 \right)}^{\frac{p - 1}{2}}
 \|u \|_{L_{\tau}^2H_x^2 \left([0,t] \times \mathcal{M}^3 \right)}^{\frac{p-1}{2}}
 \end{align*}
and Proposition \ref{pr2.12v70},
we complete the proof of Theorem \ref{th2.6v107}.
\end{proof}

\subsection{Existence and uniqueness of global weak solutions to defocusing NLS on general three-dimensional Riemannian manifolds}

In this subsection, we construct global weak solutions for $1<p\le5$ and prove weak-strong uniqueness in Theorem~\ref{th1.2v21} on three-dimensional Riemannian manifolds satisfying \eqref{eq1.5v177}.
Readers who are only interested in the CGL can skip this subsection and go directly to Section \ref{se3}.

\subsubsection{Existence of global weak solutions}

In this subsection, we show that the energy-subcritical and energy-critical defocusing nonlinear Schr\"odinger equations on arbitrary three-dimensional Riemannian manifolds satisfying \eqref{eq1.5v177} admit global weak solutions. To achieve this, we investigate the inviscid limit of solutions to the associated complex Ginzburg-Landau equation, an argument that relies heavily on uniform boundedness of CGL solutions.

We first provide an equivalent characterization of weak-${H}_{x}^1$
 solutions to the nonlinear Schr\"odinger equation \eqref{eq1.1v18}; a proof of this equivalence can be found, for example, in \cite{Ca03}.

 	\begin{definition}
A complex-valued function $v$ on a time interval $I \subseteq \mathbb{R}$ is a weak-$H_{x }^1$ solution to \eqref{eq1.1v18} if $v\in L^\infty_t H_x^{1}\left(I \times \mathcal{M}^3 \right)$, $\partial_tv \in L^\infty_t H_x^{-1}\left(I \times \mathcal{M}^3 \right)$, and \eqref{eq1.1v18} holds for almost every $t\in I$ in $H^{-1}_x$.
 	\end{definition}

 	\begin{proof}[Proof of Theorem \ref{th1.2v21} $1)$ ]
 		By Proposition \ref{prop:global}, there are global solutions $u_n$ of
 		\begin{align}\label{eq7.1v24}
 			\begin{cases}
 				\partial_t u_n - z_n \Delta_g u_n +  z_n f(u_n) = 0, \\
 				u_n(0) = v_0,
 			\end{cases}
 		\end{align}
where $z_n \in \mathbb{C}$, $\Re z_n > 0$, $|z_n | = 1$, and $z_n\to i$ as $n \to \infty$.
From Proposition \ref{prop:global}, we have $u_n \in L^\infty_t  H^1_x $ and $\pa_tu_n\in L^\infty_t H^{-1}_x$,
 with the uniform bound
 		\begin{align*}
 			\|u_n \|_{L^\infty_t H^1_x} + \left\|\pa_tu_n \right\|_{L^\infty_t H^{-1}_x} \le C \left(\|v_0\|_{H^1_x }\right).
 		\end{align*}
By Rellich's compactness theorem, this
yields compactness of $\{ u_n \}_{n = 0}^\infty $ in $C_t^0 L^2_{x,\mathrm{loc}}  \left([0,T] \times \mathcal{M}^3 \right)$.
To control the tails,
take a Lipschitz cutoff $\chi_R$ equal to zero on $B_R$ and one outside $B_{2R}$ with $|\nabla_g\chi_R|\le  \frac{ C}R$.
The localized mass dissipation identity gives
\[
\sup_{0\le t\le T}\int\chi_R |u_n(t)|^2\,\mathrm d\mu \le\int\chi_R|v_0|^2\,\mathrm d\mu+  \frac{ CT}R,
\]
uniformly in $n$. Hence, there is $v\in L_t^\infty H_x^1$ such that
\begin{align}\label{eq:weak_star}
u_n\to v \text{ in } C_t^0 L_x^2  \left([0,T] \times \mathcal{M}^3  \right)\text{ for every } T<\infty,
\quad u_n(t)\rightharpoonup v(t)\text{ in }H_x^1\text{ for every }t\ge 0.
\end{align}
After passing to a further subsequence, $u_n\to v$ almost everywhere in spacetime.
Multiplying both sides of \eqref{eq7.1v24} by $\varphi \in C_c^\infty \left(\mathcal{M}^3 \right)$ and $\psi \in  C^\infty_c ( \mathbb{R}_+)$, we get
 		\begin{align*}
 			-\int_0^\infty \left( \int_{\mathcal{M}^3 } \bar{z}_n   u_n  \varphi  \,\mathrm{d}{ \mu} \right) \psi_t \, \mathrm{d}t
 + \int_0^\infty \left(\int_{\mathcal{M}^3 }
 			\left\langle \nabla_g u_n  ,  \nabla_g  \varphi \right\rangle_g  \,\mathrm{d}{ \mu}\right) \psi \,  \mathrm{d}t = - \int_0^\infty \left(\int_{\mathcal{M}^3 } f(u_n)     \varphi \,\mathrm{d}{ \mu}\right) \psi \, \mathrm{d}t .
 		\end{align*}
 		Taking $n\to \infty$ and using the weak convergence in \eqref{eq:weak_star}, we obtain
 		\begin{align*}
 			\int_0^\infty \left(\int_{\mathcal{M}^3 } \bar{z}_n   u_n  \varphi \,\mathrm{d}{ \mu}\right) \psi_t \, \mathrm{d}t
 			\xrightarrow{n\to \infty} - \int_0^\infty \left(\int_{\mathcal{M}^3 } i v \varphi \,\mathrm{d}{ \mu} \right) \psi_t \, \mathrm{d}t,
 		\end{align*}
 		and
 		\begin{align*}
 			\int_0^\infty \left(\int_{\mathcal{M}^3 }
 			\langle \nabla_g  u_n  , \nabla_g  \varphi \rangle_g  \,\mathrm{d}{ \mu} \right) \psi \, \mathrm{d}t \xrightarrow{n\to \infty} \int_0^\infty \left(\int_{\mathcal{M}^3 }
 			\left\langle \nabla_g  v ,  \nabla_g \varphi \right\rangle_g  \,\mathrm{d}{ \mu} \right) \psi \, \mathrm{d}t.
 		\end{align*}
 Note that $  f(u_n) \varphi\psi$ is compactly supported and thus is bounded in $L^\infty_t L^{ \frac{p+1}p }_x$. Then by the almost everywhere convergence in \eqref{eq:weak_star}, we have that $  f(u_n) \varphi \psi \to f(v)   \varphi \psi$ for almost every $(t, x)$,
 which  guarantees that $    f(u_n)  \varphi \psi \to   f(v)  \varphi \psi$ in $L^1_{t,x}$ as $n\to \infty$.

Consequently, by taking $n \to \infty $, we obtain
 		\begin{align*}
 			\int_0^\infty\int_{\mathcal{M}^3 } i v \varphi \psi_t \,\mathrm{d}{ \mu} \mathrm{d}t + \int_0^\infty\int_{\mathcal{M}^3 }
 			\left\langle \nabla_g v ,  \nabla_g  \varphi \right\rangle_g  \psi\,\mathrm{d}{ \mu} \mathrm{d}t
 = -  \int_0^\infty\int_{\mathcal{M}^3 }    f(v) \varphi \psi\,\mathrm{d}{ \mu} \mathrm{d}t .
 		\end{align*}
Hence $v$ satisfies
 		\begin{equation}\label{eq7.2v87}
 			iv_t +  \Delta_g v =  f(v)
 		\end{equation}
in the sense of distributions.

By the lower semicontinuity of the $L^2$-norm and the weak convergence in \eqref{eq:weak_star}, we have, for every $t\in\mathbb{R}_+$,
\begin{align}\label{est:normdecrease_1}
 			\|\nabla_g  v(t)\|_{L_x^2} \le \liminf_{n \to\infty}  \left\|\nabla_g  u_{n } (t) \right\|_{L_x^2} .
 		\end{align}
For each fixed $t$, the same $H^1$ bound and strong $L^2$ convergence imply weak convergence in $L^{p+1}$.
By Fatou's lemma, we have
\begin{align}\label{est:normdecrease_2}
\int_{\mathcal M^3}|v(t)|^{p+1}\,\mathrm d\mu
\le\liminf_{n\to\infty}\int_{\mathcal M^3}|u_n(t)|^{p+1}\,\mathrm d\mu.
\end{align}
Combining \eqref{est:normdecrease_1} and \eqref{est:normdecrease_2}, we have
 		\begin{align*}
 			\begin{aligned}
 				E(v(t)) \le \liminf_{n \to\infty} E \left(u_{n }(t) \right) \le E(v_0).
 			\end{aligned}
 		\end{align*}
 		On each $[0,T]$, the bounds $v\in L_t^2H_x^1$ and $\partial_tv\in L_t^2H_x^{-1}$ give
 		\begin{align*}
 \|v(t)\|_{L_x^2}^2 = \|v(s)\|_{L_x^2}^2 + 2\Re\int_s^t \left< \pa_\tau v (\tau), v(\tau)\right>_{H_x^{-1}, H_x^1} \,\mathrm{d}\tau, \quad 0\le s \le t \le T,
 		\end{align*}
which implies $v \in C_t^0 L_x^2 \left([0, T] \times \mathcal{M}^3\right)$.

Multiplying both sides of \eqref{eq7.2v87} by $i\bar{v}$ and integrating over $\mathcal{M}^3$, we obtain the mass conservation.
Thus, we have obtained a global weak solution of \eqref{eq1.1v18} on $ \mathbb{R}_+ $.
Apply the same construction forward in time to the datum $\overline{v}_0 $.
If its solution is $v_+$, define $v(t)=\overline{v_+(-t)}$ for $t<0$.
This solves the NLS equation backward in time, matches $v_0$ at zero, and has the same mass and energy bounds.
 	\end{proof}

\subsubsection{Weak-strong uniqueness}
In this subsection, we show that a weak solution coincides with a strong
solution from the same initial datum, under the regularity assumptions
in Theorem~\ref{th1.2v21}.

\begin{proof}[Proof of Theorem \ref{th1.2v21} 2)]
We first consider $1<p\le2$.
Write $w=v-\tilde v$. The power nonlinearity satisfies
\[
\left|\Im\left(( f(\eta+w)-f(\eta))\bar w\right)\right| \le C_p|\eta|^{p-1}|w|^2.
\]
Indeed, if $|w|\le  \frac{ |\eta|}2$, this follows from the local derivative bound for $f$.
If $|w|> \frac{ |\eta|}2$, subtract the real quantity $f(w)\bar w$.
The remaining terms have absolute value at most $C_p \left( |\eta||w|^p+|\eta|^p|w| \right)$,
which is bounded by the displayed right hand side because $p\le2$ and $|\eta|<2|w|$.
By direct computation, we have
\[
\frac{\mathrm d}{\mathrm dt} \left( \|w(t)\|_{L_x^2}^2  \right)
\le C_p  \left\|\tilde v(t) \right\|_{L_x^\infty}^{p-1}\|w(t)\|_{L_x^2}^2.
\]
Since $\tilde v\in C_t^0 H_x^2\hookrightarrow C_t^0 L_x^\infty$,
by Gr\"onwall's inequality and $w(0)=0$, uniqueness holds on every bounded forward interval.
Negative times follow by conjugate time reversal.

For the remaining powers we prove the following stability result,
which directly implies weak-strong uniqueness.
\begin{lemma}\label{thm:stability}
Let $2\le p\le5$.
Suppose that $\tilde v\in C_t^0H_x^3 \left(\mathbb R\times\mathcal M^3 \right)$ is a strong solution to \eqref{eq1.1v18}.
Let $v$ be a global weak solution with initial datum $v_0$, satisfying mass conservation
\begin{align}\label{eq2.22v246}
M(v(t)) = M(v_0)
\end{align}
and the energy inequality
\begin{align}\label{eq2.23v246}
E(v(t))\le E(v_0),\quad t\in\mathbb R.
\end{align}
Fix $T>0$ and $R>0$, and assume $\|v_0\|_{H^1}\le R$.
Then, with $w=v-\tilde v$, there is
$C=C(\tilde v,T,R)>0$ such that
\[
\|w(t)\|_{H_x^1}\le C\|w(0)\|_{H_x^1},\qquad |t|\le T.
\]
\end{lemma}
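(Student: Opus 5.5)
We plan a relative energy (modulated energy) argument: only the conserved or dissipated quantities of the weak solution $v$ are used, while the strong solution $\tilde v$ supplies all the differentiation. The quantity to control is
\[
\mathcal H(t):=\tfrac12\|\nabla_g w(t)\|_{L^2}^2+\|w(t)\|_{L^2}^2+\int_{\mathcal M^3}\Bigl(F(v)-F(\tilde v)-\Re\bigl(f(\tilde v)\overline{w}\bigr)\Bigr)\,\mathrm d\mu ,
\qquad F(u)=\tfrac{1}{p+1}|u|^{p+1}.
\]
Since $F$ is convex, the last integrand is nonnegative, so $\mathcal H(t)\ge \tfrac12\|w(t)\|_{H^1}^2$. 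Conversely, $\mathcal H(0)\lesssim_{R,\tilde v}\|w(0)\|_{H^1}^2$: the Bregman term is bounded by $(|v|^{p-1}+|\tilde v|^{p-1})|w|^2$, and we control it with H\"older, the Sobolev embedding (Theorem~\ref{th2.6v116}) for $p\le5$, and $\tilde v\in L^\infty$.

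Next we expand $\mathcal H(t)$ as
\[
\mathcal H(t)=E(v)+\|v\|_{L^2}^2-E(\tilde v)-\|\tilde v\|_{L^2}^2 - \Re\bigl(\nabla_g \tilde v,\nabla_g w\bigr)-2\Re(\tilde v,w)-\Re\int f(\tilde v)\overline w\,\mathrm d\mu .
\]
By \eqref{eq2.22v246} and \eqref{eq2.23v246}, together with the exact conservation laws for $\tilde v$, the first four terms satisfy $\le E(v_0)+M(v_0)-E(\tilde v_0)-M(\tilde v_0)$. The linear terms can be written as $-\Re\langle \mathcal L\tilde v, w\rangle$ with $\mathcal L\tilde v:=-\Delta_g\tilde v+f(\tilde v)+2\tilde v$. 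This pairing is legitimate because $\tilde v\in H^3$ puts $\mathcal L\tilde v$ in $H^1$. We then differentiate $t\mapsto \Re\langle \mathcal L\tilde v,w\rangle$ in time, using the weak formulation of $v$ (valid in $H^{-1}$) and the strong equation for $\tilde v$. The Schrödinger structure makes the linear part $\Re\langle \mathcal L\tilde v, i\mathcal L\tilde v\rangle$ vanish. The surviving terms are quadratic in $w$:
\[
\Re\bigl\langle \partial_t(f(\tilde v)),\,w\bigr\rangle\text{-type terms}\quad\text{and}\quad \Re\bigl\langle \mathcal L\tilde v,\ -i\bigl(f(v)-f(\tilde v)-f'(\tilde v)w\bigr)\bigr\rangle\text{-type terms}.
\]
Integrating in time then gives
\[
\mathcal H(t)\le C\mathcal H(0)+\int_0^t \mathcal R(s)\,\mathrm ds ,
\]
where $\mathcal R$ gathers these second-order remainders.

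The main obstacle is bounding $\mathcal R\lesssim_{\tilde v,R}\mathcal H$ pointwise in time. The difficulty is the Taylor remainder $f(v)-f(\tilde v)-f'(\tilde v)w$, which is of size $|\tilde v|^{p-2}|w|^2+|w|^p$, paired against $\mathcal L\tilde v$. This is where the hypotheses enter: $p\ge2$ is needed for the Taylor expansion to hold at second order, and $\tilde v\in H^3$ gives $\mathcal L\tilde v\in H^1\cap L^\infty$ (possibly after using the equation and $H^3\hookrightarrow W^{1,\infty}$-type bounds, or at least $\nabla\mathcal L\tilde v\in L^2$). The term $|w|^p$ with $p$ up to $5$ cannot be bounded by $\|w\|_{H^1}^2$ alone. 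For it we plan to use the uniform bound $\|w\|_{H^1}\le C(R,\tilde v)$, which follows from the energy inequality, to write $|w|^p\le |w|^2|w|^{p-2}$, and then estimate $\int |\mathcal L\tilde v|\,|w|^{p}$. For $p<5$ this follows from Sobolev and the a priori $H^1$ bound: we get $\int |\mathcal L\tilde v||w|^{p}\lesssim \|\mathcal L\tilde v\|_{L^\infty}\|w\|_{L^{p}}^{p}\lesssim \|w\|_{H^1}^2$. At $p=5$ we would instead absorb the term using the Bregman part of $\mathcal H$, which dominates $\int |w|^{p+1}$ when $|w|\gg|\tilde v|$, combined with $L^\infty$ bounds on $\mathcal L\tilde v$. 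Granting $\mathcal R\le C\mathcal H$, Gr\"onwall's inequality yields $\mathcal H(t)\le Ce^{CT}\mathcal H(0)$, and hence $\|w(t)\|_{H^1}\le C\|w(0)\|_{H^1}$. Negative times are handled by conjugate time reversal.
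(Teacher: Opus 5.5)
Your plan is the same relative-energy argument as the paper's. The paper writes $E(v)=E(\tilde v)+I(t)+II(t)$, where $II$ is your Bregman quantity without the mass term and $I(t)=\Re\int(-\Delta_g\tilde v+f(\tilde v))\overline w\,\mathrm d\mu$. It differentiates $I$ to obtain the Taylor remainder paired with $i\partial_t\tilde v=-\Delta_g\tilde v+f(\tilde v)$, and it controls $\|w\|_{L^2}^2$ by a separate Gr\"onwall estimate.

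Folding the mass into the functional, as you do, is a harmless and slightly cleaner variant. One gets exactly
\[
\frac{\mathrm d}{\mathrm dt}\Re\langle \mathcal L\tilde v,w\rangle=-\Im\int_{\mathcal M^3}\mathcal L\tilde v\,\overline{\bigl(f(v)-f(\tilde v)-f'(\tilde v)w\bigr)}\,\mathrm d\mu ,
\]
so no separate $L^2$ estimate is needed. Your description of the surviving terms is inaccurate, though. The term $\Re\langle\partial_t f(\tilde v),w\rangle=\Re\langle \partial_t\tilde v,f'(\tilde v)w\rangle$ is linear in $w$ and does not survive on its own. It combines with $f(v)-f(\tilde v)$ coming from $w_t$ to form the Taylor remainder, and every term linear in $w$ cancels.

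One step fails as written. $\tilde v\in C_t^0H_x^3$ does not put $\mathcal L\tilde v$ in $L^\infty$ in dimension three. $\Delta_g\tilde v$ is only in $H^1\hookrightarrow L^6$, and rewriting $\mathcal L\tilde v=i\partial_t\tilde v+2\tilde v$ does not help. So the bound $\int|\mathcal L\tilde v|\,|w|^p\lesssim\|\mathcal L\tilde v\|_{L^\infty}\|w\|_{L^p}^p$ is not justified, and neither is your proposed absorption at $p=5$, which also uses $L^\infty$ bounds on $\mathcal L\tilde v$.

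Your hedge ($\nabla\mathcal L\tilde v\in L^2$) is exactly what is needed. Use the H\"older estimate the paper uses, with only $\mathcal L\tilde v\in L^6$:
\[
\int_{\mathcal M^3}|\mathcal L\tilde v|\bigl(|\tilde v|^{p-2}|w|^2+|w|^p\bigr)\,\mathrm d\mu
\le\|\mathcal L\tilde v\|_{L^6}\Bigl(\|\tilde v\|_{L^\infty}^{p-2}\|w\|_{L^{12/5}}^2+\|w\|_{L^{6p/5}}^p\Bigr)\lesssim_{\tilde v,R}\|w\|_{H^1}^2 .
\]
This holds for every $2\le p\le5$ because $\frac{6p}{5}\le6$. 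The a priori bound $\sup_{|t|\le T}\|w(t)\|_{H^1}\le C(R,\tilde v)$ then reduces $\|w\|_{H^1}^p$ to $\|w\|_{H^1}^2$. In particular, the endpoint $p=5$ needs no special treatment. With this correction you get $\mathcal H(t)\le\mathcal H(0)+C\int_0^t\mathcal H(s)\,\mathrm ds$, and Gr\"onwall closes as you intend.
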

\begin{proof}
Without loss of generality, we consider forward times $t\ge0$.
The difference $w$ satisfies
\[
\begin{cases}
i \partial_t w +\Delta_gw=-f \left(\tilde v \right)+f \left(w+\tilde v \right),\\
w(0)=v_0-\tilde v_0.
\end{cases}
\]
We can expand $E(v)$ as
\begin{align}\label{eq:decompose}
E(v)=E \left(\tilde v \right)+I(t)+II(t),
\end{align}
where
\[
I(t)=\Re\int_{\mathcal M^3}
\left(\left\langle\nabla_g\tilde v,\overline{\nabla_gw}\right\rangle_g
+f \left(\tilde v \right)\bar w\right)\,\mathrm d\mu,
\]
and
\[
II(t)=\int_{\mathcal M^3}\left(\frac12 |\nabla_gw|^2
+F \left(w+\tilde v \right)-F \left(\tilde v \right)-\Re \left(f \left(\tilde v \right)\bar w \right)\right)\,\mathrm d\mu
\text{ with }  F( u )=\frac{| u |^{p+1}}{p+1}.
\]
By \eqref{eq2.22v246}, \eqref{eq2.23v246},
and Sobolev embedding, we have a uniform $H^1$ bound for $v$ and $w$ on $[0,T]$,
depending only on $R$ and $\tilde v$. All constants below may depend on these bounds.

Notice first that
\begin{align*}
\frac{\mathrm d}{\mathrm dt} \left( \|w (t) \|_{L_x^2}^2 \right)
=2\Re\int_{\mathcal M^3}
\overline{ \left( f \left(\tilde v+w \right)-f \left(\tilde v \right) \right) }\,iw\,\mathrm d\mu
 \le C_1\int_{\mathcal M^3} \left(|w|^2+F(w) \right)\,\mathrm d\mu
\le C_1'\|w\|_{L_x^2}^2+C_2\|\nabla_gw\|_{L_x^2}^2.
\end{align*}
By Gr\"onwall's inequality, we obtain
\begin{align}\label{est:w_l2norm}
\|w(t)\|_{L_x^2}^2\le e^{C_1't}\|w(0)\|_{L_x^2}^2 +C_2\int_0^t e^{C_1'(t- \tau )} \|\nabla_gw( \tau )\|_{L_x^2}^2\,\mathrm d \tau .
\end{align}
To justify the estimate for $I$, we write
\[
I(t)=\Re\int_{\mathcal M^3}
\left(-\Delta_g\tilde v+f \left(\tilde v \right)\right)\overline{w}\,\mathrm d\mu.
\]
By a simple calculation, we get
\[
\begin{aligned}
I'(t)=-\Re\int_{\mathcal M^3}\tilde v_t\,
\Bigl(
&\overline{f \left(\tilde v+w \right)-f \left(\tilde v \right)}
-\left|\tilde v\right|^{p-1}\overline{w} -(p-1)\left|\tilde v\right|^{p-3}\overline{\tilde v}\,
\Re\left(\overline{\tilde v}\,w\right)
\Bigr)\,\mathrm d\mu.
\end{aligned}
\]
For $2\le p\le5$, the power nonlinearity satisfies
\[
\begin{aligned}
\Bigl|f \left(\tilde v+w \right)-f \left(\tilde v \right)
&-\left|\tilde v\right|^{p-1}w
-(p-1)\left|\tilde v\right|^{p-3}\tilde v\,
\Re\left(\overline{\tilde v}\,w\right)\Bigr| \le C_p\left( \left|\tilde v \right|^{p-2}|w|^2+|w|^p\right).
\end{aligned}
\]
Since $\tilde v_t\in C^0_t H_x^1 $ and $\tilde v\in C_t^0 L_x^\infty$, the H\"older and Sobolev inequalities give
\[
\left|I'(t) \right|\lesssim \|w\|_{ L_x^{ \frac{ 12}5}}^2+\|w\|_{  L_x^{ \frac{ 6p}5}}^p  \lesssim  \|w\|_{H_x^1}^2.
\]
Consequently, using \eqref{est:w_l2norm}, we obtain
\begin{align}\label{est:i_0-i_t}
\begin{aligned}
\left| I(0)-I(t) \right|
\lesssim \int_0^t\|w( \tau )\|_{H_x^1}^2\,\mathrm d \tau
\lesssim  \|w(0)\|_{L_x^2}^2+ \int_0^t\|\nabla_gw( \tau )\|_{L_x^2}^2\,\mathrm d \tau .
\end{aligned}
\end{align}
Since $E \left(\tilde v(t) \right)=E \left(\tilde v_0 \right)$ and $E(v(t))\le E(v_0)$,
the expansion \eqref{eq:decompose} implies
\begin{align}\label{est:v_t}
0\le E(v_0)-E(v(t))=I(0)-I(t)+II(0)-II(t).
\end{align}
The uniform convexity of $F$ gives
\begin{align}\label{est:ii_t}
II(t)\ge\frac12\|\nabla_gw(t)\|_{L_x^2}^2 +c_p\|w(t)\|_{ L_x^{ p+1 } }^{p+1}\ge0,
\end{align}
where $c_p$ is some fixed positive constant.
Meanwhile, Taylor's formula, the boundedness of $\tilde v_0$, and Sobolev embedding give
\begin{align}\label{est:ii_0}
\begin{aligned}
II(0) \lesssim \|\nabla_gw(0)\|_{L_x^2}^2+\|w(0)\|_{L_x^2}^2 +\|w(0)\|_{ L_x^{p+1 } }^{p+1} \lesssim_{\tilde{v}_0 }
  \|w(0)\|_{H_x^1}^2.
\end{aligned}
\end{align}
Combining \eqref{est:w_l2norm}, \eqref{est:i_0-i_t}, \eqref{est:v_t}, \eqref{est:ii_t}, and \eqref{est:ii_0}, we conclude
\[
\begin{aligned}
\|\nabla_g w(t)\|_{L_x^2}^2+\|w(t)\|_{L_x^2}^2
& \lesssim I(0)-I(t)+II(0)+\|w(t)\|_{L_x^2}^2 \\
& \lesssim  \|w(0)\|_{H^1_x }^2 + \int_0^t\left(\|\nabla_gw( \tau )\|_{L_x^2}^2+\|w( \tau )\|_{L_x^2}^2\right)\,\mathrm d \tau .
\end{aligned}
\]

By Gr\"onwall's inequality, we get the claimed estimate.
Applying the same argument to $\overline{v(-t)}$ and $\overline{\tilde v(-t)}$ gives the estimate at negative times.
\end{proof}

Taking $v_0=\tilde v_0$ in
Lemma \ref{thm:stability} proves weak-strong
uniqueness for $2<p\le5$ with an $H^3$ comparison solution.
Together with the first part, this completes the proof.
\end{proof}

\section{Decay estimates and higher-order energy analysis}\label{se3}

In this section, we derive quantitative decay estimates for the fluctuation component of the solution and establish the higher-order regularity required for the proof of Theorem \ref{th2v79}.
The central ingredient is a modified $H_x^2$-energy functional that captures the dissipative structure of the complex Ginzburg-Landau equation on $\mathcal M^3$. This functional allows us to overcome the lack of coercivity of the standard $H_x^2$-norm and provides a unified framework for deriving both polynomial and exponential decay estimates.

\subsection{The decay of the mass and energy functionals}\label{decay_ME}

From Proposition \ref{pr2.12v70}, we have
\begin{align}\label{eq3.1v118}
\frac{\mathrm d}{\mathrm dt}M\left(u\left(t\right)\right)=-2\Re z E_1\left(u\left(t\right)\right),
\end{align}
where
\begin{align*}
E_1\left(u\left(t\right)\right):=\|\nabla_gu\left(t\right)\|_{L_x^2}^2+\|u\left(t\right)\|_{L_x^{p+1}}^{p+1} ,
\end{align*}
and
\begin{align}\label{eq3.2v118}
\frac{\mathrm d}{\mathrm dt}E\left(u\left(t\right)\right)=-\frac{\Re z}{|z|^2}\|\partial_tu\left(t\right)\|_{L_x^2}^2.
\end{align}
Since the manifold is compact, the Cauchy-Schwarz inequality gives rise to
\begin{align*}
\|u\left(t\right)\|_{L_x^2}\lesssim \|u\left(t\right)\|_{L_x^{p+1}}.
\end{align*}
We then deduce from the above inequality and \eqref{eq3.1v118} that
\[
\frac{\mathrm d}{\mathrm dt} M\left(u\left(t\right)\right)\le -CM\left(u\left(t\right)\right)^{\frac{p+1}{2}},
\]
where $C$ is a large positive constant.

Hence,
\begin{align}\label{inq_fprime}
\frac{\mathrm d}{\mathrm dt} \left( M\left(u\left(t\right)\right)^{-\frac{p-1}{2}} \right)
=-\frac{p-1}{2}M\left(u\left(t\right)\right)^{-\frac{p+1}{2}}\frac{\mathrm d}{\mathrm dt}M\left(u\left(t\right)\right)\ge c>0,
\end{align}
where $c$ is a positive constant.

Integrating \eqref{inq_fprime} with respect to time, we obtain
\[
M\left(u\left(t\right)\right)^{-\frac{p-1}{2}}\ge M\left(u\left(0\right)\right)^{-\frac{p-1}{2}}+ct,
\]
and therefore we get the preliminary decay estimate
\begin{equation}\label{eq:initial-mass-decay}
M\left(u\left(t\right)\right)\lesssim_{\|u_0\|_{L^2}}\left(1+t\right)^{-\frac{2}{p-1}}.
\end{equation}
Together with \eqref{eq3.1v118}, this implies that
\[
M\left(u\left(t\right)\right)=2\Re z\int_t^{+\infty}E_1\left(u\left(\tau\right)\right)\,\mathrm d\tau.
\]
Due to the equivalence between $E_1$ and $E$, we deduce from \eqref{eq:initial-mass-decay} that
\[
\int_t^{+\infty}E\left(u\left(\tau\right)\right)\,\mathrm d\tau\le CM\left(u\left(t\right)\right)\lesssim_{\|u_0\|_{L^2}}\left(1+t\right)^{-\frac{2}{p-1}}.
\]
By the monotonicity of $E$, which follows from \eqref{eq3.2v118}, we deduce that for any dyadic integer $N$,
\[
N^{-\frac{2}{p-1}}\gtrsim\int_N^{2N}E\left(u\left(t\right)\right)\,\mathrm dt\ge E\left(u\left(2N\right)\right)\cdot N,
\]
which implies that for $t\ge1$,
\[
E\left(u\left(t\right)\right)\lesssim_{\|u_0\|_{H^1}}t^{-1-\frac{2}{p-1}}.
\]
This implies
\[
\|u\left(t\right)\|_{L_x^{p+1}}\lesssim_{\|u_0\|_{H^1}}t^{-\frac{1}{p-1}}.
\]
In summary, we have proved:
\begin{proposition}[Energy decay]\label{Energydecay}
For any $t\ge1$, the solution $u\left(t,x\right)$ to the equation \eqref{eq1.CGL} satisfies the following decay rates:
\begin{align}\label{eq3.4v247}
\|u\left(t\right)\|_{H_x^1}&\lesssim_{\|u_0\|_{H^1}}t^{-\frac{1}{p-1}},\quad
\|u\left(t\right)\|_{L_x^{p+1}}\lesssim_{\|u_0\|_{H^1}}t^{-\frac{1}{p-1}},
\end{align}
and
\begin{align*}
\|\nabla_gu\left(t\right)\|_{L_x^2} \lesssim_{\|u_0\|_{H^1}}t^{- \frac12-\frac{1}{p-1}}.
\end{align*}
In particular, we get
\[
E\left(u\left(t\right)\right)\lesssim_{\|u_0\|_{H^1}}t^{-\frac{p+1}{p-1}}.
\]
\end{proposition}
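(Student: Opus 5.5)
The argument combines the mass dissipation identity \eqref{eq3.1v118} with the energy dissipation identity \eqref{eq3.2v118}, using compactness of $\mathcal M^3$ to close a differential inequality for the mass. Everything rests on the fact that $\mathrm{Vol}_g(\mathcal M^3)<\infty$.

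\textbf{Step 1: mass decay.} By H\"older's inequality on the finite-volume manifold, $M(u)\le \mathrm{Vol}_g(\mathcal M^3)^{\frac{p-1}{p+1}}\|u\|_{L^{p+1}}^2$. Hence $\|u\|_{L^{p+1}}^{p+1}\ge c\,M(u)^{\frac{p+1}{2}}$, and \eqref{eq3.1v118} yields
$$\frac{\mathrm d}{\mathrm dt}M\le -2\Re z\, c\, M^{\frac{p+1}{2}}.$$
Integrating the resulting inequality for $M^{-\frac{p-1}{2}}$ gives $M(u(t))\lesssim (1+t)^{-\frac{2}{p-1}}$, with a constant independent of the data for $t\ge1$.

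\textbf{Step 2: averaged energy decay.} Integrating \eqref{eq3.1v118} from $t$ to $\infty$ (using $M\ge0$) gives
$$\int_t^\infty E_1(u(\tau))\,\mathrm d\tau\le \frac{M(u(t))}{2\Re z}.$$
Since $E\le E_1\le (p+1)E$, the same bound holds for $\int_t^\infty E$.

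\textbf{Step 3: pointwise energy decay.} By \eqref{eq3.2v118}, $E$ is nonincreasing, so
$$\tfrac t2\,E(u(t))\le \int_{t/2}^t E(u(\tau))\,\mathrm d\tau\lesssim t^{-\frac{2}{p-1}}.$$
This gives $E(u(t))\lesssim t^{-\frac{p+1}{p-1}}$ for $t\ge1$.

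\textbf{Step 4: conclusions.} The bound on $E$ immediately yields
$$\|\nabla_g u\|_{L^2}\lesssim t^{-\frac12-\frac1{p-1}},\qquad \|u\|_{L^{p+1}}^{p+1}\lesssim t^{-\frac{p+1}{p-1}},$$
so $\|u\|_{L^{p+1}}\lesssim t^{-\frac1{p-1}}$. Combining Step 1 ($\|u\|_{L^2}\lesssim t^{-\frac1{p-1}}$) with the gradient bound, which decays faster for $t\ge1$, gives $\|u\|_{H^1}\lesssim t^{-\frac1{p-1}}$.

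The only delicate point is the justification of the identities for $H^1$ data; this follows from Proposition~\ref{pr2.12v70}, or from smoothing for $t>0$. The constants need $\|u_0\|_{H^1}$ only through the finiteness of $E(u_0)$ and $M(u_0)$.
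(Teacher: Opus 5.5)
Your proof is correct and follows the paper's route: H\"older's inequality on the compact manifold closes the differential inequality for $M$, integrating the mass identity bounds $\int_t^\infty E$, and the monotonicity of $E$ on $[t/2,t]$ gives the pointwise decay (the paper uses dyadic intervals $[N,2N]$ at this step). Two small differences are improvements over the paper's write-up: in Step 2 you use $M\ge 0$ instead of the equality $M(t)=2\Re z\int_t^\infty E_1$, and you explicitly derive the $H^1$ bound from the $L^2$ decay, which the paper leaves implicit.
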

This, together with \eqref{eq3.2v118}, gives
\begin{corollary}\label{partial_tuapriori}
For any $t\ge1$, there holds
\[
\int_t^{+\infty}\|\partial_su\left(s\right)\|_{L_x^2}^2\,\mathrm ds\lesssim_{\|u_0\|_{H^1}}t^{-1-\frac{2}{p-1}},
\]
and
\[
\int_t^{+\infty}\|\nabla_gu\left(\tau\right)\|_{L_x^2}^2\,\mathrm d\tau\lesssim_{\|u_0\|_{L^2}}t^{-\frac{2}{p-1}}.
\]
\end{corollary}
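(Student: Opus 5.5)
Both bounds in Corollary~\ref{partial_tuapriori} come from integrating the two dissipation identities of Proposition~\ref{pr2.12v70} on the half-line $[t,+\infty[$ and then inserting the decay rates already obtained in Subsection~\ref{decay_ME}. No new estimate is needed; the only point is to use the right identity for each bound.

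For the first bound, I integrate \eqref{eq3.2v118}, which is \eqref{eq_mass}, on $[t,T]$:
\[
\frac{\Re z}{|z|^2}\int_t^T \|\partial_s u(s)\|_{L_x^2}^2\,\mathrm ds = E(u(t)) - E(u(T)) \le E(u(t)).
\]
This uses $E\ge 0$, which holds because the equation is defocusing. Letting $T\to+\infty$ with monotone convergence gives
\[
\int_t^{+\infty}\|\partial_s u\|_{L_x^2}^2\,\mathrm ds \le \frac{|z|^2}{\Re z}\,E(u(t)).
\]
Proposition~\ref{Energydecay} gives $E(u(t))\lesssim_{\|u_0\|_{H^1}} t^{-\frac{p+1}{p-1}}$ for $t\ge1$. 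Since $\frac{p+1}{p-1}=1+\frac{2}{p-1}$, this is exactly the claimed rate.

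For the second bound, I integrate \eqref{eq3.1v118}, which is \eqref{eq_ener}, on $[t,T]$:
\[
2\Re z\int_t^T\Bigl(\|\nabla_g u\|_{L_x^2}^2+\|u\|_{L_x^{p+1}}^{p+1}\Bigr)\,\mathrm d\tau = M(u(t))-M(u(T)) \le M(u(t)).
\]
Dropping the nonnegative $L^{p+1}$ term and letting $T\to\infty$ gives $\int_t^\infty\|\nabla_g u\|_{L_x^2}^2\,\mathrm d\tau\le (2\Re z)^{-1}M(u(t))$. The preliminary mass decay \eqref{eq:initial-mass-decay} then gives the bound $(1+t)^{-\frac{2}{p-1}}\lesssim t^{-\frac{2}{p-1}}$. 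That constant depends only on $\|u_0\|_{L^2}$, because \eqref{eq:initial-mass-decay} came purely from the differential inequality for $M$. This matches the stated dependence.

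There is no real obstacle. The only things to check are the nonnegativity of $E$ and $M$, which lets us discard $E(u(T))$ and $M(u(T))$, and that the identities hold for $t>0$ for the global $H^1$ solution. The latter is guaranteed by Proposition~\ref{pr2.12v70} together with the smoothing in Theorem~\ref{th2v79}'s well-posedness part (Proposition~\ref{prop:global}, Theorem~\ref{th2.6v107}).
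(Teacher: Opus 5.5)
Your argument is correct and follows the paper's route. The first bound comes from integrating \eqref{eq3.2v118} over $[t,\infty[$ and using $E(u(t))\lesssim t^{-\frac{p+1}{p-1}}$ from Proposition~\ref{Energydecay}. The second comes from integrating \eqref{eq3.1v118} over $[t,\infty[$ together with the mass decay \eqref{eq:initial-mass-decay}.

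As a minor aside, the differential inequality for $M$ actually gives $M(u(t))\le (ct)^{-\frac{2}{p-1}}$, where $c$ depends only on $p$, $\Re z$ and $\mathrm{Vol}_g(\mathcal M^3)$. So the constant in the second bound does not even depend on $u_0$.
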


\subsection{\texorpdfstring{$H_x^2$-decay for $1<p \le 3 $}{H\^2-decay}}\label{H2decay_1}
In Proposition~\ref{Energydecay}, we established the decay estimates for the $L_x^2$ norm of $u$ and its first-order derivative.
While
Theorem \ref{th2.6v107} guarantees the $H_x^2$-regularity of the solution for any positive time with $p\in]1,5]$, it only provides qualitative information without any quantitative decay. In this subsection, we focus exclusively on the delicate case $1<p\le3$ and complement the smoothing effect by establishing a quantitative decay estimate for the second-order derivative. More precisely, we prove the following result.

\begin{theorem}
Let $1<p\le3$, and let $u\left(t\right)$ be a solution of the equation \eqref{eq1.CGL}. Then for any $t\ge1$,
\[
\|\Delta_gu\left(t\right)\|_{L_x^2 \left(\mathcal M^3 \right)} \lesssim_{\|u_0\|_{H^1}}t^{- \frac32-\frac{1}{p-1}}.
\]
\end{theorem}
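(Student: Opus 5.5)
The plan is to derive the $H^2$ decay from a decay estimate for $\|\partial_t u\|_{L^2_x}$, combined with the elliptic estimate of Lemma~\ref{lem:monotonicity}. Writing the equation as $-\Delta_g u + f(u) = -z^{-1}\partial_t u$, Lemma~\ref{lem:monotonicity} with $h=-z^{-1}\partial_t u$ gives pointwise in time
\[
\|\Delta_g u(t)\|_{L^2_x}\le |z|^{-1}\|\partial_t u(t)\|_{L^2_x}.
\]
It therefore suffices to show $\|\partial_t u(t)\|_{L^2_x}\lesssim t^{-\frac32-\frac1{p-1}}$. Corollary~\ref{partial_tuapriori} already gives the integrated bound $\int_t^\infty\|\partial_s u\|_{L^2}^2\,\mathrm ds\lesssim t^{-1-\frac2{p-1}}$. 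By the same dyadic argument used in Proposition~\ref{Energydecay}, the pointwise bound will follow once $\|\partial_t u\|_{L^2}^2$ is shown to be \emph{almost nonincreasing}: $\|\partial_t u(t_2)\|^2\le C\|\partial_t u(t_1)\|^2$ for $t_1\le t_2\le 2t_1$. Indeed, averaging over $[N,2N]$ then gives $\|\partial_t u(2N)\|^2\lesssim N^{-1}\cdot N^{-1-\frac2{p-1}}$, which is exactly the claimed rate.

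To obtain this almost-monotonicity, set $w=\partial_t u$, which solves the linearized equation
\[
\partial_t w - z\Delta_g w = -z\bigl(|u|^{p-1}w+\tfrac{p-1}{2}|u|^{p-3}u(\bar u w+u\bar w)\bigr).
\]
Pairing with $\bar w$ and taking real parts gives
\[
\tfrac12\tfrac{\mathrm d}{\mathrm dt}\|w\|_{L^2}^2+\Re z\|\nabla_g w\|_{L^2}^2 \le C|z|\int|u|^{p-1}|w|^2\,\mathrm d\mu.
\]
Unlike the heat case, the $\Im z$ contribution does not have a sign, which is why the right-hand side is bounded crudely. I then estimate it by Hölder and Sobolev,
\[
\int|u|^{p-1}|w|^2\le\|u\|_{L^{3(p-1)/2}}^{p-1}\|w\|_{L^6}\|w\|_{L^2}.
\]
Since $p\le3$, we have $\tfrac{3(p-1)}2\le 3\le 6$, so $\|u\|_{L^{3(p-1)/2}}\lesssim\|u\|_{H^1}\lesssim t^{-\frac1{p-1}}$ by Proposition~\ref{Energydecay}. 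Young's inequality absorbs $\|\nabla_g w\|^2$ into the dissipation and leaves
\[
\tfrac{\mathrm d}{\mathrm dt}\|w\|^2\le C\bigl(1+\|u\|_{H^1}^{2(p-1)}\bigr)\|w\|^2\lesssim (1+t^{-2})\|w\|^2.
\]
The issue is the constant $1$, coming from the $L^2$ part of $\|w\|_{H^1}$ in the Sobolev embedding, since it produces exponential growth. On a compact manifold I will instead use $\|w\|_{L^6}\lesssim\|\nabla_g w\|_{L^2}+|\bar w|$, where $\bar w$ is the mean, and bound $|\bar w|\lesssim\|w\|_{L^2}$. The resulting term $\|u\|^{p-1}_{L^{3(p-1)/2}}\|w\|_{L^2}^2\lesssim t^{-1}\|w\|^2$ integrates over $[t_1,2t_1]$ to an $O(1)$ quantity. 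Gr\"onwall on dyadic intervals then yields the almost-monotonicity with a uniform constant $C$. This is exactly where $p\le3$ is used: for larger $p$ the factor $\|u\|^{p-1}_{L^{3(p-1)/2}}$ is not controlled by the $H^1$ bound, and a modified energy would be needed.

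The main obstacle is justifying the computation rigorously, that is, knowing that $\partial_t u\in C_tL^2\cap L^2_tH^1$ for $t\ge 1/2$. I would handle this through Theorem~\ref{th2.6v107} together with a regularization or difference-quotient argument in time, applying the estimate to $(u(t+h)-u(t))/h$ and letting $h\to0$. A second point to check is that $\mathrm{d}/\mathrm{d}t\|w\|^2\lesssim t^{-1}\|w\|^2$ really does give a uniform constant on each dyadic block; if the mean term turns out to be worse, I would fall back on using the decay of $m(t)$ to handle the zero mode separately.
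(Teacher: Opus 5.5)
There is a genuine gap, and it is in the reduction itself. The bound $\|\Delta_g u(t)\|_{L^2}\le|z|^{-1}\|\partial_t u(t)\|_{L^2}$ from Lemma~\ref{lem:monotonicity} is fine. The statement you then set out to prove, $\|\partial_t u(t)\|_{L^2}\lesssim t^{-\frac32-\frac1{p-1}}$, is false. Take $u_0\equiv\epsilon\neq0$. Then $u(t)=m(t)$ with $\dot m=-zf(m)$ and $|m(t)|=\bigl(\epsilon^{-(p-1)}+\Re z\,(p-1)t\bigr)^{-\frac1{p-1}}$, so
\[
\|\partial_t u(t)\|_{L^2}=\mathrm{Vol}_g\left(\mathcal M^3\right)^{\frac12}|z|\,|m(t)|^{p}\sim t^{-1-\frac1{p-1}},
\]
while $\Delta_g u\equiv0$. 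For every slow solution, $\partial_t u$ contains the mean mode $\dot m\approx-zf(m)$, of size $t^{-p/(p-1)}$, and this mode does not appear in $\Delta_g u$. Bounding $\Delta_g u$ by $\partial_t u$ therefore cannot reach the claimed rate.

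Your own computation already shows this, through an arithmetic slip. Almost-monotonicity plus Corollary~\ref{partial_tuapriori} gives $\|\partial_t u(2N)\|_{L^2}^2\lesssim N^{-1}\cdot N^{-1-\frac2{p-1}}=N^{-2-\frac2{p-1}}$. That is $\|\partial_tu(t)\|_{L^2}\lesssim t^{-1-\frac1{p-1}}$, not ``exactly the claimed rate'' $t^{-\frac32-\frac1{p-1}}$. So even with the regularity issues settled, your route only yields $\|\Delta_g u(t)\|_{L^2}\lesssim t^{-1-\frac1{p-1}}$, which is the rate the paper obtains for $3<p<5$.

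The missing ingredient is the spectral gap acting on the zero-mean quantity $\Delta_g u$. The paper differentiates $\|\Delta_g u\|_{L^2}^2$ directly.
\begin{itemize}
\item The dissipation term is $\Re z\|\nabla_g\Delta_g u\|_{L^2}^2$.
\item The forcing is controlled by $\|\nabla_g f(u)\|_{L^2}^2\lesssim\|u\|_{L^{2p}}^{2(p-1)}\|\nabla_g u\|_{L^{2p}}^2$. Here $p\le3$ gives $H^1\hookrightarrow L^{2p}$, and Gagliardo--Nirenberg lets the top-order part be absorbed into the dissipation.
\item Since $\int\Delta_g u\,\mathrm d\mu=0$, Proposition~\ref{pr2.3v122} gives $\|\nabla_g\Delta_g u\|^2_{L^2}\ge\lambda_1\|\Delta_g u\|_{L^2}^2$. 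Hence $\frac{\mathrm d}{\mathrm dt}\|\Delta_g u\|_{L^2}^2+c\|\Delta_g u\|_{L^2}^2\lesssim\|u\|_{H^1}^{2(p-1)}\|\nabla_g u\|_{L^2}^2\lesssim t^{-3-\frac2{p-1}}$.
\item The exponential integrating factor then transfers this forcing rate to $\|\Delta_g u\|_{L^2}^2$, with $\|\Delta_g u(1)\|_{L^2}$ controlled by Theorem~\ref{th2.6v107}.
\end{itemize}
The extra factor $t^{-1/2}$ comes from exponential damping of the nonzero modes, not from time-averaging an integrated bound. Any repair of your approach would have to separate off the mean mode of $\partial_t u$ and use Poincar\'e on what remains.
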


\begin{proof}
Differentiating $\|\Delta_gu (t) \|_{L_x^2\left(\mathcal M^3\right)}^2$ with respect to time,
using \eqref{eq1.CGL}, and integrating by parts, we get
\[
\frac12\frac{\mathrm d}{\mathrm dt} \left( \|\Delta_gu (t) \|_{L_x^2\left(\mathcal M^3\right)}^2 \right)
+\Re z\|\nabla_g\Delta_gu (t) \|_{L_x^2\left(\mathcal M^3\right)}^2
=\Re\left(  \Delta_gu (t) ,z\Delta_gf\left(u (t) \right)\right)_{L_x^2\left(\mathcal M^3\right)}.
\]
By the Cauchy-Schwarz and Young's inequalities,
\[
\left|\Re\left( \Delta_gu (t) ,z\Delta_gf\left(u (t) \right)\right)_{L^2_x \left( \mathcal{M}^3 \right) } \right|
\le \frac{\Re z}{2}\|\nabla_g\Delta_gu (t) \|_{L_x^2\left(\mathcal M^3\right)}^2
+C\|\nabla_gf\left(u (t) \right)\|_{L_x^2\left(\mathcal M^3\right)}^2,
\]
hence
\[
\frac12\frac{\mathrm d}{\mathrm dt} \left( \|\Delta_gu (t) \|_{L_x^2\left(\mathcal M^3\right)}^2 \right)
+\frac{\Re z}{2}\|\nabla_g\Delta_gu (t) \|_{L_x^2 \left(\mathcal M^3 \right)}^2
\le C\|\nabla_gf\left(u (t) \right)\|_{L_x^2 \left(\mathcal M^3 \right)}^2.
\]
The Gagliardo-Nirenberg interpolation inequality and then Young's inequality lead to
\[
\|\nabla_gu (t) \|_{L_x^{2p}\left(\mathcal M^3\right)}^2
\le \delta\|\nabla_g \Delta_gu (t) \|_{L_x^2\left(\mathcal M^3\right)}^2+C\left(\delta\right)\|\nabla_gu (t) \|_{L_x^2\left(\mathcal M^3\right)}^2,
\]
where $\delta > 0$ is a small constant.
This gives rise to
\[
\|\nabla_gf\left(u (t) \right)\|_{L_x^2\left(\mathcal M^3\right)}^2
\le C ( \delta ) \|u (t) \|_{L_x^{2p}\left(\mathcal M^3\right)}^{2\left(p-1\right)}\|\nabla_gu (t) \|_{L_x^2\left(\mathcal M^3\right)}^2
+C\delta\|u (t) \|_{L_x^{2p}\left(\mathcal M^3\right)}^{2\left(p-1\right)}\|\nabla_g\Delta_gu (t) \|_{L_x^2\left(\mathcal M^3\right)}^2.
\]
The Sobolev embedding $H_x^1\hookrightarrow L_x^{2p}$ and the uniform $H_x^1$ bound from Proposition~\ref{pr2.12v70} imply
\[
\sup_{t\ge0}\|u\left(t\right)\|_{L_x^{2p}\left(\mathcal M^3\right)}<\infty.
\]
We may therefore choose $\delta>0$ sufficiently small so that
\[
C\delta\sup_{t\ge0}\|u\left(t\right)\|_{L_x^{2p}\left(\mathcal M^3\right)}^{2\left(p-1\right)}\le \frac{\Re z}{4}.
\]
It follows that
\begin{equation}\label{energy_1}
\frac12\frac{\mathrm d}{\mathrm dt} \left( \|\Delta_gu (t) \|_{L_x^2\left(\mathcal M^3\right)}^2 \right)
+\frac{\Re z}{4}\|\nabla_g\Delta_gu (t) \|_{L_x^2\left(\mathcal M^3\right)}^2
\le C ( \delta ) \|u (t) \|_{L_x^{2p}\left(\mathcal M^3\right)}^{2\left(p-1\right)}\|\nabla_gu (t) \|_{L_x^2\left(\mathcal M^3\right)}^2.
\end{equation}
Since $\Delta_gu$ has zero mean, by Proposition \ref{pr2.3v122}, we have
\begin{equation}\label{poin}
\|\nabla_g\Delta_gu (t) \|_{L_x^2\left(\mathcal M^3\right)}^2\ge\lambda_1\|\Delta_gu (t) \|_{L_x^2\left(\mathcal M^3\right)}^2.
\end{equation}
Substituting \eqref{poin} into \eqref{energy_1} yields
\begin{equation}\label{energy_2}
\frac12\frac{\mathrm d}{\mathrm dt} \left( \|\Delta_gu (t) \|_{L_x^2\left(\mathcal M^3\right)}^2 \right)
+\frac{\Re z\,\lambda_1}{4}\|\Delta_gu (t) \|_{L_x^2\left(\mathcal M^3\right)}^2
\le C ( \delta ) \|u (t) \|_{L_x^{2p}\left(\mathcal M^3\right)}^{2\left(p-1\right)}\|\nabla_gu (t) \|_{L_x^2\left(\mathcal M^3\right)}^2.
\end{equation}
Using the Sobolev embedding and Proposition \ref{Energydecay}, we obtain
\[
\|u (t) \|_{L_x^{2p}\left(\mathcal M^3\right)}^{2\left(p-1\right)}\|\nabla_gu (t) \|_{L_x^2\left(\mathcal M^3\right)}^2
\lesssim \|u (t) \|_{H_x^1\left(\mathcal M^3\right)}^{2\left(p-1\right)}\|\nabla_gu (t) \|_{L_x^2\left(\mathcal M^3\right)}^2
\lesssim_{\|u_0\|_{H^1}}t^{-3-\frac{2}{p-1}}.
\]
Substituting this into \eqref{energy_2} gives
\begin{equation*}
\frac{\mathrm d}{\mathrm dt} \left( \|\Delta_gu (t) \|_{L_x^2\left(\mathcal M^3\right)}^2 \right)
+\frac{\Re z\,\lambda_1}{2}\|\Delta_gu (t) \|_{L_x^2\left(\mathcal M^3\right)}^2
\le C\left(\|u_0\|_{H^1}\right)t^{-3-\frac{2}{p-1}}.
\end{equation*}
Multiplying by the integrating factor $e^{\frac{\Re z\,\lambda_1}{2}t}$ and integrating from $1$ to $t$, we get
\[
\|\Delta_gu\left(t\right)\|_{L_x^2\left(\mathcal M^3\right)}^2
\le \|\Delta_gu\left(1\right)\|_{L_x^2\left(\mathcal M^3\right)}^2e^{-\frac{\Re z\,\lambda_1}{2}\left(t-1\right)}
+C\left(\|u_0\|_{H^1}\right)\int_1^t e^{-\frac{\Re z\,\lambda_1}{2}\left(t-s\right)}s^{-3-\frac{2}{p-1}}\,\mathrm ds.
\]
Since $3+\frac{2}{p-1}>1$, the standard estimate for the convolution with an exponential kernel yields
\[
\int_1^t e^{-\frac{\Re z\,\lambda_1}{2}\left(t-s\right)}s^{-3-\frac{2}{p-1}}\,\mathrm ds\lesssim t^{-3-\frac{2}{p-1}}.
\]
Therefore,
\[
\|\Delta_gu\left(t\right)\|_{L_x^2\left(\mathcal M^3\right)}^2\lesssim_{\|u_0\|_{H^1}}t^{-3-\frac{2}{p-1}}.
\]
This completes the proof.
\end{proof}

\subsection{\texorpdfstring{$H_x^2$-decay for $3<p\le5$}{H\^2-decay}}\label{H2decay}

In the case $3<p\le5$, the main difficulty in deriving higher-order decay estimates is that the standard $H_x^2$-energy does not satisfy a closed dissipative inequality. The nonlinear interaction terms generate error contributions that cannot be controlled directly by the natural energy functional. Our key observation is that these terms can be compensated by introducing a suitable nonlinear correction. This leads to the modified energy functional defined below, which captures the dissipative structure of the CGL equation at the $H_x^2$-level.

\begin{definition}\label{def_me}
We define the modified energy functional $\mathcal E_2\left(u\left(t\right)\right)$ to be
\[
\mathcal E_2\left(u\left(t\right)\right):=\|z^{-1}\partial_tu\left(t\right)\|_{L^2\left(\mathcal M^3\right)}^2
-\frac{p-1}{p}\int_{\mathcal M^3}|u\left(t\right)|^{2p}\,\mathrm d\mu
-\left(p-1\right)\int_{\mathcal M^3}|u\left(t\right)|^{p-1}|\nabla_g\left(|u\left(t\right)|\right)|^2\,\mathrm d\mu.
\]
\end{definition}
Differentiating $\mathcal E_2\left(u\left(t\right)\right)$ in time, we obtain
\begin{equation*}
\begin{split}
 & \frac{\mathrm d}{\mathrm dt}\mathcal E_2\left(u\left(t\right)\right)
 +2\frac{\Re z}{|z|^2}\|\partial_t\nabla_gu\left(t\right)\|_{L_x^2\left(\mathcal M^3\right)}^2
+2\frac{\Re z}{|z|^2}\big\||u\left(t\right)|^{\frac{p-1}{2}}\partial_tu\left(t\right)\big\|_{L_x^2\left(\mathcal M^3\right)}^2\\
&=\left(p-1\right)\left(p-3\right)\int_{\mathcal M^3}|u\left(t\right)|^{p-2}\partial_t\left(|u\left(t\right)|\right)|\nabla_g\left(|u\left(t\right)|\right)|^2\,\mathrm d\mu\\
&\quad+2\left(p-1\right)\int_{\mathcal M^3}|u\left(t\right)|^{p-2}\partial_t\left(|u\left(t\right)|\right)|\nabla_gu\left(t\right)|^2\,\mathrm d\mu.
\end{split}
\end{equation*}
The rest of this subsection is devoted to proving that the modified energy functional satisfies an almost-monotonicity property. Once such a property is established, quantitative decay estimates follow from an abstract argument.

\begin{lemma}[Almost monotonicity]\label{lem:abstract}
Let $f$, $a$, and $b$ be continuous functions on $\mathbb R_+$, with $a\left(t\right)\ge0$ for all $t\ge0$. Let $\left(\tau_n\right)\subset\left[2^n,2^{n+1}\right)$ be an increasing sequence such that
\begin{align}\label{eq3.9v249}
|f\left(\tau_n\right)|\le C_0\tau_n^{-\beta},\quad \forall \,  n\ge1,
\end{align}
where $\beta > 0$.
Assume that
for any $t_2\ge t_1\ge0$,
\begin{align}\label{eq3.10v249}
f\left(t_2\right)-f\left(t_1\right)=-\int_{t_1}^{t_2}a\left(t\right)\,\mathrm dt+\int_{t_1}^{t_2}b\left(t\right)\,\mathrm dt,
\end{align}
and there exists $\gamma>0$ such that
\begin{align}\label{eq3.11v249}
\left|\int_{t_1}^{t_2}b\left(t\right)\,\mathrm dt\right|
\le\frac12\int_{t_1}^{t_2}a\left(t\right)\,\mathrm dt+C_0\left(1+t_1 \right)^{-\gamma},
\end{align}
where the constant $C_0$ is independent of $t_1$, $t_2$ and $\tau_n$. Then for any $t\ge1$,
\[
\int_t^\infty a\left(\tau\right)\,\mathrm d\tau\le C_1t^{-\min\{\beta,\gamma\}}\quad\text{and}\quad |f\left(t\right)|\le C_1t^{-\min\{\beta,\gamma\}}.
\]
\end{lemma}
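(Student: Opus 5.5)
The plan is to combine \eqref{eq3.10v249} and \eqref{eq3.11v249} into a two-sided control of $f$ on intervals, and then use the good times $\tau_n$ from \eqref{eq3.9v249} as anchors.

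\textbf{Step 1 (two-sided increment bound).} Inserting \eqref{eq3.11v249} into \eqref{eq3.10v249} gives, for all $t_2\ge t_1\ge 0$,
\[
f(t_2)-f(t_1)\le -\tfrac12\int_{t_1}^{t_2}a\,\mathrm dt + C_0(1+t_1)^{-\gamma},
\]
and, since $a\ge 0$,
\[
f(t_2)-f(t_1)\ge -\tfrac32\int_{t_1}^{t_2}a\,\mathrm dt - C_0(1+t_1)^{-\gamma}.
\]

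\textbf{Step 2 (tail of $a$).} Apply the first inequality with $t_1=\tau_n$ and $t_2=\tau_m$, $m>n$:
\[
\tfrac12\int_{\tau_n}^{\tau_m}a \le f(\tau_n)-f(\tau_m)+C_0(1+\tau_n)^{-\gamma} \le 2C_0\tau_n^{-\beta}+C_0\tau_m^{-\beta}+C_0\tau_n^{-\gamma}.
\]
Let $m\to\infty$ (monotone convergence, $a\ge0$) to get $\int_{\tau_n}^\infty a\lesssim \tau_n^{-\min\{\beta,\gamma\}}$. For $t\ge 2$, choose $n$ with $\tau_n\le t$ and $\tau_n\ge t/4$. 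This is possible because $\tau_n\in[2^n,2^{n+1})$: take $2^{n+1}\le t<2^{n+2}$. Then $\int_t^\infty a\le\int_{\tau_n}^\infty a\lesssim t^{-\min\{\beta,\gamma\}}$. For $t\in[1,2]$ the bound follows from the case $t=\tau_1$, after adjusting $C_1$, since $\int_1^{\tau_1}a$ is finite by Step 1 on a compact interval.

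\textbf{Step 3 (pointwise bound on $f$).} For $t$ large, pick $n$ with $\tau_n\le t\le\tau_{n+1}$, so that $\tau_n\sim\tau_{n+1}\sim t$.
\begin{itemize}
\item \emph{Upper bound.} Step 1 with $(t_1,t_2)=(\tau_n,t)$ gives $f(t)\le f(\tau_n)+C_0\tau_n^{-\gamma}\lesssim t^{-\min\{\beta,\gamma\}}$.
\item \emph{Lower bound.} The second inequality with $(t_1,t_2)=(t,\tau_{n+1})$ gives $f(t)\ge f(\tau_{n+1})-\tfrac32\int_t^{\tau_{n+1}}a - C_0(1+t)^{-\gamma}$. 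This is $\gtrsim -t^{-\min\{\beta,\gamma\}}$ by \eqref{eq3.9v249} and Step 2.
\end{itemize}
Small $t\ge1$ are absorbed by continuity of $f$ on compact intervals.

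\textbf{Main obstacle.} The key point is the lower bound for $f$. The energy-type inequality only pushes $f$ downward, so it must be bounded below using a later anchor time together with the already established tail bound on $a$. That is why Step 2 has to be carried out before Step 3.
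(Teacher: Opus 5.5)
Your Step 2 is the paper's argument: compare two anchors $\tau_n<\tau_m$, absorb $\int b$ into $\tfrac12\int a$, let $m\to\infty$, then use $\tau_n\sim t$. The pointwise bound is where you differ. The paper takes a single anchor $\tau_n$ in the dyadic block of $t$ and bounds $|f(t)-f(\tau_n)|\le\tfrac32\int a+C_0(\cdot)^{-\gamma}$ in both directions, which needs the tail bound on $a$. You instead bracket $t$ between $\tau_n$ and $\tau_{n+1}$. Once the slip below is fixed, this bracketing gives $|f(t)|\lesssim t^{-\min\{\beta,\gamma\}}$ from the one-sided relation $f(t_2)\le f(t_1)+C_0(1+t_1)^{-\gamma}$ alone, with no use of Step 2. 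That is what your route buys; the paper's single-anchor version is shorter but ties the pointwise bound to the integrated dissipation.

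The slip is in your lower bound. You cite the second inequality of Step 1 with $(t_1,t_2)=(t,\tau_{n+1})$, but that inequality rearranges to $f(t)\le f(\tau_{n+1})+\tfrac32\int_t^{\tau_{n+1}}a+C_0(1+t)^{-\gamma}$, which is an upper bound on $f(t)$. Your displayed lower bound is still true. It follows from the \emph{first} inequality with the same $(t_1,t_2)$, which gives $f(t)\ge f(\tau_{n+1})+\tfrac12\int_t^{\tau_{n+1}}a-C_0(1+t)^{-\gamma}\ge -C_0\tau_{n+1}^{-\beta}-C_0t^{-\gamma}$ because $a\ge0$.

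So your ``main obstacle'' paragraph has it backwards. A relation that only pushes $f$ downward bounds $f(t)$ from below through a later anchor with no information on $a$, and from above through an earlier anchor. Step 2 is needed only for the statement about $\int_t^\infty a$.

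One bookkeeping point: for $t\in[2,4)$ your choice $2^{n+1}\le t<2^{n+2}$ gives $n=0$, where \eqref{eq3.9v249} is not assumed. Your compactness argument should therefore cover $[1,4)$, or $[1,\tau_1]$, rather than only $[1,2]$.
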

\begin{proof}
For any $k>n$, by \eqref{eq3.10v249}, we see
\[
f\left(\tau_n\right)-f\left(\tau_k\right)=\int_{\tau_n}^{\tau_k}a\left(t\right)\,\mathrm dt-\int_{\tau_n}^{\tau_k}b\left(t\right)\,\mathrm dt.
\]
We have
\[
\int_{\tau_n}^{\tau_k}a\left(t\right)\,\mathrm dt\le |f\left(\tau_n\right)|+|f\left(\tau_k\right)|+\left|\int_{\tau_n}^{\tau_k}b\left(t\right)\,\mathrm dt\right|.
\]
By the assumption on $\left|\int_{t_1}^{t_2}b\left(t\right)\,\mathrm dt\right|$ and the bound for $|f\left(\tau_j\right)|$, we deduce that
\begin{align}\label{tauktaun}
\int_{\tau_n}^{\tau_k}a\left(t\right)\,\mathrm dt\le4C_0\tau_n^{-\beta}+C_0'\tau_n^{-\gamma}.
\end{align}
From this and the fact that $t\mapsto\int_{\tau_n}^ta\left(\tau\right)\,\mathrm d\tau$ is monotone, we infer that $\int_1^\infty a\left(t\right)\,\mathrm dt$ converges. Thanks to the choice of $\tau_n$ and \eqref{tauktaun}, it follows that
\[
\int_t^\infty a\left(\tau\right)\,\mathrm d\tau\le Ct^{-\min\{\beta,\gamma\}}.
\]
To estimate $f\left(t\right)$, assume that $t\in[2^n,2^{n+1}]$. Consequently, by \eqref{eq3.9v249} and \eqref{eq3.11v249},
\[
|f\left(t\right)|\le|f\left(t\right)-f\left(\tau_n\right)|+|f\left(\tau_n\right)|
\le\left|\int_{\tau_n}^ta\left(s\right)\,\mathrm ds\right|+\left|\int_{\tau_n}^tb\left(s\right)\,\mathrm ds\right|+C_0\tau_n^{-\beta}
\le C\tau_n^{-\min\{\beta,\gamma\}}\le Ct^{-\min\{\beta,\gamma\}}.
\]
\end{proof}

Set
\begin{align*}
f\left(t\right)&:=\mathcal E_2\left(u\left(t\right)\right),\qquad a\left(t\right):=2\frac{\Re z}{|z|^2}\|\partial_t\nabla_gu\left(t\right)\|_{L_x^2\left(\mathcal M^3\right)}^2,
\end{align*}
and
\begin{align*}
b\left(t\right)&:=\left(p-1\right)\left(p-3\right)\int_{\mathcal M^3}|u\left(t\right)|^{p-2}\partial_t\left(|u\left(t\right)|\right)|\nabla_g\left(|u\left(t\right)|\right)|^2\,\mathrm d\mu\\
&\quad+2\left(p-1\right)\int_{\mathcal M^3}|u\left(t\right)|^{p-2}\partial_t\left(|u\left(t\right)|\right)|\nabla_gu\left(t\right)|^2\,\mathrm d\mu.
\end{align*}
The next proposition provides the quantitative decay estimate for the modified energy functional. Combined with the equivalence between $\mathcal E_2\left(u\left(t\right)\right)$ and the $H_x^2$-norm, it yields the higher-order decay required for the asymptotic analysis carried out in Section \ref{se4}. The core objective of this part is to verify the assumptions for the functions $f\left(t\right)$, $a\left(t\right)$, and $b\left(t\right)$ stated in Lemma \ref{lem:abstract}. The proof of the $H_x^2$-norm decay is presented below.

\begin{proposition}\label{prop_3D}
If $p\in]3,5[$, then for any $t\ge1$, the solution to equation \eqref{eq1.CGL} satisfies the following decay estimates
\begin{align}
|\mathcal E_2\left(u\left(t\right)\right)|&\lesssim_{\|u_0\|_{H^1}}t^{-2-\frac{2}{p-1}},\label{E_23}\\
\|\partial_tu\left(t\right)\|_{L_x^2} &\lesssim_{\|u_0\|_{H^1}}t^{-1 -\frac{1}{p-1}},\label{pt_u3}\\
\|\Delta_gu\left(t\right)\|_{L_x^2} &\lesssim_{\|u_0\|_{H^1}}t^{-1 -\frac{1}{p-1}},\label{Deltau_3}\\
\int_t^\infty\|\partial_\tau\nabla_gu\left(\tau\right)\|_{L_x^2}^2\,\mathrm d\tau&\lesssim_{\|u_0\|_{H^1}}t^{-2-\frac{2}{p-1}}.\label{int_u}
\end{align}
For $p=5$, there exists a time $t_0$ large enough such that for $t\ge t_0$, the above estimates \eqref{E_23}, \eqref{pt_u3}, \eqref{Deltau_3} and \eqref{int_u} still hold.
\end{proposition}

To complete the proof of this proposition, we divide the analysis into three cases according to the nonlinear exponent regimes: $p\in]3,4]$, $p\in]4,5[$, and $p=5$. For this purpose, we first derive the required estimate of the remainders in Subsection \ref{estimate_remainder}. The detailed proofs for these three cases are then presented in Subsections \ref{3D_p34}, \ref{3D_p45} and \ref{3D_p5}, respectively.

\subsubsection{Estimate of the remainders}\label{estimate_remainder}
We establish the estimate of the pointwise bound for $f\left(t\right)$ along a quasi-dyadic sequence. This subsection is devoted to the estimation of $\|u\left(t\right)\|_{L_x^{2p}}^{2p}$ for $p\in[3,5[$. To this end, we start by employing interpolation to derive that
\[
\|u\left(t\right)\|_{L_x^{2p}}^{2p}\le\|u\left(t\right)\|_{L_x^\infty}^{p-1}\|u\left(t\right)\|_{L_x^{p+1}}^{p+1}\lesssim\|u\left(t\right)\|_{L_x^\infty}^{p-1}E\left(u\left(t\right)\right).
\]
Using the Gagliardo-Nirenberg inequality, we have
\begin{equation}\label{e_u_infty}
\|u (t) \|_{L_x^\infty}
\lesssim\left(\|u (t) \|_{L_x^2}^{\frac12}+\|\Delta_gu (t) \|_{L_x^2}^{\frac12}\right)\|u (t) \|_{H_x^1}^{\frac12}
\lesssim\|u (t) \|_{H_x^1}+\left(\|\partial_tu (t) \|_{L_x^2}^{\frac12}+\|u (t) \|_{L_x^{2p}}^{\frac p2}\right)\|u (t) \|_{H_x^1}^{\frac12}.
\end{equation}
Hence,
\begin{align}\label{u_2p_2p}
\|u (t) \|_{L_x^{2p}}^{2p}\lesssim
\|u (t) \|_{H_x^1}^{p-1}E\left(u (t) \right)+\|u (t) \|_{H_x^1}^{\frac{p-1}{2}}E\left(u (t) \right)
\left(\|\partial_tu (t) \|_{L_x^2}^{\frac{p-1}{2}}+\|u (t) \|_{L_x^{2p}}^{\frac{\left(p-1\right)p}{2}}\right).
\end{align}
By Proposition \ref{Energydecay}, we obtain
 \[
\|u\left(t\right)\|_{L_x^{2p}}^{2p}\lesssim_{\|u_0\|_{H^1}}t^{-2-\frac{2}{p-1}}
+t^{-\frac32-\frac{2}{p-1}}\left(\|\partial_tu\left(t\right)\|_{L_x^2}^{\frac{p-1}{2}}+\|u\left(t\right)\|_{L_x^{2p}}^{\frac{\left(p-1\right)p}{2}}\right).
\]
For $3\le p<5$, we obtain that for $t$ large enough,
\begin{align}\label{perturbation:E2}
\|u\left(t\right)\|_{L_x^{2p}}^{2p}\lesssim_{\|u_0\|_{H^1}}t^{-2-\frac{2}{p-1}}
+t^{-\frac32-\frac{2}{p-1}}\|\partial_tu\left(t\right)\|_{L_x^2}^{\frac{p-1}{2}}.
\end{align}
This, together with Young's inequality, implies that for any $\delta>0$,
\[
\|u\left(t\right)\|_{L_x^{2p}}^{2p}\lesssim_{\|u_0\|_{H^1}}t^{-2-\frac{2}{p-1}}
+t^{-\frac4{5-p}\left(\frac32+\frac2{p-1}\right)}+\delta\|\partial_tu\left(t\right)\|_{L_x^2}^2
\le C\left(\delta,\|u_0\|_{H^1}\right)t^{-2-\frac{2}{p-1}}+\delta\|\partial_tu\left(t\right)\|_{L_x^2}^2.
\]
To summarize the preceding analysis, by combining estimates \eqref{e_u_infty} and \eqref{perturbation:E2} with Proposition \ref{Energydecay}, we deduce the following decay estimate for the solution $u\left(t,x\right)$ of the equation \eqref{eq1.CGL} whenever $p\in[3,5[$.
\begin{proposition}[Lebesgue norm decay]\label{lem:pointwise}
If $3\le p<5$, we have
\begin{align*}
& \|u\left(t\right)\|_{L_x^{2p}}\lesssim_{\|u_0\|_{H^1}}t^{-\frac1p\left(1+\frac1{p-1}\right)}
+t^{-\frac1p\left(\frac34+\frac1{p-1}\right)}\|\partial_tu\left(t\right)\|_{L_x^2}^{\frac{p-1}{4p}},
\intertext{ and }
& \|u\left(t\right)\|_{L_x^\infty}\lesssim_{\|u_0\|_{H^1}}t^{-\frac1{p-1}}
+t^{-\frac1{2\left(p-1\right)}}\|\partial_tu\left(t\right)\|_{L_x^2}^{\frac12}
+t^{-\frac1{p-1}-\frac38}\|\partial_tu\left(t\right)\|_{L_x^2}^{\frac{p-1}{8}}.
\end{align*}

\end{proposition}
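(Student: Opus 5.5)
The plan is to read off both bounds from the estimates just derived. For the $L_x^{2p}$ bound, start from \eqref{perturbation:E2}, valid for $3\le p<5$ and all sufficiently large $t$:
\[
\|u(t)\|_{L_x^{2p}}^{2p}\lesssim_{\|u_0\|_{H^1}} t^{-2-\frac{2}{p-1}}+t^{-\frac32-\frac{2}{p-1}}\|\partial_tu(t)\|_{L_x^2}^{\frac{p-1}{2}} .
\]
Take the $2p$-th root and use $(A+B)^{1/(2p)}\le A^{1/(2p)}+B^{1/(2p)}$. The first term gives exactly $t^{-\frac1p(1+\frac1{p-1})}$. The second gives $t^{-\frac1p(\frac34+\frac1{p-1})}\|\partial_tu\|_{L^2}^{\frac{p-1}{4p}}$. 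For $t$ in a bounded range $[1,T_0]$, the uniform $H^1$ bound from Proposition~\ref{pr2.12v70} and Sobolev embedding bound the left side by a constant, which is absorbed into the first term after enlarging the implicit constant.

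The one step needing care is the derivation of \eqref{perturbation:E2} from \eqref{u_2p_2p}. This requires absorbing the term $t^{-\frac32-\frac2{p-1}}\|u\|_{L^{2p}}^{(p-1)p/2}$ into the left side. Since $(p-1)p/2 > 2p$ exactly when $p>5$, for $3\le p<5$ the exponent is below $2p$, and Young's inequality (with the $t$-factor) handles it. I would do the absorption as follows. Write $X=\|u\|_{L^{2p}}^{2p}$, so the term is $t^{-\frac32-\frac2{p-1}}X^{(p-1)/4}$ with $(p-1)/4<1$. Young's inequality gives $\le \frac12 X + C t^{-(\frac32+\frac2{p-1})\frac{4}{5-p}}$. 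The last exponent dominates $2+\frac2{p-1}$ because $\frac4{5-p}\ge2$ for $p\ge3$. I take this as already established in the text preceding the statement.

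For the $L^\infty$ bound, start from the Gagliardo--Nirenberg estimate \eqref{e_u_infty}:
\[
\|u\|_{L^\infty}\lesssim \|u\|_{H^1}+\|u\|_{H^1}^{1/2}\|\partial_tu\|_{L^2}^{1/2}+\|u\|_{H^1}^{1/2}\|u\|_{L^{2p}}^{p/2}.
\]
Insert $\|u(t)\|_{H^1}\lesssim t^{-\frac1{p-1}}$ from Proposition~\ref{Energydecay}. The first two terms give $t^{-\frac1{p-1}}$ and $t^{-\frac1{2(p-1)}}\|\partial_tu\|^{1/2}$ directly. For the third term, raise the $L^{2p}$ bound just proved to the power $p/2$:
\[
\|u\|_{L^{2p}}^{p/2}\lesssim t^{-\frac12-\frac1{2(p-1)}}+t^{-\frac38-\frac1{2(p-1)}}\|\partial_tu\|^{\frac{p-1}{8}} .
\]
Multiplying by $t^{-\frac1{2(p-1)}}$ gives $t^{-\frac12-\frac1{p-1}}\le t^{-\frac1{p-1}}$ and $t^{-\frac38-\frac1{p-1}}\|\partial_tu\|^{\frac{p-1}8}$, which are exactly the stated terms.

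The main obstacle is only the bookkeeping in the absorption step above and the passage from "$t$ large" to all $t\ge1$. The rest is exponent arithmetic.
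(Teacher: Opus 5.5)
Your proposal follows the paper's route: take the $2p$-th root of \eqref{perturbation:E2}, then insert that bound, raised to the power $p/2$, together with the $H^1$ decay from Proposition~\ref{Energydecay} into \eqref{e_u_infty}. Your exponent bookkeeping and your Young-inequality absorption behind \eqref{perturbation:E2} are both correct.

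One side remark is wrong. For $3<p<5$ one has $2p>6$, so in dimension three the uniform $H^1$ bound and Sobolev embedding do \emph{not} control $\|u(t)\|_{L_x^{2p}}$ on a bounded range $[1,T_0]$. That argument only works for $p=3$.

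The step is unnecessary, though. Your Young absorption uses only $\|u(t)\|_{L_x^{2p}}<\infty$, which holds because $u(t)\in H_x^2\subset L_x^\infty$ for $t>0$ by Theorem~\ref{th2.6v107}. It also yields the error term $t^{-\frac{4}{5-p}\left(\frac32+\frac2{p-1}\right)}\le t^{-2-\frac2{p-1}}$ for every $t\ge1$. So \eqref{perturbation:E2}, and hence both stated bounds, already hold for all $t\ge1$ and not only for large $t$. Alternatively, the local-in-time $H_x^2$ bound in Theorem~\ref{th2.6v107} would repair the bounded-range step directly.
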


\subsubsection{Proof of Proposition \ref{prop_3D} with $p\in]3,4]$}\label{3D_p34}
We start the proof with the estimate for $b\left(t\right)$ in the energy derivatives.
Invoking the Cauchy-Schwarz inequality, we have
\begin{align}\label{e_b}
|b\left(t\right)|\lesssim\|\partial_tu\left(t\right)\|_{L_x^6} \|\nabla_gu\left(t\right)\|_{L_x^2}\|\nabla_gu\left(t\right)\|_{L_x^\infty}\|u\left(t\right)\|_{L_x^{3\left(p-2\right)}}^{p-2}.
\end{align}
By the interpolation and Young inequalities, for any $\delta>0$,
\begin{align}\label{parabolic}
\|\nabla_gu (t) \|_{L_x^\infty}\le  \left\|\nabla_g^3u (t)  \right\|_{L_x^2}^{\frac34}\|\nabla_gu (t) \|_{L_x^2}^{\frac14}
\le\delta\|\nabla_g\Delta_gu (t) \|_{L_x^2}+C\left(\delta\right)\|\nabla_gu (t) \|_{L_x^2}.
\end{align}
Replacing $\Delta_gu$ by $z^{-1}\partial_tu+|u|^{p-1}u$ yields
\[
\|\nabla_gu (t) \|_{L_x^\infty}\le C\delta\|\partial_t\nabla_gu (t) \|_{L_x^2}
+C\delta\|\nabla_gu (t) \|_{L_x^\infty}\|u (t) \|_{L_x^{2\left(p-1\right)}}^{p-1}+C\left(\delta\right)\|\nabla_gu (t) \|_{L_x^2},
\]
for some constant $C>0$ independent of $\delta$. By the Sobolev inequality, we obtain that
\[
\|\nabla_gu (t) \|_{L_x^\infty}\left(1-C\delta\|u (t) \|_{H_x^1}^{p-1}\right)
\le\delta\|\partial_t\nabla_gu (t) \|_{L_x^2}+C\left(\delta\right)\|\nabla_gu (t) \|_{L_x^2}.
\]
Taking $\delta$ sufficiently small and using Proposition \ref{Energydecay}, we obtain
\begin{align}\label{n_u_infty}
\|\nabla_gu (t) \|_{L_x^\infty}\le C\delta\|\partial_t\nabla_gu (t) \|_{L_x^2}+C\left(\delta\right)\|\nabla_gu (t) \|_{L_x^2}.
\end{align}
Using the Sobolev inequality, we can also get
\begin{align}\label{pt_u_6}
\|\partial_tu (t) \|_{L_x^6}\lesssim\|\partial_tu (t) \|_{L_x^2}+\|\partial_t\nabla_gu (t) \|_{L_x^2}.
\end{align}
Combining \eqref{e_b}, \eqref{n_u_infty}, and \eqref{pt_u_6}, together with the boundedness of $E\left(u\left(t\right)\right)$ and the Sobolev inequality, yields
\begin{align*}
|b\left(t\right)|\lesssim_{\|u_0\|_{H^1}}&\|u\left(t\right)\|_{H_x^1}^{p-2}\|\nabla_gu\left(t\right)\|_{L_x^2}
\left(\|\partial_tu\left(t\right)\|_{L_x^2}+\|\partial_t\nabla_gu\left(t\right)\|_{L_x^2}\right)
\left(\delta\|\partial_t\nabla_gu\left(t\right)\|_{L_x^2}+\|\nabla_gu\left(t\right)\|_{L_x^2}\right)\\
\lesssim_{\|u_0\|_{H^1}}&\delta\|\partial_t\nabla_gu\left(t\right)\|_{L_x^2}^2
\left(1+\|u\left(t\right)\|_{H_x^1}^{p-2}\|\nabla_gu\left(t\right)\|_{L_x^2}\right)
+\|u\left(t\right)\|_{H_x^1}^{2p-4}\|\nabla_gu\left(t\right)\|_{L_x^2}^4\\
&+\|u\left(t\right)\|_{H_x^1}^{2p-4}\|\nabla_gu\left(t\right)\|_{L_x^2}^2\|\partial_tu\left(t\right)\|_{L_x^2}^2
+\|u\left(t\right)\|_{H_x^1}^{p-1}\|\partial_tu\left(t\right)\|_{L_x^2}^2
+\|u\left(t\right)\|_{H_x^1}^{p-3}\|\nabla_gu\left(t\right)\|_{L_x^2}^4.
\end{align*}
Therefore, for any $1\le t_1\le t_2<\infty$, we have
\begin{align*}
\int_{t_1}^{t_2}|b\left(t\right)|\,\mathrm dt\lesssim_{\|u_0\|_{H^1}}&\delta\left(1+\|u\left(t_1\right)\|_{H_x^1}^{p-2}\|\nabla_gu\left(t_1\right)\|_{L_x^2}\right)
\int_{t_1}^{t_2}\|\partial_t\nabla_gu\left(t\right)\|_{L_x^2}^2\,\mathrm dt\\
&+\left(\|u\left(t_1\right)\|_{H_x^1}^{2p-4}+\|u\left(t_1\right)\|_{H_x^1}^{p-3}\right)\|\nabla_gu\left(t_1\right)\|_{L_x^2}^2
\int_{t_1}^{t_2}\|\nabla_gu\left(t\right)\|_{L_x^2}^2\,\mathrm dt\\
&+\left(\|u\left(t_1\right)\|_{H_x^1}^{2p-4}\|\nabla_gu\left(t_1\right)\|_{L_x^2}^2+\|u\left(t_1\right)\|_{H_x^1}^{p-1}\right)
\int_{t_1}^{t_2}\|\partial_tu\left(t\right)\|_{L_x^2}^2\,\mathrm dt,
\end{align*}
which, together with Proposition \ref{Energydecay} and Corollary \ref{partial_tuapriori}, yields the following estimate for sufficiently small  $\delta$:
\[
\int_{t_1}^{t_2}|b\left(t\right)|\,\mathrm dt\le\frac{\Re z}{|z|^2}\int_{t_1}^{t_2}\|\partial_t\nabla_gu\left(t\right)\|_{L_x^2}^2\,\mathrm dt
+C\left(\delta,\|u_0\|_{H^1}\right)t_1^{-\frac{2p}{p-1}}.
\]
For the term $ \left\||u (t) |^{p-1}|\nabla_g\left(|u (t) |\right)|^2 \right\|_{L_x^1}$, we use the Cauchy-Schwarz inequality,
the Sobolev inequality, Young's inequality and $\Delta_gu=z^{-1}\partial_tu+|u|^{p-1}u$ to deduce that for any sufficiently small $\delta$,
\begin{equation}\label{nonlinear_e1}
\begin{split}
\left\|  | u (t) |^{p-1}|\nabla_g\left(|u (t) |\right)|^2 \right\|_{L_x^1}
&\lesssim\|\nabla_gu (t) \|_{L_x^{p+1}}^2\|u (t) \|_{L_x^{p+1}}^{p-1}
\\
&\lesssim\|u (t) \|_{H_x^1}^{p-1}\left( \|\nabla_gu (t) \|_{L_x^2}^2+\delta\|\Delta_gu (t) \|_{L_x^2}^2\right)
\\
&
\lesssim\|u (t) \|_{H_x^1}^{p-1}\left(\|\nabla_gu (t) \|_{L_x^2}^2+\delta\|\partial_tu (t) \|_{L_x^2}^2+\delta\|u (t) \|_{L_x^{2p}}^{2p}\right).
\end{split}
\end{equation}
Now, to verify the condition satisfied by $f\left(t\right)$, we observe that, by Corollary \ref{partial_tuapriori}, for any dyadic interval $[2^n,2^{n+1}]$,
\[
2^{-n\left(1+\frac2{p-1}\right)}\ge\int_{2^n}^{2^{n+1}}\|\partial_tu\left(t\right)\|_{L_x^2}^2\,\mathrm dt
\ge2^n\min_{t\in[2^n,2^{n+1}]}\|\partial_tu\left(t\right)\|_{L_x^2}^2.
\]
Let $\tau_n=\arg\min_{t\in[2^n,2^{n+1}]}\|\partial_tu\left(t\right)\|_{L_x^2}^2$. Then we obtain that
\begin{equation}\label{pt_u}
\|\partial_tu\left(\tau_n\right)\|_{L_x^2}^2\le2^{-n\left(2+\frac2{p-1}\right)}.
\end{equation}
Combining \eqref{nonlinear_e1} and \eqref{pt_u} with Proposition \ref{Energydecay} and Proposition \ref{lem:pointwise}, we have for $f\left(t\right):=\mathcal E_2\left(u\left(t\right)\right)$,
\[
|f\left(\tau_n\right)|\le C\tau_n^{-\left(2+\frac2{p-1}\right)}.
\]
In summary, $f\left(t\right)$, $a\left(t\right)$, and $b\left(t\right)$ verify the hypothesis of Lemma \ref{lem:abstract} with $\beta=\gamma=\frac{2p}{p-1}$. This implies
\[
|\mathcal E_2\left(u\left(t\right)\right)|\le C\left(\|u_0\|_{H^1}\right)t^{-2-\frac2{p-1}}.
\]
We then combine \eqref{nonlinear_e1} and \eqref{perturbation:E2} to derive that
\begin{align*}
\|\partial_tu\left(t\right)\|_{L_x^2}^2
&\lesssim\mathcal E_2\left(u\left(t\right)\right)+ \left\|u\left(t\right) \right\|_{L_x^{2p}}^{2p}
+ \left\||u (t) |^{p-1}|\nabla_g\left( |u\left(t\right)|\right)|^2 \right\|_{L_x^1}\\
&\lesssim_{\|u_0\|_{H^1}}\mathcal E_2\left(u\left(t\right)\right)+\delta\|\partial_tu\left(t\right)\|_{L_x^2}^2+t^{-\frac{2p}{p-1}}
+\|u\left(t\right)\|_{H_x^1}^{p-1}\left(\|\nabla_gu\left(t\right)\|_{L_x^2}^2 +\delta\|\partial_tu\left(t\right)\|_{L_x^2}^2+t^{-\frac{2p}{p-1}}\right),
\end{align*}
which, due to the smallness of $\delta$ and Proposition \ref{Energydecay}, leads to
\[
\|\partial_tu\left(t\right)\|_{L_x^2}^2\lesssim_{\|u_0\|_{H^1}}\mathcal E_2\left(u\left(t\right)\right)+t^{-\frac{2p}{p-1}}
+\|u\left(t\right)\|_{H_x^1}^{p-1}\left(\|\nabla_gu\left(t\right)\|_{L_x^2}^2+t^{-\frac{2p}{p-1}}\right).
\]
By Proposition \ref{Energydecay}, this implies that
\[
\|\partial_tu\left(t\right)\|_{L_x^2}^2\lesssim_{\|u_0\|_{H^1}}t^{-2-\frac2{p-1}}.
\]
Thus, by applying \eqref{perturbation:E2} again with $t\ge1$, we obtain
\[
\|\Delta_gu\left(t\right)\|_{L_x^2}^2\le 2|z|^{-2}\|\partial_tu\left(t\right)\|_{L_x^2}^2+2\|u\left(t\right)\|_{L_x^{2p}}^{2p}
\lesssim_{\|u_0\|_{H^1}}t^{-2-\frac2{p-1}},
\]
which completes the proof of Proposition \ref{prop_3D} with $p\in]3,4]$.

\subsubsection{Proof of Proposition \ref{prop_3D} with $p\in]4,5[$}\label{3D_p45}
We start by estimating $b\left(t\right)$ in the energy derivatives. For this purpose, we give the following lemma.

\begin{lemma}\label{cor:Chebyshev'}
Let $u\in C_t^0H_x^1\left(\mathbb R_+\times\mathcal M^3\right)$ be a solution of \eqref{eq1.CGL}. For any $t\ge1$ and $\lambda>0$, denote
\[
\Lambda_{\lambda,t}:=\{\tau\ge t:\|\partial_\tau u\left(\tau\right)\|_{L_x^2}\ge\lambda\}.
\]
Then we have
\[
|\Lambda_{\lambda,t}|\lesssim_{u_0}\lambda^{-2}t^{-1-\frac2{p-1}}.
\]
\end{lemma}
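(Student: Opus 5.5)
This is a Chebyshev-type estimate obtained directly from Corollary~\ref{partial_tuapriori}. That corollary gives, for every $t\ge1$,
\[
\int_t^{+\infty}\|\partial_\tau u(\tau)\|_{L_x^2}^2\,\mathrm d\tau\lesssim_{\|u_0\|_{H^1}} t^{-1-\frac{2}{p-1}},
\]
which is the integrated energy dissipation identity \eqref{eq_mass} combined with the energy decay of Proposition~\ref{Energydecay}.

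The plan is to bound the Lebesgue measure of $\Lambda_{\lambda,t}$ by the integral of $\|\partial_\tau u\|_{L_x^2}^2$ over that set. First we check that $\Lambda_{\lambda,t}$ is measurable. The map $\tau\mapsto\|\partial_\tau u(\tau)\|_{L_x^2}$ is continuous on $[1,\infty[$, since by Theorem~\ref{th2.6v107} we have $u\in C_t^0H_x^2$ for positive times and $\partial_t u=z(\Delta_g u-f(u))$ with $f(u)$ continuous in $L^2$. In any case it is measurable, because it lies in $L^2_{\mathrm{loc}}$ in time. On $\Lambda_{\lambda,t}$ we have $\lambda^2\le\|\partial_\tau u(\tau)\|_{L_x^2}^2$, so
\[
\lambda^2|\Lambda_{\lambda,t}|\le\int_{\Lambda_{\lambda,t}}\|\partial_\tau u(\tau)\|_{L_x^2}^2\,\mathrm d\tau\le\int_t^{+\infty}\|\partial_\tau u(\tau)\|_{L_x^2}^2\,\mathrm d\tau\lesssim_{u_0} t^{-1-\frac{2}{p-1}}.
\]
Dividing by $\lambda^2$ gives the claimed bound.

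There is no real obstacle. The only point to verify is that the tail integral in Corollary~\ref{partial_tuapriori} is finite and decays. This follows from integrating \eqref{eq3.2v118} over $[t,\infty[$, using the nonnegativity of $E$ and the bound $E(u(t))\lesssim t^{-\frac{p+1}{p-1}}$. Note that $\frac{p+1}{p-1}=1+\frac{2}{p-1}$, which matches the stated exponent.
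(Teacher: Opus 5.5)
Your proof is correct and matches the paper's intended argument. The paper states this lemma without a separate proof, as a direct Chebyshev-type consequence of the tail bound in Corollary~\ref{partial_tuapriori}, which comes from integrating \eqref{eq3.2v118} and using the energy decay of Proposition~\ref{Energydecay}. Your measurability remark via Theorem~\ref{th2.6v107} is a harmless extra detail that the paper does not spell out.
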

The critical value of $\lambda$ is such that $\lambda^{-2}t^{-1-\frac2{p-1}}=1$, i.e., $\lambda=t^{-\frac12-\frac1{p-1}}$;
 hence, the level set $\Lambda_{\lambda,t}$ has measure of size $O\left(1\right)$. Therefore, for any interval $I\subset\mathbb R_+$ of size $O\left(1\right)$,
we can always find a point $\tau_0\in I$ such that $\|\partial_tu\left(\tau_0\right)\|_{L_x^2}\lesssim\tau_0^{-\frac12-\frac1{p-1}}$.
Combining this with Proposition \ref{lem:pointwise}, we deduce that $\|\Delta_gu\left(\tau_0\right)\|_{L_x^2}\lesssim\tau_0^{-\frac12-\frac1{p-1}}$. Together with Proposition \ref{Energydecay}, we get $\|u\left(\tau_0\right)\|_{H_x^2}\lesssim\tau_0^{-\frac1{p-1}}$. Therefore, by the local well-posedness theory, at least for sufficiently large $\tau_0\in I$, the solution is continuous with respect to the $H_x^2$-topology.
 Hence, on $I$, we always have $\|u\left(t\right)\|_{H_x^2}\lesssim_{\|u_0\|_{H^1}}t^{-\frac1{p-1}}$. This argument holds on any $I$, so we conclude directly that \begin{align}\label{eq3.27v258}
\|u\left(t\right)\|_{H_x^2}\lesssim_{\|u_0\|_{H^1}}t^{-\frac1{p-1}}.
\end{align}
Now, for the term $b\left(t\right)$, we use the Cauchy-Schwarz inequality to deduce that
\begin{align}\label{b_3D}
|b\left(t\right)|
\lesssim \left\||u (t) |^{p-2} \right\|_{L_x^3}\|\partial_tu (t) \|_{L_x^6}\|\nabla_gu (t) \|_{L_x^2}\|\nabla_gu (t) \|_{L_x^\infty}.
\end{align}
Obviously, by the Sobolev embedding,
\begin{align}\label{non_L3}
\left\||u (t) |^{p-2} \right\|_{L_x^3}\le\|u (t) \|_{L_x^\infty}^{\frac{2p-7}{3}}\|u (t) \|_{L_x^{p+1}}^{\frac{p+1}{3}}
\lesssim\|u (t) \|_{H_x^2}^{\frac{2p-7}{3}}E\left(u (t) \right)^{\frac13}.
\end{align}
By \eqref{parabolic}, we replace $\Delta_gu$ by $z^{-1}\partial_tu+|u|^{p-1}u$ to get
\[
\begin{split}
\|\nabla_gu (t) \|_{L_x^\infty}&\lesssim\delta \|\partial_t\nabla_gu (t) \|_{L_x^2}
+\delta \left\||u (t) |^{p-1}\nabla_gu (t)  \right\|_{L_x^2}+\|\nabla_gu (t) \|_{L_x^2}\\
&\lesssim\delta\|\partial_t\nabla_gu (t) \|_{L_x^2}
+\delta\|u (t) \|_{L_x^\infty}\|\nabla_gu (t) \|_{L_x^\infty}\|u (t) \|_{L_x^{2\left(p-2\right)}}^{p-2}+\|\nabla_gu (t) \|_{L_x^2} .
\end{split}
\]
It follows from the Sobolev embedding and \eqref{eq3.27v258} that
\[
\begin{split}
\|\nabla_gu (t) \|_{L_x^\infty}
&\le C\delta\|\partial_t\nabla_gu (t) \|_{L_x^2}
+C\delta\|u (t) \|_{H_x^2}\|\nabla_gu (t) \|_{L_x^\infty}\|u (t) \|_{H_x^1}^{p-2}+C\left(\delta\right)\|\nabla_gu (t) \|_{L_x^2}\\
&\lesssim_{\|u_0\|_{H^1}}\delta\|\partial_t\nabla_gu (t) \|_{L_x^2}+\delta t^{-1}\|\nabla_gu (t) \|_{L_x^\infty}+\|\nabla_gu (t) \|_{L_x^2}.
\end{split}
\]
For any $t\ge1$ and $\delta$ small enough, there holds
\begin{equation}\label{non_Linfty}
\|\nabla_gu (t) \|_{L_x^\infty}\lesssim_{\|u_0\|_{H^1}}\delta\|\partial_t\nabla_gu (t) \|_{L_x^2}+\|\nabla_gu (t) \|_{L_x^2}.
\end{equation}
Gathering \eqref{b_3D}, \eqref{non_L3}, and \eqref{non_Linfty}, we obtain
\begin{align*}
|b\left(t\right)| & \lesssim_{\|u_0\|_{H^1}}
\|u\left(t\right)\|_{H_x^2}^{\frac{2p-7}{3}}E\left(u\left(t\right)\right)^{\frac13}
\left(\|\partial_tu\left(t\right)\|_{L_x^2}+\|\partial_t\nabla_gu\left(t\right)\|_{L_x^2}\right)\|\nabla_gu\left(t\right)\|_{L_x^2}
\left(\delta\|\partial_t\nabla_gu\left(t\right)\|_{L_x^2}+\|\nabla_gu\left(t\right)\|_{L_x^2}\right)\\
& \sim_{\|u_0\|_{H^1}}\delta\|u\left(t\right)\|_{H_x^2}^{\frac{2p-7}{3}}E\left(u\left(t\right)\right)^{\frac13} \|\nabla_gu\left(t\right)\|_{L_x^2}\|\partial_t\nabla_gu\left(t\right)\|_{L_x^2}^2\\
& \qquad \qquad +\delta\|u\left(t\right)\|_{H_x^2}^{\frac{2p-7}{3}}E\left(u\left(t\right)\right)^{\frac13}\|\nabla_gu\left(t\right)\|_{L_x^2} \|\partial_t\nabla_gu\left(t\right)\|_{L_x^2}\|\partial_tu\left(t\right)\|_{L_x^2}\\
& \qquad \qquad +\|u\left(t\right)\|_{H_x^2}^{\frac{2p-7}{3}}E\left(u\left(t\right)\right)^{\frac13}\|\nabla_gu\left(t\right)\|_{L_x^2}^2 \|\partial_t\nabla_gu\left(t\right)\|_{L_x^2}\\
&  \qquad \qquad +\|u\left(t\right)\|_{H_x^2}^{\frac{2p-7}{3}}E\left(u\left(t\right)\right)^{\frac13}\|\nabla_gu\left(t\right) \|_{L_x^2}^2\|\partial_tu\left(t\right)\|_{L_x^2}\\
& :=  I_1+I_2+I_3+I_4.
\end{align*}
By Young's inequality, we have
\begin{align*}
I_1+I_2\lesssim_{\|u_0\|_{H^1}}& \delta\|u\left(t\right)\|_{H_x^2}^{\frac{2p-7}{3}}E\left(u\left(t\right)\right)^{\frac13}
\|\nabla_gu\left(t\right)\|_{L_x^2}\|\partial_t\nabla_gu\left(t\right)\|_{L_x^2}^2
+\delta\|\partial_t\nabla_g u (t) \|_{L_x^2}^2\\
&+\|u (t) \|_{H_x^2}^{\frac{2\left(2p-7\right)}{3}}E\left(u (t) \right)^{\frac23}\|\nabla_gu (t) \|_{L_x^2}^2\|\partial_tu (t) \|_{L_x^2}^2,
\intertext{ and }
I_3\lesssim_{\|u_0\|_{H^1}}
& \delta\|\partial_t\nabla_gu (t) \|_{L_x^2}^2
+\|u (t) \|_{H_x^2}^{\frac{2\left(2p-7\right)}{3}} E\left(u (t) \right)^{\frac23}\|\nabla_gu (t) \|_{L_x^2}^4.
\end{align*}
For the term $I_4$, simple calculations yield
\begin{align*}
I_4
&\sim\|\nabla_gu (t) \|_{L_x^2}^{\frac{p-1}{p+1}}\|\partial_tu (t) \|_{L_x^2}
\|u (t) \|_{H_x^2}^{\frac{2p-7}{3}} E\left(u (t) \right)^{\frac13}\|\nabla_gu (t) \|_{L_x^2}^{2-\frac{p-1}{p+1}}\\
&\lesssim\|\nabla_gu (t) \|_{L_x^2}^{\frac{2\left(p-1\right)}{p+1}}\|\partial_tu (t) \|_{L_x^2}^2
+\|u (t) \|_{H_x^2}^{\frac{2\left(2p-7\right)}{3}}E\left(u (t) \right)^{\frac23}\|\nabla_gu (t) \|_{L_x^2}^{\frac4{p+1}}\|\nabla_gu (t) \|_{L_x^2}^2.
\end{align*}
Hence,
\begin{align*}
|b\left(t\right)|\lesssim_{\|u_0\|_{H^1}}&\delta\left(1+\|u\left(t\right)\|_{H_x^2}^{\frac{2p-7}{3}}E\left(u\left(t\right)\right)^{\frac13}\|\nabla_gu\left(t\right)\|_{L_x^2}\right)\|\partial_t\nabla_gu\left(t\right)\|_{L_x^2}^2\\
&+\left(\|\nabla_gu\left(t\right)\|_{L_x^2}^{\frac{2\left(p-1\right)}{p+1}}+\|u\left(t\right)\|_{H_x^2}^{\frac{2\left(2p-7\right)}{3}}E\left(u\left(t\right)\right)^{\frac23}\|\nabla_gu\left(t\right)\|_{L_x^2}^2\right)\|\partial_tu\left(t\right)\|_{L_x^2}^2\\
&+\|u\left(t\right)\|_{H_x^2}^{\frac{2\left(2p-7\right)}{3}}E\left(u\left(t\right)\right)^{\frac23}
\left(\|\nabla_gu\left(t\right)\|_{L_x^2}^2+\|\nabla_gu\left(t\right)\|_{L_x^2}^{\frac4{p+1}}\right)\|\nabla_gu\left(t\right)\|_{L_x^2}^2.
\end{align*}
The integration of $|b\left(t\right)|$ with respect to time gives for any $1\le t_1\le t_2<\infty$,
\begin{align*}
\int_{t_1}^{t_2}|b\left(t\right)|\,\mathrm dt\lesssim_{\|u_0\|_{H^1}}&\delta\left(1+\|u\left(t_1\right)\|_{H_x^2}^{\frac{2p-7}{3}}E\left(u\left(t_1\right)\right)^{\frac13}\|\nabla_gu\left(t_1\right)\|_{L_x^2}\right)
\int_{t_1}^{t_2}\|\partial_t\nabla_gu\left(t\right)\|_{L_x^2}^2\,\mathrm dt\\
&+\left(\|\nabla_gu\left(t_1\right)\|_{L_x^2}^{\frac{2\left(p-1\right)}{p+1}}+\|u\left(t_1\right)\|_{H_x^2}^{\frac{2\left(2p-7\right)}{3}}E\left(u\left(t_1\right)\right)^{\frac23}\|\nabla_gu\left(t_1\right)\|_{L_x^2}^2\right)
\int_{t_1}^{t_2}\|\partial_tu\left(t\right)\|_{L_x^2}^2\,\mathrm dt\\
&+\|u\left(t_1\right)\|_{H_x^2}^{\frac{2\left(2p-7\right)}{3}}E\left(u\left(t_1\right)\right)^{\frac23}
\left(\|\nabla_gu\left(t_1\right)\|_{L_x^2}^2+\|\nabla_gu\left(t_1\right)\|_{L_x^2}^{\frac4{p+1}}\right)
\int_{t_1}^{t_2}\|\nabla_gu\left(t\right)\|_{L_x^2}^2\,\mathrm dt.
\end{align*}
Taking $\delta$ sufficiently small and using Proposition \ref{Energydecay}, Corollary \ref{partial_tuapriori}, and \eqref{eq3.27v258},
we obtain
\[
\int_{t_1}^{t_2}|b\left(t\right)|\,\mathrm dt\le\frac{\Re z}{|z|^2}\int_{t_1}^{t_2}\|\partial_t\nabla_gu\left(t\right)\|_{L_x^2}^2\,\mathrm dt+C\left(\delta,\|u_0\|_{H^1}\right)t_1^{-\frac{2p}{p-1}}.
\]
By \eqref{pt_u}, \eqref{nonlinear_e1}, Proposition \ref{Energydecay} and Corollary \ref{lem:pointwise}, we have for $f\left(t\right):=\mathcal E_2\left(u\left(t\right)\right)$,
\[
|f\left(\tau_n\right)|\lesssim\tau_n^{-\left(2+\frac2{p-1}\right)}.
\]
In summary, $f\left(t\right)$, $a\left(t\right)$, and $b\left(t\right)$ verify the hypothesis of Lemma \ref{lem:abstract} with $\beta=\gamma=\frac{2p}{p-1}$, so that
\begin{align}\label{eq3.28newv167}
|\mathcal E_2\left(u\left(t\right)\right)|\lesssim_{u_0}t^{-2-\frac2{p-1}}.
\end{align}
Since
\begin{equation}\label{pt_u2}
\begin{split}
\|\partial_tu\left(t\right)\|_{L_x^2}^2
&\lesssim\mathcal E_2\left(u\left(t\right)\right)+\|u\left(t\right)\|_{L_x^{2p}}^{2p}+ \left\||u\left(t\right)|^{p-1}
|\nabla_g\left(|u\left(t\right)|\right)|^2 \right\|_{L_x^1}\\
&\lesssim\mathcal E_2\left(u\left(t\right)\right)+\|u\left(t\right)\|_{H_x^2}^{p-1}E\left(u\left(t\right)\right)
+\|u\left(t\right)\|_{H_x^2}^{p-1}\|\nabla_gu\left(t\right)\|_{L_x^2}^2,
\end{split}
\end{equation}
it follows from \eqref{eq3.28newv167} together with Proposition \ref{Energydecay} that
\[
\|\partial_tu\left(t\right)\|_{L_x^2}\lesssim_{\|u_0\|_{H^1}}t^{-1-\frac1{p-1}}.
\]
We can then readily deduce that
\[
\|\Delta_gu\left(t\right)\|_{L_x^2}\lesssim_{\|u_0\|_{H^1}}t^{-1-\frac1{p-1}}.
\]
So we can finish the proof of Proposition \ref{prop_3D} with $p\in]4,5[$.

\subsubsection{Proof of Proposition \ref{prop_3D} with $p=5$}\label{3D_p5}
Firstly, we estimate $\|u\left(t\right)\|_{L_x^{10}} $.
Invoking \eqref{u_2p_2p}, we have
\begin{align*}
\|u\left(t\right)\|_{L_x^{10 }}^{10 }
\lesssim&\|u\left(t\right)\|_{H_x^1}^{4 }E\left(u\left(t\right)\right)
+\|u\left(t\right)\|_{H_x^1}^{2}E\left(u\left(t\right)\right)\left(\|\partial_tu\left(t\right)\|_{L_x^2}^{2 } +\|u\left(t\right)\|_{L_x^{10 }}^{10 }\right)\\
\sim&\|u\left(t\right)\|_{H_x^1}^{4}E\left(u\left(t\right)\right)
+\|u\left(t\right)\|_{H_x^1}^{2 }E\left(u\left(t\right)\right)\|\partial_tu\left(t\right)\|_{L_x^2}^{2 }
+\|u\left(t\right)\|_{H_x^1}^{2 }E\left(u\left(t\right)\right)\|u\left(t\right)\|_{ L_x^{10 } }^{10 }.
\end{align*}
By Proposition \ref{Energydecay}, this gives rise to
\[
\|u\left(t\right)\|_{L_x^{10 }}^{10 }
\le C\left(\|u_0\|_{H^1}\right)t^{-\frac{5}{2}}
+C\left(\|u_0\|_{H^1}\right)t^{-2 }\|\partial_tu\left(t\right)\|_{L_x^2}^{2 }
+C\left(\|u_0\|_{H^1}\right)t^{-2 }\|u\left(t\right)\|_{L_x^{10 }}^{10 }.
\]
Taking $t_0$ large enough, we have for $t \ge t_0$,
\begin{align}\label{nonlinear_10}
\|u\left(t\right)\|_{L_x^{10}}^{10 }
\le Ct^{-\frac{5}{2}}+Ct^{-2 }\|\partial_tu\left(t\right)\|_{L_x^2}^{2}.
\end{align}
Now, fix any interval $I\subset]t_0,\infty[$ of size $O\left(1\right)$. We can always find a point $\tau_0\in I$ such that
\[
\|\partial_tu\left(\tau_0\right)\|_{L_x^2}\lesssim\tau_0^{-\frac34 }.
\]
Combining this with \eqref{nonlinear_10}, we deduce that $\|\Delta_gu\left(\tau_0\right)\|_{L_x^2}\lesssim\tau_0^{-\frac34 }$.
 Together with \eqref{eq3.4v247}, we get $\|u\left(\tau_0\right)\|_{H_x^2}\lesssim\tau_0^{-\frac1{p-1}}$.
 Therefore, by the local well-posedness theory, at least for sufficiently large $\tau_0\in I$, the solution is continuous with respect to the $H_x^2$-topology.
 Hence, on $I$, we always have $\|u\left(t\right)\|_{H_x^2}\lesssim_{\|u_0\|_{H^1}}t^{-\frac1{4}}$.
 This argument works for any $I$, so we conclude directly that $\|u\left(t\right)\|_{H_x^2}\lesssim_{\|u_0\|_{H^1}}t^{-\frac1{4}}$.

A simple calculation yields
\begin{align*}
|b\left(t\right)|&
\lesssim \left\| |u\left(t\right)|^3\partial_tu\left(t\right)|\nabla_g\left(|u\left(t\right)|\right)|^2 \right\|_{L_x^1}
\\
&\lesssim\|u\left(t\right)\|_{H_x^2}E\left(u\left(t\right)\right)^{\frac13}
\|\partial_tu\left(t\right)\|_{L_x^6}\|\nabla_gu\left(t\right)\|_{L_x^2}\|\nabla_gu\left(t\right)\|_{L_x^\infty},
\end{align*}
where we have used \eqref{non_L3}. Invoking \eqref{non_Linfty}, we obtain
\[
|b\left(t\right)|\lesssim\|u\left(t\right)\|_{H_x^2}E\left(u\left(t\right)\right)^{\frac13}
\left(\|\partial_tu\left(t\right)\|_{L_x^2}+\|\partial_t\nabla_gu\left(t\right)\|_{L_x^2}\right)\|\nabla_gu\left(t\right)\|_{L_x^2}
\left(\delta\|\partial_t\nabla_gu\left(t\right)\|_{L_x^2}+\|\nabla_gu\left(t\right)\|_{L_x^2}\right).
\]
Using the same argument as in the proof of the case $p\in]4,5[$, one could easily deduce that for $t_0\le t_1\le t_2<\infty$,
\[
\int_{t_1}^{t_2}|b\left(t\right)|\,\mathrm dt\le\frac{\Re z}{|z|^2}\int_{t_1}^{t_2}\|\partial_t\nabla_gu\left(t\right)\|_{L_x^2}^2\,\mathrm dt
+C\left(\|u_0\|_{H^1},\delta\right)t_1^{-\frac{5}{2}}.
\]
Hence, in this case, we derive that
\[
\mathcal E_2\left(u\left(t\right)\right)\lesssim_{\|u_0\|_{H^1}}t^{- \frac52 }.
\]
Then, by \eqref{pt_u2}, we can conclude that
\[
\|\partial_tu\left(t\right)\|_{L_x^2}\lesssim_{\|u_0\|_{H^1}}t^{- \frac54 } .
\]
This leads to
\[
\|\Delta_gu\left(t\right)\|_{L_x^2}\lesssim_{\|u_0\|_{H^1}}t^{- \frac54 }.
\]
Hence, we have completed the proof of Proposition \ref{prop_3D} with $p=5$ for $t\ge t_0$.

\section{Asymptotic profile}\label{se4}

In this section, we complete the proof of Theorem \ref{th2v79} by deriving the precise asymptotic profile of the global solutions.
The key observation is that the fluctuation component $v$ decays much faster than the spatial average $m$.
Hence, the long time dynamics are governed by an effective finite-dimensional equation satisfied by $m$,
 while the influence of $v$ appears only as a perturbative correction.
 The proof therefore reduces to a detailed analysis of the asymptotic behavior of the averaged mode.

In the sequel, we introduce the following notation. For any function $F$, denote
\begin{align*}
 \Pi_0(F)=\dashint_{\mathcal{M}^3}F \, \dd\mu,\quad \Pi_0^{\perp}:=\mathrm{Id}-\Pi_0.
\end{align*}
Our starting point is the decomposition
\begin{align}\label{eq4.1v250}
	u(t,x)=m(t)+v(t,x),
\end{align}
as in \eqref{eq4.1v127},
where $m(t)$ stands for the spatial integral average of $u$ over $\mathcal{M}^3$.
The fluctuation component $v$ has zero mean
\begin{align}\label{int_v}
\dashint_{\mathcal{M}^3} v \,\mathrm{d}\mu=0,
\end{align}
and therefore satisfies Poincar\'e's inequality in Proposition~\ref{pr2.3v122}, which will play a crucial role in the subsequent analysis.

Substituting the decomposition \eqref{eq4.1v250} into \eqref{eq1.CGL}, we have
\begin{align}\label{eq4.2v250}
\partial_t v=z\Delta_g v-z f(u)+z \dashint_{\mathcal{M}^3} f(u)  \,\mathrm{d}\mu.
\end{align}
Taking the $L^{2}$ inner product of both sides against $\ov{v}$ yields
\begin{align*}
	\int_{\mathcal{M}^3} \ov{v}\cdot\partial_t v \,\mathrm{d}\mu =-z\int_{\mathcal{M}^3} |\nabla_gv|^2 \,\mathrm{d}\mu
	-z\int_{\mathcal{M}^3}
	f(u ) \ov{v}\,\mathrm{d}\mu+z\int_{\mathcal{M}^3}
	\left(\dashint_{\mathcal{M}^3} f(u)  \,\mathrm{d}\mu\right)\ov{v}\,\mathrm{d}\mu.
\end{align*}
Owing to \eqref{int_v}, the last two nonlinear terms simplify as
\begin{align*}
&-z\int_{\mathcal{M}^3}
f(u ) \ov{v}\,\mathrm{d}\mu + z\int_{\mathcal{M}^3}
\left(\dashint_{\mathcal{M}^3} f(u)  \,\mathrm{d}\mu\right)\ov{v}\,\mathrm{d}\mu
= -z\int_{\mathcal{M}^3}
(f(u)-f(m)) \ov{v}\,\mathrm{d}\mu.
\end{align*}
Combining the above identities, we obtain the following energy dissipation identity:
\begin{equation}\label{eq_v}
	\begin{split}
		& \frac{1}{2}\frac{\mathrm{d}}{\mathrm{d}t} \left( \|v (t) \|_{L_{x}^2}^2 \right) +\Re{z}\|\nabla_gv (t) \|_{L_{x}^2}^2
=
	-\Re\left[z\int_{\mathcal{M}^3}
	(f(v+m)-f(m)) \ov{v}\,\mathrm{d}\mu\right].
	\end{split}
\end{equation}

\subsection{\texorpdfstring{$H^{1}$-decay of $v(t)$}{Lx2-decay of v(t)}}

In the following, we establish $H^{1}$-decay of $v(t)$.

\begin{proposition}\label{non_integral}
For initial data $u_0\in H_x^1 \left(\mathcal{M}^3 \right)$, the following decay estimate holds:
\begin{align}\label{eq4.2newv174}
\|v(t)\|_{H_x^1 \left(\mathcal{M}^3 \right)} \leq C e^{-\lambda t}\|u_0\|_{H_x^1 \left(\mathcal{M}^3 \right)},
\end{align}
where $\lambda>0$ is a positive constant depending only on the geometry of $\mathcal{M}^3$ and the parameter $\Re z $,
and $C>0$ depends in addition on $\|u_0 \|_{H^1_x\left(\mathcal{M}^3 \right)}$.
\end{proposition}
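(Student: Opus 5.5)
We start from the energy dissipation identity \eqref{eq_v} for the fluctuation $v$ and treat the right-hand side perturbatively, using the decay of $u$ obtained in Section \ref{se3}. Since $v$ has zero mean, Proposition~\ref{pr2.3v122} gives $\|\nabla_g v\|_{L_x^2}^2 \ge \lambda_1 \|v\|_{L_x^2}^2$. Pointwise, $|f(v+m)-f(m)| \lesssim (|u|^{p-1}+|m|^{p-1})|v|$, and $|m|\le \|u\|_{L_x^\infty}$. Hence
\[
\left|\Re\Big[z\int_{\mathcal M^3}(f(v+m)-f(m))\overline v\,\mathrm d\mu\Big]\right| \lesssim |z|\,\|u(t)\|_{L_x^\infty}^{p-1}\|v(t)\|_{L_x^2}^2 .
\]
The whole argument therefore hinges on showing $\|u(t)\|_{L_x^\infty}^{p-1}\to0$, with a quantitative rate.

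\textbf{$L^\infty$ decay.} For $t\ge1$ (or $t\ge t_0$ when $p=5$), combine the Gagliardo--Nirenberg bound $\|u\|_{L_x^\infty}\lesssim \|u\|_{H_x^1}^{1/2}\|u\|_{H_x^2}^{1/2}$ with the $H_x^1$ decay of Proposition~\ref{Energydecay}. For the $H_x^2$ factor, use the $\Delta_g u$ decay from Subsection \ref{H2decay_1} when $1<p\le3$, and Proposition~\ref{prop_3D} when $3<p\le5$. This yields $\|u(t)\|_{L_x^\infty}\lesssim t^{-1/(p-1)}$, so that $\|u(t)\|_{L_x^\infty}^{p-1}\lesssim t^{-1}$. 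On the bounded interval up to $T_*$, Theorem~\ref{th2.6v107} gives $\|u(t)\|_{H^2}\le C(t)$ for $t>0$, so $u\in L_t^{p-1}L_x^\infty$ locally; the interpolation used in the smoothing proof covers the singularity at $t=0$.

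\textbf{$L^2$ decay.} Choose $T_*$ so large that $C|z|\,\|u(t)\|_{L_x^\infty}^{p-1}\le \tfrac12\Re z\,\lambda_1$ for $t\ge T_*$. Then \eqref{eq_v} gives
\[
\tfrac{\mathrm d}{\mathrm dt}\|v\|_{L_x^2}^2+\Re z\,\lambda_1\|v\|_{L_x^2}^2\le0 \quad\text{for } t\ge T_*,
\]
whence $\|v(t)\|_{L^2}\le e^{-\frac{\Re z\lambda_1}{2}(t-T_*)}\|v(T_*)\|_{L^2}$. On $[0,T_*]$, Gr\"onwall with the integrable coefficient $\|u\|_{L^\infty}^{p-1}$ gives $\|v(T_*)\|_{L^2}\le C\|u_0\|_{L^2}$. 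Both $T_*$ and the Gr\"onwall constant depend on $\|u_0\|_{H^1}$, which is consistent with the statement. The rate $\lambda$ depends only on $\Re z\,\lambda_1$.

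\textbf{Upgrade to $H^1$.} Integrating \eqref{eq_v} over $[t,t+1]$ shows $\int_t^{t+1}\|\nabla_g v\|_{L^2}^2\lesssim e^{-2\lambda t}$. Next, differentiate $\|\nabla_g v\|_{L^2}^2$ using \eqref{eq4.2v250}; this gives
\[
\tfrac{\mathrm d}{\mathrm dt}\|\nabla_g v\|^2+\Re z\|\Delta_g v\|^2\lesssim \|\Pi_0^\perp f(u)\|_{L^2}^2 .
\]
Since $\Pi_0^\perp f(m)=0$, we have $\|\Pi_0^\perp f(u)\|_{L^2}\le 2\|f(u)-f(m)\|_{L^2}\lesssim\|u\|_{L^\infty}^{p-1}\|v\|_{L^2}$, which decays exponentially. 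A uniform Gronwall lemma (pick a good time in $[t-1,t]$ and integrate forward) then gives $\|\nabla_g v(t)\|_{L^2}\lesssim e^{-\lambda t}$. For small $t$, use $\|v\|_{H^1}\le\|u\|_{H^1}\lesssim\|u_0\|_{H^1}$ via \eqref{eq_ener}--\eqref{eq_mass}.

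\textbf{Main obstacle.} The main obstacle is securing $\|u\|_{L^\infty}^{p-1}\to0$ uniformly for all $p\le5$, especially $p=5$, where Proposition~\ref{prop_3D} only applies past $t_0$. It also requires care that the constants depend only on $\|u_0\|_{H^1}$. If the Gagliardo--Nirenberg route were too lossy, I would instead use the integrability $\int^\infty\|u\|_{L^\infty}^{p-1}\,\mathrm dt$ restricted to large times, together with the smallness of $\|u\|_{H^2}$, to absorb the perturbation.
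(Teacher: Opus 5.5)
Your proof is correct, but it takes a heavier route than the paper's.

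\textbf{What you do.} You bound the nonlinear term in \eqref{eq_v} by $\|u(t)\|_{L_x^\infty}^{p-1}\|v\|_{L_x^2}^2$. This forces you to first establish $\|u(t)\|_{L_x^\infty}^{p-1}\lesssim t^{-1}$, and hence to invoke the $H_x^2$ decay of Subsections~\ref{H2decay_1} and~\ref{H2decay}. That includes the modified-energy Proposition~\ref{prop_3D}, which for $p=5$ holds only past a solution-dependent time $t_0$.

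\textbf{What the paper does.} The paper uses only the $H^1$-level decay of Proposition~\ref{Energydecay}. It splits $|(f(v+m)-f(m))\bar v|\lesssim |m|^{p-1}|v|^2+|v|^{p+1}$. Sobolev ($p+1\le 6$) and Poincar\'e then give $\|v\|_{L_x^{p+1}}^{p+1}\lesssim \|\nabla_g u\|_{L_x^2}^{p-1}\|\nabla_g v\|_{L_x^2}^2$. Since $|m|^{p-1},\|\nabla_g u\|_{L_x^2}^{p-1}\lesssim t^{-1}$, the perturbation is absorbed into the dissipation $\Re z\|\nabla_g v\|_{L_x^2}^2$ for $t\ge T$. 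The $H^1$ upgrade works the same way: $\int|u|^{p-1}|\nabla_g v|^2\,\mathrm d\mu\lesssim \|\nabla_g v\|_{L_x^6}^2\|u\|_{L_x^{p+1}}^{p-1}\lesssim t^{-1}\|\Delta_g v\|_{L_x^2}^2$ is absorbed into $\Re z\|\Delta_g v\|_{L_x^2}^2$. The paper does not use an exponentially small forcing term with a uniform Gr\"onwall argument.

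\textbf{What each route buys.} The paper's route is independent of the modified-energy machinery and treats the endpoint uniformly. The ``main obstacle'' you flag at $p=5$ does not arise there, since only $H^1\hookrightarrow L^6$ is needed. Its threshold $T$ depends only on the constants in Proposition~\ref{Energydecay}, not on $t_0$. Your route gives a slightly cleaner $H^1$ step, because $\|\Pi_0^\perp f(u)\|_{L_x^2}\lesssim\|u\|_{L_x^\infty}^{p-1}\|v\|_{L_x^2}$ is already exponentially small. It also produces the $L^\infty$ decay of Theorem~\ref{th2v79} as a by-product.

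\textbf{One unnecessary step.} Your Gr\"onwall argument on $[0,T_*]$ is not needed. Since $v=\Pi_0^\perp u$ is an orthogonal projection and the mass is nonincreasing by \eqref{eq_ener}, you have directly $\|v(T_*)\|_{L_x^2}\le\|u(T_*)\|_{L_x^2}\le\|u_0\|_{L_x^2}$. This is exactly what the paper uses.
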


\begin{proof}
We begin with the elementary estimate
\begin{align}\label{eq_non}
  \left|\Re\left[z\int_{\mathcal{M}^3}
(f(v+m)-f(m)) \overline{v}\,\mathrm{d}\mu\right]\right|
\lesssim |m|^{p-1}\|v\|_{L_x^2 \left(\mathcal{M}^3 \right)}^2+\|v\|_{L_x^{p+1} \left(\mathcal{M}^3 \right)}^{p+1}.
\end{align}
By the Sobolev inequality and Proposition \ref{pr2.3v122}, we have
\begin{align}\label{eq_v_p}
\|v\|_{L_x^{p+1} \left(\mathcal{M}^3 \right)}^{p+1}
\lesssim \|v\|_{L_x^2 \left(\mathcal{M}^3 \right)}^{p+1}+\|\nabla_g v\|_{L_x^2 \left(\mathcal{M}^3 \right)}^{p+1}
& \lesssim   \left(1+\lambda_1^{- \frac{p+1}2} \right)\|\nabla_g v\|_{L_x^2 \left(\mathcal{M}^3 \right)}^{p+1} \notag \\
      &  \sim  \left(1+\lambda_1^{- \frac{p+1}2} \right)\|\nabla_g u\|_{L_x^2 \left(\mathcal{M}^3 \right)}^{p-1}
    \|\nabla_g v\|_{L_x^2 \left(\mathcal{M}^3 \right)}^{2}.
\end{align}
Gathering \eqref{eq_non} and \eqref{eq_v_p} yields
\[
\begin{aligned}
  \left|\Re\left[z\int_{\mathcal{M}^3}
(f(v+m)-f(m)) \overline{v}\,\mathrm{d}\mu\right]\right|
  \lesssim |m|^{p-1}\|v\|_{L_x^2 \left(\mathcal{M}^3 \right)}^2
  +  \left(1+\lambda_1^{- \frac{p+1}2} \right)\|\nabla_g u\|_{L_x^2 \left(\mathcal{M}^3 \right)}^{p-1}\|\nabla_g v\|_{L_x^2 \left(\mathcal{M}^3 \right)}^{2}.
\end{aligned}
\]
From \eqref{eq4.1v127},
we have $|m(t)|\lesssim \|u(t)\|_{L_x^2 \left(\mathcal{M}^3 \right)}$.
By Proposition \ref{Energydecay}, we obtain
\begin{align}\label{eq4.4newv174}
|m(t)|^{p-1} \lesssim t^{-1} \text{ and } \|\nabla_g u(t)\|_{L_x^2 \left(\mathcal{M}^3 \right)}^{p-1} \lesssim t^{-1}.
\end{align}
Thus, we obtain
\[
\begin{aligned}
  \left|\Re\left[z\int_{\mathcal{M}^3}
(f(v+m)-f(m)) \overline{v}\,\mathrm{d}\mu\right]\right|
  \lesssim t^{-1}\|v\|_{L_x^2 \left(\mathcal{M}^3 \right)}^2
 + \left(1+\lambda_1^{- \frac{p+1}2} \right)t^{-1}\|\nabla_g v\|_{L_x^2 \left(\mathcal{M}^3 \right)}^{2}.
\end{aligned}
\]
Substituting this into \eqref{eq_v} gives
\[
\begin{aligned}
  \frac{\mathrm{d}}{\mathrm{d}t} \left( \|v(t)\|_{L_x^2 \left(\mathcal{M}^3 \right)}^2 \right)
    +2\Re{z}\|\nabla_g v(t)\|_{L_x^2 \left(\mathcal{M}^3 \right)}^2
  \lesssim t^{-1}\|v(t)\|_{L_x^2 \left(\mathcal{M}^3 \right)}^2
  +  \left(1+\lambda_1^{- \frac{ p+1}2} \right)t^{-1}\|\nabla_g v(t)\|_{L_x^2 \left(\mathcal{M}^3 \right)}^{2}.
\end{aligned}
\]
Using Proposition \ref{pr2.3v122} again, we summarize the energy inequality
\begin{align*}
\begin{aligned}
  &  \frac{\mathrm{d}}{\mathrm{d}t} \left( \|v(t)\|_{L_x^2 \left(\mathcal{M}^3 \right)}^2 \right)
  +\Re{z}\lambda_1\|v(t)\|_{L_x^2 \left(\mathcal{M}^3 \right)}^2
  +\Re{z}\|\nabla_g v(t)\|_{L_x^2 \left(\mathcal{M}^3 \right)}^2 \\
  & \leq Ct^{-1}\|v(t)\|_{L_x^2 \left(\mathcal{M}^3 \right)}^2
  +  C\left(1+\lambda_1^{-  \frac{ p+1}2} \right) t^{-1}\|\nabla_g v(t)\|_{L_x^2 \left(\mathcal{M}^3 \right)}^{2}.
\end{aligned}
\end{align*}
Choose $T>0$ such that
\begin{align}\label{condition:largeT}
C T^{-1} \le \frac{1}{2}\Re{z}\lambda_1  \quad \text{  and  }  \quad  C \left(1+\lambda_1^{- \frac{ p+1}2} \right)T^{-1} \le \frac{1}{2}\Re{z}.
\end{align}
Hence, for $t\ge T$, we have
\begin{align*}
\frac{\mathrm{d}}{\mathrm{d}t} \left( \|v(t)\|_{L_x^2 \left(\mathcal{M}^3 \right)}^2 \right)
+\frac{1}{2}\Re z\lambda_1\|v(t)\|_{L_x^2 \left(\mathcal{M}^3 \right)}^2
+\frac{1}{2}\Re z\|\nabla_g v(t)\|_{L_x^2 \left(\mathcal{M}^3 \right)}^2 \le 0.
\end{align*}
By Gr\"onwall's inequality and the mass dissipation identity \eqref{eq_ener}, we have
\[
\|v(t)\|_{L_x^2 \left(\mathcal{M}^3 \right)}
 \leq e^{- \frac{1}{4}\Re z \lambda_1 (t-T)}\|v(T)\|_{L_x^2 \left(\mathcal{M}^3 \right)}
 \leq e^{- \frac{1}{4}\Re z \lambda_1 (t-T)}\|u(T)\|_{L_x^2 \left(\mathcal{M}^3 \right)}
 \leq e^{- \frac{1}{4}\Re z \lambda_1 (t-T)}\|u_0\|_{L_x^2 \left(\mathcal{M}^3 \right)}.
\]
To obtain the $H^1$-decay, we deduce from \eqref{eq4.2v250} the energy dissipation identity
\begin{align*}
\frac{1}{2}\frac{\dd}{\dd t} \left( \|\nabla v(t)\|_{L_x^2 \left(\mathcal{M}^3 \right)}^2 \right)
=& -\Re z \|\Delta_g v(t)\|_{L_x^2 \left(\mathcal{M}^3 \right)}^2
-\Re \left[z\int \nabla\Pi_0^{\perp}\big(f(m+v)-f(m)\big)\cdot \overline{\nabla v}  \, \dd\mu \right].
\end{align*}
Similarly, by H\"older and Sobolev, we have
\begin{align*}
\left|\int_{\mathcal{M}^3} \nabla_{g} \Pi_0^{\perp}\big(f(m+v)-f(m)\big)\cdot \overline{\nabla v}  \, \dd\mu \right|
\lesssim  &  \left\| |\nabla_{g} v|^2 |v+m|^{p-1} \right\|_{L_{x}^1 \left(\mathcal{M}^3 \right)}
\lesssim    \|\nabla_{g} v\|_{L_{x}^6 \left(\mathcal{M}^3 \right)}^2
\|v+m \|_{L_{x}^{p+1} \left(\mathcal{M}^3 \right) }^{p-1}.
\end{align*}
By Proposition \ref{th2.10v116}, we have
$$ \|\nabla_{g} v\|_{L_{x}^6 \left(\mathcal{M}^3  \right)}\lesssim  \|\Delta_{g} v\|_{L_{x}^2 \left(\mathcal{M}^3 \right)},
$$
and by \eqref{eq4.4newv174},
$$ \|v+m\|_{L_{x}^{p+1} \left(\mathcal{M}^3 \right)}^{p-1}\lesssim  t^{-1} .
$$
Thus, we obtain the energy inequality:
\begin{align*}
\frac{\dd}{\dd t} \left( \|\nabla_{g} v(t)\|_{L_x^2 \left(\mathcal{M}^3 \right)}^2 \right)
+ \Re z \|\Delta_{g} v(t)\|_{L_x^2 \left(\mathcal{M}^3 \right)}^2
\lesssim t^{-1} \|\Delta_{g} v(t)\|_{L_x^2 \left(\mathcal{M}^3 \right)}^2, \ \forall\, t \ge 1 .
\end{align*}
The same Gr\"onwall argument yields
\begin{align*}
\|\nabla_{g} v(t)\|_{L_x^2  \left( \mathcal{M}^3 \right)}
\leq e^{- \frac{1}{4}\Re z \lambda_1(t-T)}\|\nabla u(T)\|_{L_x^2 \left(\mathcal{M}^3 \right)},
\end{align*}
for sufficiently large $T>0$ satisfying \eqref{condition:largeT} with a possibly larger implicit constant $C>0$,
depending only on $\|u_0\|_{H_x^1 \left(\mathcal{M}^3 \right)}$.

In summary, the proof of Proposition \ref{non_integral} is now completed.
\end{proof}

By Proposition \ref{non_integral}, we have exponential decay of the fluctuation component.
We now analyze the scalar equation for $m(t)$ to derive the refined asymptotic profile.

\subsection{Asymptotic dynamics of the spatial mean}
Employing the $L_{x}^{2}$ decay of $v(t)$, we next derive an upper bound for the integral $z\int_{\mathcal{M}^3} \left( f(v (t) + m(t) ) -
f(m(t) ) \right) \,\mathrm{d}\mu$ so as to finalize the proof of Theorem \ref{th2v79}.

\begin{lemma}\label{u1_p25}
For initial data $u_0\in H^1$,
 we have
\begin{equation}\label{inf_ex}
	\left| \int_{\mathcal{M}^3}  \left(
f(v (t) + m (t) ) -
f(m  (t) ) \right)\,\mathrm{d}\mu \right| \lesssim e^{-\lambda t}, \ \forall\, t \ge 1 ,
\end{equation}
where $\lambda>0$ is a positive constant depending only on $\mathcal{M}^3$,
and the implicit constant depends also on
 $\|u_0\|_{H^1}$.
\end{lemma}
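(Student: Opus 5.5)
The plan is to bound the integrand pointwise by a quantity that is linear in $v$, and then integrate using the exponential decay of $v$ from Proposition~\ref{non_integral}.

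\textbf{Pointwise bound.} By the mean value theorem applied to $f(\omega)=|\omega|^{p-1}\omega$, we have
\[
|f(m+v)-f(m)|\lesssim_p \left(|m|^{p-1}+|v|^{p-1}\right)|v|
\]
for all $m\in\mathbb C$ and $v(x)$.

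\textbf{Splitting the integral.} Integrating over $\mathcal M^3$ gives
\[
\left|\int_{\mathcal{M}^3}\big(f(v+m)-f(m)\big)\,\mathrm d\mu\right|
\lesssim |m|^{p-1}\|v\|_{L_x^1}+\|v\|_{L_x^p}^p .
\]
The two pieces are handled separately.

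\textbf{The linear piece.} Compactness gives $\|v\|_{L_x^1}\lesssim \mathrm{Vol}_g(\mathcal M^3)^{1/2}\|v\|_{L_x^2}$. From \eqref{eq4.4newv174} (or simply the uniform $H^1$ bound from Proposition~\ref{pr2.12v70}) we have $|m(t)|^{p-1}\lesssim t^{-1}\lesssim 1$ for $t\ge1$. Proposition~\ref{non_integral} then bounds this term by $Ce^{-\lambda t}$.

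\textbf{The power piece.} Since $1<p\le5<6$ and the manifold is compact, Hölder and the Sobolev embedding $H^1\hookrightarrow L^6$ (Theorem~\ref{th2.6v116}) give
\[
\|v\|_{L_x^p}\lesssim \|v\|_{L_x^6}\lesssim \|v\|_{H_x^1}.
\]
Hence $\|v\|_{L^p_x}^p\lesssim \|v\|_{H^1_x}^p\lesssim C e^{-p\lambda t}\le Ce^{-\lambda t}$, using $p>1$ and the fact that $C e^{-\lambda t}$ is bounded. Alternatively, one can write $\|v\|_{L^p}^p\le \|v\|_{H^1}^{p-1}\cdot\|v\|_{H^1}$ and absorb the first factor into the constant via the uniform $H^1$ bound on $v=u-m$.

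\textbf{Constants.} Combining the two pieces yields \eqref{inf_ex} with the same $\lambda$ as in Proposition~\ref{non_integral}, which depends only on $\Re z$ and $\lambda_1$. The implicit constant depends on $\|u_0\|_{H^1}$. On the range $t\in[1,T]$, where $T$ is the time fixed in \eqref{condition:largeT}, the estimate holds trivially from the uniform bounds after enlarging the constant.

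\textbf{Where the difficulty lies.} There is no real obstacle here. The only care needed is to use the $H^1$ decay of $v$, rather than only its $L^2$ decay, so that the $|v|^p$ term can be controlled even when $p$ is close to $5$. For this, Sobolev embedding on the compact manifold suffices, so no $L^\infty$ bound is needed.
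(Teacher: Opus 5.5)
Your proposal is correct and follows the paper's own proof. The paper uses the same pointwise bound $|f(m+v)-f(m)|\lesssim |v||m|^{p-1}+|v|^p$, together with the exponential $H^1$ decay of $v$ from Proposition~\ref{non_integral} and the bound on $|m|^{p-1}$ from \eqref{eq4.4newv174}; you simply spell out the H\"older and Sobolev steps that the paper leaves implicit.
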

\begin{proof}
A direct computation yields
\begin{equation*}
	\left|\int_{\mathcal{M}^3}\big(f(v+m)-f(m)\big) \, \mathrm{d}\mu\right|
	\lesssim \int_{\mathcal{M}^3}\left(|v||m|^{p-1}+|v|^p\right)\,\mathrm{d}\mu.
\end{equation*}
This together with \eqref{eq4.2newv174} and \eqref{eq4.4newv174} yields \eqref{inf_ex}.
\end{proof}

Now we are able to prove the dichotomy for fast and slow dynamics. Recall that
$$ m(t)=\dashint_{\mathcal{M}^3} u(t,x)\dd \mu,
$$
where $u(t,x)$ is the global solution to \eqref{eq1.CGL}.
Set
\begin{align}\label{def:rt}
 r(t):=-z\dashint_{\mathcal{M}^3}\big(f(u(t,x))-f(m(t)) \big)\dd\mu.
\end{align}
The key proposition is the following asymptotic result for the slow solution. For the proof, we loosely follow the strategy in \cite{Ben} for semilinear heat equations.
\begin{proposition}\label{prop:dichotomy}
Let $0<\gamma<\frac{\lambda}{p}$, where $\lambda>0$ is the constant in Lemma \ref{u1_p25}.
Assume that
\begin{align}\label{ht:exponentialdecay}
\limsup_{t\rightarrow+\infty}e^{\gamma t}|m(t)|=+\infty.
\end{align}
Then there exist $T_{*}\gg 1$ and a continuous function $\theta(t)$ such that, for all $t\ge T_{*}$,
$$  m(t)=\rho(t)e^{i\theta(t)}, \quad \rho(t)=|m(t)|,
$$
and
\begin{equation}\label{rho_theta}
\begin{cases}
\dfrac{ \mathrm{d} \rho(t)}{ \mathrm{d} t}=-(\Re z)\rho^p(t)+\Re\left(r(t)e^{-i\theta(t)}\right),\\[6pt]
\dfrac{ \mathrm{d} \theta(t)}{  \mathrm{d} t}=-(\Im z)\rho^{p-1}(t)+\Im\left(r(t)e^{-i\theta(t)}\rho^{-1}(t)\right),
\end{cases}
\end{equation}
where
\[
r(t):=-z\dashint_{\mathcal{M}^3}\left(f(v(t)+m(t))-f(m(t))\right)\,  \mathrm{d} \mu,
\]
and if $\Im z\neq 0$,
\[
\lim_{t\rightarrow+\infty}\left(\theta(t)+\frac{\Im z}{(p-1)\Re z}\ln t \right)  \text{ exists}.
\]
Moreover, there exists a bounded function $\nu:[T_*,\infty [ \rightarrow \mathbb{R}_+$ such that
\[\rho(t)=\left(\frac{1}{\nu(t)+\Re z (p-1) (t-T_* ) }\right)^{\frac{1}{p-1}}.
\]
\end{proposition}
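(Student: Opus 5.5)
Averaging \eqref{eq1.CGL} over $\mathcal M^3$ (the Laplacian integrates to zero on a closed manifold) gives the exact ODE
\[
m'(t)=-z f(m(t))+r(t),
\]
and Lemma~\ref{u1_p25} gives $|r(t)|\le Ce^{-\lambda t}$ for $t\ge1$. The plan is to show that, under \eqref{ht:exponentialdecay}, $m$ never vanishes after some time $T_*$, and then to analyse this scalar ODE in polar coordinates.

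\textbf{Step 1: $m$ is eventually nonzero, and $\rho\ge ce^{-\gamma' t}$.} We use the differential inequality
\[
\frac{\mathrm d}{\mathrm dt}|m|\ge -|z||m|^p-|r|,
\]
understood in the Dini sense at points where $m$ vanishes. We argue by contradiction: suppose $|m(t)|\le Ke^{-\gamma t}$ on arbitrarily long stretches. The key point is that the nonlinear term cannot drive $m$ to zero faster than polynomially. The hypothesis gives times $s_k\to\infty$ with $|m(s_k)|\ge e^{-\gamma s_k}$ (indeed $e^{\gamma s_k}|m(s_k)|\to\infty$). Compare $\rho$ from $s_k$ onward with the solution of $y'=-|z|y^p$, $y(s_k)=\rho(s_k)$, which satisfies $y(t)\gtrsim (y(s_k)^{1-p}+t)^{-1/(p-1)}$. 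The accumulated forcing is only $\int_{s_k}^\infty|r|\lesssim e^{-\lambda s_k}$. Since $\gamma p<\lambda$, this forcing is much smaller than $y(s_k)^p\ge e^{-p\gamma s_k}$ times any unit of time. A barrier comparison then shows that $\rho$ stays above $\tfrac12 y(t)$ for all $t\ge s_k$. Hence $\rho>0$ on $[T_*,\infty[$, with $T_*:=s_k$ for $k$ large, and moreover $\rho(t)\gtrsim (e^{(p-1)\gamma T_*}+t)^{-1/(p-1)}$, which decays only polynomially.

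\textbf{Step 2: polar coordinates and the equations \eqref{rho_theta}.} On $[T_*,\infty[$ the function $m$ is $C^1$ and nonvanishing, so we can write $m=\rho e^{i\theta}$ with $\theta$ a continuous lift. Dividing the ODE by $e^{i\theta}$ gives
\[
\rho'+i\rho\theta'=-z\rho^p+re^{-i\theta}.
\]
Taking real and imaginary parts yields exactly \eqref{rho_theta}.

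\textbf{Step 3: the formula for $\rho$ and boundedness of $\nu$.} Define $\nu(t):=\rho(t)^{1-p}-\Re z(p-1)(t-T_*)$. By the first equation of \eqref{rho_theta},
\[
\nu'(t)=-(p-1)\rho^{-p}\Re\left(re^{-i\theta}\right),
\]
so $|\nu'|\lesssim \rho^{-p}e^{-\lambda t}$. By the lower bound from Step 1, $\rho^{-p}$ grows at most like $(C+t)^{p/(p-1)}$, so $\nu'$ is integrable. Hence $\nu$ is bounded and converges to a limit. The needed positivity $\nu(T_*)=\rho(T_*)^{1-p}>0$ is immediate, and positivity of the denominator follows for large $t$; if necessary we enlarge $T_*$ so that the denominator stays positive throughout, which is consistent because $\nu$ only changes by a small amount.

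\textbf{Step 4: the phase.} From Step 3, $\rho^{p-1}=\bigl(\Re z(p-1)t+O(1)\bigr)^{-1}$, so
\[
-\Im z\,\rho^{p-1}=-\frac{\Im z}{(p-1)\Re z}\,t^{-1}+O(t^{-2}).
\]
The remainder term in the $\theta$ equation satisfies $|r|\rho^{-1}\lesssim e^{-\lambda t}(C+t)^{1/(p-1)}$, which is integrable. Therefore $\theta(t)+\frac{\Im z}{(p-1)\Re z}\ln t$ has an integrable derivative and converges as $t\to\infty$.

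\textbf{Main obstacle.} The delicate part is Step 1: turning the $\limsup$ hypothesis into a uniform polynomial lower bound, given that the forcing $r$ is not signed. Everything hinges on the comparison $e^{-\lambda s}\ll e^{-p\gamma s}$, which is exactly the restriction $\gamma<\lambda/p$.
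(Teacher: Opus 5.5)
Your argument is correct, and it takes a different route from the paper. The paper never shows a priori that $m$ stays away from zero. It fixes $T=T_j$ with $|m(T)|\ge e^{-\gamma T}$ and sets $N_T=|m(T)|^{-(p-1)}$. It then obtains the modulation parameters $(\nu,\alpha)$ as the fixed point of the integrated modulation system on the set $\frac12N_T\le\nu\le2N_T$, $|\alpha-\alpha_0|\le\frac{\pi}{10}$. The estimate $e^{-\lambda T}N_T^{p/(p-1)}\lesssim e^{(p\gamma-\lambda)T}$ is used both for the self-map property and for the contraction. The resulting $\rho e^{i\theta}$ solves $m'=-zf(m)+r$ with the same value at $T$, and uniqueness for this scalar ODE identifies it with the true mean. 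So in the paper, nonvanishing is a by-product of the construction.

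You instead work directly on the actual $m$. A comparison principle gives nonvanishing together with a polynomial lower bound. The polar form then exists by lifting, and \eqref{rho_theta} is just the real and imaginary parts of the ODE. You define $\nu:=\rho^{1-p}-\Re z(p-1)(t-T_*)$ explicitly and show it is bounded by integrating $\nu'=-(p-1)\rho^{-p}\Re(re^{-i\theta})$. The decisive estimate, $\int_T^\infty\rho^{-p}|r|\,\mathrm d\tau\lesssim e^{-(\lambda-p\gamma)T}$, is the same in both proofs. Your route is more elementary: there is no fixed point in $C^0([T,\infty[;\mathbb R^2)$ and no identification step. The paper's route delivers nonvanishing and the localization of $(\nu,\alpha)$ in a single step.

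Two places in your write-up need tightening.

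\textbf{Step 1.} The integrated forcing $\int_{s_k}^\infty|r|\lesssim e^{-\lambda s_k}$ cannot by itself keep $\rho$ above $\frac12y(t)$ once $y(t)\lesssim e^{-\lambda s_k}$. What makes the barrier work is a pointwise bound, valid for all $t\ge s_k$:
$|r(t)|\,y(t)^{-p}\lesssim e^{-\lambda s_k}\bigl(\rho(s_k)^{-p}+1\bigr)\lesssim e^{-(\lambda-p\gamma)s_k}$.
For $s_k$ large, this makes $\frac12y$ a strict subsolution of $\rho'\ge-|z|\rho^p-|r|$, because $\frac12-2^{-p}>0$. With this in place, the contradiction framing is unnecessary.

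\textbf{Step 3.} The denominator $\nu+\Re z(p-1)(t-T_*)$ equals $\rho^{1-p}$, so it is positive by construction. What actually has to be checked is $\nu>0$. This follows from $\int_{T_*}^\infty|\nu'|\,\mathrm d\tau\lesssim e^{-(\lambda-p\gamma)T_*}\ll\nu(T_*)$. No enlargement of $T_*$ is needed, and enlarging it would change $\nu$ anyway.
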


\begin{proof}
By \eqref{ht:exponentialdecay}, there exists a sequence $T_j\to\infty$ such that
\[
|m(T_j)|\ge e^{-\gamma T_j}.
\]
We fix $T=T_j$ with $j$ sufficiently large.
From \eqref{eq:initial-mass-decay}  and the Cauchy-Schwarz inequality, we have
\[
|m(T)|\le C T^{-\frac1{p-1}}.
\]
Consequently, setting $N_T:=|m(T)|^{-(p-1)}$ and choosing $\alpha_0 \in \mathbb{R}$ such that $m(T) = |m(T)|e^{i \alpha_0}$, we obtain
\begin{align}\label{estimate_nu}
cT \le N_T \le e^{\gamma(p-1)T}.
\end{align}
We seek continuous modulation functions
\[
\nu:[T,\infty[\to\mathbb R_+,\quad \alpha:[T,\infty[\to\mathbb R
\]
such that
$m(t)=\rho(t)e^{i\theta(t)}$
holds for all $t\ge T$ with
\begin{align*}
& \rho(t)=\left(\nu(t)+  \Re z  (p-1)(t-T)\right)^{-\frac1{p-1}},
\intertext{ and }
& \theta(t)=- \Im z  \int_T^t \frac{ \mathrm{d} \tau}{\nu(\tau)+  \Re z (p-1)(\tau-T)}+\alpha(t).
\end{align*}
The idea behind choosing $\nu(t)$ and $\alpha(t)$ is to modulate the targeted asymptotic profile
\begin{align*}
\rho_c(t):=(\nu+ \Re z (p-1)(t-T) )^{-\frac{1}{p-1}},\quad \theta(t)=- \Im z \int_T^t\frac{\dd\tau}{\nu+ \Re z (p-1)(\tau-T) },
\end{align*}
governed by the model ODE
$$ \frac{ \mathrm{d} m_c (t) }{ \mathrm{d} t} =-zf(m_c (t) ).
$$

Substituting the above expressions into the polar system \eqref{rho_theta}, we obtain the equivalent modulation equations
\begin{align}\label{eq4.15v250}
\begin{cases}
\frac{ \mathrm{d} {\nu (t) }}{ \mathrm{d} t}  =-(p-1)\rho^{-p}\Re\left( r(t)e^{-i\theta(t)}\right),\\[4pt]
\frac{ \mathrm{d} {\alpha (t) }}{ \mathrm{d} t}  =\Im\left(r(t)e^{-i\theta(t)}\rho(t)^{-1}\right),\\[4pt]
\nu(T)=N_T,\\[4pt]
\alpha(T)=\alpha_0.
\end{cases}
\end{align}
Define the map $(\Phi,\Psi)$ by
\begin{align*}
\Phi(\nu,\alpha)(t):= & N_T-(p-1)\int_T^t \varrho( \tau ;\nu)^{-p}\Re\left(r( \tau ) e^{-i\vartheta( \tau ;\nu,\alpha)}\right)\, \mathrm{d} \tau ,
\intertext{ and }
\Psi(\nu,\alpha)(t):= & \alpha_0+\int_T^t \Im\left(r( \tau )e^{-i\vartheta( \tau ;\nu,\alpha)}\varrho( \tau ;\nu)^{-1}\right)\, \mathrm{d}  \tau ,
\end{align*}
where
\begin{align*}
\varrho(t;\nu):= & \left(\nu(t)+ \Re z (p-1)(t-T)\right)^{-\frac1{p-1}},
\intertext{ and }
\vartheta(t;\nu,\alpha):= & - \Im z \int_T^t \frac{ \mathrm{d} \tau}{\nu(\tau)+ \Re z (p-1)(\tau-T)}+\alpha(t).
\end{align*}
Here \(r(t)\), defined by \eqref{def:rt}, is the forcing term already determined by the given global solution \(u\).
Take $T$ large enough and define the closed set in the Banach space $C_t^0  \left([T,\infty[ ;\mathbb R^2 \right)$, equipped with the sum of the two uniform norms,
\[
\mathcal B:=\left\{(\nu,\alpha)\in C^0  \left([T,\infty [ ;\mathbb R^2 \right) \,\big|\,
\nu(T)=N_T, \  \frac12N_T \le \nu(t)\le 2N_T ,\   \alpha(T)=\alpha_0,\ |\alpha(t)-\alpha_0|\le \frac{\pi}{10} \right\}.
\]
For any $(\nu,\alpha)\in\mathcal B$, we have the uniform pointwise bound
\[
\varrho( \tau ;\nu)^{-p}
\le C\bigl(N_T + \tau -T\bigr)^{\frac{p}{p-1}}.
\]
Using the exponential decay $|r(s)|\le Ce^{-\lambda s}$ thanks to Lemma \ref{u1_p25} and the upper bound for $N_T$ from \eqref{estimate_nu},
we get
\[
\begin{aligned}
|\Phi(\nu,\alpha)(t)-N_T|
&\le C\int_T^t \bigl(N_T+ \tau -T\bigr)^{\frac{p}{p-1}} e^{-\lambda  \tau }\, \mathrm{d}  \tau  \\
&\le C e^{-\lambda T}\int_0^\infty (N_T+\tau)^{\frac{p}{p-1}} e^{-\lambda \tau}\, \mathrm{d} \tau
\le C e^{-\lambda T}\left(N_T^{\frac{p}{p-1}}+1\right)  \le C e^{(\gamma p-\lambda)T}.
\end{aligned}
\]
Since $\gamma p<\lambda$, the right hand side decays exponentially to zero as $T\to\infty$. Thus, for $T$ sufficiently large,
\[
|\Phi(\nu,\alpha)(t)-N_T|\le \frac12N_T,
\]
so $\Phi(\nu,\alpha)(t)\in
\left[\frac12N_T,2N_T \right]$.
Similarly, using $\varrho( \tau ;\nu)^{-1}\le C(N_T+ \tau -T)^{ \frac1{p-1} }$, we have
\[
\begin{aligned}
|\Psi(\nu,\alpha)(t)-\alpha_0|
&\le C\int_T^t \bigl(N_T+ \tau -T\bigr)^{\frac1{p-1}} e^{-\lambda  \tau }\, \mathrm{d}  \tau
 \le C e^{-\lambda T}\left(N_T^{\frac1{p-1}}+1\right)
 \le C e^{(\gamma-\lambda)T}.
\end{aligned}
\]
Since the right hand side is exponentially small in $T$, the bound $|\Psi(\nu,\alpha)(t)-\alpha_0|\le \frac{\pi}{10}$ holds for all sufficiently large $T$.
\vspace{0.3cm}

To establish the contraction property, for  $(\nu_1,\alpha_1),(\nu_2,\alpha_2)\in \mathcal B$, we have
\begin{align*}
\|\vartheta(\cdot;\nu_1,\alpha_1)-\vartheta(\cdot;\nu_2,\alpha_2)\|_{L_t^\infty([T,\infty[ )}
\le \|\alpha_1-\alpha_2\|_{L^\infty } +\frac{C}{N_T}\|\nu_1-\nu_2\|_{L_t^\infty([T,\infty [ )},
\end{align*}
and
\begin{align*}
|\varrho( t ;\nu_1)-\varrho( t ;\nu_2) |
\leq C(N_T+ \Re z (p-1)(t-T))^{-\frac{p}{p-1}}\|\nu_1-\nu_2\|_{L^{\infty}([T,\infty[) }, \ \forall\, t \ge T.
\end{align*}
These give
\begin{align*}
& \left\|\Phi(\nu_1,\alpha_1)-\Phi(\nu_2,\alpha_2) \right\|_{L_t^\infty([T,\infty [ )}
+ \left\|\Psi(\nu_1,\alpha_1)-\Psi(\nu_2,\alpha_2) \right\|_{L_t^\infty([T,\infty[ )}  \notag\\
& \lesssim   e^{-\lambda T}\int_0^\infty
\left((N_T+\tau)^{\frac{p}{p-1}}+(N_T+\tau)^{\frac1{p-1}}+1\right)e^{-\lambda\tau}\,\mathrm d\tau
 \left( \|\nu_1-\nu_2\|_{L^\infty([T,\infty [ )}+\|\alpha_1-\alpha_2\|_{L_t^\infty([T,\infty [ )}  \right)\, \notag\\
& \lesssim  e^{-\lambda T}  \left(N_T^{\frac{p}{p-1}}+1 \right)\left( \|\nu_1-\nu_2\|_{L_t^\infty([T,\infty [ )}+\|\alpha_1-\alpha_2\|_{L_t^\infty([T,\infty [ )}  \right)
\notag \\
&   \lesssim   e^{-(\lambda-\gamma p)T}\left( \|\nu_1-\nu_2\|_{L_t^\infty([T,\infty [ )}+\|\alpha_1-\alpha_2\|_{L_t^\infty([T,\infty [ )}  \right).
\end{align*}
Since $\lambda-\gamma p>0$, taking $T$ sufficiently large makes this coefficient less than $ \frac12$.
Thus, $(\Phi,\Psi)$ is a strict contraction on $\mathcal B$.
By the Banach fixed point theorem, there exists a unique pair $(\nu,\alpha)\in\mathcal B$ solving the modulation system \eqref{eq4.15v250} for all $t\ge T$. Since $\nu(t)\ge \frac12N_T>0$ uniformly in time, we have
\[
\nu(t)+ \Re z (p-1)(t-T)\ge \frac12N_T>0
\]
for all finite $t\ge T$. The reconstructed function $m = \rho e^{i\theta}$
satisfies $\frac{ \mathrm{d}  m}{ \mathrm{d} t} =-z f(m )
+r(t)$ with the same initial value as
the actual spatial mean. Since \(p>1\), the map \(w\mapsto f(w) \) is locally Lipschitz on \(\mathbb C\).
Hence, the forced scalar ODE has a unique solution for the prescribed initial value,
 and the reconstructed solution coincides with the original mean \(m\). This establishes the polar
decomposition on $[T,\infty[$.

To end the proof, we analyze the asymptotic behavior of the phase as $ t \rightarrow +\infty$. From the boundedness of $\nu(t)$, we have
\[
\frac{1}{\nu( t )+ \Re z (p-1)( t -T)}  =\frac{1}{ \Re z (p-1) t }+O \left(t^{-2} \right).
\]
This implies that
\begin{align}\label{theta}
\begin{aligned}
 & \theta(t)+\frac{ \Im z }{ \Re z (p-1)}\ln  t \\
 = & - \Im z \int_T^t\left(\frac{1}{\nu(\tau)+ \Re z (p-1)(\tau-T)}
-\frac{1}{ \Re z (p-1)\tau}\right) \, \mathrm{d} \tau +\frac{ \Im z }{ \Re z (p-1)}\ln  T+\alpha(t).
\end{aligned}
\end{align}
The integrand in  \eqref{theta} is integrable as it is $O \left(\tau^{-2} \right)$. Moreover,
\[
 \left| \frac{ \mathrm{d} {\alpha}(t)}{ \mathrm{d} t}  \right|\lesssim   e^{-\lambda t}\bigl(N_T+t-T\bigr)^{\frac1{p-1}},
\]
which is integrable on $[T,\infty[$. Hence, $\alpha(t)$ converges as $t\to\infty$. Consequently,
$\theta(t)+\frac{ \Im z }{ \Re z (p-1)}\ln t$ admits a finite limit $\theta_\infty\in\mathbb R$.
The boundedness of $\nu(t)$ also yields the asymptotic expansion
\[
\rho(t)=\left(\nu(t)+ \Re z (p-1)(t-T)\right)^{-\frac1{p-1}}
=\left( \Re z (p-1)t\right)^{-\frac1{p-1}} \left(1+O \left(t^{-1} \right)\right).
\]
This completes the proof of Proposition \ref{prop:dichotomy}.
\end{proof}

\subsection{Existence of fast and slow solutions}

The existence of a slow solution can be verified directly by taking the initial datum to be a constant function.
So we are going to construct a global solution $u(t)$ with the decomposition
$$ u(t)=m(t)+v(t),\quad m(t)=\dashint_{\mathcal{M}^3}u(t,x) \, \mathrm{d} \mu,
$$
with the prescribed exponential decay rate
$$ m(t)=O \left(e^{-\gamma t} \right),\text{ as }  t\rightarrow+\infty.
$$
Let $v_0\in H_{x}^1 \left(\mathcal{M}^3 \right)$ satisfy $$ \dashint_{\mathcal{M}^3}v_0 \, \mathrm{d} \mu=0.
$$
We then consider the fixed-point problem for $(m,v)$:
\begin{align}\label{eq:fixedpointmv}
\begin{cases}
& m(t)=z\int_t^{+\infty}
f( m(\tau))  \, \dd \tau
+z\int_t^{\infty} \dashint_{\mathcal{M}^3} \left(f(m(\tau)+v(\tau))-f(m(\tau)) \right) \, \mathrm d\mu\,\dd\tau,\\
& \\
& v(t)=e^{t z\Delta_g }v_0-z\int_0^te^{z(t-\tau)\Delta_g }\left(f(m(\tau)+v(\tau))-\dashint_{\mathcal{M}^3}f \left(m(\tau)+v(\tau) \right) \right)\, \dd \tau.
\end{cases}
\end{align}
\begin{proposition}\label{pr4.4v210}
For any $0<\gamma<\lambda< \Re z \lambda_1$\footnote{Recall that $\lambda_1>0$ is the first non-zero
eigenvalue of $-\Delta_g$. }, there exists $\epsilon_0>0$ such that
every zero-mean $v_0\in H^1 \left(\mathcal M^3 \right)$ with $\|v_0\|_{H^1}\le\epsilon_0$
admits a solution $(m,v)\in C_t^0  \left(\mathbb R_+;\mathbb C\times H^1_x  \right)$ of \eqref{eq:fixedpointmv} satisfying
\[
|m(t)|\lesssim  e^{-\gamma t}\|v_0\|_{H^1_x }^{p},\qquad  \|v(t)\|_{H^1_x }\lesssim  e^{-\lambda t}\|v_0\|_{H_x^1}.
\]
In particular, $u=m+v$ solves \eqref{eq1.CGL} and $\|u(t)\|_{H^1}\lesssim  e^{-\gamma t}\|v_0\|_{H^1_x }$.
\end{proposition}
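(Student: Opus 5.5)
I will solve \eqref{eq:fixedpointmv} by the Banach fixed-point theorem in the weighted space
\[
X:=\left\{(m,v)\in C^0 \left(\mathbb R_+;\mathbb C\times H^1_x\right)\,\big|\,\Pi_0 v=0,\ \|(m,v)\|_X:=\sup_{t\ge0}\left(e^{\gamma t}|m(t)|+e^{\lambda t}\|v(t)\|_{H^1_x}\right)<\infty\right\}.
\]
I will work in the ball $B_\epsilon=\{\|(m,v)\|_X\le 2C_0\epsilon\}$ with $\epsilon=\|v_0\|_{H^1}$. The linear ingredient is the exponential decay of the CGL propagator on mean-zero functions. By the spectral representation used in Lemma~\ref{le1.1v156}, restricted to $\Pi_0^\perp L^2$ (where $\lambda\ge\lambda_1$), one has
\[
\|e^{tz\Delta_g}\Pi_0^\perp g\|_{H^1}\le e^{-\Re z\lambda_1 t}\|g\|_{H^1},
\qquad
\|e^{tz\Delta_g}\Pi_0^\perp g\|_{H^1}\lesssim \left(1+t^{-\frac12}\right)e^{-\Re z\lambda_1 t}\|g\|_{L^2}.
\]
The second bound follows from $\sqrt{\lambda}e^{-\Re z\lambda t}\le C t^{-1/2}e^{-\Re z\lambda_1 t}$ for $\lambda\ge\lambda_1$, after splitting off a small fraction of the exponent. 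Since $\lambda<\Re z\lambda_1$, the loss $e^{-(\Re z\lambda_1-\lambda)t}$ is available to absorb constants.

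\emph{The $v$-equation.} The integrand in \eqref{eq:fixedpointmv} is mean-zero, so the smoothing bound applies with source in $L^2$. Using $|f(m+v)|\lesssim |m|^p+|v|^p$, Sobolev $H^1\hookrightarrow L^{2p}$ (valid since $2p\le 10>6$ requires care; for $p>3$ I instead use $H^1$-estimates of $f$ with $L^\infty$ control as in Lemma~\ref{lem:Blowupcriterion}), and $\|\Pi_0^\perp f(m)\|=0$, the source is bounded by $\|\Pi_0^\perp(f(m+v)-f(m))\|_{L^2}\lesssim |m|^{p-1}\|v\|_{L^2}+\|v\|_{L^{2p}}^p$. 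This is $\lesssim \epsilon^p e^{-((p-1)\gamma+\lambda)\tau}$, which decays faster than $e^{-\lambda\tau}$. The Duhamel integral $\int_0^t(1+(t-\tau)^{-1/2})e^{-\Re z\lambda_1(t-\tau)}e^{-\lambda\tau}\,d\tau\lesssim e^{-\lambda t}$ then closes with a factor $\epsilon^{p-1}$.

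\emph{The $m$-equation.} I estimate $|f(m)|\le|m|^p\lesssim\epsilon^p e^{-p\gamma\tau}$ and $|\dashint(f(m+v)-f(m))|\lesssim |m|^{p-1}\|v\|_{L^1}+\|v\|_{L^p}^p$, both integrable. Their tails from $t$ to $\infty$ give $e^{-\gamma t}$ with small constants, since $p\gamma>\gamma$ and $\lambda>\gamma$. Note that $m$ is of size $\epsilon^p$, not $\epsilon$, because the $m$-equation has no linear source; hence $|m|\lesssim e^{-\gamma t}\epsilon^p$. Difference estimates come from $|f(a)-f(b)|\lesssim(|a|^{p-1}+|b|^{p-1})|a-b|$ together with the same Hölder and Sobolev bounds. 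Since every nonlinear term carries a factor $\epsilon^{p-1}$, choosing $\epsilon_0$ small makes the map a contraction on $B_\epsilon$.

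The main obstacle is the energy-critical and supercritical-for-$L^2$ regime $3<p\le5$. There the term $\|v\|_{L^{2p}}^p$ is not controlled by $H^1$ alone, since $2p>6$. I would first try to enlarge $X$ by the weighted norm $\sup_t \|e^{\lambda t}v\|_{L^2_t H^2_x([t,t+1])}$, mirroring the $p=5$ local theory of Lemma~\ref{lem:Blowupcriterion}. In that space, Gagliardo--Nirenberg gives $\|v\|_{L^{\infty}}^{4}\lesssim\|v\|_{H^1}^2\|v\|_{H^2}^2$, and the unit-interval $L_t^2H_x^2$ smoothing of Corollary~\ref{lem:smoothing}, combined with the exponential factor $e^{-\Re z\lambda_1 t}$, sums over intervals. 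A final check is needed: a solution $(m,v)$ of \eqref{eq:fixedpointmv} gives $u=m+v$ solving \eqref{eq1.CGL}. This follows by differentiating, which yields $m'=-z\Pi_0 f(u)$ and $v'=z\Delta_g v-z\Pi_0^\perp f(u)$; uniqueness in Lemma~\ref{lem:Blowupcriterion} then identifies $u$ with the solution having datum $m(0)+v_0$.
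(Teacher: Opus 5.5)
Your overall scheme is the paper's: a Banach fixed point for \eqref{eq:fixedpointmv} in an exponentially weighted space, with the gap $\Re z\lambda_1>\lambda$ supplying decay of the fluctuation. Estimating the $v$-component by Duhamel and the gapped semigroup bounds, instead of the paper's energy identity (pairing with $-e^{2\lambda t}\Delta_g$), is a legitimate alternative. Your self-mapping bounds are also fine, at least for $p\le3$.

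\textbf{The gap is in the contraction step.} You weight $m$ by the prescribed $\gamma$, which may be arbitrarily small, and you estimate differences only with $|f(a)-f(b)|\lesssim(|a|^{p-1}+|b|^{p-1})|a-b|$. Take $v_1=v_2=v$ and $m_1\neq m_2$. The $v$-component of $\mathcal T(m_1,v)-\mathcal T(m_2,v)$ is driven by $\Pi_0^\perp\bigl(f(m_1+v)-f(m_2+v)\bigr)$, and your bound only gives
\[
\bigl(|m|^{p-1}+\|v\|_{L^{2(p-1)}}^{p-1}\bigr)|m_1-m_2|\lesssim \epsilon^{p-1}\bigl(e^{-p\gamma t}+e^{-((p-1)\lambda+\gamma)t}\bigr)\|(m_1-m_2,0)\|_X .
\]
After the Duhamel integral this is $O(e^{-\lambda t})$ only if $p\gamma\ge\lambda$. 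Otherwise $\|\mathcal T(a)-\mathcal T(b)\|_X\le\frac12\|a-b\|_X$ does not follow, and the factor $\epsilon^{p-1}$ cannot compensate for a wrong exponential rate.

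Exploiting that the forcing vanishes at $v=0$ (a second-difference estimate) improves this to $(|m|^{p-2}+|v|^{p-2})|v||m_1-m_2|$ when $p\ge2$, which works for every $\gamma$. For $1<p<2$, however, the best pointwise bound is $|v|^{p-1}|m_1-m_2|$, which is sharp where $|m_i|\ll|v|$. That term has size $e^{-((p-1)\lambda+\gamma)t}$. When $\gamma<(2-p)\lambda$ it decays slower than $e^{-\lambda t}$, and the map is genuinely not Lipschitz in your norm: take $m_2=0$ and $m_1$ tiny, so that $|m_1|\ll|v|$ on a long initial time interval.

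The missing idea is the paper's auxiliary rate. The conclusion for a larger rate implies it for $\gamma$, so run the fixed point with weight $e^{\gamma_0 t}$ on $m$, where $\max\{\gamma,(2-p)_+\lambda\}<\gamma_0<\lambda$, and use the second-difference bound. If you prefer to keep your first-order bound, you need $\max\{\gamma,\lambda/p\}<\gamma_0<\lambda$ instead, which is also admissible since $p>1$.

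Your treatment of $3<p\le5$ is only sketched. The paper builds $\bigl(\int_0^\infty e^{2\lambda t}\|v\|_{H^2_x}^2\,\mathrm dt\bigr)^{1/2}$ into the norm from the start, which is essentially the enlargement you propose, and the same exponent bookkeeping then applies there.
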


\begin{remark}\label{prop_fast_decay}
Under the preceding assumptions, the construction determines a mean $m(0)$ satisfying
\begin{align}\label{eq4.19v250}
|m(0)|\lesssim \|v_0\|_{H^1_x }^{p}.
\end{align}
The initial datum $u_0=m(0)+v_0$ therefore generates the above exponentially decaying solution.
The mean is part of the construction;
it cannot be prescribed independently of $v_0$.
However, the smallness of an arbitrary initial value alone does not imply exponential decay, as the following example shows.
Let $\epsilon > 0  $. We consider the solution $u(t)$ of \eqref{eq1.CGL}
 with $ u (0)= \epsilon $.
The solution $u (t)$ satisfies
\[
| u (t)| = \left( \Re z (p-1)t + | \epsilon |^{-(p-1)} \right)^{- \frac1{p-1}}
\sim \left( \Re z (p-1)t \right)^{- \frac1{p-1} }, \text{ as } t\to\infty.
\]
Exponential decay therefore fails whenever $ \epsilon \neq 0$.

\end{remark}

\begin{proof}[Proof of Proposition \ref{pr4.4v210}]
Choose $\gamma_0$ such that
\[
\max\{\gamma,(2-p)_+\lambda\}<\gamma_0<\lambda.
\]
We apply the Banach fixed point theorem directly to the system \eqref{eq:fixedpointmv}.
For pairs \((m,v)\) with \(\Pi_0v=0\), set
\[
\begin{aligned}
\|(m,v)\|_*
:=\;&\sup_{t\ge0}e^{\gamma_0t}|m(t)|
+\sup_{t\ge0}e^{\lambda t}\|v(t)\|_{H^1_x} +\left(\int_0^\infty e^{2\lambda t}\|v(t)\|_{H^2_x}^2\,\mathrm dt\right)^{ \frac12},
\end{aligned}
\]
and let $B_R = \left\{ (m, v) \in C_t^0  \left( \mathbb{R}_+; \mathbb{C} \times H_x^1 \right)  \,\big|\,  \|(m,v)\|_*\le R  \right\}$.
Denote by \(\mathcal T\) the map defined by the right hand side of \eqref{eq:fixedpointmv}.

Assume \((m,v)\in B_R\). By the Sobolev inequality and the compactness of \(\mathcal M^3\), we obtain
\[
\left|\Pi_0\bigl(f(m+v)-f(m)\bigr)\right|
\lesssim
|m|^{p-1}\|v\|_{L^p}+\|v\|_{L^p}^p
\lesssim R^p
\left(
e^{-((p-1)\gamma_0+\lambda)t}+e^{-p\lambda t}
\right).
\]
This together with
\[
\int_t^\infty |m(\tau)|^p\,\mathrm d\tau
\lesssim R^p e^{-p\gamma_0t}
\]
yields that the first component of \(\mathcal T(m,v)\) has weighted norm
bounded by \(CR^p\).
\vspace{0.3cm}

The second component of $\mathcal T(m,v)$ satisfies
\begin{align}\label{eq4.19v271}
\partial_t v = z \Delta_g v -z\Pi_0^\perp\bigl(f(m+v)-f(m)\bigr),
\end{align}
with initial datum $v_0$.
Taking the real $L^2$ inner product of \eqref{eq4.19v271} with $-e^{2\lambda t}\Delta_g$ applied to the second component,
 and using Proposition \ref{th2.10v116} and Young's inequality, we obtain
\begin{align}\label{maximalregularity}
\|\mathcal T(m,v)\|_*
\lesssim \|v_0\|_{H_x^1}+R^p
+\left\|e^{\lambda t}\Pi_0^\perp\bigl(f(m+v)-f(m)\bigr)\right\|_{L_{t,x}^2}.
\end{align}
Note that
\[
\|f(m+v)-f(m)\|_{L_x^2}
\lesssim |m|^{p-1}\|v\|_{L_x^2}+\|v\|_{L_x^{2p}}^p.
\]
By the Sobolev, interpolation, and H\"older inequalities, we obtain
\[
\left\|e^{\lambda t}
\Pi_0^\perp\bigl(f(m+v)-f(m)\bigr)\right\|_{L_{t,x}^2}
\lesssim R^p.
\]
Consequently, \eqref{maximalregularity} yields
\[
\|\mathcal T(m,v)\|_*  \le C_0\|v_0\|_{H_x^1}+C_1R^p.
\]

The same pointwise estimates for the power nonlinearity give, for any $(m_1,v_1),(m_2,v_2) \in B_R$,
\[
\|\mathcal T(m_1,v_1)-\mathcal T(m_2,v_2)\|_*
\le
C_2R^{p-1}
\|(m_1-m_2,v_1-v_2)\|_* .
\]
The only additional point when \(1<p<2\) is the
variation of the mean in the fluctuation forcing. In that case,
\[
\begin{aligned}
&e^{\lambda t} \left\| \bigl(f(m_1+v)-f(m_1)\bigr) -\bigl(f(m_2+v)-f(m_2)\bigr) \right\|_{L_x^2}
\leq   CR^{p-1} e^{-(\gamma_0-(2-p)\lambda)t} \|(m_1-m_2,0)\|_*,
\end{aligned}
\]
which belongs to \(L_t^2 ([ 0,\infty [ ) \) precisely because
\(\gamma_0>(2-p)\lambda\).

Choose
$R:=2C_0\|v_0\|_{H^1_x}$. If $\|v_0\|_{H^1_x }\leq \epsilon_0$ is sufficiently small, then
$$ C_0\|v_0\|_{H^1}+C_1R^p\leq R,\quad C_2R^{p-1}<\frac{1}{2}.
$$
Hence \(\mathcal T\) maps \(B_R\) into itself, is a contraction, and has a unique fixed point $(m,v)$ satisfying
\[
|m(t)|\lesssim R^p e^{-\gamma_0t} \text{ and } \|v(t)\|_{H_x^1}\lesssim R e^{-\lambda t}.
\]
Since \(\gamma_0>\gamma\), these are the asserted estimates. Moreover, differentiating \eqref{eq:fixedpointmv} shows that
\(u=m+v\) solves \eqref{eq1.CGL}, and
\[
\|u(t)\|_{H_x^1} \lesssim \|v_0\|_{H_x^1}e^{-\gamma t}.
\]
Furthermore, \eqref{eq4.19v250}
holds as stated in Remark~\ref{prop_fast_decay}.
This completes the proof of Proposition \ref{pr4.4v210}.
\end{proof}

\section{Asymptotic decay of the solutions to CGL on some noncompact three-dimensional Riemannian manifolds}\label{se5}

The analysis carried out in the preceding sections crucially hinges on the compactness of the underlying Riemannian manifold, most notably via Poincar\'e's inequality and the spectral gap property of the Laplace-Beltrami operator.
For noncompact Riemannian manifolds, such tools are generally no longer accessible,
rendering the asymptotic behavior of solutions considerably more intricate.
In particular, convergence towards spatially homogeneous steady states can no longer be anticipated in general.
Accordingly, we adopt a more modest objective in this section: under appropriate geometric assumptions (Assumption \ref{assump:A}) on the underlying Riemannian manifold,
we establish the decay to zero in time for global solutions to CGL.
We also show that there is no uniform decay rate for CGL on $\mathcal{M}^3$ when the bottom of the spectrum of $- \Delta_g$ is $0$,
whereas exponential decay holds when it is positive.
This highlights the difference between the asymptotic dynamics of solutions to the CGL equation on noncompact and compact Riemannian manifolds.

\subsection{Decay of global solutions to CGL on some noncompact three-dimensional Riemannian manifolds}

In this subsection, we prove that the global solution of the equation \eqref{eq1.CGL} on a class of noncompact three-dimensional Riemannian manifolds that satisfy Assumption \ref{assump:A} below converges to zero in $H_{x}^1$ as time tends to infinity.

Before presenting our main results in this subsection, we state the structural assumption imposed on the underlying Riemannian manifolds. The geometric conditions introduced below are formulated to ensure adequate control over the linear heat semigroup and rule out undesirable volume-growth behaviors at infinity.

\begin{assumption}\label{assump:A}
The three-dimensional Riemannian manifold $ \left(\mathcal{M}^3, g \right)$
 satisfies the following conditions:
\begin{enumerate}
\item There exists a constant $K>0$ such that the Ricci curvature satisfies $\mathcal{R}_{ij}\ge -K$ on every geodesic ball $B(p,\rho)\subset \mathcal{M}^3$, for all $p\in\mathcal{M}^3$ and $\rho>0$.
\item The sectional curvature is uniformly bounded: $|\mathrm{Rm}_g|\le K'$ for some constant $K'>0$.
\item There exists a constant $c>0$ such that the volume of every geodesic ball satisfies $\mathrm{Vol}_g B(p,r)\ge c r^3$, for any $p\in\mathcal{M}^3$ and $r>0$.
\item The homogeneous Sobolev inequality $\|\phi\|_{L^6 \left(\mathcal M^3 \right)}\le C\|\nabla_g\phi\|_{L^2 \left(\mathcal M^3 \right)}$ holds for all $\phi\in C_c^\infty \left(\mathcal M^3 \right)$, and the Sobolev inequalities used below hold.
\end{enumerate}
\end{assumption}

\begin{remark}
Within the class of noncompact three-dimensional Riemannian manifolds, Assumption~\ref{assump:A} covers a broad and geometrically relevant family of examples, including the Euclidean space $\mathbb R^3$, hyperbolic space $\mathbb H^3$, asymptotically Euclidean spaces, as well as Cartan-Hadamard manifolds with uniformly bounded sectional curvature.
Conditions (1)--(4) ensure the Sobolev inequalities and Gaussian upper bounds for the heat kernel used below.

\end{remark}

The integral kernel associated with the semigroup $ e^{ z  \Delta_g} $ is $p_{z } (x, y) = p(z , x,y )$.
We next recall the upper bound estimate for the heat kernel $p_t$ on $\mathcal{M}^{3 }$ from \cite{LY}, where
\begin{align*}
e^{t \Delta_g} \phi(x)=\int_{\mathcal{M}^3} p_t(x,y ) \phi(y ) \,\mathrm{d} \mu (y ).
\end{align*}
Let $d_g(x,y)$ stand for the geodesic distance between two points $x,y\in \mathcal{M}^3 $.

\begin{theorem}\label{thm:real_heat_kernel}
Let $\left(\mathcal{M}^3 ,g \right)$ satisfy Assumption \ref{assump:A}. Then its heat kernel satisfies the following bound
\begin{align*}
p_t(x,y) \lesssim t^{- \frac{3 }2} e^{- \frac{d_g^2(x,y)}{ct}}, \ \forall\, t > 0,
\end{align*}
where $c$ is a positive constant.
\end{theorem}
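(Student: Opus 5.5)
The plan is to derive the Gaussian bound from the Li--Yau parabolic Harnack theory. Under condition (1) of Assumption~\ref{assump:A} (Ricci bounded below by $-K$), the Li--Yau estimates give, for all $t>0$ and $x,y\in\mathcal M^3$,
\[
p_t(x,y)\le C_\varepsilon\, \mathrm{Vol}_g\bigl(B(x,\sqrt t)\bigr)^{-\frac12}\mathrm{Vol}_g\bigl(B(y,\sqrt t)\bigr)^{-\frac12}\exp\Bigl(-\frac{d_g^2(x,y)}{(4+\varepsilon)t}+C\varepsilon K t\Bigr).
\]
The volume factors are handled directly by condition (3): $\mathrm{Vol}_g B(x,\sqrt t)\ge c\,t^{3/2}$ for every $x$ and $t$, so their product is bounded by $C\,t^{-3/2}$. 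For $t\le 1$ this already gives the claim with $c=5$ (say), since then $e^{C\varepsilon Kt}$ is bounded.

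The main obstacle is large time. Because $K>0$, the factor $e^{C K t}$ is not uniform, and Li--Yau alone does not give the stated $t^{-3/2}$ decay uniformly in $t$. For large $t$ I will instead use condition (4), the global homogeneous Sobolev inequality $\|\phi\|_{L^6}\lesssim\|\nabla_g\phi\|_{L^2}$.

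The first step is the on-diagonal bound. By the Varopoulos equivalence (Sobolev in dimension $3$ is equivalent to ultracontractivity), or by a direct Nash-type argument, condition (4) yields
\[
\|e^{t\Delta_g}\|_{L^1\to L^\infty}\lesssim t^{-3/2}\quad\text{for all } t>0.
\]
Concretely, I would interpolate the Sobolev inequality with $L^1$ to obtain the Nash inequality $\|\phi\|_{L^2}^{2+4/3}\lesssim\|\nabla_g\phi\|_{L^2}^2\|\phi\|_{L^1}^{4/3}$. I would then apply it to $\phi=e^{t\Delta_g}f$ with $\|f\|_{L^1}=1$, giving the differential inequality $\frac{\mathrm d}{\mathrm dt}\|\phi\|_2^2\lesssim-\|\phi\|_2^{2+4/3}$. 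Integrating yields $\|e^{t\Delta_g}\|_{L^1\to L^2}\lesssim t^{-3/4}$, and duality together with the semigroup property gives $p_t(x,y)\lesssim t^{-3/2}$.

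The second step upgrades this uniform on-diagonal bound to the Gaussian off-diagonal bound. I would use Davies' perturbation method: conjugate the semigroup by $e^{\psi}$ with $|\nabla_g\psi|\le\alpha$, and rerun the Nash argument for the weighted semigroup, which costs a factor $e^{C\alpha^2 t}$. Choosing $\psi=\alpha\, d_g(\cdot,y)$ and optimizing in $\alpha$ gives
\[
p_t(x,y)\lesssim t^{-3/2}e^{-d_g^2(x,y)/(ct)}.
\]
Alternatively, Grigor'yan's theorem (uniform on-diagonal upper bound implies Gaussian upper bound) could be cited here. This step should need no curvature input, and combining it with the small-time Li--Yau bound above finishes the proof.
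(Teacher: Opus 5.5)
Your argument is correct, but it follows a different route from the paper. The paper gives no proof: it recalls the bound from Li--Yau \cite{LY}, so it relies on conditions (1) and (3) of Assumption~\ref{assump:A}. You instead put the weight on condition (4):
\begin{itemize}
\item Sobolev $\Rightarrow$ Nash $\Rightarrow$ $\|e^{t\Delta_g}\|_{L^1\to L^\infty}\lesssim t^{-3/2}$ (Varopoulos);
\item then the Gaussian factor via Davies' exponential perturbation or Grigor'yan's on-diagonal-to-off-diagonal theorem. For the Davies step, note that the weighted semigroup is no longer $L^1$-contractive; the Carlen--Kusuoka--Stroock form of the Nash argument handles exactly this, using Lipschitz truncations of $\alpha\, d_g(\cdot,y)$.
\end{itemize}

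Your remark about the Li--Yau route is accurate. With $K>0$ that bound carries the factor $e^{C\varepsilon Kt}$, so combined with (3) it gives the stated estimate only on bounded time intervals. The paper, however, needs $p_t(x,x)\lesssim t^{-3/2}$ for all $t>0$ as the hypothesis of Theorem~\ref{thm:complex_heat_kernel}, and hence for the global dispersive and Strichartz estimates. Your Sobolev-based argument is what actually supplies that uniformity in time, for example on $\mathbb H^3$, where $K>0$ cannot be avoided.

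One simplification: the Nash argument does not distinguish small from large times. Your Li--Yau step for $t\le 1$ is therefore redundant, and the bound for all $t>0$ follows from condition (4) alone.
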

We next present a kernel estimate for the linear Ginzburg-Landau propagator.
\begin{theorem}[\cite{Coulhon-Sikora-2008}]\label{thm:complex_heat_kernel}
Suppose that
\begin{align*}
p_t(x,x) \le K t^{-\frac{3 }2}, \ \forall\, t  > 0  , x \in  \mathcal{M}^3 ,
\end{align*}
for some $K> 0$. Then for any $x , y \in \mathcal{M}^3 $,
\begin{align}\label{eq1.4v3}
\left| p_{ t z  } (x,y)\right|
\lesssim K  t^{-\frac{3 }2} ( \Re z)^{- \frac{3 }2} \left(1 + \Re \left(  \frac{d_g^2(x,y)}{4 z t } \right)  \right)^\frac{3 }2
 e^{- \Re \frac{d_g^2(x,y)}{4  z t} }
\end{align}
for any
$t > 0$.
\end{theorem}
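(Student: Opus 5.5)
Throughout, write $t z = \tau$, so $\Re\tau = t\Re z>0$. The plan is to express the complex-time heat kernel $p_\tau(x,y)$ through the spectral calculus, $e^{\tau\Delta_g}=\int_0^\infty e^{-\tau\lambda}\,\mathrm dE_\lambda$, and observe that $\tau\mapsto p_\tau(x,y)$ is holomorphic in the right half-plane $\{\Re\tau>0\}$. The argument then runs in three steps, in the spirit of Davies' and Coulhon--Sikora's approach.

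The first step is a uniform bound with no Gaussian factor. Using the semigroup property and Cauchy--Schwarz, I would write
\[
|p_\tau(x,y)|=\Bigl|\int p_{\tau/2}(x,w)p_{\tau/2}(w,y)\,\mathrm d\mu(w)\Bigr|\le \|p_{\tau/2}(x,\cdot)\|_{L^2}\|p_{\tau/2}(\cdot,y)\|_{L^2}.
\]
Then I would note that $\|p_{\tau/2}(x,\cdot)\|_{L^2}^2=p_{\Re\tau}(x,x)$, which holds because $\overline{p_{\tau/2}}=p_{\bar\tau/2}$ and $\tau/2+\bar\tau/2=\Re\tau$. The hypothesis then gives $|p_\tau(x,y)|\le K(\Re\tau)^{-3/2}=K(t\Re z)^{-3/2}$.

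The second step transfers the real-time Gaussian decay into complex time. The hypothesis $p_t(x,x)\le Kt^{-3/2}$ on the diagonal implies, by Grigor'yan's theorem (or Theorem~\ref{thm:real_heat_kernel} under Assumption~\ref{assump:A}), the off-diagonal bound $p_t(x,y)\lesssim Kt^{-3/2}e^{-d^2/(Ct)}$ for real $t$. To reach complex $\tau$ I would use the Phragm\'en--Lindel\"of argument of Davies and Coulhon--Sikora. Fix $x,y$ and consider the function
\[
F(\tau)=\tau^{3/2}\,p_\tau(x,y)\,e^{d_g^2(x,y)/(4\tau)}
\]
on the half-plane, after the change of variable $\tau\mapsto 1/\tau$, which maps the half-plane to itself and turns the Gaussian into an exponential of linear type. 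On the real axis, $F$ is controlled by the Gaussian bound. Everywhere, it is at most of exponential type by the first step. A Phragm\'en--Lindel\"of lemma on a sector then yields the weighted bound
\[
|p_\tau(x,y)|\lesssim K(\Re\tau)^{-3/2}\Bigl(1+\Re\frac{d^2}{4\tau}\Bigr)^{3/2}e^{-\Re\frac{d^2}{4\tau}}.
\]
The polynomial prefactor $(1+\cdot)^{3/2}$ appears because the sharp constant $4$ in the exponent is obtained via finite propagation speed, i.e.\ the Davies--Gaffney estimate. That estimate gives $|(e^{\tau\Delta}f,g)|\le e^{-\Re(d^2/4\tau)}\|f\|_{L^2}\|g\|_{L^2}$ for supports at distance $d$, and it is combined with the $L^2\to L^\infty$ bound from the first step at the cost of the polynomial factor.

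The third step substitutes $\tau=tz$ and yields exactly \eqref{eq1.4v3}.

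The main obstacle is the second step, obtaining the sharp constant $\frac14$ together with the prefactor. I would first try the Coulhon--Sikora route. It starts from the complex Davies--Gaffney estimate, which follows from the wave equation's finite propagation speed via $e^{\tau\Delta}=\int\cos(s\sqrt{-\Delta})\,(\pi\tau)^{-1/2}e^{-s^2/4\tau}\,\mathrm ds$. One then applies the composition trick with $e^{(\Re\tau/2)\Delta}$ on both sides, splitting $M$ into annuli around $x$ and $y$ and using the volume doubling implied by the on-diagonal bound. Summing over these annuli is what produces the $(1+\Re(d^2/4\tau))^{3/2}$ loss.
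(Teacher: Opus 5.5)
The paper gives no proof of this statement; it is quoted from \cite{Coulhon-Sikora-2008}, so your proposal has to stand on its own. Your Step 1 is correct and standard: $|p_\tau(x,y)|\le p_{\Re\tau}(x,x)^{1/2}p_{\Re\tau}(y,y)^{1/2}\le K(\Re\tau)^{-3/2}$. The gap is Step 2. It carries the entire content of the theorem, namely the sharp exponent $\Re\frac{d_g^2}{4\tau}$ together with a prefactor of exactly the size $(\Re\tau)^{-3/2}(1+\Re\frac{d_g^2}{4\tau})^{3/2}$, and neither mechanism you describe works as stated.

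For the Phragm\'en--Lindel\"of route you need $F(t)=t^{3/2}p_t(x,y)e^{d_g^2/(4t)}$ to be bounded on the positive axis. But Grigor'yan's theorem, like Theorem~\ref{thm:real_heat_kernel}, only gives $e^{-d_g^2/(Ct)}$ with some $C>4$ (at best $4+\epsilon$ with an $\epsilon$-dependent constant). So $F(t)$ grows like $e^{(\frac14-\frac1C)d_g^2/t}$ as $t\to0^+$. No Phragm\'en--Lindel\"of argument can fix this. The function $G(\tau)=\tau^{-3/2}e^{-d_g^2/(C\tau)}$ satisfies $|G(\tau)|\le(\Re\tau)^{-3/2}$ on the half-plane and the real-axis bound with constant $C$, yet violates the conclusion with constant $4$. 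There is a second problem: in the variable $w=1/\tau$, Step 1 only gives $|F(1/w)|\le K(|w|/\Re w)^{3/2}e^{d_g^2\Re w/4}$. This is unbounded as $w$ approaches the imaginary axis, so $F$ is not of exponential type uniformly up to the boundary of the quadrants. A Phragm\'en--Lindel\"of lemma on the quadrant therefore has no boundary control, and handling this singular weight is a step you have not supplied.

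Your Davies--Gaffney route is closer to a proof, but three of its ingredients are wrong as written.

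\emph{First}, composing with $e^{(\Re\tau/2)\Delta_g}$ on both sides leaves the middle factor $e^{i\Im\tau\,\Delta_g}$. This is a unitary Schr\"odinger group with no off-diagonal decay, so there is nothing to apply Davies--Gaffney to.

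\emph{Second}, an on-diagonal upper bound does not imply volume doubling. For example, $\mathbb H^3$ satisfies $p_t(x,x)\le(4\pi t)^{-3/2}$ but has exponential volume growth.

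\emph{Third}, the complex Davies--Gaffney bound with constant $1$ does not follow from the cosine formula by the triangle inequality. That loses a factor $\sqrt{|\tau|/\Re\tau}$, which is unbounded near the imaginary axis and cannot be absorbed into $(1+\Re\frac{d_g^2}{4\tau})^{3/2}$. You get the sharp bound instead by applying Phragm\'en--Lindel\"of to $\tau\mapsto\langle e^{\tau\Delta_g}f_1,f_2\rangle$. This function is bounded by $\|f_1\|_{L^2}\|f_2\|_{L^2}$ on the whole half-plane, so no singular weight appears. On the real axis it is bounded by $e^{-r^2/(4t)}\|f_1\|_{L^2}\|f_2\|_{L^2}$, where $r$ is the distance between the supports.

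With these corrections the route closes:
\begin{itemize}
\item Write $e^{\tau\Delta_g}=e^{s\Delta_g}e^{(\tau-2s)\Delta_g}e^{s\Delta_g}$ with real $s=c\min\{\Re\tau,|\tau|^2/d_g^2(x,y)\}$ and $c$ small.
\item Split $p_s(\cdot,y)$ and $p_s(\cdot,x)$ over annuli of width $\sqrt s$.
\item Bound the pieces in $L^2$ by Grigor'yan's integrated estimate $\int p_s(x,w)^2e^{d_g^2(x,w)/(3s)}\,\dd\mu(w)\lesssim Ks^{-3/2}$. This follows from the on-diagonal bound alone, with no volume hypothesis.
\item Use the complex Davies--Gaffney bound for the middle factor.
\end{itemize}
The constraint $s\lesssim|\tau|^2/d_g^2$ gives $\Re\frac{\ell^2}{4(\tau-2s)}\ge\Re\frac{d_g^2}{4\tau}-O(1+j+k)$, where $\ell$ is the distance between the $j$-th annulus around $y$ and the $k$-th around $x$. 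The double sum therefore converges, and $|p_\tau(x,y)|\lesssim Ks^{-3/2}e^{-\Re\frac{d_g^2}{4\tau}}$ with $s^{-3/2}\simeq(\Re\tau)^{-3/2}(1+\Re\frac{d_g^2}{4\tau})^{3/2}$. It is this choice of $s$, not a sum over annuli using doubling, that produces the polynomial prefactor.
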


By Theorem~\ref{thm:complex_heat_kernel}, we obtain the spacetime mapping properties of the linear Ginzburg-Landau propagator needed below.
To facilitate the subsequent decay analysis for the CGL equation, we present the following $L_{x}^r-L_{x}^p$ dispersive estimate.
\begin{lemma}\label{le3.1v16}
For $1\le r\le 2 \le p\le\infty$ and $\phi\in L^r \left(\mathcal M^3 \right)$,
\[
\left\|e^{tz\Delta_g}\phi \right\|_{L^p}
\lesssim t^{-\frac32 \left(\frac1r-\frac1p \right)}\|\phi\|_{L^r}, \qquad \forall\, t>0.
\]
\end{lemma}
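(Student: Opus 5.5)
The plan is to combine the two available pieces of information on the kernel $p_{tz}(x,y)$ of $e^{tz\Delta_g}$: an $L^2$ bound from the spectral theorem, and an $L^1\to L^\infty$ bound from the complex heat kernel estimate. The general $L^r\to L^p$ bound then follows by duality and interpolation.

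\emph{Step 1: the $L^2\to L^2$ bound.} Lemma~\ref{le1.1v156} already gives $\|e^{tz\Delta_g}\phi\|_{L^2}\le\|\phi\|_{L^2}$, since the multiplier $e^{-tz\lambda}$ has modulus at most one.

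\emph{Step 2: the $L^1\to L^\infty$ bound.} By Theorem~\ref{thm:real_heat_kernel}, Assumption~\ref{assump:A} gives $p_t(x,x)\lesssim t^{-3/2}$. This is exactly the hypothesis of Theorem~\ref{thm:complex_heat_kernel}, so \eqref{eq1.4v3} applies. Set $s=\Re\bigl(d_g^2(x,y)/(4zt)\bigr)=d_g^2\Re z/(4|z|^2t)\ge 0$. Then
\[
\sup_{s\ge0}(1+s)^{3/2}e^{-s}<\infty,
\]
so \eqref{eq1.4v3} yields $\sup_{x,y}|p_{tz}(x,y)|\lesssim_z t^{-3/2}$. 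Consequently
\[
\|e^{tz\Delta_g}\phi\|_{L^\infty}\lesssim t^{-3/2}\|\phi\|_{L^1}.
\]

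\emph{Step 3: the $L^2\to L^\infty$ and $L^1\to L^2$ bounds.} The cleanest route is a $TT^*$ argument. The adjoint of $e^{tz\Delta_g}$ is $e^{t\bar z\Delta_g}$, and $\Re\bar z=\Re z>0$, so Step 2 applies to it as well. Since $\Delta_g$ is self-adjoint, the two semigroups commute, and
\[
e^{tz\Delta_g}\bigl(e^{tz\Delta_g}\bigr)^*=e^{t(z+\bar z)\Delta_g}=e^{2t\Re z\,\Delta_g}.
\]
This is the real heat semigroup at time $2t\Re z$. By Theorem~\ref{thm:real_heat_kernel} it maps $L^1\to L^\infty$ with norm $\lesssim t^{-3/2}$. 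Hence $\|e^{tz\Delta_g}\|_{L^2\to L^\infty}^2\lesssim t^{-3/2}$, that is, $\|e^{tz\Delta_g}\|_{L^2\to L^\infty}\lesssim t^{-3/4}$. Applying the same argument to $\bar z$ and dualizing gives $\|e^{tz\Delta_g}\|_{L^1\to L^2}\lesssim t^{-3/4}$.

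\emph{Step 4: interpolation.} We now have the exponent $\tfrac32(\tfrac1r-\tfrac1p)$ at the four corners $(r,p)=(2,2)$, $(1,\infty)$, $(2,\infty)$, $(1,2)$ of the square $1\le r\le2\le p\le\infty$. Since this exponent is affine in $(1/r,1/p)$, Riesz--Thorin interpolation (first in $p$ with $r$ fixed, then in $r$) gives the full range, with constants depending on $z$ and the constants in Assumption~\ref{assump:A}.

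The main obstacle is Step 2. One has to verify that the polynomial prefactor in \eqref{eq1.4v3} is harmless. This requires that $\Re(d_g^2/(4zt))$ be nonnegative and comparable to $d_g^2/t$, which holds precisely because $\Re z>0$. One also has to check the on-diagonal hypothesis uniformly in $x$, which is supplied by the Li--Yau-type bound of Theorem~\ref{thm:real_heat_kernel}. The remaining steps are standard.
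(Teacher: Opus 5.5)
Your proof is correct, and it is more complete than the paper's treatment: the paper states Lemma~\ref{le3.1v16} without proof and only points to the complex heat kernel bound \eqref{eq1.4v3} of Theorem~\ref{thm:complex_heat_kernel}.

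The difference lies in how the off-diagonal range is reached. A pointwise kernel bound plus the $L^2$ contraction gives, by Riesz--Thorin, only the diagonal estimates $L^r\to L^{r'}$. The corners $(r,p)=(2,\infty)$ and $(1,2)$ come from your $TT^*$ identity $e^{tz\Delta_g}\bigl(e^{tz\Delta_g}\bigr)^*=e^{2t\Re z\,\Delta_g}$, equivalently $\int_{\mathcal M^3}|p_{tz}(x,y)|^2\,\mathrm d\mu(y)=p_{2t\Re z}(x,x)$. This is the right mechanism here. Getting $L^2\to L^\infty$ from \eqref{eq1.4v3} by integrating the Gaussian against the volume measure would need a polynomial upper bound on $\mathrm{Vol}_g B(x,r)$. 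Assumption~\ref{assump:A} does not provide one, and on $\mathbb H^3$ that integration produces exponential growth in $t$.

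Your Step~3 also makes Step~2 redundant. Write $e^{tz\Delta_g}=e^{\frac{t}{2}z\Delta_g}e^{\frac{t}{2}z\Delta_g}$ and compose your $L^1\to L^2$ and $L^2\to L^\infty$ bounds; this gives $\|e^{tz\Delta_g}\|_{L^1\to L^\infty}\lesssim t^{-3/2}$ directly. So the whole lemma follows from the real on-diagonal bound $\sup_x p_t(x,x)\lesssim t^{-3/2}$ alone. The Gaussian factor in \eqref{eq1.4v3}, which is the actual content of the Coulhon--Sikora theorem, is never used, and the ``main obstacle'' you single out can be bypassed.
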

To set up the spacetime bounds for the linear Ginzburg-Landau propagator, we introduce the standard notion of a Strichartz admissible pair.

\begin{definition}\label{de3.1v16}
The exponent pair $(q, p)$ is called a Strichartz admissible pair if
\begin{align*}
\frac1q = \frac{3 }2  \left( \frac12- \frac1p  \right),
\end{align*}
where $ 2 \le q, p  \le \infty$.
\end{definition}

With the Strichartz admissible pair defined in Definition~\ref{de3.1v16}, we now state the full set of Strichartz estimates on the noncompact three-dimensional Riemannian manifolds considered here.
\begin{proposition}[Strichartz estimates]\label{pr2.1v6}

\begin{enumerate}
\item  If $\phi \in L_{x}^2 \left(  \mathcal{M}^3  \right)$, then for any Strichartz admissible pair $(q, p )$, we have

\begin{align}\label{eq2.25v6}
 \left\| e^{t  z\Delta_g } \phi \right\|_{L_t^q L_x^p \left( \mathbb{R}_+  \times \mathcal{M}^3  \right) }
 \lesssim  \|\phi \|_{L_{x}^2\left(\mathcal{M}^3  \right)}.
\end{align}
\item Let $t_0\ge0$. If $F \in L_t^{q_1'} L_x^{p_1'} \left( [t_0,\infty [  \times \mathcal{M}^3  \right)$,
then for any pairs $(q,p)$ and $(q_1, p_1)$ satisfying $1 \le p_1' \le 2 \le p \le \infty$ and $1 < q_1' < q < \infty$, \begin{align}\label{eq1.8v61}
\left( \frac1{q_1'} - \frac1q  \right) + \frac{3  }2  \left( \frac1{p_1'} - \frac1p  \right) = 1,
\end{align}
we have
\begin{align}\label{eq2.26v6}
\left\| \int_{t_0 }^t e^{(t- \tau ) z \Delta_g } F( \tau ) \,\mathrm{d} \tau  \right\|_{L_t^q L_x^p \left( [t_0,\infty [   \times \mathcal{M}^3  \right)}
 \lesssim \|F\|_{ L_t^{q_1'} L_x^{p_1'}  \left( [t_0,\infty [   \times \mathcal{M}^3  \right)}.
\end{align}

\end{enumerate}

\end{proposition}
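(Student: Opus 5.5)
The approach is the abstract Keel--Tao machinery, driven by the dispersive bound of Lemma~\ref{le3.1v16}. First I will check that the family $U(t):=e^{tz\Delta_g}$, $t\ge0$, satisfies the two hypotheses of the Keel--Tao theorem with decay exponent $\sigma=\frac32$.

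The energy estimate $\|U(t)\phi\|_{L^2}\le\|\phi\|_{L^2}$ is already in the proof of Lemma~\ref{le1.1v156}. The untruncated decay estimate
\[
\|U(t)U(s)^*\phi\|_{L^\infty}\lesssim |t-s|^{-\frac32}\|\phi\|_{L^1}
\]
needs one observation: $U(s)^*=e^{s\bar z\Delta_g}$, so $U(t)U(s)^*=e^{(tz+s\bar z)\Delta_g}$. This is a complex-time heat propagator whose time $w=tz+s\bar z$ satisfies $\Re w=(t+s)\Re z$. The plan is therefore to apply Theorem~\ref{thm:complex_heat_kernel} directly at the complex time $w$, rather than only along the ray $tz$. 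That gives a kernel bound $|p_w|\lesssim(\Re w)^{-3/2}(\Re z)^{-3/2}\cdot O(1)$, and $(\Re w)^{-3/2}\lesssim (t+s)^{-3/2}\le |t-s|^{-3/2}$. The needed diagonal bound $p_t(x,x)\lesssim t^{-3/2}$ comes from Theorem~\ref{thm:real_heat_kernel}.

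The factor $(1+\Re(d^2/4w))^{3/2}e^{-\Re(d^2/4w)}$ is bounded provided $\Re(1/w)\ge0$, and this holds since $\Re w>0$. So I get the $L^1\to L^\infty$ bound, and interpolation gives Lemma~\ref{le3.1v16}-type bounds for the TT* operator as well.

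Part (1) then follows from Keel--Tao in dimension three, including the endpoint $(q,p)=(2,6)$, since $\sigma=\frac32>1$. The dual estimate and the retarded estimate for admissible pairs follow from the same theorem. I expect the retarded truncation $\tau<t$ to be harmless here, since the time ordering matches the semigroup direction.

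Part (2) involves non-admissible (inhomogeneous) pairs, where only the scaling relation \eqref{eq1.8v61} and the ordering $q_1'<q$ are imposed. For these I would argue by hand instead of relying on Keel--Tao. Lemma~\ref{le3.1v16} gives
\[
\Bigl\|\int_{t_0}^t e^{(t-\tau)z\Delta_g}F(\tau)\,d\tau\Bigr\|_{L^p_x}\lesssim\int_{t_0}^t|t-\tau|^{-\frac32(\frac1{p_1'}-\frac1p)}\|F(\tau)\|_{L^{p_1'}_x}\,d\tau.
\]
The exponent equals $1-(\frac1{q_1'}-\frac1q)\in\,]0,1[$, because $q_1'<q$ and the left side of \eqref{eq1.8v61} is positive. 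The Hardy--Littlewood--Sobolev inequality in time, with $1<q_1'<q<\infty$, then yields \eqref{eq2.26v6}. This requires $1\le p_1'\le2\le p$, which is exactly the range where Lemma~\ref{le3.1v16} applies.

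The main obstacle is the decay step in the TT* argument, namely controlling the kernel at the complex time $tz+s\bar z$ uniformly, including the Gaussian prefactor. If Theorem~\ref{thm:complex_heat_kernel} turns out to apply only on rays, I would fall back on the semigroup identity $e^{(tz+s\bar z)\Delta_g}=e^{s\bar z\Delta_g}e^{tz\Delta_g}$ together with Lemma~\ref{le3.1v16} for $L^1\to L^2\to L^\infty$. That route gives $t^{-3/4}s^{-3/4}\lesssim(t+s)^{-3/2}$ only when $t\sim s$, and the case $t\ll s$ would need to be split separately.
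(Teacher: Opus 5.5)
Your proposal is correct and follows the paper's route: Keel--Tao for part (1), and the pointwise bound of Lemma~\ref{le3.1v16} combined with Hardy--Littlewood--Sobolev in time for part (2). Your check of the Keel--Tao decay hypothesis is actually the correct one, since $U(s)^*=e^{s\bar z\Delta_g}$ gives $U(t)U(s)^*=e^{(tz+s\bar z)\Delta_g}$, whose kernel is $\lesssim((t+s)\Re z)^{-3/2}\lesssim|t-s|^{-3/2}$ because Theorem~\ref{thm:complex_heat_kernel} holds at every complex time $w$ with $\Re w>0$ (so your fallback is unnecessary), whereas the paper's identification $U(t)U(\tau)^*=e^{(t-\tau)z\Delta_g}$ fails for this non-unitary semigroup --- and for the same reason Keel--Tao's retarded estimate does not give the Duhamel integral $\int_{t_0}^t e^{(t-\tau)z\Delta_g}F(\tau)\,\mathrm{d}\tau$, so drop your side remark to that effect, since the HLS argument is what proves part (2).
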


\begin{remark}
The condition \eqref{eq1.8v61} is weaker than the Strichartz admissibility of $(q,p)$ and $(q_1,p_1)$.
\end{remark}

\begin{proof}[Proof of Proposition \ref{pr2.1v6}]
(1) Let $(q,p)$ be a Strichartz admissible pair. By Lemma \ref{le3.1v16}, the semigroup $e^{tz \Delta_g}$ satisfies the uniform $L_{x}^2$ bound
\begin{align}\label{eq3.1v61}
\left\| e^{tz\Delta_g} \phi \right\|_{L_x^2 \left(\mathcal{M}^3  \right)}
\lesssim \|\phi\|_{L^2 \left(\mathcal{M}^3  \right)},\quad \forall\,t\in \mathbb{R}_+ ,
\end{align}
and also the decay estimate
\begin{align}\label{eq3.2v61}
\left\| e^{ (t - \tau )  z \Delta_g} \phi \right\|_{L^\infty \left(\mathcal{M}^3  \right)}
\lesssim |t- \tau |^{-\frac{3 }{2}} \|\phi\|_{L^1 \left(\mathcal{M}^3  \right)},\quad \forall\,t \ge  \tau \ge 0.
\end{align}
Combining \eqref{eq3.1v61} and \eqref{eq3.2v61}, we apply Theorem 1.2 from \cite{KT} with the semigroup $U(t)=e^{tz\Delta_g}$ for $t \ge 0$,
$U(t) U( \tau )^* = e^{( t- \tau ) z \Delta_g}$ for $t > \tau \ge 0$,
the Hilbert space $H=L_x^2 \left(\mathcal{M}^3 \right)$ and the manifold $X=\mathcal{M}^3 $, which yields the bound \eqref{eq2.25v6}.

(2) Next, suppose the exponent pairs $(q,p)$ and $(q_1,p_1)$ satisfy $1 \le p_1' \le 2 \le p \le \infty$, $1 < q_1' < q < \infty$ and \eqref{eq1.8v61}.
 By Lemma \ref{le3.1v16}, for $t>\tau\ge t_0$,
 we obtain
\begin{align*}
\left\| e^{(t- \tau )z\Delta_g}F( \tau ,\cdot) \right\|_{L_x^p \left(\mathcal{M}^3  \right)}
\lesssim |t- \tau |^{-\frac{3 }{2}\left(\frac{1}{p_1'} - \frac{1}{p}\right)} \|F( \tau ,\cdot)\|_{L_x^{p_1'} \left(\mathcal{M}^3  \right)}.
\end{align*}
We then invoke the Hardy-Littlewood-Sobolev fractional integral inequality to control the convolution in time:
\begin{align*}
\left\|\int_{t_0}^t e^{(t- \tau )z\Delta_g}F( \tau ,\cdot)\,\mathrm{d} \tau  \right\|_{L_t^q L_x^p \left( [t_0,\infty [  \times\mathcal{M}^3  \right)}
& \lesssim \left(\int_{t_0}^\infty   \left( \int_{t_0}^t \left\| e^{(t- \tau )z\Delta_g}F( \tau ,\cdot) \right\|_{L_x^p \left(\mathcal{M}^3  \right)} \mathrm{d} \tau  \right)^q \mathrm{d}t\right)^{ \frac1q} \\
& \lesssim \left(\int_{t_0}^\infty    \left( \int_{t_0}^t |t- \tau |^{-\frac{3 }{2}\left(\frac{1}{p_1'} - \frac{1}{p}\right)}
\|F( \tau ,\cdot)\|_{L_x^{p_1'} \left(\mathcal{M}^3  \right)} \mathrm{d} \tau \right)^q \mathrm{d}t\right)^{ \frac1q} \\
& \lesssim \|F\|_{L_t^{q_1'} L_x^{p_1'} \left( [t_0,\infty  [  \times\mathcal{M}^3  \right)}.
\end{align*}
This proves \eqref{eq2.26v6}.

\end{proof}

Following the argument in \cite{MZ,Miao2}, we record the local well-posedness theory needed below,
including local existence and the blowup criterion. \begin{theorem}[Local Well-posedness] \label{thm:wellposedness}
Assume that $\mathcal M^3$ obeys Assumption~\ref{assump:A}, $ \frac73\le p\le5$, and $u_0\in H_x^1 \left(\mathcal M^3 \right)$.
Then the following conclusions hold:

\begin{enumerate}
\item[(1)] \textbf{Local existence.}
There exists $\epsilon_0>0$ such that if
\begin{align*}
\left\|e^{tz\Delta_g}u_0 \right\|_{L_{t,x}^{\frac{5(p-1)}2}  \left(\mathbb R_+\times\mathcal M^3 \right)}\leq\epsilon_0,
\end{align*}
then
\begin{align*}
\|u\|_{L_{t,x}^{\frac{5(p-1)}2} \left(\mathbb R_+\times\mathcal M^3 \right)}
\leq 2\epsilon_0.
\end{align*}
This holds in particular when $\|u_0\|_{\dot H^{s_c}}$ is sufficiently small, where $ s_c=\frac32-\frac2{p-1}$.

\item[(2)] \textbf{Maximal-lifespan solution.}
There exists a unique maximal-lifespan solution $u\in C_t^0 H_x^1\cap L^2_{t,\mathrm{loc}}H_x^2$ to the equation \eqref{eq1.CGL} on
$[0, T_{\max}(u_0)[\times \mathcal{M}^3$ such that
\[
\|u\|_{L_{t,x}^{\frac{5(p-1)}2} \left([0,t[\times\mathcal M^3 \right)}<\infty,
\qquad\forall\,t<T_{\max}(u_0).
\]
If $T_{\max}(u_0) < \infty$, then
\[
\|u\|_{L_{t,x}^{\frac{5(p-1)}{2}} \left([0, T_{\max}(u_0)[ \times \mathcal{M}^3 \right)} = \infty.
\]

\end{enumerate}
\end{theorem}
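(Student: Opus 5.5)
This is the standard Kato-type fixed point in the critical space $L_{t,x}^{5(p-1)/2}$, built on the Strichartz estimates of Proposition~\ref{pr2.1v6}. Set $\alpha:=\frac{5(p-1)}{2}$ and $s_c=\frac32-\frac2{p-1}$, and note that $\frac73\le p\le5$ gives $\alpha\in[\frac{10}{3},10]$. We work with the Duhamel map
\[
\Phi(u)(t)=e^{tz\Delta_g}u_0-z\int_0^te^{(t-\tau)z\Delta_g}f(u(\tau))\,\mathrm d\tau
\]
on the ball $\{\|u\|_{L^{\alpha}_{t,x}(I\times\mathcal M^3)}\le2\epsilon_0\}$. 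The metric will be $L^\alpha_{t,x}$ itself, or a weaker Strichartz norm if needed.

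\textbf{Part (1).} The key nonlinear estimate uses the inhomogeneous estimate \eqref{eq2.26v6} with target pair $(q,p)=(\alpha,\alpha)$ and source exponents $q_1'=p_1'=\frac{\alpha}{p}$. Then $\|f(u)\|_{L^{\alpha/p}_{t,x}}=\|u\|_{L^\alpha_{t,x}}^p$, and the scaling condition \eqref{eq1.8v61} reads $\frac52\bigl(\frac p\alpha-\frac1\alpha\bigr)=\frac52\cdot\frac{p-1}{\alpha}=1$, which holds exactly. We must also check the side conditions $1\le p_1'\le2\le\alpha$ and $1<q_1'<q$.
\begin{itemize}
\item $\frac{\alpha}{p}=\frac{5(p-1)}{2p}\ge1$ precisely when $p\ge\frac53$.
\item $\frac{\alpha}{p}\le2$ holds for all $p\le5$.
\item $\alpha\ge2$ holds since $\alpha\ge\frac{10}{3}$.
\end{itemize}
The strict inequality $q_1'>1$ requires $p>\frac53$, so $p\ge\frac73$ leaves room. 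This gives $\|\Phi(u)\|_{L^\alpha}\le\|e^{tz\Delta_g}u_0\|_{L^\alpha}+C\|u\|_{L^\alpha}^p$. The difference bound $\|\Phi(u)-\Phi(v)\|_{L^\alpha}\le C(\|u\|^{p-1}+\|v\|^{p-1})\|u-v\|$ follows the same way via H\"older. Choosing $\epsilon_0$ so that $C(2\epsilon_0)^{p-1}\le\frac14$ makes $\Phi$ a contraction on the ball, with $\|u\|_{L^\alpha}\le2\epsilon_0$.

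To see that small $\|u_0\|_{\dot H^{s_c}}$ is enough, combine Sobolev embedding (Theorem~\ref{th2.6v116}) with the homogeneous estimate \eqref{eq2.25v6}. The exponent $(\alpha,r)$ with $\frac1r=\frac12-\frac{2}{3\alpha}$ is Strichartz admissible, and $\dot W^{s_c,r}\hookrightarrow L^\alpha$ because $\frac1r-\frac{s_c}{3}=\frac1\alpha$. Since $(-\Delta_g)^{s_c/2}$ commutes with the semigroup, $\|e^{tz\Delta_g}u_0\|_{L^\alpha}\lesssim\|u_0\|_{\dot H^{s_c}}$. This step is the one needing care on a manifold, because only the inhomogeneous Sobolev embedding is available. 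I would replace it by the homogeneous version, using Assumption~\ref{assump:A}(4) together with the Gaussian bound of Theorem~\ref{thm:real_heat_kernel}, which give fractional Sobolev inequalities via subordination.

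\textbf{Part (2).} On a short interval $[0,T]$, absolute continuity gives $\|e^{tz\Delta_g}u_0\|_{L^\alpha([0,T])}\to0$ as $T\to0$, because the full-line norm is finite for $u_0\in H^1\subset\dot H^{s_c}$ with $0\le s_c\le1$. The same contraction therefore produces a local solution in $L^\alpha$. Persistence of $H^1$ regularity and $L^2_tH^2_x$ then follows from the energy/smoothing estimates of Lemma~\ref{le1.1v156} and Corollary~\ref{lem:smoothing}, applied with the derivative placed on $f(u)$, in the same way as in Lemma~\ref{lem:Blowupcriterion}; this yields $u\in C_t^0H^1_x\cap L^2_{t,\mathrm{loc}}H^2_x$. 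Uniqueness follows from the difference estimate on subintervals where the $L^\alpha$ norm is small. Gluing these local solutions gives the maximal solution, with finite $L^\alpha$ norm on every $[0,t)$ with $t<T_{\max}$.

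For the blowup criterion, suppose $T_{\max}<\infty$ and $\|u\|_{L^\alpha([0,T_{\max}))}<\infty$. Split $[0,T_{\max})$ into finitely many intervals on which the norm is below $\epsilon_0$. The inhomogeneous Strichartz estimate together with the $H^1$ persistence estimate then bounds $u$ in $L^\infty_tH^1_x$ up to $T_{\max}$, and also shows that $e^{(t-T_{\max}+\delta)z\Delta_g}u(T_{\max}-\delta)$ is small in $L^\alpha$ on a slightly larger interval. Part (1) then extends the solution past $T_{\max}$, a contradiction. The main obstacle is the persistence-of-regularity step, namely controlling $\nabla f(u)$ in dual Strichartz norms through $\|u\|_{L^\alpha}^{p-1}$. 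The endpoint $p=\frac73$, where $s_c=0$, also needs care, but there the $L^2$ theory applies directly.
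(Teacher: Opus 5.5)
Your proposal is correct and is the standard Kato-type contraction in $L^{5(p-1)/2}_{t,x}$ built on Proposition~\ref{pr2.1v6}. This is exactly what the paper relies on: it gives no proof of its own and only cites \cite{MZ,Miao2}. Your exponent checks for the non-admissible inhomogeneous estimate, and for the $\dot H^{s_c}$ claim via a homogeneous (Varopoulos-type) Sobolev embedding, are right.

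The step you flag as the main obstacle does not actually arise here. The dissipation identities of Proposition~\ref{pr2.12v70}, together with Lemma~\ref{lem:monotonicity} and Remark~\ref{re4v177}, already give a global solution in $C_t^0H_x^1\cap L^2_{t,\mathrm{loc}}H_x^2$. By Gagliardo--Nirenberg, and since $5(p-1)/2\le 10$, this solution lies in $L^{5(p-1)/2}_{t,x}$ on bounded intervals. Your $L^{5(p-1)/2}$ uniqueness argument then identifies it with the fixed point. So no Strichartz control of $\nabla_g f(u)$ is needed, which is fortunate, since on a curved manifold that route would also require Riesz-transform bounds ($\nabla_g$ does not commute with $e^{tz\Delta_g}$).
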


The Riemannian manifold $\mathcal{M}^3$ under Assumption~\ref{assump:A} satisfies \eqref{eq1.5v177}.
Thus, by Proposition \ref{prop:global} and Remark \ref{re4v177},
the solution $u$ to the equation \eqref{eq1.CGL} on a noncompact Riemannian manifold $\mathcal{M}^3$ under Assumption~\ref{assump:A} is global.
 Thus, we have
\begin{proposition}
Assume that $\mathcal M^3$ obeys Assumption~\ref{assump:A}, $ \frac73\le p\le5$, and $u_0\in H_x^1 \left(\mathcal M^3 \right)$.
The solution $u$ to equation \eqref{eq1.CGL} on the Riemannian manifold $\mathcal{M}^3$ under Assumption~\ref{assump:A} is global.
\end{proposition}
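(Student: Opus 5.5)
The global existence follows by checking that the global well-posedness argument of Section~\ref{se2}, stated there for compact $\mathcal M^3$, uses only properties that Assumption~\ref{assump:A} already provides. We do not repeat the fixed-point construction. We verify the three ingredients it relies on, and then run the same continuation argument.

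First we check that Assumption~\ref{assump:A} implies \eqref{eq1.5v177}. Condition (1) gives $\mathrm{Ric}\ge -K$ globally. Condition (3) with $r=1$ gives $\inf_p \mathrm{Vol}_g B(p,1)\ge c>0$. Hence Theorem~\ref{th2.6v116} (Sobolev embedding) is available on $\mathcal M^3$, together with the Gagliardo--Nirenberg and interpolation inequalities used in Lemma~\ref{lem:Blowupcriterion}.

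Second, the linear estimates in Lemma~\ref{le1.1v156} and Corollary~\ref{lem:smoothing} are purely spectral. They use only self-adjointness and nonpositivity of $\Delta_g$, which hold on complete manifolds. Compactness enters only through the identification $\|(1-\Delta_g)v\|_{L^2}\sim\|v\|_{H^2}$. On a noncompact manifold we instead take $H^2$ to be defined through $(1-\Delta_g)$, as in the paper's fractional Sobolev definition. The bounded geometry in conditions (1)--(2) makes this equivalent to the covariant $W^{2,2}$ norm, by the standard Bochner-formula argument: $\|\nabla^2 v\|_{L^2}^2\le\|\Delta_g v\|_{L^2}^2+K\|\nabla v\|_{L^2}^2$. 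With this, the fixed point of Lemma~\ref{lem:Blowupcriterion} goes through verbatim. It gives a local solution in $C_t^0H_x^1\cap L_t^2H_x^2$ and the blowup criterion. Uniqueness in this class is consistent with the maximal solution of Theorem~\ref{thm:wellposedness}, because for $\frac73\le p\le5$ Sobolev embedding places $C_t^0H^1\cap L_t^2H^2$ inside $L^{5(p-1)/2}_{t,x,\mathrm{loc}}$.

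Third, the identities \eqref{eq_ener} and \eqref{eq_mass} of Proposition~\ref{pr2.12v70} hold for $H^2$ solutions on complete manifolds. The integrations by parts are justified because $u,\nabla u\in L^2$ and $\Delta_g u\in L^2$, and we approximate by $C_c^\infty$ functions, which are dense in $H^2$ on a complete manifold. Together these give a uniform $H^1$ bound, so the subcritical case $p<5$ closes at once. For $p=5$, Lemma~\ref{lem:monotonicity} is an integration-by-parts identity that remains valid. Combining it with \eqref{eq2.15v246}--\eqref{eq2.17v246} bounds $\|u\|_{L_t^2H_x^2([0,T])}$, and this excludes $T_{\max}<\infty$ exactly as in Proposition~\ref{prop:global}.

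Finally, we pass from the finite $L_t^2H_x^2([0,T])$ bound to finiteness of $\|u\|_{L^{5(p-1)/2}_{t,x}([0,T])}$ for every $T$. We interpolate between $L_t^\infty H^1$ and $L_t^2H^2$, then apply Sobolev embedding; this works since $\frac{5(p-1)}{2}\in[\frac{10}{3},10]$. The blowup criterion of Theorem~\ref{thm:wellposedness} then gives $T_{\max}=\infty$. The main obstacle is the justification step: the $H^2$ norm equivalence and the integration by parts without boundary terms on a noncompact manifold. This is exactly where conditions (1)--(2) and completeness are used.
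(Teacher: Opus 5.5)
Your proposal is correct and follows the same route as the paper. The paper notes that Assumption~\ref{assump:A} implies \eqref{eq1.5v177} (conditions (1) and (3) with $r=1$) and then invokes Proposition~\ref{prop:global} with Remark~\ref{re4v177}; the global theory of Section~\ref{se2} is already formulated for manifolds satisfying \eqref{eq1.5v177}, not only compact ones, so the reduction is immediate. Your extra checks are sound elaborations of steps the paper leaves implicit: the spectral smoothing estimates, the Bochner bound $\|\nabla_g^2 v\|_{L^2}^2\le\|\Delta_g v\|_{L^2}^2+K\|\nabla_g v\|_{L^2}^2$, the justification of the dissipation identities, and the transfer of the $L_t^2H_x^2$ bound to the $L_{t,x}^{5(p-1)/2}$ blowup criterion of Theorem~\ref{thm:wellposedness}.
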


We now turn to the proof of Theorem \ref{th3.1v3}. We first establish the decay estimate of $\| \nabla_g u(t)\|_{L^2 \left(\mathcal{M}^3 \right)}$.

\begin{proposition}\label{limit_u_1}
Under the assumptions of Theorem~\ref{th3.1v3}, we have
\begin{align}\label{lim_grad_noncompact}
 \lim_{t\to\infty}\|\nabla_gu(t)\|_{L_{x}^2 \left(\mathcal M^3 \right)}=0.
\end{align}
\end{proposition}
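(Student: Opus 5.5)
The plan is to combine the two dissipation identities of Proposition \ref{pr2.12v70}, which hold verbatim on the noncompact manifold since the solution is global and regular by Proposition \ref{prop:global}. First, the mass dissipation identity \eqref{eq_ener} gives
\[
2\Re z\int_0^\infty \Bigl(\|\nabla_g u(\tau)\|_{L_x^2}^2+\|u(\tau)\|_{L_x^{p+1}}^{p+1}\Bigr)\,\mathrm d\tau\le M(u_0)<\infty .
\]
Since $E(u)\le \frac12\|\nabla_g u\|_{L^2}^2+\frac1{p+1}\|u\|_{L^{p+1}}^{p+1}$ is bounded by a constant multiple of the integrand, this yields $\int_0^\infty E(u(\tau))\,\mathrm d\tau<\infty$.

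Second, the energy dissipation identity \eqref{eq_mass} shows that $t\mapsto E(u(t))$ is nonincreasing and nonnegative. A nonincreasing nonnegative function with finite integral on $[0,\infty[$ must tend to zero. Quantitatively,
\[
\frac t2\,E(u(t))\le\int_{t/2}^t E(u(\tau))\,\mathrm d\tau\longrightarrow 0,
\]
so in fact $E(u(t))=o(t^{-1})$. Since $\frac12\|\nabla_g u(t)\|_{L_x^2}^2\le E(u(t))$, this gives \eqref{lim_grad_noncompact}.

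The only point requiring care is justifying the identities \eqref{eq_ener}--\eqref{eq_mass} on a noncompact manifold. The integrations by parts must have no boundary contributions at infinity, and $E(u(t))$ must be absolutely continuous in time. I would handle this using the regularity $u\in C_t^0H_x^1\cap L^2_{t,\mathrm{loc}}H_x^2$ from Theorem \ref{thm:wellposedness} and $\partial_t u\in L^2_{t,\mathrm{loc}}L^2_x$. The identities would first be derived for smooth compactly supported approximations, or with a cutoff $\chi_R$ as in the proof of Theorem \ref{th1.2v21}, and then passed to the limit using completeness of $\mathcal M^3$ and the density of $C_c^\infty$ in $H^1$ and $H^2$ under Assumption \ref{assump:A}. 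I expect this justification to be the main (though routine) obstacle. The decay argument itself is elementary and does not depend on the restriction $p\ge\frac73$.
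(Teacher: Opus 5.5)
Your proposal is correct and is essentially the paper's proof. Both arguments use the mass dissipation identity \eqref{eq_ener} to get $\int_0^\infty E(u(t))\,\mathrm dt<\infty$, and the energy dissipation identity \eqref{eq_mass} to make $E(u(t))$ nonnegative and nonincreasing, which forces $E(u(t))\to0$. Your $o(t^{-1})$ refinement and your remarks on justifying the identities on a noncompact manifold are sound additions that the paper leaves implicit.
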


\begin{proof}
We start by observing that
 $E(u(t))$ is nonnegative and nonincreasing by \eqref{eq_mass}. On the other hand, \eqref{eq_ener} gives
\[
\int_0^\infty \left( \|\nabla_g u(t)\|_{L_x^2  \left(\mathcal M^3 \right)}^2
+ \|u(t)\|_{L_x^{p+1} \left(\mathcal M^3 \right)}^{p+1} \right) \mathrm dt < \infty.
\]
Since $E(u(t))$ is comparable to the integrand above, it follows that
\[
E(u(t)) \to 0 \quad \text{as } t\to\infty.
\]
This implies \eqref{lim_grad_noncompact}.
\end{proof}

Before proving the asymptotic $L_x^2$-decay of the solution to equation \eqref{eq1.CGL} on the Riemannian manifold $\mathcal{M}^3$ under Assumption~\ref{assump:A},
 we establish the global spacetime bound \eqref{eq6.2v51}.

\begin{proof}[Proof of \eqref{eq6.2v51}]
We establish \eqref{eq6.2v51} by separately treating the cases \(p= \frac73\)
and \(p> \frac73\).

\noindent
\textbf{Case 1: \(p= \frac73\).}
 \eqref{eq_ener} directly yields
\eqref{eq6.2v51}.

\noindent\textbf{Case 2: \(p> \frac73\).}
By Theorem~\ref{thm:wellposedness}, we have
\begin{align}\label{local}
\|u\|_{L_{t,x}^{\frac{5(p-1)}{2}}([0,t]\times\mathcal M^3)}<\infty,
\quad \forall\, t<\infty.
\end{align}
By
 \eqref{eq_ener} and \eqref{eq_mass}, we obtain
\[
\sup_{t\ge0}\|u(t)\|_{H_x^1 \left(\mathcal M^3 \right)}<\infty.
\]
Here the local spacetime bound is used only on finite intervals for the already constructed global solution.
From Proposition~\ref{limit_u_1},
for any \(\eta>0\), there exists \(T_\eta>0\) such that \(\|\nabla_g u(t)\|_{L_x^2 \left( \mathcal{M}^3 \right) }\le\eta\) for all \(t\ge T_\eta\).
Choose \(t_0\ge T_\eta\). Then \(\|\nabla_g u(t_0)\|_{L_x^2\left( \mathcal{M}^3 \right) }\le\eta\).
Applying the Gagliardo-Nirenberg inequality, we get
\[
\|u(t_0)\|_{\dot H_x^{s_c} \left( \mathcal{M}^3 \right) }
\lesssim \|u(t_0)\|_{L_x^2 \left( \mathcal{M}^3 \right) }^{1-s_c}\|\nabla_g u(t_0)\|_{L_x^2 \left( \mathcal{M}^3 \right) }^{s_c}
\le \|u_0\|_{L_x^2 \left( \mathcal{M}^3 \right) }^{1-s_c}\eta^{s_c},
\]
where the last inequality follows from \eqref{eq_ener}.
Upon taking \(\eta\) sufficiently small, \(\|u(t_0)\|_{\dot H_x^{s_c} \left( \mathcal{M}^3 \right) }\) becomes arbitrarily small.
Invoking Theorem~\ref{thm:wellposedness}, we obtain
\begin{align*}
\|u\|_{L_{t,x}^{\frac{5(p-1)}{2}} \left([t_0,\infty[\times\mathcal M^3 \right)}<\infty.
\end{align*}
By \eqref{local}, we also have \(\|u\|_{L_{t,x}^{\frac{5(p-1)}{2}} \left([0,t_0[\times\mathcal M^3 \right) }<\infty\).
Combining the bounds over \([0,t_0[\) and \([t_0,\infty[\), we obtain
\[
\|u\|_{L_{t,x}^{\frac{5(p-1)}{2}} \left(\mathbb R_+\times\mathcal M^3 \right)}<\infty,
\]
which establishes \eqref{eq6.2v51}.
\end{proof}

\begin{proposition}\label{limit_u_2}
Under the assumptions of Theorem~\ref{th3.1v3}, we have
\begin{align}\label{lim_L2_noncompact}
\lim_{t\to\infty}\|u(t)\|_{L_{x}^2 \left(\mathcal{M}^3 \right)} = 0.
\end{align}
\end{proposition}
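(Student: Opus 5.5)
The plan is to show that the nonincreasing mass $M(u(t))$, which has a limit by \eqref{eq_ener}, actually tends to zero. I would split the solution at a large time $t_0$ with Duhamel's formula,
\[
u(t)=e^{(t-t_0)z\Delta_g}u(t_0)-z\int_{t_0}^t e^{(t-\tau)z\Delta_g}f(u(\tau))\,\mathrm d\tau,\qquad t\ge t_0,
\]
and prove two things. First, the nonlinear Duhamel term is uniformly small in $L^2_x$ once $t_0$ is large. Second, for each fixed $t_0$ the linear term tends to $0$ in $L^2_x$ as $t\to\infty$. Together these give $\limsup_{t\to\infty}\|u(t)\|_{L^2_x}\le\varepsilon$ for every $\varepsilon>0$, which is \eqref{lim_L2_noncompact}.

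\emph{Nonlinear tail.} I use the global bound \eqref{eq6.2v51}, so $\|u\|_{L_{t,x}^{5(p-1)/2}([t_0,\infty[\times\mathcal M^3)}\to0$ as $t_0\to\infty$. I work with the admissible pair $(\tfrac{10}{3},\tfrac{10}{3})$ and its dual $L_{t,x}^{10/7}$. By H\"older,
\[
\|f(u)\|_{L_{t,x}^{10/7}}\le\|u\|_{L_{t,x}^{5(p-1)/2}}^{p-1}\|u\|_{L_{t,x}^{10/3}},
\]
since $\tfrac{7}{10}=\tfrac25+\tfrac3{10}$. The Keel--Tao framework used in Proposition~\ref{pr2.1v6} (with the $L^1\to L^\infty$ decay \eqref{eq3.2v61}) also supplies the retarded estimates into $L_t^\infty L_x^2$ and into $L_{t,x}^{10/3}$ with forcing in $L_{t,x}^{10/7}$. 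A continuity argument on $[t_0,T]$ then gives
\[
\|u\|_{L_{t,x}^{10/3}([t_0,T])}\lesssim\|u(t_0)\|_{L^2}+\|u\|^{p-1}_{L^{5(p-1)/2}([t_0,\infty[)}\|u\|_{L_{t,x}^{10/3}([t_0,T])}.
\]
Hence $\|u\|_{L_{t,x}^{10/3}([t_0,\infty[)}\lesssim\|u_0\|_{L^2}$ for $t_0$ large. Consequently
\[
\sup_{t\ge t_0}\Big\|\int_{t_0}^t e^{(t-\tau)z\Delta_g}f(u)\,\mathrm d\tau\Big\|_{L^2_x}\lesssim\|u\|^{p-1}_{L^{5(p-1)/2}([t_0,\infty[)}\|u_0\|_{L^2}\longrightarrow0 .
\]

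\emph{Linear part and zero modes.} Using the spectral measure $\mathrm d\mu_\phi$ of $-\Delta_g$ from Lemma~\ref{le1.1v156}, with $\phi=u(t_0)$, we have
\[
\|e^{sz\Delta_g}\phi\|_{L^2}^2=\int_{[0,\infty[}e^{-2s\Re z\,\lambda}\,\mathrm d\mu_\phi(\lambda).
\]
By dominated convergence this tends to $\mu_\phi(\{0\})$ as $s\to\infty$. So it suffices to show that $0$ is not an $L^2$ eigenvalue of $-\Delta_g$. If $\Delta_g h=0$ with $h\in L^2$, then $h\in H^1$ and, after a cutoff argument on the complete manifold, $\nabla_g h=0$, so $h$ is constant. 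Item (3) of Assumption~\ref{assump:A} gives infinite volume, which forces $h=0$. Thus $\mu_\phi(\{0\})=0$, and the linear term vanishes as $t\to\infty$ for each fixed $t_0$.

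The main obstacle I expect is the nonlinear tail. One needs the inhomogeneous Strichartz estimate into $L^\infty_tL^2_x$ for the non-unitary semigroup, since Proposition~\ref{pr2.1v6}(2) as stated excludes $q=\infty$. One also needs the bootstrap bound for $\|u\|_{L_{t,x}^{10/3}}$ on $[t_0,\infty[$. I would obtain both from the retarded $TT^*$ part of Keel--Tao, whose hypotheses (the $L^2$ bound \eqref{eq3.1v61} and the decay \eqref{eq3.2v61}) hold for $U(t)=e^{tz\Delta_g}$. If that fails, a fallback is a direct energy estimate: pair the Duhamel term with itself in $L^2$ and use the dual homogeneous estimate \eqref{eq2.25v6}.
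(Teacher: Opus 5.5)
Your proposal is correct and follows essentially the same route as the paper. Both arguments use Duhamel from a large time, H\"older plus Strichartz to bound $\|u\|_{L^{10/3}_{t,x}}$ and so make the $L^{10/7}_{t,x}$ norm of $f(u)$ on $[t_0,\infty[$ small, the dual form of \eqref{eq2.25v6} for the $L^\infty_tL^2_x$ bound on the Duhamel term, and the spectral theorem with $\ker(-\Delta_g)\cap L^2=\{0\}$ for the linear part. The differences are minor: you bootstrap only on $[t_0,\infty[$ instead of partitioning $\mathbb R_+$, and you actually justify the absence of $L^2$ zero modes, which the paper only asserts.

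On the step you flag, your fallback is the right one. Test $\int_{t_0}^t e^{(t-\tau)z\Delta_g}F(\tau)\,\mathrm d\tau$ against $h\in L^2$ and apply \eqref{eq2.25v6} to $s\mapsto e^{s\bar z\Delta_g}h$; this gives the $L^\infty_tL^2_x$ bound. The retarded Keel--Tao estimate with $U(t)=e^{tz\Delta_g}$ does not literally apply, because $U(t)U(s)^*=e^{(tz+s\bar z)\Delta_g}$. The $L^{10/7}_{t,x}\to L^{10/3}_{t,x}$ bound you need for the bootstrap is already Proposition~\ref{pr2.1v6}(2).
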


\begin{proof}
We first show \(u\in L^{ \frac{ 10}3}_{t,x} \left(\mathbb{R}_+\times\mathcal{M}^3 \right)\).
For any finite time interval \(I=[a,b]\), Duhamel's formula combined with \eqref{eq2.25v6}, \eqref{eq2.26v6} and H\"older's inequality yields
\[
\begin{aligned}
\|u\|_{L^{ \frac{ 10}3}_{t,x} \left(I \times \mathcal{M}^3 \right)}
 \leq C\|u_0\|_{L_x^2 \left(\mathcal{M}^3 \right)}
+ C\|u\|_{L^{\frac{5(p-1)}2}_{t,x} \left(I \times \mathcal{M}^3  \right)}^{p-1} \|u\|_{L^{ \frac{ 10}3}_{t,x} \left(I \times \mathcal{M}^3  \right)}.
\end{aligned}
\]
From \eqref{eq6.2v51}, we have \(\|u\|_{L^{\frac{5(p-1)}2}_{t,x} \left(\mathbb{R}_+\times\mathcal{M}^3 \right)}<\infty\).
We may therefore partition \(\mathbb{R}_+\) into a finite family of consecutive subintervals \(I_j\), the last of which is unbounded, such that
$
C\|u\|_{L^{\frac{5(p-1)}2}_{t,x} \left(I_j \times \mathcal{M}^3  \right)}^{p-1} \leq \frac12  \text{ for all }j.
$
The local $H^1_x$ bound ensures that the $L_{t,x}^{ \frac{10}3}$ norm is finite on bounded intervals,
 so absorption is legitimate. On the final unbounded interval,
 first apply the estimate to its finite truncations and then use monotone convergence.
 Absorbing on each $I_j$ gives $\|u\|_{L_{t,x}^{ \frac{10}3} \left(I_j\times\mathcal M^3 \right)}\le 2C\|u_0\|_{L^2_x  \left( \mathcal{M}^3 \right)}$.
 Since there are only finitely many intervals, summing their $\frac{10}3$-powers gives the global spacetime regularity
\(u\in L^{ \frac{10}3}_{t,x} \left(\mathbb{R}_+\times\mathcal{M}^3 \right)\).
As a consequence, the nonlinear term \(f(u)\) belongs to \(L^{ \frac{10}7}_{t,x} \left(\mathbb{R}_+\times\mathcal{M}^3 \right)\). In particular,
\begin{align}\label{eq:nonlinear_tail}
\|f(u)\|_{L^{ \frac{ 10}7}_{t,x} \left([T,\infty[ \times\mathcal{M}^3 \right)} \to 0
\qquad \text{as } T\to\infty.
\end{align}
For any fixed \(T>0\), Duhamel's formula gives
\begin{align}\label{eq:duhamel_noncompact}
u(t) = e^{(t-T)z\Delta_g}u(T) - z\int_T^t e^{(t- \tau )z\Delta_g} f(u(\tau))\,\mathrm{d}\tau .
\end{align}
Applying \eqref{eq2.25v6}, we obtain
\begin{align}\label{eq:duhamel_tail_L2}
\sup_{t\ge T}\left\| \int_T^t e^{(t- \tau )z\Delta_g} f(u(\tau))\, \mathrm{d}\tau  \right\|_{L_x^2 \left(\mathcal{M}^3 \right)}
\lesssim \|f(u)\|_{L^{ \frac{ 10}7}_{t,x} \left([T,\infty[ \times\mathcal{M}^3 \right)}.
\end{align}
We next analyze the linear complex Ginzburg--Landau propagator.
By the spectral theorem,
\[
\left\|e^{(t-T)z\Delta_g}u(T) \right\|_{L_x^2 \left(\mathcal{M}^3 \right)}^2
= \int_{0 }^\infty e^{-2\Re z\,(t-T)\xi}\,\mathrm{d}\mu_{u(T)}(\xi),
\]
where \(\mathrm{d}\mu_{u(T)}\) is the spectral measure associated with \(u(T)\).
 The dominated convergence theorem and \(\ker(-\Delta_g)\cap L_x^2(\mathcal M^3)=\{0\}\) gives
\begin{align}\label{eq:linear_semigroup_decay}
\lim_{t\to\infty} \left\|e^{(t-T)z\Delta_g}u(T) \right\|_{L^2_x \left(\mathcal{M}^3 \right)}
= \left\|P_{\ker(-\Delta_g)}u(T)\right\|_{L^2_x \left(\mathcal{M}^3 \right)} = 0 .
\end{align}
Combining \eqref{eq:duhamel_noncompact}, \eqref{eq:duhamel_tail_L2}, and \eqref{eq:linear_semigroup_decay}, we obtain
\begin{align}\label{eq:l2_limsup}
\limsup_{t\to\infty}\|u(t)\|_{L^2_x  \left(\mathcal{M}^3 \right)}
\lesssim \|f(u)\|_{L^{ \frac{ 10}7}_{t,x} \left([T,\infty[ \times\mathcal{M}^3 \right)}.
\end{align}
Letting \(T\to\infty\) in \eqref{eq:l2_limsup} and using \eqref{eq:nonlinear_tail} yields \eqref{lim_L2_noncompact}. This completes the proof.
\end{proof}

\begin{proof}[End of the proof of Theorem \ref{th3.1v3}]
The desired estimate \eqref{eq6.3v51} follows from \eqref{lim_grad_noncompact} and \eqref{lim_L2_noncompact},
 proved in Propositions~\ref{limit_u_1} and~\ref{limit_u_2}, respectively.
This completes the proof of Theorem \ref{th3.1v3}.
\end{proof}

\subsection{No uniform decay rate of the solution to CGL on noncompact \texorpdfstring{$\mathcal{M}^3 $ when $\lambda_{\min} = 0$
}{\mathcal{M}\^3 }}

The dynamics of the solutions to the CGL equation are dramatically different on noncompact Riemannian manifolds because solutions spreading at large scales may dissipate very slowly. In sharp contrast with the compact case,
one cannot expect a universal decay rate for solutions to the CGL equation on noncompact Riemannian manifolds when
$\lambda_{\min } = 0$.

Before giving the proof, we first establish the following lemma.

\begin{lemma}\label{prop:non-uniform}
Let $\mathcal{M}^3$ satisfy Assumption~\ref{assump:A} and $\lambda_{\min} = 0$.
Let $0<r\leq 1$ and $1<p\leq 5$.
There exist a sequence of initial data $u_n(0)\in H_x^1 \left(\mathcal{M}^3 \right)$, corresponding global solutions $u_n$ of the equation \eqref{eq1.CGL}, and a sequence $t_n\to+\infty$ such that
\begin{align}
\lim_{n\to\infty} \|u_n(0)\|_{H^1_x  \left(\mathcal{M}^3 \right)} &= r,
 \label{bd:initial}
\intertext{ and }
\liminf_{n\to\infty} \|u_n(t_n)\|_{H^1_x  \left(\mathcal{M}^3 \right)} &\ge \frac{r e^{-\Re z}}{10}.
 \label{bd:final}
\end{align}
In particular, there is no uniform decay rate in the unit ball of $H_x^1\left(\mathcal{M}^3 \right)$.
\end{lemma}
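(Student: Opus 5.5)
The idea is to use initial data that are almost zero modes of $-\Delta_g$: their $H^1_x$ norm is carried almost entirely by the $L^2_x$ part, so their energy is tiny. The dissipation identities of Proposition~\ref{pr2.12v70} then show that the mass barely moves over very long times. No dispersive or kernel estimates are needed; only the variational characterization of $\lambda_{\min}$, Sobolev embedding, and \eqref{eq_ener}--\eqref{eq_mass}.

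\emph{Step 1 (choice of data).} Since $\lambda_{\min}=\inf\sigma(-\Delta_g)=0$, the Rayleigh quotient $\|\nabla_g\phi\|_{L^2}^2/\|\phi\|_{L^2}^2$ has infimum $0$ over $C_c^\infty(\mathcal M^3)\setminus\{0\}$. I therefore pick $\phi_n\in C_c^\infty$ with $\|\phi_n\|_{L^2_x}=1$ and $\|\nabla_g\phi_n\|_{L^2_x}\to 0$. I set $u_n(0):=r_n\phi_n$, with $r_n>0$ chosen so that $\|u_n(0)\|_{H^1_x}=r$; then $r_n\to r$ and \eqref{bd:initial} holds. By Proposition~\ref{prop:global} and Remark~\ref{re4v177}, valid under Assumption~\ref{assump:A}, the solutions $u_n$ are global.

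\emph{Step 2 (smallness of energy).} I will show $E(u_n(0))\to0$. The gradient part tends to zero by construction. For the potential part, the homogeneous Sobolev inequality in Assumption~\ref{assump:A}(4) gives $\|u_n(0)\|_{L^6}\lesssim\|\nabla_g u_n(0)\|_{L^2}\to0$. Since $2<p+1\le 6$, Hölder interpolation gives $\|u_n(0)\|_{L^{p+1}}^{p+1}\le\|u_n(0)\|_{L^2}^{(p+1)\theta}\|u_n(0)\|_{L^6}^{(p+1)(1-\theta)}$ with $1-\theta>0$. The $L^2$ factor stays bounded by $r$, so this also tends to $0$. Write $\varepsilon_n:=E(u_n(0))\to0$; I may assume $\varepsilon_n>0$.

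\emph{Step 3 (mass stays put).} The mass identity in its differential form \eqref{u_L2} reads $\frac{\mathrm d}{\mathrm dt}M(u_n)=-2\Re z\,E_1(u_n)$, where $E_1\le (p+1)E$ pointwise in time. By \eqref{eq_mass}, $E(u_n(t))\le\varepsilon_n$ for all $t$. Integrating gives
\[
M(u_n(t))\ge r_n^2-2(p+1)\Re z\,\varepsilon_n t .
\]
I choose $t_n:=\varepsilon_n^{-1/2}\to\infty$. Then $M(u_n(t_n))\ge r_n^2-2(p+1)\Re z\,\varepsilon_n^{1/2}\to r^2$. Consequently
\[
\liminf_{n\to\infty}\|u_n(t_n)\|_{H^1_x}\ge\liminf_{n\to\infty}\|u_n(t_n)\|_{L^2_x}\ge r\ge\frac{re^{-\Re z}}{10},
\]
which is \eqref{bd:final}, with room to spare. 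The final claim, that there is no uniform decay rate in the unit ball of $H^1_x$, follows immediately: a rate $\Psi(t)\to0$ would force $\|u_n(t_n)\|_{H^1_x}\to0$.

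\emph{Main obstacle.} The only delicate point is Step 1: extracting from $\lambda_{\min}=0$ a normalized sequence with vanishing gradient, and making sure the nonlinear part of the energy also vanishes. The first part is the min-max (Rayleigh quotient) characterization of the bottom of the spectrum for the self-adjoint Friedrichs extension, using density of $C_c^\infty$ in $H^1$ on a complete manifold. The second is handled by the Sobolev inequality of Assumption~\ref{assump:A}(4); this is exactly where the range $p\le5$ enters, via $p+1\le6$.
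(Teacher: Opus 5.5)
Your proof is correct, and it takes a genuinely different and simpler route than the paper's.

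\textbf{What is shared.} You use the same quasi-zero-mode data as the paper, $u_n(0)=r_n\phi_n$ with $\|\nabla_g\phi_n\|_{L^2}\to0$.

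\textbf{What the paper does.} It writes Duhamel's formula and bounds $\|u_n(t_n)\|_{L^2}$ from below by $\|e^{t_nz\Delta_g}u_n(0)\|_{L^2}-|z|\int_0^{t_n}\|f(u_n)\|_{L^2}\,\mathrm d\tau$, with $t_n=h_n^{-\beta}$. The linear part is handled spectrally, using $\beta<2$. The nonlinear integral is controlled by a three-case analysis in $p$. For $3\le p\le5$ this needs the smoothing effect (Theorem~\ref{th2.6v107}), the elliptic estimate (Lemma~\ref{lem:monotonicity}) to get $\int_0^\infty\|\Delta_g u_n\|_{L^2}^2\,\mathrm dt\lesssim h_n^2$, and an $L^\infty$ Gagliardo--Nirenberg bound. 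For $p<3$ it uses interpolation and a further restriction on $\beta$.

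\textbf{What you do instead.} You stay entirely at the level of the two Lyapunov identities. Combining $\frac{\mathrm d}{\mathrm dt}M=-2\Re z\,E_1$ with $E_1\le(p+1)E$ and the monotonicity $E(u_n(t))\le\varepsilon_n$ shows that the mass lost on $[0,t]$ is at most $2(p+1)\Re z\,\varepsilon_n t$. Hence $t_n=\varepsilon_n^{-1/2}$ works for all $1<p\le5$ at once. The Sobolev inequality of Assumption~\ref{assump:A}(4) is needed only at $t=0$, to see that $\varepsilon_n\to0$.

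\textbf{What each buys.} Your route gives a shorter proof with no case distinction, no $H^2$-level regularity on the noncompact manifold, and no spectral analysis of the propagator. The paper's Duhamel route yields, as a by-product, that $u_n(t_n)$ is $L^2$-close to the linear evolution and hence to $u_n(0)$. Your framework recovers that as well: by \eqref{eq_mass},
$\|u_n(t)-u_n(0)\|_{L^2}\le t^{1/2}\bigl(\int_0^t\|\partial_\tau u_n\|_{L^2}^2\,\mathrm d\tau\bigr)^{1/2}\lesssim(t\varepsilon_n)^{1/2}$, which tends to $0$ at $t=t_n$.
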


\begin{proof}
By the variational characterization of the spectrum (see \cite[Theorem 10.8]{Gri}),
there exists a sequence $\phi_n \in C_c^\infty \left( \mathcal{M}^3 \right)$ such that
\begin{align*}
\|\phi_n \|_{L^2} = 1,  \text{ and } h_n := \|\nabla_g \phi_n \|_{L^2} \to 0, \text{ as } n \to \infty.
\end{align*}
Let
\begin{align*}
u_n(0,x) = \frac{r }{ \sqrt{ 1 + h_n^2}} \phi_n(x).
\end{align*}
It is clear that
\begin{align}\label{eq5.17v275}
\ \| u_n(0) \|_{L^2} \to r,
\  \|\nabla_g u_n(0) \|_{L^2 \left(\mathcal{M}^3 \right)}\sim rh_n,
\  \| u_n(0) \|_{L^q  \left(\mathcal{M}^3 \right)}\sim rh_n^{3  \left( \frac12 - \frac1q \right)}, \ 2 \le q \le 6.
\end{align}
Then \eqref{bd:initial} holds.
We now turn to \eqref{bd:final}. By \eqref{eq_mass},
\[
\|\nabla_g  u_n (t) \|_{L^2 \left(\mathcal{M}^3 \right)} \lesssim E(
u_n (0) )^{ \frac12}, \ \forall\, t \ge 0,
\]
and the Sobolev inequality yields
\[
\|u_n (t) \|_{L^6 \left(\mathcal{M}^3 \right)} \lesssim E( u_n(0) )^{ \frac12}, \ \forall\, t \ge 0.
\]
Now we fix $t_n=h_n^{- \beta }$, where $0 < \beta < \min  \left( 2, \frac{3(p-1)}2  \right)$. To verify \eqref{bd:final},
it suffices to prove a lower bound for $\|u_n(t_n)\|_{L^2 \left(\mathcal{M}^3 \right)}$.
By the triangle inequality, we have
\begin{align}\label{bd:untn}
\|u_n(t_n)\|_{L^2  \left(\mathcal{M}^3 \right)}
\geq  \left\|e^{t_n z\Delta_g } u_n(0) \right\|_{L^2 \left(\mathcal{M}^3 \right)}
-|z|\int_0^{t_n} \big\| f( u_n(\tau) )  \big\|_{L^2 \left(\mathcal{M}^3 \right)}\,  \mathrm{d} \tau.
\end{align}
We estimate the nonlinear term in three cases.

\textbf{Case 1: $3 \le p \le 5$.}
Theorem \ref{th2.6v107} holds on $\mathcal M^3$,
so $u_n(t)\in H^2 \left(\mathcal M^3 \right)$ for $t>0$.
By Lemma~\ref{lem:monotonicity} and \eqref{eq_mass}, we have
\[
\int_0^\infty\|\Delta_g  u_n(t)\|_{L^2  \left(\mathcal{M}^3 \right)}^2\,\mathrm dt
\le |z|^{-2}\int_0^\infty\|\partial_t u_n(t)\|_{L^2  \left(\mathcal{M}^3 \right) }^2\,\mathrm dt
\le\frac{E( u_n(0) )}{\Re z}\lesssim h_n^2.
\]
Since $p\ge3$, we have $E(u_n(0) )\lesssim h_n^2$ for fixed $r$, and consequently
\[
\sup_{t\ge0}\|u_n(t)\|_{L^6  \left(\mathcal{M}^3 \right) }
\lesssim\sup_{t\ge0}\|\nabla_g  u_n(t)\|_{L^2  \left(\mathcal{M}^3 \right)} \lesssim h_n.
\]
By the Gagliardo-Nirenberg and Sobolev inequalities, we have
\[
\|u_n(t)\|_{L^\infty  \left(\mathcal{M}^3 \right)}
\lesssim\|\nabla_g  u_n(t)\|_{L^2  \left(\mathcal{M}^3 \right)}^{ \frac12}\|\Delta_g  u_n(t)\|_{L^2  \left(\mathcal{M}^3 \right)}^{ \frac12}
\lesssim h_n^{ \frac12}\|\Delta_g  u_n(t)\|_{L^2  \left(\mathcal{M}^3 \right)}^{ \frac12}.
\]
Combining the preceding $L^6$ bound with the interpolation and Gagliardo--Nirenberg inequalities, we obtain
\[
\|f(u_n(t))\|_{L^2_x  \left(\mathcal{M}^3 \right)}
\le\|u_n(t)\|_{L^6  \left(\mathcal{M}^3 \right)}^3\|u_n(t)\|_{L^\infty  \left(\mathcal{M}^3 \right)}^{p-3}
\lesssim h_n^{ \frac{ p+3}2}\|\Delta_g  u_n(t)\|_{L^2  \left(\mathcal{M}^3 \right)}^{ \frac{ p-3}2}.
\]
By H\"older's inequality, we obtain
\begin{align}\label{eq5.22v258}
\begin{aligned}
\int_0^{t_n}\|f(u_n(t))\|_{L^2  \left(\mathcal{M}^3 \right) } \,\mathrm dt
&\lesssim h_n^{ \frac{ p+3}2}t_n^{ \frac{ 7-p}4}
\left(\int_0^{t_n}\|\Delta_g  u_n(t)\|_{L^2  \left(\mathcal{M}^3 \right) }^2\,\mathrm dt\right)^{ \frac{ p-3}4}
\lesssim h_n^pt_n^{ \frac{ 7-p}4}   \to 0, \text{ as } n\to \infty .
\end{aligned}
\end{align}

\textbf{Case 2: $ \frac32 < p < 3$.}
By
 the interpolation inequality,
we have
\[
\| f(u) \|_{L^2 \left(\mathcal{M}^3 \right)}
\lesssim \|u \|_{L^2 \left(\mathcal{M}^3 \right)}^{ \frac{ 3-p}2} \|u \|_{L^6 \left(\mathcal{M}^3 \right)}^{ \frac{ 3(p-1)}2}.
\]
By \eqref{eq_mass}, we have
\[
\|u_n(t)\|_{L^2 \left(\mathcal{M}^3 \right)} \le r \text{ and } \|u_n(t)\|_{L^6 \left(\mathcal{M}^3 \right)} \lesssim h_n^{\gamma_1},
\]
where $\gamma_1 = \min\left\{1, \frac{3(p-1)}{4}\right\} > 0$.
Hence,
\[
\| f(u_n(t)) \|_{L^2  \left(\mathcal{M}^3 \right)} \lesssim h_n^{\frac{3\gamma_1(p-1)}{2}}.
\]
Shrinking $\beta$ if necessary so that $0 < \beta < \frac{3\gamma_1(p-1)}{2}$, we obtain
\begin{align}\label{eq5.23v258}
\int_0^{t_n} \| f(u_n(\tau)) \|_{L^2 \left(\mathcal{M}^3 \right)} \,  \mathrm{d} \tau
\lesssim t_n
 h_n^{\frac{3\gamma_1(p-1)}{2}}
= h_n^{\frac{3\gamma_1(p-1)}{2} - \beta} \to 0 \quad \text{as } n \to \infty.
\end{align}

\textbf{Case 3: $1 < p \le  \frac32$.}
By the interpolation inequality, we get
\[
\| f(u) \|_{L^2 \left(\mathcal{M}^3 \right)}
\lesssim
\|u\|_{L^2 \left(\mathcal{M}^3 \right)}^{3-2p}
\|u\|_{L^3 \left(\mathcal{M}^3 \right)}^{3(p-1)}.
\]
By \eqref{eq_mass} and the Gagliardo--Nirenberg inequality, we obtain
\[
\|u_n(t)\|_{L^2 \left(\mathcal{M}^3 \right)} \le r
\text{ and }
 \|u_n(t)\|_{L^3 \left(\mathcal{M}^3 \right)} \lesssim h_n^{ \frac{ \gamma_1}2},
\]
where $\gamma_1 = \min\left\{1, \frac{3(p-1)}{4}\right\} > 0$.
Hence,
\[
\| f(u_n(t)) \|_{L^2 \left(\mathcal{M}^3 \right)}
\lesssim h_n^{\frac{3\gamma_1(p-1)}2}.
\]
Shrinking $\beta$ if necessary so that
\[
0 < \beta < \frac{3\gamma_1(p-1)}2,
\]
we obtain
\begin{align}\label{eq5.24v258}
\int_0^{t_n} \| f(u_n(\tau)) \|_{L^2 \left(\mathcal{M}^3 \right)} \,  \mathrm{d} \tau
\lesssim t_n h_n^{\frac{3\gamma_1(p-1)}2}
= h_n^{\frac{3\gamma_1(p-1)}2-\beta}
\to 0 \quad \text{as } n \to \infty.
\end{align}
Therefore, for all $1 < p \le 5$, by \eqref{eq5.22v258}, \eqref{eq5.23v258}, and \eqref{eq5.24v258}, we have
\[
\int_0^{t_n} \left\|   f(u_n(\tau))    \right\|_{L^2 \left(\mathcal{M}^3 \right)} \,  \mathrm{d}\tau = o(1) \text{ as } n\to \infty .
\]
Substituting this into \eqref{bd:untn}, we obtain
\begin{align}\label{eq5.22v275}
\|u_n(t_n)\|_{L^2 \left(\mathcal{M}^3 \right)}
\geq \left\|e^{t_n z\Delta_g }
u_n(0) \right\|_{L^2 \left(\mathcal{M}^3 \right)}
- o(1).
\end{align}
It remains to estimate the linear Ginzburg--Landau propagator. Since $\beta<2$, we have
\begin{align*}
\left\|e^{t_n z \Delta_g} u_n(0) - u_n (0) \right\|_{L^2}^2
& = \int_0^\infty  \left| e^{- t_n z\lambda} - 1 \right|^2 \,  \mathrm{d}\mu_{u_n(0)} ( \lambda)
\\
& \lesssim_z t_n \int_0^\infty \lambda \,\mathrm{d} \mu_{u_n(0)} ( \lambda)
= t_n \|\nabla_g u_n(0) \|_{L^2}^2 \lesssim_r t_n h_n^2 = h_n^{2- \beta} \to 0, \text{ as } n \to \infty.
\end{align*}
This together with \eqref{eq5.17v275}
implies $\left\|e^{t_nz\Delta} u_n(0)  \right\|_{L^2 \left(\mathcal{M}^3 \right)}\to r$ as $n \to \infty$.
Combining this with \eqref{bd:untn} and \eqref{eq5.22v275} gives
$\liminf\limits_{ n \to \infty } \|u_n(t_n)\|_{H^1  \left(\mathcal{M}^3 \right)}\ge r$.
This verifies \eqref{bd:final}.
The proof of Lemma \ref{prop:non-uniform} is now complete.
\end{proof}

\begin{proof}[Proof of Proposition \ref{pr1.4v152}]

Assume for contradiction that such a function $\Psi$ exists. Fix $0<r\le1$ as in Lemma~\ref{prop:non-uniform}. Since $\Psi(t)\to0$, there exists $T_*\gg1$ such that
\[
C(r)\Psi(t)\le \frac{r e^{-\Re z}}{100},\qquad t\ge T_*.
\]
By Lemma~\ref{prop:non-uniform}, for all sufficiently large $n$ we have $t_n\ge2T_*$, $\|u_n(0)\|_{H^1 \left(\mathcal M^3 \right)}=r$, and
\[
\|u_n(t_n)\|_{H^1 \left(\mathcal{M}^3 \right)}\ge \frac{r e^{-\Re z}}{20}.
\]
On the other hand, the assumed uniform decay estimate gives
\[
\|u_n(t_n)\|_{H^1 \left(\mathcal M^3 \right)}
\le C(r)\Psi(t_n)
\le \frac{r e^{-\Re z}}{100},
\]
which is a contradiction.
\end{proof}

\subsection{Exponential decay of the solution to CGL on noncompact $\mathcal{M}^3$ when $\lambda_{\min } > 0$
}

By the variational characterization of the bottom of the spectrum (see \cite[Theorem 10.8]{Gri}), we have
\begin{align}\label{eq5.23v275}
\lambda_{\min } \| f\|_{L^2 \left(\mathcal M^3 \right)}^2  \le  \|\nabla_g f \|_{L^2 \left(\mathcal M^3 \right)}^2,
\qquad  f \in H^1 \left(\mathcal M^3 \right).
\end{align}

\begin{proof}[Proof of Proposition \ref{pr1.5v275}]

From the equation \eqref{eq1.CGL}, we have
\[
\|\nabla_g u\|_{L^2}^2 + \|u\|_{L^{p+1}}^{p+1} = -\Re\left(  z^{-1} \partial_tu,u \right)_{L^2} .
\]
Therefore, by the Cauchy--Schwarz inequality and \eqref{eq5.23v275},
\[
\|\nabla_g u\|_{L^2}^2 + \|u\|_{L^{p+1}}^{p+1}
\le \frac1{|z|} \|\partial_tu\|_{L^2}\|u\|_{L^2}
\le \frac1{|z|\sqrt{\lambda_{\min } }} \|\partial_tu\|_{L^2}\|\nabla_g u\|_{L^2}.
\]
By Young's inequality,
\[
\|\partial_tu\|_{L^2}^2 \ge |z|^2\lambda_{\min }  \left( \|\nabla_g u\|_{L^2}^2 +  \|u\|_{L^{p+1}}^{p+1} \right).
\]
This together with \eqref{eq_mass} implies
\[
\frac{ \mathrm{d} }{ \mathrm{d} t}E(u(t)) \le -2 \Re z \lambda_{\min }  E(u(t)).
\]
By Gr\"onwall's inequality
and \eqref{eq5.23v275}, we obtain
\[
\|u(t)\|_{H^1 \left(\mathcal M^3 \right)} \le \left(2 \left(1+\lambda_{\min}^{-1} \right)E(u_0)\right)^{ \frac12}  e^{- \Re z \lambda_{\min } t}.
\]
\end{proof}

\noindent \textbf{Acknowledgments.}
We would like to thank Professors ~Philippe Souplet, Lixin Yan, and Hatem Zaag for helpful discussions.

\end{document}